\setlist[enumerate]{format=\normalfont}
\newcommand{\marginparstretch}{0.6}
\let\oldmarginpar\marginpar
\renewcommand\marginpar[1]{\-\oldmarginpar[\framebox{\setstretch{\marginparstretch}\begin{minipage}{\marginparwidth}{\raggedleft\tiny #1}\end{minipage}}]{\framebox{\setstretch{\marginparstretch}\begin{minipage}{\marginparwidth}{\raggedright\tiny #1}\end{minipage}}}}
\tikzset{
        cvertex/.style={circle,draw=black,inner sep=1pt,outer sep=3pt},
        vertex/.style={circle,fill=black,inner sep=1pt,outer sep=3pt},
        DBs/.style={circle,draw=black,circle,fill=black,inner sep=0pt, minimum size=3pt},
        DB/.style={circle,draw=black,circle,fill=black,inner sep=0pt, minimum size=4pt},
         DWs/.style={circle,draw=black,circle,fill=white,inner sep=0pt, minimum size=3pt},
         DWds/.style={circle,draw=black,densely dotted,circle,fill=white,inner sep=0pt, minimum size=3pt},
        DW/.style={circle,draw=black,inner sep=0pt, minimum size=4pt},
        tvertex/.style={inner sep=1pt,font=\scriptsize},
        gap/.style={inner sep=0.5pt,fill=white},
        Ggap/.style={inner sep=0.5pt,fill=green!40!black!20}}
\newcommand{\rotateRPY}[3]
{   \pgfmathsetmacro{\rollangle}{#1}
    \pgfmathsetmacro{\pitchangle}{#2}
    \pgfmathsetmacro{\yawangle}{#3}

    \pgfmathsetmacro{\newxx}{cos(\yawangle)*cos(\pitchangle)}
    \pgfmathsetmacro{\newxy}{sin(\yawangle)*cos(\pitchangle)}
    \pgfmathsetmacro{\newxz}{-sin(\pitchangle)}
    \path (\newxx,\newxy,\newxz);
    \pgfgetlastxy{\nxx}{\nxy};

    \pgfmathsetmacro{\newyx}{cos(\yawangle)*sin(\pitchangle)*sin(\rollangle)-sin(\yawangle)*cos(\rollangle)}
    \pgfmathsetmacro{\newyy}{sin(\yawangle)*sin(\pitchangle)*sin(\rollangle)+ cos(\yawangle)*cos(\rollangle)}
    \pgfmathsetmacro{\newyz}{cos(\pitchangle)*sin(\rollangle)}
    \path (\newyx,\newyy,\newyz);
    \pgfgetlastxy{\nyx}{\nyy};

    \pgfmathsetmacro{\newzx}{cos(\yawangle)*sin(\pitchangle)*cos(\rollangle)+ sin(\yawangle)*sin(\rollangle)}
    \pgfmathsetmacro{\newzy}{sin(\yawangle)*sin(\pitchangle)*cos(\rollangle)-cos(\yawangle)*sin(\rollangle)}
    \pgfmathsetmacro{\newzz}{cos(\pitchangle)*cos(\rollangle)}
    \path (\newzx,\newzy,\newzz);
    \pgfgetlastxy{\nzx}{\nzy};
}
\tikzset{RPY/.style={x={(\nxx,\nxy)},y={(\nyx,\nyy)},z={(\nzx,\nzy)}}}
\tikzstyle{mybox} = [draw=black, fill=blue!10, very thick,
\tikzstyle{boxtitle} =[fill=blue!50, text=white,rectangle,rounded corners]
\newtheorem{thm}{Theorem}[section]
\newtheorem{prop}[thm]{Proposition}
\newtheorem{lemma}[thm]{Lemma}
\newtheorem{defin}[thm]{Definition}
\newtheorem{cor}[thm]{Corollary}
\theoremstyle{definition} 
\newtheorem{example}[thm]{Example}
\newtheorem{setup}[thm]{Setup}
\newtheorem{remark}[thm]{Remark}
\newtheorem{conj}[thm]{Conjecture}
\newtheorem{notation}[thm]{Notation}
\numberwithin{equation}{section}
\newcounter{tempenum}
\newcounter{enumeratenoindentcounter}
\newcommand{\m}{\mathfrak{m}}
\newcommand{\n}{\mathfrak{n}}
\newcommand{\p}{\mathfrak{p}}
\renewcommand{\t}[1]{\textnormal{#1}}
\def\op{\mathop{\rm op}\nolimits}
\def\rest{\mathop{\rm res}\nolimits}
\def\CM{\mathop{\rm CM}\nolimits}
\def\uCM{\mathop{\underline{\rm CM}}\nolimits}
\def\depth{\mathop{\rm depth}\nolimits}
\def\mod{\mathop{\rm mod}\nolimits}
\def\coh{\mathop{\rm coh}\nolimits}
\def\Qcoh{\mathop{\rm Qcoh}\nolimits}
\def\Mod{\mathop{\rm Mod}\nolimits}
\def\Rad{\mathop{\rm Rad}\nolimits}
\def\refl{\mathop{\rm ref}\nolimits}
\def\pd{\mathop{\rm pd}\nolimits}
\def\id{\mathop{\rm inj.dim}\nolimits}
\def\Hom{\mathop{\rm Hom}\nolimits}
\def\RHom{\mathop{\rm {\bf R}Hom}\nolimits}
\def\End{\mathop{\rm End}\nolimits}
\def\Ext{\mathop{\rm Ext}\nolimits}
\def\add{\mathop{\rm add}\nolimits}
\def\Cone{\mathop{\rm Cone}\nolimits}
\def\Ker{\mathop{\rm Ker}\nolimits}
\def\rank{\mathop{\rm rank}\nolimits}
\def\Sing{\mathop{\rm Sing}\nolimits}
\def\Supp{\mathop{\rm Supp}\nolimits}
\def\Aut{\mathop{\rm Aut}\nolimits}
\def\Spec{\mathop{\rm Spec}\nolimits}
\def\Max{\mathop{\rm Max}\nolimits}
\def\Perf{\mathop{\mathrm{per}}\nolimits}
\def\D{\mathop{\rm{D}^{}}\nolimits}
\def\Dsg{\mathop{\rm{D}_{\sf sg}}\nolimits}
\def\Dm{\mathop{\rm{D}^-}\nolimits}
\def\Db{\mathop{\rm{D}^b}\nolimits}
\def\flop{{\sf{F}}}
\def\Id{\mathop{\rm{Id}}\nolimits}
\newcommand{\K}{\mathop{{}_{}\mathbb{C}}\nolimits}
\newcommand{\rk}{\mathop{\mathsf{rk}}\nolimits}
\newcommand{\con}{\mathrm{con}}
\newcommand{\fib}{0}
\newcommand{\CA}{\mathrm{A}_{\con}}
\newcommand{\CAR}{\mathrm{A}_{\fib}}
\newcommand{\AB}{\mathrm{A}}
\newcommand{\ABplus}{\mathrm{A}\hspace{-.1em}^{+}}
\newcommand{\BB}{\mathrm{B}}
\def\RA{\mathop{\rm RA}\nolimits}
\def\LA{\mathop{\rm LA}\nolimits}
\def\redu{\mathop{\rm red}\nolimits}
\def\noniso{\mathop{\rm Ram}\nolimits}
\def\Rf{{\rm\bf R}f}
\def\Ri{{\rm\bf R}i}
\def\RHom{{\rm{\bf R}Hom}}
\def\RsHom{{\bf R}\mathcal{H}om}
\newcommand\RDerived[1]{{\rm\bf R}{#1}}
\newcommand\RDerivedi[2]{{\rm\bf R}^{#1}{#2}}
\newcommand\Rfi[1]{{\rm\bf R}^{#1}f}
\newcommand\LDerived[1]{{\rm\bf L}{#1}}
\newcommand\FM[1]{\operatorname{\sf{FM}}({#1})}
\newcommand\nuJ{\upnu_{\hspace{-1pt} J}\hspace{0.5pt}}
\newcommand\fundgp{\uppi_{\hspace{0.5pt}1}\hspace{-0.5pt}}
\newcommand{\cA}{\mathcal{A}}
\newcommand{\cB}{\mathcal{B}}
\newcommand{\cC}{\mathcal{C}}
\newcommand{\cD}{\mathcal{D}}
\newcommand{\cE}{\mathcal{E}}
\newcommand{\cF}{\mathcal{F}}
\newcommand{\cG}{\mathcal{G}}
\newcommand{\cH}{\mathcal{H}}
\newcommand{\cL}{\mathcal{L}}
\newcommand{\cM}{\mathcal{M}}
\newcommand{\cN}{\mathcal{N}}
\newcommand{\cO}{\mathcal{O}}
\newcommand{\cP}{\mathcal{P}}
\newcommand{\cQ}{\mathcal{Q}}
\newcommand{\cR}{\mathcal{R}}
\newcommand{\cS}{\mathcal{S}}
\newcommand{\cV}{\mathcal{V}}
\newcommand{\cW}{\mathcal{W}}
\newcommand{\cX}{\mathcal{X}}
\newcommand{\DF}{\mathds{D}_\mathsf{Flop}}
\newcommand{\Per}{{}^{0}\mathrm{Per}}
\newcommand{\PerOne}{{}^{-1}\mathrm{Per}}
\newcommand{\TwistVsFlop}{\Uppsi}
\newcommand{\boldb}{\mathbf{b_J}}
\newcommand\dualizing{\mathbb{D}}
\newcommand\Tcon{{J\sf Twist}} 
\newcommand\Tcondual{{\dualizing\, \Tcon\, \dualizing^{-1}}} 
\newcommand\Tconconj{{J\sf Twist}_\conjlb}
\newcommand\Tfib{{F\sf Twist}}
\newcommand\Tfibdual{{\dualizing\, \Tfib\, \dualizing^{-1}}}
\newcommand\Tconp{{J\sf Twist}_{p}} 
\newcommand\Tfibp{{F\sf Twist}_p}
\newcommand\Tconpiinv[1]{{J_{{#1}}\!{\sf Twist}{}_{{p_#1}}^*}} 
\newcommand\Tconpip[1]{{J_{{#1}}\!{\sf Twist}{}_{{p}}}} 
\newcommand\Tfibpiinv[1]{{F{\sf Twist}{}_{{p_#1}}^*}} 
\newcommand\indx{j}
\newcommand\indxset{J}
\newcommand\otherindx{k}
\newcommand\functcong{\cong}
\newcommand\clocCon{\upvarphi} 
\newcommand\conjlb{\cF} 
\newcommand\diag{\Updelta}
\newlength\tempWidth
\newcommand\FMproja{\uppi}
\newcommand\FMprojb{\uppi}
\newcommand\projenv{{\bar{X}}}
\newcommand\projenvtwist{{\bar{F}}}
\newcommand{\defColor}{gray!50}
  \def\size{5}
  \def\bulge{1/3}
\def\gridShift{-4}
\def\bend{0.1}
\def\len{0.8}
\def\dotsize{0.1}
\newcommand{\defCurveCoords}[3]{
  (#1+\bend,-#3,#2) .. controls (#1-\bend,-#3/3,#2) and (#1-\bend, #3/3,#2) .. (#1+\bend,#3,#2)
}
\newcommand{\defDrawDot}[2]{
    \draw[fill] (#1+\dotsize,\gridShift,#2+\dotsize) -- (#1+\dotsize,\gridShift,#2-\dotsize) -- (#1-\dotsize,\gridShift,#2-\dotsize) -- (#1-\dotsize,\gridShift,#2+\dotsize) -- cycle
    }
\newcommand{\defCurve}[3]{
  \draw[#3] \defCurveCoords{#1}{#2}{\len}
}
\newcommand{\defThreeFold}[3]{
  \draw[#3] (#1+0.5+\size,#2-0.3) .. controls (#1+0.5+2*\size*\bulge,#2-0.3+\size/2) and (#1+0.5-\size*\bulge,#2-0.3+\size/2) .. (#1+0.5-\size,#2-0.3) .. controls (#1+0.5-5*\size*\bulge,#2-0.3-\size/2) and (#1+0.5+4*\size*\bulge,#2-0.3-\size/2) .. (#1+0.5+\size,#2-0.3)
}
\newcommand{\defUniversalCurves}[4]{
  \def\x{#1} \def\y{#2}
  \def\numx{#3} \def\numy{#4}
  \def\zero{0}
  \foreach \i in {0,...,#4} {
    \ifx\i\zero \defCurve{#1+\i}{#2}{black} \else \defCurve{#1+\i}{#2}{\defColor} \fi;
    \defDrawDot{#1+\i}{#2};
  };
  
  \ifx\numx\zero {} \else {
  \foreach \j in {1,...,#3} {
    \defCurve{#1}{#2+\j}{\defColor};
    \defDrawDot{#1}{#2+\j};
    }
    } \fi
}
\begin{document}
\title{\textsc{Twists and Braids for General 3-fold Flops}}
\author{Will Donovan}
\address{Will Donovan, Kavli Institute for the Physics and Mathematics of the Universe (WPI), The University of Tokyo Institutes for Advanced Study, Kashiwa, Chiba 277-8583, Japan.}
\email{will.donovan@ipmu.jp}
\author{Michael Wemyss}
\address{Michael Wemyss, The Maxwell Institute, School of Mathematics, James Clerk Maxwell Building, The King's Buildings, Peter Guthrie Tait Road, Edinburgh, EH9 3FD, UK.}
\email{wemyss.m@googlemail.com}
\begin{abstract}
Given a quasi-projective $3$-fold $X$ with only Gorenstein terminal singularities, we prove that the flop functors beginning at $X$ satisfy higher degree braid relations, with the combinatorics controlled by a real hyperplane arrangement $\cH$. This leads to a general theory, incorporating known special cases with degree $3$ braid relations, in which we show that higher degree relations can occur even for two smooth rational curves meeting at a point. This theory yields an action of the fundamental group of the complexified complement $\fundgp(\mathbb{C}^n\backslash \cH_\mathbb{C})$ on the derived category of $X$, for any such $3$-fold that admits individually floppable curves. We also construct such an action in the more general case where individual curves may flop analytically, but not algebraically, and furthermore we lift the action to a form of affine pure braid group under the additional assumption that $X$ is $\mathds{Q}$-factorial.

Along the way, we produce two new types of derived autoequivalences. One uses commutative deformations of the scheme-theoretic fibre of a flopping contraction, and the other uses noncommutative deformations of the fibre with reduced scheme structure, generalising constructions of Toda and the authors \cite{Toda,DW1} which considered only the case when the flopping locus is irreducible. For type $A$ flops of irreducible curves, we show that the two autoequivalences are related, but that in other cases they are very different, with the noncommutative twist being linked to birational geometry via the Bridgeland--Chen \cite{Bridgeland, Chen} flop--flop functor.
\end{abstract}

\subjclass[2010]{Primary 14J30; Secondary 14E30, 14F05, 16S38, 18E30, 20F36, 52C35}

\thanks{The first author was supported by World Premier International Research Center Initiative (WPI Initiative), MEXT, Japan; by EPSRC grant~EP/G007632/1; and by the Hausdorff Research Institute for Mathematics, Bonn. The second author was supported by EPSRC grant~EP/K021400/1.}
\maketitle
\parindent 20pt
\parskip 0pt

\tableofcontents

\section{Introduction}

The derived category, through its autoequivalence group and associated Bridgeland stability manifold, is one of the fundamental tools that enriches and furthers our understanding of birational geometry, especially in low dimension.  Amongst other things it is widely expected, and proved in many cases, that the derived category allows us to run the minimal model program \cite{BayerMacri,BO,Bridgeland,Chen,TodaExtremal, HomMMP} and track minimal models \cite{CI,TodaResPub,HomMMP}, to illuminate new and known symmetries \cite{Donovan,DonSeg1,HLS,ST,Toda}, and to understand wall crossing phenomena \cite{CI,Calabrese,Nagao}, in both a conceptually and computationally easier manner. 

Consider a general algebraic $3$-fold flopping contraction $f\colon X\to X_{\con}$, where $X$ has only Gorenstein terminal singularities. It is known that there are derived equivalences between $X$ and its flops \cite{Bridgeland,Chen}, and in this paper:
\begin{enumerate}
\item We prove that the flop functors satisfy higher degree braid relations, with combinatorics controlled by the real hyperplane arrangement $\cH$ appearing in \cite[\S5--\S7]{HomMMP}, which need not be Coxeter. We then show that the derived category of $X$ carries an action of the fundamental group of the complexified complement of $\cH$, which should be thought of as a natural generalisation of a pure braid group.  We obtain this new group action via topological methods, without knowledge of a group presentation.  
\item We give a geometric description of generators of this action using noncommutative deformations of multiple curves.  Subsets $J$ of the curves in the fibres of $f$ need not flop algebraically, however for every subset we produce a twist autoequivalence, which we call the $J$-twist.  We prove that the $J$-twist is inverse to a known flop--flop functor in the case when the latter exists, namely when the union of curves in $J$ flop algebraically.  However the $J$-twist exists in full generality,  and the ability to vary $J$ regardless allows us to produce many more derived symmetries.
\item We also produce another new autoequivalence, using (commutative) deformations of the scheme-theoretic exceptional fibre, which we call the fibre twist.  We explain why both the $J$-twist and the fibre twist must be understood in order to construct affine braid group actions on the derived category of $X$.
\end{enumerate}

We remark that in special cases, braiding of flop functors is already known.  In certain toric cases studied by Segal and the first author \cite{DonSeg2}, and in other situations studied by Szendr\H{o}i \cite{Szendroi}, degree $3$ braid relations $\flop_1\circ\flop_2\circ\flop_1\functcong\flop_2\circ\flop_1\circ\flop_2$ exist between flops, which correspond to analogous degree $3$ relations between Seidel--Thomas twists on surfaces \cite{ST}.

\subsection{Motivation}\label{mot ex}
Before describing our results in detail, we sketch why the general theory requires higher degree relations. Consider a contraction $f\colon U\to\Spec R$, where $R$ is an isolated $3$-fold cDV singularity, $U$ is a minimal model, and $f$ contracts precisely two intersecting irreducible curves.  There exist examples where a generic hyperplane section $g\in R$ cuts to an ADE surface singularity $R/g$ of Type $E_6$, and $U$ cuts to the partial crepant resolution depicted below. Using notation from \cite{Katz}, this is described by the given marked Dynkin diagram, by blowing down the curves in the minimal resolution of $R/g$ corresponding to the black vertices.
\def\overallscaling{0.8}
\def\internalscaling{1.6}
\def\basescaling{2}
\def\layoutstretch{2}
\def\curvetilt{-70}
\def\cuttilt{15}
\def\cutshift{0.1}
\def\cutscaling{0.9}
\def\toplineshift{0.9}
\def\bottomlineshift{0}
\def\leftshift{1.75}
\def\Dynkinscale{1}
\[
\begin{tikzpicture}[>=stealth,scale=\overallscaling,xscale=1/2] 
\defThreeFold{\layoutstretch*\internalscaling*-0.5}{\internalscaling*2.5+\toplineshift}{black};
\node (a1) at (\layoutstretch*\internalscaling*-0.25,\internalscaling*2.25+\toplineshift) 
{\begin{tikzpicture} [xscale=\cutscaling*\internalscaling*0.75,yscale=\cutscaling*\internalscaling*0.75,bend angle=25, looseness=1,transform shape, rotate=\curvetilt]
\draw[red] (0,0,0) to [bend left=25] (1,0,-0.3);
\draw[red] (0.8,0,-0.3) to [bend left=25] (2,0,0);
\end{tikzpicture}
};
\node (a2) at (\layoutstretch*\internalscaling*-4.25-\leftshift,\internalscaling*2.25+\toplineshift) 
{\begin{tikzpicture} [xscale=\cutscaling*\internalscaling*0.75,yscale=\cutscaling*\internalscaling*0.75,bend angle=25, looseness=1,transform shape, rotate=\curvetilt]
\draw[red] (0,0,0) to [bend left=25] (1,0,-0.3);
\draw[red] (0.8,0,-0.3) to [bend left=25] (2,0,0);
\end{tikzpicture}
};
    \defThreeFold{\layoutstretch*\internalscaling*-4.4-\leftshift}{\internalscaling*2.5+\toplineshift}{densely dotted, gray};
    \node (a2) at (\layoutstretch*\internalscaling*-4.1-\leftshift,\internalscaling*2.25+\toplineshift+\cutshift) 
{\begin{tikzpicture}[xscale=\cutscaling*\internalscaling*1.2,yscale=\cutscaling*\internalscaling*1.2,transform shape, rotate=\cuttilt]
\draw[black] (0,0)--(0.8,0)--(0.8,1.4)--(0,1.4)--(0,0);
\end{tikzpicture}
};
\node (a3) at (\layoutstretch*\internalscaling*-7-\leftshift,\internalscaling*3.4)
{\begin{tikzpicture}[xscale=0.4*\internalscaling*\Dynkinscale,yscale=0.4*\internalscaling*\Dynkinscale]
\node (-1) at (-0.75,0) [DWs] {};
\node (0) at (0,0) [DBs] {};
\node (1) at (0.75,0) [DWs] {};
\node (1b) at (0.75,0.75) [DBs] {};
\node (2) at (1.5,0) [DBs] {};
\node (3) at (2.25,0) [DBs] {};
\draw [-] (-1) -- (0);
\draw [-] (0) -- (1);
\draw [-] (1) -- (2);
\draw [-] (2) -- (3);
\draw [-] (1) -- (1b);
\end{tikzpicture}};
\defThreeFold{\layoutstretch*\internalscaling*-0.5+6*\bottomlineshift}{0}{black};
\node (a2) at (\layoutstretch*\internalscaling*-0.2+6*\bottomlineshift,-\internalscaling*0.25+\cutshift) 
{\begin{tikzpicture}[xscale=\cutscaling*\internalscaling*1.2,yscale=\cutscaling*\internalscaling*1.2,transform shape, rotate=\cuttilt]
\draw[densely dotted,  gray] (0,0)--(0.8,0)--(0.8,1.4)--(0,1.4)--(0,0);
\filldraw [black]  (intersection of 0,0--0.8,1.4 and 0.8,0--0,1.4) circle (0.6pt);
\end{tikzpicture}};
    \defThreeFold{\layoutstretch*\internalscaling*-4.4+6*\bottomlineshift-\leftshift}{0}{densely dotted, gray};
\node (a2) at (\layoutstretch*\internalscaling*-4.1+6*\bottomlineshift-\leftshift,-\internalscaling*0.25+\cutshift) 
{\begin{tikzpicture}[xscale=\cutscaling*\internalscaling*1.2,yscale=\cutscaling*\internalscaling*1.2,transform shape, rotate=\cuttilt]
\draw[black] (0,0)--(0.8,0)--(0.8,1.4)--(0,1.4)--(0,0);
\filldraw [black]  (intersection of 0,0--0.8,1.4 and 0.8,0--0,1.4) circle (0.6pt);
\end{tikzpicture}};

\draw[->] (\layoutstretch*\internalscaling*-0.25+2*\bottomlineshift,\internalscaling*1.05+\toplineshift)  -- node[right] {$\small f$} (\layoutstretch*\internalscaling*-0.25+4*\bottomlineshift,\internalscaling*0.5+\toplineshift);
\draw[->] (\layoutstretch*\internalscaling*-4.25+2*\bottomlineshift-\leftshift,\internalscaling*1.05+\toplineshift)  -- node[left] {$\scriptstyle $} (\layoutstretch*\internalscaling*-4.25+4*\bottomlineshift-\leftshift,\internalscaling*0.5+\toplineshift);
\draw[->] (\layoutstretch*\internalscaling*-2.5+6*\bottomlineshift-\leftshift,\internalscaling*-0.25)  --  (\layoutstretch*\internalscaling*-2.1+6*\bottomlineshift,\internalscaling*-0.25);
\draw[->] (\layoutstretch*\internalscaling*-2.5-\leftshift,\internalscaling*2.25+\toplineshift)  --  (\layoutstretch*\internalscaling*-2.1,\internalscaling*2.25+\toplineshift);
\node at (\layoutstretch*\internalscaling*+0.75+6*\bottomlineshift,\internalscaling*-1.5) {$\small\Spec R$};
\node at (\layoutstretch*\internalscaling*-3.35+6*\bottomlineshift-\leftshift,\internalscaling*-1.5) {$\small\Spec R/g$};
\node at (\layoutstretch*\internalscaling*+1.1,\internalscaling*1.4+\toplineshift) {$\small U$};
\end{tikzpicture}
\]

Since all minimal models of $\Spec R$ are connected by repeatedly flopping the two curves (see e.g.\ \cite{KollarFlops}), a degree $3$ braid relation would imply that $\Spec R$ has only $6$ minimal models. However, in the above example, by \cite{Pinkham} or \cite[7.2]{HomMMP} the minimal models of $\Spec R$ correspond to the $10$ chambers in the following hyperplane arrangement $\cH$ in~$\mathbb{R}^2$.
\[
\begin{array}{c}
\begin{tikzpicture}
\node at (6.5,0) {$\begin{tikzpicture}[scale=0.75,>=stealth]
\coordinate (A1) at (135:2cm);
\coordinate (A2) at (-45:2cm);
\coordinate (B1) at (153.435:2cm);
\coordinate (B2) at (-26.565:2cm);
\coordinate (C1) at (161.565:2cm);
\coordinate (C2) at (-18.435:2cm);
\draw[red] (A1) -- (A2);
\draw[green!70!black] (B1) -- (B2);
\draw[blue] (C1) -- (C2);
\draw (-2,0)--(2,0);
\draw (0,-2)--(0,2);
\end{tikzpicture}$};
\node at (10.5,0) {
$
\begin{array}{cl}
\\
&\upvartheta_1=0\\
&\upvartheta_2=0\\
\begin{array}{c}\begin{tikzpicture}\node at (0,0){}; \draw[red] (0,0)--(0.5,0);\end{tikzpicture}\end{array}&\upvartheta_1+\upvartheta_2=0\\
\begin{array}{c}\begin{tikzpicture}\node at (0,0){}; \draw[green!70!black] (0,0)--(0.5,0);\end{tikzpicture}\end{array}&\upvartheta_1+2\upvartheta_2=0\\
\begin{array}{c}\begin{tikzpicture}\node at (0,0){}; \draw[blue] (0,0)--(0.5,0);\end{tikzpicture}\end{array}&\upvartheta_1+3\upvartheta_2=0
\end{array}
$};
\end{tikzpicture}
\end{array}
\]
This shows that a degree $3$ braid relation cannot hold in this example, but does suggest a higher length braid relation given by
\[
\begin{array}{c}
\begin{array}{cc}
\begin{array}{c}
\begin{tikzpicture}[scale=1,>=stealth]
\coordinate (A1) at (135:2cm);
\coordinate (A2) at (-45:2cm);
\coordinate (B1) at (153.435:2cm);
\coordinate (B2) at (-26.565:2cm);
\coordinate (C1) at (161.565:2cm);
\coordinate (C2) at (-18.435:2cm);
\draw[red!30] (A1) -- (A2);
\draw[green!70!black!30] (B1) -- (B2);
\draw[blue!30] (C1) -- (C2);
\draw[black!30] (-2,0)--(2,0);
\draw[black!30] (0,-2)--(0,2);
\draw[->] (45:1.5cm) arc (45:110.5:1.5cm);
\draw[->] (112.5:1.5cm) arc (112.5:143:1.5cm);
\draw[->] (145:1.5cm) arc (145:155.5:1.5cm);
\draw[->] (157.5:1.5cm) arc (157.5:169:1.5cm);
\draw[->] (171:1.5cm) arc (171:223:1.5cm);
\draw[->] (45:1.5cm) arc (45:-7:1.5cm);
\draw[->] (-9:1.5cm) arc (-9:-20.5:1.5cm);
\draw[->] (-22.5:1.5cm) arc (-22.5:-33:1.5cm);
\draw[->] (-35:1.5cm) arc (-35:-65.5:1.5cm);
\draw[->] (-67.5:1.5cm) arc (-67.5:-133:1.5cm);
\node at (45:1.5cm) [DWs] {};
\node at (112.5:1.5cm) [DWs] {};
\node at (145:1.5cm) [DWs] {};
\node at (157.5:1.5cm) [DWs] {};
\node at (171:1.5cm) [DWs] {};
\node  at (225:1.5cm) [DWs] {};
\node at (-67.5:1.5cm) [DWs] {};
\node at (-35:1.5cm) [DWs] {};
\node at (-22.5:1.5cm) [DWs] {};
\node at (-9:1.5cm) [DWs] {};
\end{tikzpicture}
\end{array}
&
\begin{array}{c}
\begin{tikzpicture}[scale=1,>=stealth]
\draw[->] (45:1.5cm) arc (45:110.5:1.5cm);
\draw[->] (112.5:1.5cm) arc (112.5:143:1.5cm);
\draw[->] (145:1.5cm) arc (145:155.5:1.5cm);
\draw[->] (157.5:1.5cm) arc (157.5:169:1.5cm);
\draw[->] (171:1.5cm) arc (171:223:1.5cm);
\draw[->] (45:1.5cm) arc (45:-7:1.5cm);
\draw[->] (-9:1.5cm) arc (-9:-20.5:1.5cm);
\draw[->] (-22.5:1.5cm) arc (-22.5:-33:1.5cm);
\draw[->] (-35:1.5cm) arc (-35:-65.5:1.5cm);
\draw[->] (-67.5:1.5cm) arc (-67.5:-133:1.5cm);
\node at (45:1.5cm) [DWs] {};
\node at (112.5:1.5cm) [DWs] {};
\node at (145:1.5cm) [DWs] {};
\node at (157.5:1.5cm) [DWs] {};
\node at (171:1.5cm) [DWs] {};
\node  at (225:1.5cm) [DWs] {};
\node at (-67.5:1.5cm) [DWs] {};
\node at (-35:1.5cm) [DWs] {};
\node at (-22.5:1.5cm) [DWs] {};
\node at (-9:1.5cm) [DWs] {};
\node at (78.75:1.7cm) {$\scriptstyle {}_1$};
\node at (128.75:1.7cm) {$\scriptstyle {}_2$};
\node at (151.25:1.7cm) {$\scriptstyle {}_1$};
\node at (164.25:1.7cm) {$\scriptstyle {}_2$};
\node at (198:1.7cm) {$\scriptstyle {}_1$};
\node at (18:1.7cm) {$\scriptstyle {}_2$};
\node at (-15.75:1.7cm) {$\scriptstyle {}_1$};
\node at (-28.75:1.7cm) {$\scriptstyle {}_2$};
\node at (-51.25:1.7cm) {$\scriptstyle {}_1$};
\node at (-101.25:1.7cm) {$\scriptstyle {}_2$};
\end{tikzpicture}
\end{array}
\end{array}\\
\flop_1\circ\flop_2\circ\flop_1\circ\flop_2\circ\flop_1\functcong\flop_2\circ\flop_1\circ\flop_2\circ\flop_1\circ\flop_2,
\end{array}
\]
and indeed this follows from the general theory developed below. This example demonstrates that the braiding of $3$-fold flops is not controlled by the dual graph, in contrast to the braiding of spherical twists associated to $(-2)$-curves on surfaces \cite{ST}.

\subsection{Group Actions from Flops}\label{Intro new results}
We first describe the case where the flopping contraction $f\colon X\to X_{\con}$ contracts precisely two irreducible curves, where each curve is individually floppable.  The general case will be described later.  

Since there are two curves, $f$ has an associated hyperplane arrangement $\cH$ in $\mathbb{R}^2$, using \cite{HomMMP} (see~\S\ref{moduli tracking section}).
As in the example above, we write $\flop_1$ and $\flop_2$ for the Bridgeland--Chen flop functors associated to the two flopping curves in $X$. By abuse of notation, we also use $\flop_1$ and $\flop_2$ to denote flop functors at the varieties obtained after successive flops.  The functor associated to the flop of the two curves together is denoted by $\flop$.

\begin{thm}[=\ref{braid 2 curve global}]\label{intro braid 2 curve}
Suppose that $X\to X_{\con}$ is a flopping contraction between quasi-projective $3$-folds, contracting precisely two independently floppable irreducible curves.  If $X$ has at worst Gorenstein terminal singularities, then
\[
\underbrace{\flop_1\circ\flop_2\circ\flop_1\circ\cdots}_{d}\functcong
\flop
\functcong
\underbrace{\flop_2\circ\flop_1\circ\flop_2\circ\cdots}_{d}
\]
where $d$ is the number of hyperplanes in $\cH$.  Furthermore:
\begin{enumerate}
\item\label{intro braid 2 curve 1} If the curves intersect, then $d\geq 3$.
\item\label{intro braid 2 curve 2} If the curves are disjoint, then $d=2$.
\end{enumerate}
\end{thm}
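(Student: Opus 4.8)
The plan is to reduce the statement to the combinatorics of a central line arrangement in $\mathbb{R}^2$, using the Homological MMP of \cite{HomMMP}, and then identify the composite of the elementary flop functors along a path with the flop functor $\flop$ of the contraction of $C_1\cup C_2$. Since flop functors are Fourier--Mukai and an isomorphism between two such functors over $X_\con$ can be checked after completing $X_\con$ at the finitely many points beneath the exceptional locus, I would first reduce to $X_\con=\Spec R$ with $R$ complete local Gorenstein terminal. Then \cite[\S5--\S7]{HomMMP} identifies the minimal models of $\Spec R$ with the chambers of $\cH$, a central arrangement of $d$ lines in $\mathbb{R}^2$: each codimension-one wall is a flop of a single curve, the chamber $C_0$ of $X$ has exactly two walls crossed by $\flop_1$ and $\flop_2$, and $\flop$ carries $X$ to the minimal model $X^+$ occupying the antipodal chamber $-C_0$, since $\mathrm{Nef}(X^+)=-\mathrm{Nef}(X)$. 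Now $d$ lines cut $\mathbb{R}^2$ into $2d$ cyclically ordered chambers, the central symmetry $v\mapsto -v$ preserves $\cH$ and exchanges the two arcs joining $C_0$ to $-C_0$, so each arc crosses exactly $d$ walls; and since crossing two walls of the same type in succession returns to the previous (hence already visited) minimal model, the types strictly alternate along each arc. Thus the two arcs realise exactly the two alternating composites $\underbrace{\flop_1\circ\flop_2\circ\cdots}_{d}$ and $\underbrace{\flop_2\circ\flop_1\circ\cdots}_{d}$ appearing in the statement, each as an equivalence $\Db(X)\to\Db(X^+)$.

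It then remains to prove each composite is isomorphic \emph{as a functor} to $\flop$ --- which simultaneously completes the braid relation and shows it is independent of the arc. One route: write every elementary flop functor as a Fourier--Mukai functor over $\Spec R$ with kernel the relevant component of the structure sheaf of the fibre product of consecutive minimal models \cite{Bridgeland,Chen}, and convolve along an arc; since all fibre products are over $\Spec R$ and the intermediate varieties are small modifications, the convolution should collapse to $\cO$ of $X\times_{\Spec R}X^+$, the kernel of $\flop$. Alternatively, pass through the equivalences $\Db(Y)\simeq\Db(\Lambda_Y)$ of \cite{HomMMP}, under which each elementary flop becomes a mutation functor and the composite along an arc becomes the two-sided tilting complex of the iterated mutation; this depends only on the endpoints $X,X^+$ and coincides with the complex realising $\flop$. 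Either way $\underbrace{\flop_1\circ\flop_2\circ\cdots}_{d}\functcong\flop\functcong\underbrace{\flop_2\circ\flop_1\circ\cdots}_{d}$, and undoing the completion recovers the quasi-projective statement.

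For the two supplementary claims: if $C_1$ and $C_2$ are disjoint, each flop is supported in a neighbourhood of one curve and the neighbourhoods are disjoint, so $\flop_1$ and $\flop_2$ commute, $\cH=\{\upvartheta_1=0\}\cup\{\upvartheta_2=0\}$, and $d=2$ (with $\flop\functcong\flop_1\circ\flop_2\functcong\flop_2\circ\flop_1$). If instead $C_1\cap C_2\neq\emptyset$, cut by a generic hyperplane section through the singular point: by \cite{Katz} the flopping curves restrict to intersecting chains of $(-2)$-curves in a du Val partial resolution, so the two unmarked vertices of the associated marked Dynkin diagram are adjacent; the type therefore contains $A_2$, whose arrangement already has three lines, and by compatibility of $\cH$ with the hyperplane-section arrangement \cite{HomMMP} we obtain $d\geq 3$ --- equivalently, two intersecting floppable curves admit at least $6>4$ minimal models over $\Spec R$, ruling out $d=2$.

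The main obstacle is the second step. The combinatorial reduction and the bounds on $d$ are comparatively formal once \cite{HomMMP} is available, but showing that the composite of the elementary flop functors along an arc is \emph{literally} isomorphic to $\flop$ --- not merely numerically equivalent, nor equivalent up to a shift or an extra autoequivalence --- requires genuine control: one must compute the convolution of the fibre-product kernels and exclude spurious derived corrections, or pin down the iterated mutation two-sided complex on the nose. That rigidity, which forces the two arcs to produce the same functor, is the heart of the braid relation.
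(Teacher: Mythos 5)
Your overall scaffolding — reduce over $\Spec\widehat{R}$, read the minimal models as chambers of a simplicial line arrangement $\cH$ in $\mathbb{R}^2$, recognise the two alternating arcs from $C_0$ to $-C_0$ of length $d$, then compare to $\flop$ — matches the paper closely, and your treatment of the disjoint case is the same easy argument the paper gives in~\ref{braid 2 curve global disjoint}. But you have correctly diagnosed, and then left open, the load-bearing step.

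The paper does not attack that step by convolving fibre-product kernels (your route (a)), nor by arguing abstractly that an iterated mutation two-sided tilting complex ``depends only on endpoints'' (your route (b)) — the latter is precisely what has to be \emph{proved}, and it does not follow from general tilting theory; a priori different mutation paths between the same two algebras give different tilting complexes. The mechanism the paper uses is \emph{moduli tracking}, Theorem~\ref{moduli main}: each elementary mutation functor $\Upphi_j$ carries a $\upvartheta$-stable $\AB$-module of dimension vector $\rk$ to a $\upnu_{\boldb}\upvartheta$-stable module, with an explicit formula $\upnu_{\boldb}$ determined by the exchange sequences, and $\upvartheta$-stable modules of dimension vector $\rk$ for $\upvartheta\in C_+$ are exactly the images of skyscrapers $\cO_x$ under the tilting equivalence \cite[\S 5.2]{Joe}. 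Feeding the chamber wall data of~\eqref{chamber structure} into this formula shows that the full loop $(\Upphi_1^{-1}\circ\Upphi_2^{-1}\circ\cdots)\circ(\cdots\circ\Upphi_1\circ\Upphi_2)$, and likewise $\Upphi_{\{1,2\}}^{-1}\circ(\cdots\circ\Upphi_1\circ\Upphi_2)$, carry $C_+$-stable modules to $C_+$-stable modules. Passing back to geometry, these composites are Fourier--Mukai equivalences sending skyscrapers to skyscrapers, fixing $\cO_{\mathfrak{U}}$, and commuting with $\RDerived\clocCon_*$; such an equivalence is the identity. This is what pins the composite on the nose and eliminates the ``spurious derived corrections'' you flag. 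It is also a genuinely different rigidity input than either of your two candidate routes, and the one you would need to discover to close the gap.

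Two smaller remarks. First, the Zariski-local and global lifts are not automatic from the complete-local statement: the paper needs \ref{alg comm} (an explicit commutative square intertwining the Zariski flop with the complete-local mutation across the tilting equivalence, with the tilting module $Z=\Hom_R(H^0(\cV),H^0(\cV^+))$) and then \ref{diagram commutes U to X} (a base-change argument for the Fourier--Mukai kernels under the open immersion $U\hookrightarrow X$), followed again by a skyscraper-tracking argument. Your phrase ``undoing the completion recovers the quasi-projective statement'' elides these. Second, for $d\geq 3$ the paper cites knitting on AR quivers of the Kleinian general elephant \cite[5.19, 5.23]{HomMMP}; your Dynkin-diagram heuristic (``the type contains $A_2$'') is in the right spirit but is not a substitute, since $\cH$ is built from the 3-fold mutation exchange sequences and is not simply the restriction of the surface root arrangement (compare the $E_6$ example in \S\ref{mot ex}, whose $\cH$ has $5$ lines, not the $36$ positive roots of $E_6$).
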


In fact $d\leq 8$, though we do not show this here.
The proof of \ref{intro braid 2 curve} uses moduli tracking, developed as part of the Homological MMP \cite{HomMMP}.  The strategy is to track skyscrapers $\cO_x$, with the challenge being to show that the composition 
\[
(\flop_1^{-1}\circ\flop_2^{-1}\circ\flop_1^{-1}\circ\cdots)\circ(\cdots\circ\flop_2\circ\flop_1\circ\flop_2)
\]
applied to $\cO_x$ is also a skyscraper.

When there are $n$ irreducible curves in the exceptional locus of the flopping contraction, braiding is controlled by a hyperplane arrangement $\cH$ in $\mathbb{R}^n$, which we prove is simplicial in \ref{issimplicial}.  As a motivating example, consider the following hyperplane arrangement $\cH$ in $\mathbb{R}^3$, which arises from certain $D_4$ flops with three curves above the origin.  It has $7$ hyperplanes, and $32$ chambers:

\smallskip
\[
\begin{array}{ccccccc}
\begin{array}{c}
\begin{tikzpicture}[scale=1]
\rotateRPY{-5}{-15}{5}
\begin{scope}[RPY]
\draw [->,densely dotted] (0,0) -- (1,0,0) node [right] {$\upvartheta_2$}; 
\draw [->,densely dotted] (0,0) -- (0,1,0) node [above] {$\upvartheta_3$}; 
\draw [->,densely dotted] (0,0) -- (0,0,1);
\node at (0,0,1.3) {$\upvartheta_1$}; 
\end{scope}
\end{tikzpicture}
\end{array}
&&
\begin{array}{c}
\begin{tikzpicture}[scale=1.2]
\rotateRPY{-5}{-15}{5}
\begin{scope}[RPY]
\filldraw[gray] (-1,-1,1) -- (-1,1,-1) -- (1,-1,1) -- (-1,-1,1);
\filldraw[red!80!black] (-1,1,1) -- (-1,-1,1) -- (0,0,0) -- cycle;
\filldraw[green!40!black] (-1,0,1) -- (0,0,1) -- (0,0,0)-- cycle;
\filldraw[green!60!black] (0,-1,1) -- (0,0,1) -- (0,0,0)-- cycle;
\filldraw[blue] (-1,1,1) -- (-1,1,-1) -- (1,-1,-1) -- (1,-1,1);
\filldraw[red!80!black] (1,1,-1) -- (1,-1,-1) -- (0,0,0) -- cycle;
\filldraw[green!40!black] (1,0,-1) -- (1,0,0) -- (0,0,0)-- cycle;
\filldraw[green!50!black] (1,-1,0) -- (1,0,0) -- (0,0,0)-- cycle;
\filldraw[yellow!95!black] (1,-1,0) -- (1,0,-1) -- (1,0,0)-- cycle;
\filldraw[yellow!95!black] (-1,0,1) -- (0,-1,1) -- (0,0,1)-- cycle;
\filldraw[gray] (-1,1,-1) -- (1,1,-1) -- (1,-1,1)--(-1,1,-1);
\filldraw[yellow!95!black] (-1,1,0) -- (0,1,-1) -- (0,0,0)--(-1,1,0);
\filldraw[green!50!black] (-1,1,0) -- (0,0,0) -- (0,1,0) -- cycle;
\filldraw[green!60!black] (0,1,-1)--(0,0,0) -- (0,1,0) -- cycle;
\filldraw[red!80!black] (-1,1,1) -- (1,1,-1) -- (0,0,0)-- cycle;
\filldraw[green!60!black] (0,1,1) -- (0,0,1) -- (0,0,0)-- (0,1,0)-- cycle;
\filldraw[green!50!black] (1,1,0) -- (1,0,0) -- (0,0,0)-- (0,1,0)-- cycle;
\filldraw[green!40!black] (1,0,1) -- (1,0,0) -- (0,0,0)-- (0,0,1)-- cycle;
\node (A) at (1,0.6,1) [DWs] {};
\node (B1) at (-0.6,0.7,0.6) [DWs] {};
\node (B2) at (-1,0.9,0) [DWs] {};
\node (C1) at (0.6,0.7,-0.7) [DWs] {};
\node (C2) at (0,0.9,-1) [DWs] {};
\node (D) at (-1,0.7,-1.2) [DWs] {};
\node (E) at (-1,0.9,-1) [DWs] {};
\node (F) at (1,-0.3,1) [DWs] {};
\node (F1) at (0.3,-0.6,1) [DWs] {};
\node (F2) at (-0.3,-0.3,1) [DWs] {};
\node (J) at (-0.8,-0.8,0.9) [DWs] {};
\node (F3) at (-0.6,0.3,1) [DWs] {};
\node (G1) at (1,-0.6,0.3) [DWs] {};
\node (G2) at (1,-0.3,-0.3) [DWs] {};
\node (H) at (0.9,-0.8,-0.8) [DWs] {};
\node (G3) at (1,0.3,-0.6) [DWs] {};
\draw (A)--(B1)--(B2)--(D)--(E);
\draw (A)--(C1)--(C2)--(D);
\draw (E) -- (-1,0.82,-1.15); 
\draw (E) -- (-1.1,0.83,-1); 
\draw (B2) -- (-1.1,0.83,0);
\draw (C2) -- (0.1,0.85,-1.15);
\draw (A)--(F)--(F1)--(F2)--(F3)--(B1);
\draw (F)--(G1)--(G2)--(G3)--(C1);
\draw (G2)--(H);
\draw (F2)--(J);
\draw (G3)--(1,0.4,-0.7);
\draw (H)--(0.85,-0.7,-1);
\draw (H)--(1,-0.9,-0.4);
\draw (G1)--(0.9,-0.9,0.5);
\draw (F1)--(0.5,-0.9,0.9);
\draw (F3)--(-0.75,0.4,0.9);
\draw (J)--(-0.9,-0.65,0.9);
\draw (J)--(-0.65,-0.95,0.9);
\end{scope}
\end{tikzpicture}
\end{array}
&&&
\begin{array}{l}
\upvartheta_1=0\\
\upvartheta_2=0\\
\upvartheta_3=0\\
\upvartheta_1+\upvartheta_2=0\\
\upvartheta_1+\upvartheta_3=0\\
\upvartheta_2+\upvartheta_3=0\\
\upvartheta_1+\upvartheta_2+\upvartheta_3=0
\end{array}
\end{array}
\]

\smallskip
The Deligne groupoid associated to $\cH$ has by definition a vertex for every chamber, arrows between adjacent chambers, and relations corresponding to codimension-two walls (see \ref{Paris defin} for full details).  In the above example, four of these relations are illustrated in the picture below, corresponding to the four codimension-two walls marked blue. 
\smallskip
\[
\begin{array}{c}
\begin{tikzpicture}[scale=1.2,>=stealth]
\rotateRPY{-5}{-15}{5}
\begin{scope}[RPY]
\filldraw[gray!20] (-1,-1,1) -- (-1,1,-1) -- (1,-1,1) -- (-1,-1,1);
\filldraw[red!80!black!20] (-1,1,1) -- (-1,-1,1) -- (0,0,0) -- cycle;
\filldraw[green!40!black!20] (-1,0,1) -- (0,0,1) -- (0,0,0)-- cycle;
\filldraw[green!60!black!20] (0,-1,1) -- (0,0,1) -- (0,0,0)-- cycle;
\filldraw[blue!20] (-1,1,1) -- (-1,1,-1) -- (1,-1,-1) -- (1,-1,1);
\filldraw[red!80!black!20] (1,1,-1) -- (1,-1,-1) -- (0,0,0) -- cycle;
\filldraw[green!40!black!20] (1,0,-1) -- (1,0,0) -- (0,0,0)-- cycle;
\filldraw[green!50!black!20] (1,-1,0) -- (1,0,0) -- (0,0,0)-- cycle;
\filldraw[yellow!95!black!20] (1,-1,0) -- (1,0,-1) -- (1,0,0)-- cycle;
\filldraw[yellow!95!black!20] (-1,0,1) -- (0,-1,1) -- (0,0,1)-- cycle;
\filldraw[gray!20] (-1,1,-1) -- (1,1,-1) -- (1,-1,1)--(-1,1,-1);
\filldraw[yellow!95!black!20] (-1,1,0) -- (0,1,-1) -- (0,0,0)--(-1,1,0);
\filldraw[green!50!black!20] (-1,1,0) -- (0,0,0) -- (0,1,0) -- cycle;
\filldraw[green!60!black!20] (0,1,-1)--(0,0,0) -- (0,1,0) -- cycle;
\filldraw[red!80!black!20] (-1,1,1) -- (1,1,-1) -- (0,0,0)-- cycle;
\filldraw[green!60!black!20] (0,1,1) -- (0,0,1) -- (0,0,0)-- (0,1,0)-- cycle;
\filldraw[green!50!black!20] (1,1,0) -- (1,0,0) -- (0,0,0)-- (0,1,0)-- cycle;
\filldraw[green!40!black!20] (1,0,1) -- (1,0,0) -- (0,0,0)-- (0,0,1)-- cycle;
\node (A) at (1,0.6,1) [DWs] {};
\node (B1) at (-0.6,0.7,0.6) [DWs] {};
\node (B2) at (-1,0.9,0) [DWs] {};
\node (C1) at (0.6,0.7,-0.7) [DWs] {};
\node (C2) at (0,0.9,-1) [DWs] {};
\node (D) at (-1,0.7,-1.2) [DWs] {};
\node (F) at (1,-0.3,1) [DWs] {};
\node (F1) at (0.3,-0.6,1) [DWs] {};
\node (F2) at (-0.3,-0.3,1) [DWs] {};
\node (F3) at (-0.6,0.3,1) [DWs] {};
\node (G1) at (1,-0.6,0.3) [DWs] {};
\node (G2) at (1,-0.3,-0.3) [DWs] {};
\node (G3) at (1,0.3,-0.6) [DWs] {};
\node (N) at (1,-0.9,1) [DWds] {};
\draw[-,blue] (0,0,0)--(0,1,0);
\draw[-,blue] (0,0,0)--(0,0,1);
\draw[-,blue] (0,0,0)--(1,0,0);
\draw[-,blue,densely dotted] (0,0,0)--(.25,-.25,.25);
\draw[-,blue] (.25,-.25,.25)--(1,-1,1);
\draw[->] (A)-- node[above] {$\scriptstyle\flop_2$} (B1);
\draw[->] (B1)--node[left] {$\scriptstyle\flop_1$}(B2);
\draw[->] (B2)--node[above] {$\scriptstyle\flop_2$}(D);
\draw[->] (A)-- node[above] {$\scriptstyle\flop_1$} (C1);
\draw[->] (C1)--node[right] {$\scriptstyle\flop_2$}(C2);
\draw[->] (C2)--node[above] {$\scriptstyle\flop_1$}(D);
\draw[->] (A)-- node[Ggap] {$\scriptstyle\flop_3$} (F);
\draw[->] (F)--node[above] {$\scriptstyle\flop_2$}(F1);
\draw[->] (F1)--node[below] {$\scriptstyle\flop_3$}(F2);
\draw[->] (F)--node[above] {$\scriptstyle\flop_2$}(F1);
\draw[->] (B1)--node[left] {$\scriptstyle\flop_3$}(F3);
\draw[->] (F3)--node[left] {$\scriptstyle\flop_2$}(F2);
\draw[->] (C1)--node[right] {$\scriptstyle\flop_3$}(G3);
\draw[->] (G3)--node[right] {$\scriptstyle\flop_1$}(G2);
\draw[->] (F)--node[above] {$\scriptstyle\flop_1$}(G1);
\draw[->] (G1)--node[right] {$\scriptstyle\flop_3$}(G2);
\draw[-] (F1)--($(F1)!.4!(N)$);
\draw[->,densely dotted] ($(F1)!.4!(N)$)--(N);
\path (F1) -- node[left] {$\scriptstyle\flop_1$}(N);
\draw[-] (G1)--($(G1)!.4!(N)$);
\draw[->,densely dotted] ($(G1)!.4!(N)$)--(N);
\path (G1) -- node[right] {$\scriptstyle\flop_2$}(N);
\end{scope}
\end{tikzpicture}
\end{array}
\]

We may now state our first main theorem.
\begin{thm}[=\ref{flopsglobalmultiple}]\label{flops global intro}
Suppose that $X\to X_{\con}$ is a flopping contraction between quasi-projective $3$-folds, where $X$ has Gorenstein terminal singularities and each of the $n$ irreducible exceptional curves is individually floppable. 
\begin{enumerate}
\item\label{flops global intro 1} The flop functors $\flop_i$ form a representation of the Deligne groupoid associated to~$\mathcal{H}$.
\item\label{flops global intro 2} There is a group homomorphism 
\[
\fundgp(\mathbb{C}^n\backslash \cH_\mathbb{C})\to\Aut\Db(\coh X),
\]
where $\cH_{\mathbb{C}}$ denotes the complexification of $\cH$.
\end{enumerate}
\end{thm}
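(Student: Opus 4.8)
The plan is to obtain (2) as a formal consequence of (1) via the standard topological presentation of a complexified arrangement complement, so the real work lies in (1). For (1), I assign to each chamber $C$ of $\cH$ the category $\Db(\coh X_C)$, where $X_C$ is the minimal model of $X_{\con}$ attached to $C$ by the moduli-tracking dictionary of the Homological MMP (\S\ref{moduli tracking section}, \cite{HomMMP}); I take the distinguished chamber $C_0$ to be the ample cone of $X$, so that its value is $\Db(\coh X)$ itself. To each generating morphism of the Deligne groupoid, that is, each oriented crossing of a wall separating adjacent chambers $C$ and $C'$, I assign the Bridgeland--Chen flop functor $\Db(\coh X_C)\to\Db(\coh X_{C'})$ of the curve flopped in crossing that wall; this is an equivalence since individual floppability of the exceptional curves of $X$ propagates to every minimal model. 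Part (1) then amounts to the assertion that these data respect the relations of the Deligne groupoid, which by \ref{Paris defin} are indexed by the codimension-two faces of $\cH$.

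Fix such a face $F$. Since $\cH$ is simplicial (\ref{issimplicial}), the hyperplanes of $\cH$ containing $F$ form a rank-two subarrangement $\cH_F$ with some number $d_F$ of lines, the chambers of $\cH$ incident to $F$ form a single $2d_F$-cycle, and the relation of the Deligne groupoid at $F$ says precisely that the two positive galleries of length $d_F$ between a pair of opposite chambers around $F$ yield isomorphic composite functors. Now each flop functor occurring in either gallery depends only on an analytic neighbourhood of the (at most two) curves it flops, so passing to such a neighbourhood realises the situation as a flopping contraction of precisely two independently floppable curves whose associated arrangement is $\cH_F$; here I use the compatibility of $\cH$ with partial contractions established in \cite{HomMMP}. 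Applying Theorem \ref{braid 2 curve global} to this two-curve contraction (or rather its moduli-tracking proof, which is analytic-local in character) shows that the two length-$d_F$ composites are isomorphic, both being the flop of the two curves together, and transporting the conclusion back through locality of the flop functors gives the relation at $F$. Running over all codimension-two faces proves (1).

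For (2), I again invoke simpliciality of $\cH$: by Deligne's theorem on simplicial arrangements, in the groupoid reformulation recorded in \ref{Paris defin}, the complexified complement $\mathbb{C}^n\backslash\cH_\mathbb{C}$ is aspherical and $\fundgp(\mathbb{C}^n\backslash\cH_\mathbb{C},x_0)$, for $x_0$ a point of the chamber $C_0$, is canonically the automorphism group of $C_0$ in the Deligne groupoid. Restricting the representation of (1) to loops based at $C_0$ produces a group homomorphism from that automorphism group to $\Aut\Db(\coh X)$, viewed as the group of isomorphism classes of autoequivalences; precomposing with the above isomorphism yields the required homomorphism $\fundgp(\mathbb{C}^n\backslash\cH_\mathbb{C})\to\Aut\Db(\coh X)$.

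The main obstacle is the localisation step inside (1): identifying the germ of $\cH$ at a codimension-two face with the movable-cone arrangement of an honest two-curve flopping sub-contraction, necessarily only analytic-local when $X$ is not $\mathds{Q}$-factorial, and checking that the flop functors of the two transverse curves there coincide with the restrictions of the global wall-crossing functors. This combines the behaviour of $\cH$ under partial contractions \cite{HomMMP} with the locality of the Bridgeland--Chen construction \cite{Bridgeland,Chen}. Simpliciality of $\cH$ plays a double role here: it forces the relevant local picture to be of rank two, so that the two-curve Theorem \ref{braid 2 curve global} is precisely the input needed, and it is also the hypothesis under which Deligne's $K(\pi,1)$ theorem, and hence the presentation used in the final step, is available.
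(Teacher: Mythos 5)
Your proposal mirrors the paper's own argument: apply Deligne's codimension-two criterion to reduce to checking relations at codimension-two faces, identify wall-crossings around such a face with flops of two fixed curves so that Theorem~\ref{braid 2 curve global} supplies the relation, and then pass to $\fundgp(\mathbb{C}^n\backslash\cH_\mathbb{C})$ by identifying it with a vertex group of the Deligne groupoid (Remark~\ref{fund gp remark}). The localisation step you sketch — that the rank-two germ $\cH_F$ at a codimension-two face is the arrangement of an honest two-curve sub-contraction and that the gallery functors alternate between $\flop_{i_1}$ and $\flop_{i_2}$ — is precisely the content of the paper's Lemma~\ref{crash=flopboth}, proved there by moduli tracking; you correctly locate this as the crux but leave it as a sketch.
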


To prove \eqref{flops global intro 1}, we show that crashing through a codimension-two wall corresponds to  flopping two curves together, so that the relations in the Deligne groupoid representation can be verified using \ref{intro braid 2 curve}.
The proof does not require knowledge of a group presentation of $\fundgp(\mathbb{C}^n\backslash \cH_\mathbb{C})$.

\subsection{The $J$-Twists}\label{2 actors}In order to describe geometrically the action of some of the generators of $\fundgp(\mathbb{C}^n\backslash \cH_\mathbb{C})$ on the derived category of $X$, and also to describe the case when the curves are not individually floppable, we next associate intrinsic derived symmetries to a given flopping contraction $X\to X_{\con}$.  

In the case $n=1$, the authors previously described the action of a generator of $\fundgp(\mathbb{C}^1\backslash \cH_\mathbb{C})=\fundgp(\mathbb{C}\backslash \{0\})=\mathbb{Z}$ on $\Db(\coh X)$ by using noncommutative deformation theory of the reduced curve, and twisting around a universal family over the so-called contraction algebra \cite{DW1}.  However in the general case considered here, the twists from \cite{DW1} correspond only to monodromy around certain codimension-one walls, and in general this recovers only a small part of $\fundgp(\mathbb{C}^{n}\backslash \cH_\mathbb{C})$.  

To remedy this we describe the action of monodromies around higher codimension walls, by considering simultaneous deformations of multiple curves.   We thus choose a subset $J$ of the curves, and from this produce an autoequivalence, corresponding to monodromy around a codimension $|J|$ wall.

To produce such a twist autoequivalence, we first construct the deformation base.  Choosing a point $p\in X_{\con}$, it is now well-known that the formal fibre above $p$ is derived equivalent to a noncommutative ring $\AB$ \cite{VdB1d}.  If there are $n_p$ curves in the formal fibre, then this algebra has $n_p$ idempotents corresponding to these curves, plus one other.  Furthermore, as explained in \cite[2.15]{HomMMP}, the presentation of $\AB$ depends on the intersection theory of the curves. An example with two curves is drawn below.
\[
\begin{tikzpicture} 
\begin{scope}[bend angle=25, looseness=1,transform shape, rotate=-30,>=stealth]
\draw[red!50] (0,0) to [bend left=25] (2,0,0);
\draw[red!50] (1.8,0) to [bend left=25] (3.8,0,0);
\filldraw [black] (1,0.25) circle (2pt);
\filldraw [black] (3,0.25) circle (2pt);
\filldraw [black] (2,-1) circle (2pt);
\node (1) at (1,0.25) {};
\node (2) at (3,0.25) {};
\node (0) at (2,-1) {};
\draw[->,black] (1) -- (2);
\draw[->,black]  (1) edge [in=90,out=160,loop,looseness=8] (1);
\draw[->,black,bend right] (2) to (1);
\draw[->,black] (0) -- (1);
\draw[->,black] (2) -- (0);
\draw[->,bend right,black] (1) to (0);
\draw[->, bend right,black] (0) to (2);
\draw[->,black]  (0) edge [in=-60,out=-120,loop,looseness=8] (0);
\end{scope}
\node at (0.5,0.1) {$\scriptstyle 1$};
\node at (3.2,-1.5) {$\scriptstyle 2$};
\end{tikzpicture}
\]
We generalise the contraction algebra of \cite{DW1} to this setting (see \ref{def basic algebra}), which in the example above can be visualised as follows.  For any subset of the curves $J\subseteq\{1,2\}$, we define $\AB_J$ by factoring out all other idempotents, so that different $\AB_J$ have presentations as drawn below:
\[
\begin{array}{ccc}
\begin{array}{c}
\begin{tikzpicture} 
\begin{scope}[bend angle=25, looseness=1,transform shape, rotate=-30,>=stealth,scale=0.75]
\filldraw [black] (1,0.25) circle (2pt);
\filldraw [black!20] (3,0.25) circle (2pt);
\filldraw [black!20] (2,-1.25) circle (2pt);
\node (1) at (1,0.25) {};
\node (2) at (3,0.25) {};
\node (0) at (2,-1.25) {};
\draw[->,black!20] (1) -- (2);
\draw[->,black]  (1) edge [in=90,out=160,loop,looseness=8] (1);
\draw[->,black!20,bend right] (2) to (1);
\draw[->,black!20] (0) -- (1);
\draw[->,black!20] (2) -- (0);
\draw[->,bend right,black!20] (1) to (0);
\draw[->, bend right,black!20] (0) to (2);
\draw[->,black!20]  (0) edge [in=-60,out=-120,loop,looseness=8] (0);
\end{scope}
\node at (1.5,-2) {${}_{\phantom\}}\AB_{1\phantom\}}$};
\end{tikzpicture}
\end{array}
&
\begin{array}{c}
\begin{tikzpicture} 
\begin{scope}[bend angle=25, looseness=1,transform shape, rotate=-30,>=stealth,scale=0.75]
\filldraw [black!20] (1,0.25) circle (2pt);
\filldraw [black] (3,0.25) circle (2pt);
\filldraw [black!20] (2,-1.25) circle (2pt);
\node (1) at (1,0.25) {};
\node (2) at (3,0.25) {};
\node (0) at (2,-1.25) {};
\draw[->,black!20] (1) -- (2);
\draw[->,black!20]  (1) edge [in=90,out=160,loop,looseness=8] (1);
\draw[->,black!20,bend right] (2) to (1);
\draw[->,black!20] (0) -- (1);
\draw[->,black!20] (2) -- (0);
\draw[->,bend right,black!20] (1) to (0);
\draw[->, bend right,black!20] (0) to (2);
\draw[->,black!20]  (0) edge [in=-60,out=-120,loop,looseness=8] (0);
\end{scope}
\node at (1.5,-2) {${}_{\phantom\}}\AB_{2\phantom\}}$};
\end{tikzpicture}
\end{array}
&
\begin{array}{c}
\begin{tikzpicture} 
\begin{scope}[bend angle=25, looseness=1,transform shape, rotate=-30,>=stealth,scale=0.75]
\filldraw [black] (1,0.25) circle (2pt);
\filldraw [black] (3,0.25) circle (2pt);
\filldraw [black!20] (2,-1.25) circle (2pt);
\node (1) at (1,0.25) {};
\node (2) at (3,0.25) {};
\node (0) at (2,-1.25) {};
\draw[->,black] (1) -- (2);
\draw[->,black]  (1) edge [in=90,out=160,loop,looseness=8] (1);
\draw[->,black,bend right] (2) to (1);
\draw[->,black!20] (0) -- (1);
\draw[->,black!20] (2) -- (0);
\draw[->,bend right,black!20] (1) to (0);
\draw[->, bend right,black!20] (0) to (2);
\draw[->,black!20]  (0) edge [in=-60,out=-120,loop,looseness=8] (0);
\end{scope}
\node at (1.75,-2) {$\AB_{\{1,2\}}$};
\end{tikzpicture}
\end{array}
\end{array}
\]
It is shown in \cite{DW2} that $\AB_\indxset$ represents the functor of simultaneous noncommutative deformations of the reduced curves in $J$.  We then in \ref{twists definition} construct a functor
\[
\Tcon\colon \Db(\coh X)\to \Db(\coh X)
\]
called the $J$-twist, which twists around the universal sheaf $\cE_J$ from the noncommutative deformation theory.
\begin{thm}[=\ref{twists autoequivalence B}]\label{intro twists autoequivalence B} 
Suppose that $X\to X_{\con}$ is a flopping contraction between quasi-projective $3$-folds, where $X$ has Gorenstein terminal singularities.
\begin{enumerate}
\item\label{intro twists autoequivalence B 1} For any subset $J$ of the flopping curves, $\Tcon$ is an equivalence.
\item\label{intro twists autoequivalence B 2} If the curves $J$ are contracted to a  single point, then there is a functorial triangle
\[
\RHom_X(\cE_J,x)\otimes^{\bf L}_{\AB_J}\cE_J \to x\to \Tcon(x)\to.
\]
\end{enumerate}
\end{thm}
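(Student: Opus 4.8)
The plan for the second statement is to unwind the construction of $\Tcon$ from \ref{twists definition}. Write $\cF_\indxset\colon\Db(\modCat\AB_\indxset)\to\Db(\coh X)$ for the functor $\cF_\indxset(-)=\cE_\indxset\otimes^{\bf L}_{\AB_\indxset}(-)$, with right adjoint $\cF_\indxset^R=\RHom_X(\cE_\indxset,-)$. In \ref{twists definition} the twist is built functorially, at the level of Fourier--Mukai kernels, as the cone $\Tcon=\Cone(\cF_\indxset\cF_\indxset^R\to\Id)$ of the counit map, so there is a functorial triangle $\cF_\indxset\cF_\indxset^R\to\Id\to\Tcon\to$ of endofunctors of $\Db(\coh X)$. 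Evaluating at an object $x$ and using the standard identification $\cF_\indxset\cF_\indxset^R(x)\functcong\RHom_X(\cE_\indxset,x)\otimes^{\bf L}_{\AB_\indxset}\cE_\indxset$ gives the asserted triangle. The hypothesis that the curves of $\indxset$ are contracted to a single point $p$ is used precisely so that $\cE_\indxset$ and $\AB_\indxset$ are the universal sheaf and the noncommutative deformation algebra of the formal fibre over $p$, as in \ref{def basic algebra} and \cite{DW2}; without it $\AB_\indxset$ splits over the images of the curves and more care is needed, although a version of the triangle persists.

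For the first statement the strategy is to reduce to a local, purely algebraic assertion. Since $\cE_\indxset$ is supported on the curves in $\indxset$, the Fourier--Mukai kernel of $\Tcon$ agrees with $\cO_{\Updelta}$ away from the fibres over the finitely many points carrying such a curve, so $\Tcon$ is the identity there; hence $\Tcon$ is an equivalence if and only if it is so after completing $X$ at each such point (the gluing statements for Fourier--Mukai functors needed here are in \cite{HomMMP}; the local check is that $\Tcon$ has adjoints and that its units and counits are isomorphisms on a spanning class). So take $R$ complete local Gorenstein terminal and $f\colon X\to\Spec R$ contracting the curves of $\indxset$ over the closed point. By \cite{VdB1d} there is a tilting equivalence $\Db(\coh X)\functcong\Db(\modCat\Lambda)$ for a noncommutative $R$-algebra $\Lambda$ whose quotient by the idempotents complementary to $\indxset$ is $\AB_\indxset$ (see \ref{def basic algebra}), and under which $\cE_\indxset$ corresponds to an explicit $\Lambda$-$\AB_\indxset$-bimodule $\mathsf{E}_\indxset$. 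It then suffices to prove that the algebraic twist $\Cone\big(\RHom_\Lambda(\mathsf{E}_\indxset,-)\otimes^{\bf L}_{\AB_\indxset}\mathsf{E}_\indxset\to -\big)$ is an autoequivalence of $\Db(\modCat\Lambda)$.

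To prove this last assertion I would show that $\cF_\indxset$ is a \emph{spherical functor} and invoke the principle, going back to \cite{ST}, that the twist around such a functor is an autoequivalence. This needs three inputs: that $\cF_\indxset$ has both adjoints, which is clear since $\AB_\indxset$ is finite-dimensional; that the cotwist $\Cone\big(\Id\to\cF_\indxset^R\cF_\indxset\big)[-1]$ is an autoequivalence of $\Db(\modCat\AB_\indxset)$, obtained by computing $\cF_\indxset^R\cF_\indxset\functcong\RHom_\Lambda(\mathsf{E}_\indxset,\mathsf{E}_\indxset)\otimes^{\bf L}_{\AB_\indxset}(-)$ and identifying the cotwist with the twist by an invertible $\AB_\indxset$-bimodule up to shift, using the homological structure of $\AB_\indxset$ and $\mathsf{E}_\indxset$; and that the two adjoints of $\cF_\indxset$ agree after twisting by the cotwist, which is where the crepancy $\omega_X\functcong f^*\omega_{X_{\con}}$ enters, making the formal fibre over $p$ a $3$-dimensional Calabi--Yau situation and supplying the required Serre-duality identification. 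An alternative, which avoids the $\RHom$-computations, is to identify this algebraic twist with a composite of the mutation derived equivalences of $\Lambda$ at the idempotents of $\indxset$, as in \cite{DW1}; these exist at the purely algebraic level of modifying modules regardless of whether the curves in $\indxset$ flop geometrically, so their composite is automatically an equivalence. As a consistency check, when the curves of $\indxset$ do flop algebraically this recovers the identification of $\Tcon$ with the inverse of the Bridgeland--Chen \cite{Bridgeland,Chen} flop--flop functor, which is manifestly an autoequivalence; the genuinely new content is the non-floppable case.

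The main obstacle, for either route, is the homological control of $\RHom_\Lambda(\mathsf{E}_\indxset,\mathsf{E}_\indxset)$ --- a form of relative sphericality of $\cE_\indxset$ over $\AB_\indxset$ --- needed to conclude that the cotwist is an autoequivalence, or equivalently that $\mathsf{E}_\indxset$ is a tilting-type bimodule for the mutation. This rests on the fine structure of the generalised contraction algebras $\AB_\indxset$ for arbitrary subsets $\indxset$, which must be developed beyond the single-curve case of \cite{DW1,Toda} (in part using \cite{DW2}), together with Grothendieck duality on $X$ relative to $X_{\con}$. A secondary technical point, and the reason the argument is routed through $\Lambda$ rather than carried out on $X$ directly, is that $\cE_\indxset$ need not be a perfect complex on the possibly singular $3$-fold $X$, so duality and adjunction cannot be run on $X$ itself.
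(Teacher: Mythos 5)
Your account of the functorial triangle in part~(2) is essentially right in spirit: the paper does ultimately obtain
$\RHom_X(\cE_J,-)\otimes^{\bf L}_{\AB_J}\cE_J\to\Id\to\Tconp\to$
as the right-adjoint triangle to the kernel triangle defining $\Tconp^*$, and your observation that the single-point hypothesis is what keeps $\AB_J$ from splitting is exactly why \ref{twists autoequivalence B}\eqref{twists autoequivalence B 2} is stated locally at a point. Your alternative route~(b) for part~(1) --- identify the twist with the double mutation $\Upphi_J\circ\Upphi_J$ at the idempotents of $J$, which is an equivalence by construction --- is also the paper's core complete-local mechanism (\ref{track Lambda J}, \ref{mu=nu for hyper}, \ref{mut mut twist complete}). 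The delicate input, $\upnu_J\upnu_J N\cong N$, comes from the hypersurface structure of $\mathfrak{R}$, matching your ``homological control'' caveat.

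The genuine gap is the step ``$\Tcon$ is an equivalence if and only if it is so after completing $X$ at each point.'' Being a derived equivalence is not a local (or formal-local) property of a Fourier--Mukai functor, and the paper does not argue this way. What actually happens is that the equivalence is first \emph{constructed} complete-locally (as mutation), then \emph{lifted} Zariski-locally by showing the ideal $I_J$ is a tilting $\Lambda$-module of projective dimension two (\ref{Zar local alg twist}, \ref{track Lambda J Prop2}), and only then \emph{reassembled globally} by defining $\Tconp^*$ as a Fourier--Mukai functor via gluing (\ref{defin twists at a point}, \ref{defin twists at a point aux}). Proving that this global functor is an equivalence is where most of \S\ref{ZLT and GT section} lives: one passes to a projective compactification $\projenv$ of $X$, appeals to Rizzardo's theorem \ref{Alice thm} to produce both adjoints, runs a spanning-class argument with $\Omega=\cE_J\cup\cE_J^\perp$ on $\Dm(\coh\projenv)$ (\ref{Y not Goren}, \ref{spanning class and twist functor props Y}, \ref{twists autoequivalence}), and finally descends to quasi-projective $X$ using compact generation (\ref{QcohY compact}, \ref{thm inverse Jtwist is equiv on X}). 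Your proposal skips all of this, and the local check you list (``adjoints and units/counits on a spanning class'') is exactly what is hard to obtain globally without the compactification and \cite{Alice}.

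A secondary error: your last paragraph asserts that $\cE_J$ ``need not be a perfect complex on the possibly singular $3$-fold $X$,'' and uses this to justify routing the argument through $\Lambda$. In fact the paper proves the opposite in \ref{universal and canonical}\eqref{universal and canonical 1}: since $\pd_{\widehat\Lambda}\mathbb{F}\AB_J=3$ (\ref{pd for N and ext}), the object $\cE_J$ is perfect on $U_p$ and its pushforward is perfect on $X$ and on $\projenv$. This perfectness is actually essential to the paper's argument (it feeds into \ref{Y not Goren A} and into Rizzardo's hypotheses), so the correct picture is the reverse of what you wrote. The reason the complete-local mutation picture is used is not a failure of perfectness on $X$, but rather that tilting/mutation is simply the right tool for producing the equivalence at the source, which is then globalised with substantial effort.
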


The construction of the functor $\Tcon$ is a three-step process. We first construct it on the formal fibre, then via an algebraic idempotent method we extend this to an open neighbourhood of the curves, before finally gluing to give a functor on $X$.  At various stages, the proof requires the existence of adjoints. The new adjoint technology of Rizzardo \cite{Alice} allows us to construct these, in the process dropping the projectivity assumption from previous works \cite{Toda,DW1}, and also relaxing the conditions on singularities in \ref{intro twists autoequivalence B}, and indeed throughout the whole paper.  It turns out that almost all of our theorems only require Gorenstein terminal singularities in a neighbourhood of the contracted curves; our general setup is explained in \ref{flopsglobal}.

When the union of curves $\bigcup_{j\in J}C_j$ flops algebraically, the flop functor $\flop_J$ exists, and so we are in a position to compare it to the $J$-twist.

\begin{thm}[{=\ref{twist vs flopflop}}] With assumptions as above, for a choice of subset $J$ of the flopping curves, suppose that $\bigcup_{j\in J}C_j$  flops algebraically. Then $\Tcon \circ (\flop_J \circ \flop_J) \functcong \Id.$\end{thm}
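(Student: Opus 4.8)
The plan is to reduce the statement to a computation in a formal neighbourhood of the flopping locus, translate that into algebra via Van den Bergh, and then prove a purely ring-theoretic identity for mutation functors, following the strategy used for irreducible flopping curves in \cite{Toda, DW1} but now feeding in the multi-curve deformation theory of \cite{DW2}.

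\emph{Step 1: reduction to the formal fibre.} Both functors in play restrict to the identity away from the curves in $J$. For $\flop_J\circ\flop_J$ this is the standard fact that the Bridgeland--Chen kernel agrees with the structure sheaf of the diagonal over the open locus on which the contraction is an isomorphism \cite{Bridgeland, Chen}. For $\Tcon$ it follows from the functorial triangle of \ref{intro twists autoequivalence B}\eqref{intro twists autoequivalence B 2}, since $\cE_J$ is supported on a neighbourhood of $\bigcup_{j\in J}C_j$ and hence $\RHom_X(\cE_J,x)=0$ whenever $x$ is supported off that neighbourhood. Consequently the Fourier--Mukai kernel of $\Tcon\circ\flop_J\circ\flop_J$ differs from that of $\Id$ only in a formal neighbourhood of $f^{-1}(p)$ for the finitely many points $p$ lying under the curves in $J$, so it suffices to prove $\Tcon\circ\flop_J\circ\flop_J\functcong\Id$ after completing at each such $p$. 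This is exactly the local-to-global gluing situation used to construct $\Tcon$ in the first place, and Rizzardo's adjoint technology \cite{Alice} is what makes the relevant kernels and natural transformations behave.

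\emph{Step 2: translation to algebra.} Over $\widehat{R}_p$, Van den Bergh \cite{VdB1d} gives $\Db(\coh \widehat{X}_p)\simeq\Db(\AB)$ and likewise $\Db(\coh \widehat{X}_p^+)\simeq\Db(\AB^+)$, where $\AB^+$ is the mutation of $\AB$ along $J$; under these equivalences $\flop_J$ becomes the mutation functor, by the compatibility established in the Homological MMP \cite{HomMMP}. Hence $\flop_J\circ\flop_J$ becomes the autoequivalence ``mutate along $J$, then mutate back'', given by $-\otimes^{\bf L}_{\AB}(T_J\otimes^{\bf L}_{\AB^+}T_J^+)$ for the mutation tilting bimodules $T_J$ and $T_J^+$. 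On the other side, $\Tcon$ was defined on the formal fibre as the twist around the universal family $\cE_J$, and under the Van den Bergh equivalence $\cE_J$ corresponds to $\AB_J$ regarded as an $\AB$-bimodule; this rests on the fact that $\AB_J$ pro-represents simultaneous noncommutative deformations of the reduced curves in $J$ \cite{DW2}, generalising the identification of the contraction algebra in \cite{DW1}.

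\emph{Step 3: the algebraic identity, and the main obstacle.} It remains to show that ``double mutation'' is inverse to the $\AB_J$-twist. The exchange (minimal approximation) triangles defining mutation at $J$ compute $T_J\otimes^{\bf L}_{\AB^+}T_J^+$ as the cone of a canonical map $\AB\to\AB$, and tracking the counit and unit through a minimal free resolution should show that this cone, together with the evaluation maps, assembles into the triangle
\[
\RHom_X(\cE_J,x)\otimes^{\bf L}_{\AB_J}\cE_J\to x\to (\flop_J\circ\flop_J)^{-1}(x)\to
\]
functorially in $x$. Comparing with \ref{intro twists autoequivalence B}\eqref{intro twists autoequivalence B 2} and using uniqueness of the cone in the Fourier--Mukai formalism then yields $\Tcon\functcong(\flop_J\circ\flop_J)^{-1}$, i.e.\ $\Tcon\circ\flop_J\circ\flop_J\functcong\Id$. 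I expect the main obstacle to lie precisely in this last step: unlike the irreducible case, $\AB_J$ need not be local and the mutation is simultaneous in several summands, so identifying $T_J\otimes^{\bf L}_{\AB^+}T_J^+$ and matching the resulting triangle with the twist triangle genuinely requires the refined deformation-theoretic input of \cite{DW2} and careful bookkeeping of the exchange triangles, rather than the lighter arguments available when the flopping locus is irreducible; a secondary subtlety is making the gluing of Step 1 genuinely functorial, which is why the kernel formalism is needed throughout.
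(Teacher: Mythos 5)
Your Step 1 is the genuine gap. Knowing that the Fourier--Mukai kernel of $\Tcon\circ\flop_J\circ\flop_J$ agrees with $\cO_{\diag,X}$ away from the curves and (non-canonically) after completion at the finitely many special points does not yield a global isomorphism of kernels: completions determine neither a coherent complex nor a natural transformation on $X$, and the ``uniqueness of the cone'' you appeal to in Step 3 is not functorial. (Rizzardo's results supply adjoints, not gluing data.) The paper avoids any direct kernel comparison. Instead it shows that the composite $\TwistVsFlop := \Tcon\circ\flop_J\circ\flop_J$ fixes the sheaves $E_j = \cO_{C_j}(-1)$ for $j\in J$, because $\Tcon(E_j)\cong E_j[-2]$ by \ref{spanning class and twist functor props X} while $(\flop_J\circ\flop_J)(E_j)\cong E_j[2]$ by the known action of the flop from \cite{TodaResPub, TodaGV}; it then deduces that $\TwistVsFlop$ preserves the perverse heart $\Per(X,X_J)$ associated to the intermediate contraction $g\colon X\to X_J$ (which exists precisely because the curves in $J$ flop algebraically), checks that $\TwistVsFlop(\cO_X)\cong\cO_X$ and that $\TwistVsFlop$ commutes with $\RDerived g_*$, and finally cites the Bridgeland-style result \cite[7.18]{DW1} that an autoequivalence with these three properties is the identity. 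That indirect route is what replaces your local-to-global reduction.

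Your Step 3, by contrast, is not really the obstacle you fear. The paper already proves that complete-local double mutation at $J$ is $\RHom_{\AB}([N_{J^c}],-)$ (\ref{track Lambda J}\eqref{track Lambda J 1}) and identifies this with the derived Hom from the tilting ideal $\widehat{I}_J$ (\ref{mut mut twist complete}\eqref{mut mut twist complete 1}), which is the complete-local avatar of the inverse twist; the multi-curve bookkeeping is handled in \S\ref{mut for flops section} using the exchange sequences of \ref{setup2}. So the algebraic identity you want is in place. What is missing from your proposal is a mechanism to promote that complete-local identity to the global functorial statement, and that is exactly what the argument via $\Per$, $\cO_X$, and $\RDerived g_*$ supplies.
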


Thus the $J$-twist gives an intrinsic characterisation of the inverse of the flop--flop functor, but has the advantage of always existing regardless of whether $\bigcup_{j\in J}C_j$ flops algebraically.  This leads to our main result in the purely algebraic setting, where individual curves are not necessarily floppable.

\begin{thm}[=\ref{global main result action}]
Suppose that $X\to X_{\con}$ is a flopping contraction, where $X$ is a quasi-projective $3$-fold with only Gorenstein terminal singularities.  The subgroup $K$ of \,$\fundgp(\mathbb{C}^n\backslash \cH_\mathbb{C})$ generated by the $J$-twists, as $J$ ranges over all subsets of curves, acts on $\Db(\coh X)$.
\end{thm}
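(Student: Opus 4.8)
The plan is to construct the homomorphism $K\to\Aut\Db(\coh X)$ directly on the generators, sending each loop $\ell_J$ (the one around which monodromy was matched to the $J$-twist) to $\Tcon$, and the strategy splits into the individually floppable case and a reduction of the general case to it. Suppose first that every exceptional curve is individually floppable; then every union $\bigcup_{j\in J}C_j$ also flops algebraically, so every flop functor $\flop_J$, and hence every flop--flop autoequivalence $\flop_J\circ\flop_J$ of $\Db(\coh X)$, exists. By \ref{flopsglobalmultiple} the functors $\flop_i$ form a representation of the Deligne groupoid of $\cH$ and so induce a homomorphism $\fundgp(\mathbb{C}^n\backslash\cH_{\mathbb{C}})\to\Aut\Db(\coh X)$ carrying $\ell_J$ to $\flop_J\circ\flop_J$; by \ref{twist vs flopflop} this autoequivalence is $\Tcon^{-1}$. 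Hence in this case the required action of $K$ (in fact of the whole of $\fundgp(\mathbb{C}^n\backslash\cH_{\mathbb{C}})$) is an immediate consequence of \ref{flopsglobalmultiple} and \ref{twist vs flopflop}.

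For a general flopping contraction the idea is to deform to the floppable case and transport the relations back. Since \ref{flopsglobalmultiple}, \ref{twists autoequivalence B}, \ref{twist vs flopflop} and the construction of $\Tcon$ in \ref{twists definition} only concern a neighbourhood of the contracted curves (see \ref{flopsglobal}), one may work locally on $X_{\con}$ and deform its cDV points to exhibit $X\to X_{\con}$ as the closed fibre of a flopping contraction $\mathcal{X}\to\mathcal{X}_{\con}\to B$ over an irreducible base $B$, whose generic fibre has the same $n$ curves and the same arrangement $\cH$ (the arrangement being a deformation invariant, by the moduli-theoretic description of \cite{HomMMP}) but with every curve now individually floppable. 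One then runs the construction of the $J$-twists relatively over $B$: the relative generalised contraction algebra $\AB_J$ is $B$-flat, its fibres represent (by \cite{DW2}) the simultaneous noncommutative deformation functor of the reduced curves in each fibre of $\mathcal{X}$, so the universal sheaf $\cE_J$, the twist triangle, and the autoequivalence $\Tcon$ spread out to a relative autoequivalence $\mathcal{T}_J$ of $\Db(\coh\mathcal{X})$ restricting on each fibre to that fibre's $J$-twist.

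Granting this, let $w$ be a word in symbols $x_J$ representing the identity of $K\subseteq\fundgp(\mathbb{C}^n\backslash\cH_{\mathbb{C}})$, and write $w(\mathcal{T})$ and $w(\Tcon)$ for the autoequivalences obtained by substituting $\mathcal{T}_J$, resp.\ the $\Tcon$ on $X$, for $x_J$. By the first paragraph, $w$ evaluated at the generic-fibre $J$-twists is $\functcong\Id$. Since the twists are supported on the exceptional locus, $w(\mathcal{T})$ already agrees with $\Id$ away from the flopping curves, and on skyscrapers over the closed fibre it is a specialisation of the corresponding computation on the generic fibre; a rigidity argument (an autoequivalence supported on the flopping locus that fixes all skyscrapers and the structure sheaf is $\functcong\Id$) then forces $w(\Tcon)\functcong\Id$ on $X$. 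Hence $x_J\mapsto\Tcon$ descends to a well-defined homomorphism $K\to\Aut\Db(\coh X)$, which is the asserted action.

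The main obstacle is the relative theory of the $J$-twists together with the descent of a functor isomorphism from the generic to the closed fibre: one must check that the relative contraction algebra is genuinely $B$-flat with the expected fibres (via flatness and base-change for the representing noncommutative deformation functor of \cite{DW2}), that the three-step construction of \ref{twists definition} — and in particular the adjoints furnished by \cite{Alice} — runs in families, and that the descent step really does reduce to a rigidity statement rather than merely to a generic isomorphism of Fourier--Mukai kernels. A deformation-free alternative is to exploit that $\cH$ is simplicial (\ref{issimplicial}), so that $\fundgp(\mathbb{C}^n\backslash\cH_{\mathbb{C}})$ admits a presentation whose relations are each supported on a codimension-two flat $Z$: the sub-arrangement through $Z$ is a rank-two arrangement of $d$ lines attached to a two-curve sub-configuration, and the corresponding relation among the $\Tcon$ is a length-$d$ braid-type identity that one proves directly by the moduli-tracking argument of \ref{braid 2 curve global}, now tracking skyscrapers through compositions of twists in place of flop functors; this sidesteps the family at the cost of re-running the tracking argument in the twisted setting.
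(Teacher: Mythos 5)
Your first paragraph handles the individually floppable case correctly. For the general case, however, your deformation argument misses the observation that makes the paper's proof short: flops of arbitrary subsets of the exceptional curves \emph{always} exist on the formal fibre above each $p\in\noniso f$ (the complete local setting of \ref{flopscompletelocal}), regardless of whether they flop algebraically on $X$ --- this is classical and is used implicitly throughout \S\ref{moduli tracking section} and \ref{2braidcompletelocal}. Given this, there is no need to deform: on the formal fibre the $J$-twists coincide with inverses of the complete-local flop--flop functors, and the complete-local flop functors give a representation of the Deligne groupoid, so the complete-local $\Tconp$ already satisfy the relations of $K$; then \ref{sky off curves}\eqref{sky off curves 2} transports relations among the $\Tconp$ from the formal fibre to $X$ (tracking skyscrapers --- off the curves they are fixed by \ref{sky off curves}\eqref{sky off curves 1}, on the curves one computes on the formal fibre); and \ref{commute remark} handles the product over $\noniso f$.

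The deformation route has gaps you flag but do not close. No construction is given of a family $\mathcal{X}\to B$ with the same $n$ and the same $\cH$ but individually floppable curves on the generic fibre; non-floppability is a delicate condition on $X_{\con}$ which need not vanish under deformation, and when it does, $\cH$ can change. The relative theory of $J$-twists (flatness, base-change, universal families, adjoints over $B$) is not developed in the paper and would be a substantial project in itself. And the descent ``rigidity'' step --- from a generic isomorphism of functors to one on the closed fibre --- is left unstated and is not obviously available in the form you need. Your deformation-free alternative in the final paragraph is closer in spirit to what the paper does, but it proposes re-running the moduli-tracking argument of \ref{braid 2 curve global} directly for compositions of twists; the paper avoids this extra work by reducing to the formal fibre, where the braiding of flops established in \ref{2braidcompletelocal} can be reused via the identification of the $J$-twist with the inverse flop--flop.
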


It can happen that $K=\fundgp(\mathbb{C}^n\backslash \cH_\mathbb{C})$ (see \ref{whole pure braid group}), but verifying this in any level of generality seems group-theoretically difficult; we discuss this briefly in \S\ref{Section6.1}.  We offer the following conjecture, which in particular would give new generating sets for fundamental groups in known cases where $\cH$ is associated to a semisimple Lie algebra.
\begin{conj}
$K=\fundgp(\mathbb{C}^{n}\backslash \cH_\mathbb{C})$ $\iff$ $\cH$ is a root system of a semisimple Lie algebra.
\end{conj}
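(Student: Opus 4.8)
\emph{Overview of the approach.} The plan is to treat the two implications separately and, in each, reduce everything to a statement about the simplicial arrangement $\cH$ (simplicial by \ref{issimplicial}) and the group $G=\fundgp(\mathbb{C}^n\backslash\cH_\mathbb{C})$. The bridge is the Deligne-groupoid representation of \ref{flopsglobalmultiple}: realising an element of $G$ as a composite of flop functors along a loop of chambers lets one view every $\Tcon$, computed at every minimal model, as a well-defined element of $G$, so that $K=\langle\, \Tcon \text{ over all } J \text{ and all chambers}\,\rangle\leq G$ (well defined up to conjugacy, so that ``$K=G$'' is meaningful). I would first pin down the dictionary between twists and loops. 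A singleton twist $\Tcon$ with $|J|=1$, computed at a chamber $C$ with facet $W$, is the meridian of $\cH$ around $\mathrm{span}(W)$: it is the inverse of the flop--flop functor $\flop_j\circ\flop_j$ by \ref{twist vs flopflop} when $C_j$ flops algebraically, and monodromy around the corresponding codimension-one wall in general (cf.\ the $\upvartheta$-wall descriptions of \cite{HomMMP}). A general $\Tcon$ with $|J|=k$, computed at a chamber $C$ with the codimension-$k$ face $F$ cut out by $J$, is the image in $G$ of a distinguished small loop around $F_\mathbb{C}$, i.e.\ of the canonical central element of the fundamental group of the complexified local arrangement of $\cH$ at $F$ (for $k=1$ this recovers a meridian, for $J$ contracted to a point this is governed by the triangle in \ref{twists autoequivalence B}). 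Thus $K$ is generated by these ``face-linking'' elements, each conjugated by a gallery path to its chamber.

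\emph{The direction $\Leftarrow$.} Suppose $\cH$ is the reflection arrangement of the Weyl group $W$ of a root system $\Phi$, so $\cH=\{H_\alpha\mid\alpha\in\Phi^+\}$ and $G$ is the pure Artin group $P(W)$. By the Zariski--van Kampen theorem the fundamental group of any complex hyperplane-arrangement complement is generated by meridians; for $P(W)$ one may take the standard meridians $\mu_\alpha$, $\alpha\in\Phi^+$, and write $\mu_\alpha=\widetilde w\,\mu_{\alpha_i}\,\widetilde w^{-1}$ where $w$ has minimal length with $w^{-1}\alpha=\alpha_i$ simple and $\widetilde w\in A(W)$ is the positive lift. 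The facets of the fundamental chamber span the $H_{\alpha_i}$, so by the dictionary the singleton twist along $H_{\alpha_i}$ at the corresponding minimal model is $\mu_{\alpha_i}$; a minimal gallery from the fundamental chamber to $w(\text{fundamental chamber})$ is a reduced word for $w$, and by the braid relations of \ref{flopsglobalmultiple} the associated composite of flop functors is well defined and realises $\widetilde w$. Conjugating the singleton twist along $H_\alpha=w(H_{\alpha_i})$ at the latter chamber by this gallery therefore lands exactly $\mu_\alpha$ in $K$, whence $K\supseteq\langle\mu_\alpha\mid\alpha\in\Phi^+\rangle=G$. Only singleton twists are used here, so this argument also exhibits the standard generators of $P(W)$ as flop--flop functors, which is the new generating set advertised around the conjecture.

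\emph{The direction $\Rightarrow$, and the main obstacle.} I would argue the contrapositive in two steps. \emph{Step B} (the routine one): granting Step A, once $\cH$ is known to be a finite reflection arrangement one must show it is a Weyl arrangement; this should follow from the rationality of the wall equations in the $\upvartheta$-coordinates of \cite{HomMMP} together with the classification of finite Coxeter groups, since a rational finite reflection arrangement is crystallographic, hence the arrangement of a root system of a semisimple Lie algebra. \emph{Step A} (the crux) is to show that if a simplicial arrangement $\cH$ is \emph{not} a finite reflection arrangement then the face-linking subgroup $K$ is proper in $G$. The difficulty is that $K$ always surjects onto $H_1(\mathbb{C}^n\backslash\cH_\mathbb{C})=\mathbb{Z}^d$ (each singleton twist hits a basis meridian) and there are very many twist generators, so no counting or abelianisation argument can work; the obstruction must live in the non-abelian structure of $G$. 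The approach I would pursue is to construct, for non-reflection $\cH$, a distinguishing quotient $G\twoheadrightarrow Q$ --- a finite group, the monodromy group of a rank-one local system, or a group read off the chamber graph --- in which the images of all face-linking elements lie in a proper subgroup; such a $Q$ would precisely quantify the failure of the chamber-transitivity exploited in the $\Leftarrow$ argument. As a reality check the statement can be verified in ranks $2$ and $3$ against the (conjecturally complete, by Gr\"unbaum) classification of simplicial arrangements --- for instance the $5$-line $E_6$-type arrangement of \S\ref{mot ex}, which is not a rank-$2$ root arrangement --- consistently with the analysis in \S\ref{Section6.1} of why the fibre twist is also needed for the affine picture. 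I expect Step A to require genuinely new input on generating sets for the fundamental groups of complements of non-reflection simplicial arrangements, which is why the statement is at present only a conjecture.
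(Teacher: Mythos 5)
The statement you are attempting is stated in the paper as a \emph{conjecture}: the authors explicitly say that verifying $K=\fundgp(\mathbb{C}^n\backslash\cH_\mathbb{C})$ in any generality ``seems group-theoretically difficult'' (\S\ref{Section6.1}), prove it only in isolated examples such as \ref{whole pure braid group}, and offer no argument for either implication. So there is no proof in the paper to compare against, and your proposal is not one either: you yourself leave Step~A of the $\Rightarrow$ direction (properness of $K$ for non-reflection simplicial arrangements) as requiring ``genuinely new input,'' which is exactly the open problem. What you have written is a plausible research programme, not a proof.

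Beyond the admitted gap, the $\Leftarrow$ direction as sketched also has two concrete problems. First, your ``dictionary'' identifying $\Tcon$ with a distinguished face-linking loop (and a singleton twist with a meridian) is nowhere established: the paper proves only that $\Tcon$ inverts the flop--flop functor when the curves in $J$ flop algebraically (\ref{twist vs flopflop}), and the Deligne-groupoid representation \ref{flopsglobalmultiple} is constructed only under the hypothesis that every curve is individually floppable, whereas the conjecture is posed for arbitrary flopping contractions; the intended identification of $\Tcon$ with monodromy around a codimension-$|J|$ wall is motivational language in the introduction, not a theorem you may invoke. Second, and more seriously, the subgroup $K$ in the conjecture (see \ref{global main result action}) is generated by the $J$-twists computed \emph{at $X$ itself}, i.e.\ at a single chamber, whereas your argument enlarges the generating set to twists at all minimal models conjugated back by gallery paths; granting your dictionary, that shows the larger subgroup equals $\fundgp$, not that $K$ does. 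The paper's own example \ref{whole pure braid group} illustrates the difference: there one needs the ad hoc identity $ba^2b^{-1}=a^{-2}(aba)^2b^{-2}$ precisely because the conjugate $ba^2b^{-1}$ is not itself a twist at $X$, and it is exactly this kind of elimination of conjugators that your $\Leftarrow$ argument would have to supply in general (this is where new generating-set statements for pure Artin groups, advertised after the conjecture, would come from). As it stands, neither direction is proved.
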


\subsection{The Fibre Twist}
We then construct new derived autoequivalences not in the image of the group homomorphism in \ref{flops global intro}.  We expect these to coincide with the action of the affine element of some larger group, analogous to the affine braid group actions for chains of $(-2)$-curves on surfaces \cite{BridgelandKleinian}.  This is remarkable, since hyperplane arrangements $\cH$ such as the one in \S\ref{mot ex} do not have affine versions, so there is no obvious candidate for this expected larger group.

From the viewpoint of idempotents of $\AB$ in \S\ref{2 actors}, the construction of the new autoequivalence is clear, namely we define a base $\AB_\fib$ for the twist by factoring out the idempotents corresponding to all the curves. In the example from \S\ref{2 actors}, this can be visualised as follows.
\[
\begin{tikzpicture} 
\begin{scope}[bend angle=25, looseness=1,transform shape, rotate=-30,>=stealth]
\draw[red!50] (0,0) to [bend left=25] (2,0,0);
\draw[red!50] (1.8,0) to [bend left=25] (3.8,0,0);
\filldraw [black!30] (1,0.25) circle (2pt);
\filldraw [black!30] (3,0.25) circle (2pt);
\filldraw [black] (2,-1) circle (2pt);
\node (1) at (1,0.25) {};
\node (2) at (3,0.25) {};
\node (0) at (2,-1) {};
\draw[->,black!30] (1) -- (2);
\draw[->,black!30]  (1) edge [in=90,out=160,loop,looseness=8] (1);
\draw[->,black!30,bend right] (2) to (1);
\draw[->,black!30] (0) -- (1);
\draw[->,black!30] (2) -- (0);
\draw[->,bend right,black!30] (1) to (0);
\draw[->, bend right,black!30] (0) to (2);
\draw[->,black]  (0) edge [in=-60,out=-120,loop,looseness=8] (0);
\end{scope}
\node at (1.75,-2.25) {$\AB_{\fib}$};
\end{tikzpicture}
\]
In contrast to the $J$-twist, whose base $\AB_J$ represents simultaneous noncommutative deformations of the reduced curves, by \cite{DW2} $\AB_\fib$ represents both the commutative and noncommutative deformations of the whole scheme-theoretic exceptional fibre, and so in particular $\AB_\fib$ is a finite dimensional commutative local $\mathbb{C}$-algebra.

We then in \ref{twists definition} construct a functor
\[
\Tfib\colon \Db(\coh X)\to \Db(\coh X)
\]
called the fibre twist, which twists around the universal sheaf $\cE_\fib$ from the commutative deformation theory.

\begin{thm}[=\ref{twists autoequivalence B}]\label{Ftwists autoequivalence intro} Suppose that $f\colon X\to X_{\con}$ is a flopping contraction of quasi-projective 3-folds, where $X$ is $\mathds{Q}$-factorial and has Gorenstein terminal  singularities.  
\begin{enumerate}
\item The fibre twist $\Tfib$ is an equivalence.
\item There is a functorial triangle
\[
 \RHom_X(\cE_{\fib},x)\otimes^{\bf L}_{\CAR}\cE_{\fib}\to x\to \Tfibp(x)\to
\]
\end{enumerate}
\end{thm}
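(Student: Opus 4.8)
The plan is to run the same three-step construction used for the $J$-twist in \ref{intro twists autoequivalence B} — indeed the shared reference $=\ref{twists autoequivalence B}$ signals that $\Tfib$ is output by the identical machine, now fed the commutative base $\CAR$ and its universal sheaf $\cE_{\fib}$ in place of $\AB_J$ and $\cE_J$. First I would pass to the complete local model: fix $p\in X_{\con}$ with positive-dimensional fibre, complete, and use \cite{VdB1d} to replace $\Db(\coh\widehat{X}_p)$ by $\Db(\AB)$, where $\AB$ is the endomorphism algebra of a basic modifying module with idempotents $\idem 0,\idem 1,\dots,\idem{n_p}$, the $\idem i$ with $i\geq 1$ indexing the curves; then $\CAR$ is the quotient of $\AB$ by all the curve idempotents. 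Two structural inputs are recorded at this point. Because $X$ has Gorenstein terminal singularities near the fibre, $\widehat{X}_p$ is locally Calabi--Yau of dimension $3$, so $\RHom_{\widehat{X}_p}(\cF,\cG)$ for fibre-supported $\cF,\cG$ satisfies $3$-fold Serre duality; relatedly $\AB$ is twisted Calabi--Yau of dimension $3$, which forces its finite-dimensional quotient $\CAR$ to be a symmetric Frobenius algebra, exactly as for the contraction algebras $\AB_J$. And $\mathds{Q}$-factoriality of $X$ is precisely what makes $\CAR$ finite-dimensional (Artinian local), by \cite{DW2}, so that the derived tensor and $\RHom$ manipulations below are legitimate; this is the one and only place that hypothesis enters. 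Finally, again by \cite{DW2}, $\CAR$ prorepresents the commutative deformations of the scheme-theoretic exceptional fibre (equivalently its noncommutative deformations, whence the commutativity), with universal sheaf $\cE_{\fib}$ a perfect $\cO$-$\CAR$-bimodule supported on the fibre, extended by zero to $X$.

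On the local model, $\mathsf S:=-\otimes^{\bf L}_{\CAR}\cE_{\fib}\colon\Db(\CAR)\to\Db(\coh\widehat{X}_p)$ has right adjoint $\mathsf R:=\RHom(\cE_{\fib},-)$, and one simply \emph{defines} $\Tfibp$ by the functorial triangle
\[
\RHom(\cE_{\fib},x)\otimes^{\bf L}_{\CAR}\cE_{\fib}\longrightarrow x\longrightarrow \Tfibp(x)\longrightarrow,
\]
so that part (2) holds by construction in the local model; its genuine content is that after gluing (paragraph four) the same triangle persists on $X$ with the global $\cE_{\fib}$ and the global evaluation counit, which it does since every ingredient is defined globally and the counit is natural. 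The adjoints required here and throughout the gluing — on the possibly non-projective $X$ — are supplied by Rizzardo's adjoint-existence results \cite{Alice}, and, as in \ref{intro twists autoequivalence B}, this is also what lets us drop projectivity and weaken the singularity hypotheses.

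The plan for part (1) is to show that $\mathsf S$ is a spherical functor, so that its twist $\Tfibp$ is automatically an autoequivalence. This comes down to two checks: the cotwist $\mathsf C=\Cone(\Id_{\Db(\CAR)}\to\mathsf R\mathsf S)$ is an autoequivalence of $\Db(\CAR)$, and the canonical comparison of the left adjoint with $\mathsf C[1]\circ\mathsf R$ is an isomorphism. Both are extracted from a computation of $\RHom(\cE_{\fib},\cE_{\fib})$ as a $\CAR$-bimodule: the universal property from \cite{DW2} identifies its low-degree part with $\CAR$ itself (the tangent--obstruction bookkeeping of the deformation functor), and the local Calabi--Yau-$3$ duality pins down the top-degree part as the $\CAR$-dual shifted by $[3]$; these together exhibit $\cE_{\fib}$ as a $\CAR$-spherical object in the appropriate relative sense, and the symmetric Frobenius structure on $\CAR$ makes $\mathsf C$ a shift — hence an autoequivalence — exactly mirroring the $J$-twist case. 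I expect this to be the main obstacle, for two reasons: the whole spherical-twist computation must be run in the \emph{relative} setting, with bimodules over the non-semisimple Artinian ring $\CAR$ rather than coefficients in a field; and one must verify that the deformation-theoretic input of \cite{DW2} really controls $\RHom(\cE_{\fib},\cE_{\fib})$ in all the cohomological degrees the spherical criterion needs, not only those visible to the deformation functor. Note that, unlike $\Tcon$, the fibre twist cannot be obtained from a flop--flop comparison as in \ref{twist vs flopflop}, since there is no flop of the scheme-theoretic fibre, so an intrinsic argument of this kind is forced.

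Finally I would globalise exactly as for $\Tcon$: the algebraic idempotent method promotes the complete-local autoequivalence first to an autoequivalence of $\Db(\coh U)$ for a Zariski neighbourhood $U\supseteq f^{-1}(p)$, and then the autoequivalences at the finitely many points with positive-dimensional fibre are glued into a single $\Tfib\in\Aut\Db(\coh X)$, acting as the identity off the exceptional locus. Being an equivalence is local on $X_{\con}$, so part (1) follows from the local case of the previous paragraph, and part (2) descends from the local triangle as indicated. The hypotheses then come out as stated: Gorenstein terminality is used only near the contracted curves, and $\mathds{Q}$-factoriality only through the finite-dimensionality of $\CAR$.
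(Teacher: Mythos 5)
Your proposal misidentifies the role of the $\mathds{Q}$-factoriality hypothesis, and this is a genuine gap in the argument. You write that $\mathds{Q}$-factoriality ``is precisely what makes $\CAR$ finite-dimensional (Artinian local)\ldots this is the one and only place that hypothesis enters.'' This is wrong: $\CAR$ is \emph{always} a finite-dimensional local $\mathbb{C}$-algebra in the setup of \ref{flopscompletelocal}, because $\mathfrak{R}$ is an isolated singularity (see \ref{fib ses bimodules} and \ref{elementary mut for flops}\eqref{elementary mut for flops 1}, which cite [IW4, 6.19(3)]); $\mathds{Q}$-factoriality plays no role here. What $\mathds{Q}$-factoriality is \emph{actually} needed for, and where the paper's proof spends most of its technical effort, is the pair of statements in \ref{pd thm for fibre} and \ref{mu=nu for R}: that $\pd_{\AB}\CAR = 3$ (finite at all!) and that $\upnu_0\upnu_0 N\cong N$. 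For subsets $J\subseteq\{1,\hdots,n\}$ both of these are automatic from the hypersurface property of cDV singularities (\ref{mu=nu for hyper}), but mutating the summand $\mathfrak{R}=N_0$ produces reflexive modules that are not CM, and neither statement is automatic. The paper gets them by passing to the Zariski localization $\Lambda_\m$, invoking \ref{MMAsLocalize} to conclude $\Lambda_\m$ is an MMA (this is where $\mathds{Q}$-factoriality enters), and then using [IW5, 4.16] for finiteness of projective dimension together with a Riedtmann--Schofield argument for $\upnu_0\upnu_0 N\cong N$. If you believe $\mathds{Q}$-factoriality is only for finiteness of $\CAR$, then (since that is automatic) you would conclude the hypothesis is unnecessary and never do this work --- but then your spherical check would break, because the perfectness of $\cE_\fib$ that underlies your computation of $\RHom(\cE_\fib,\cE_\fib)$ is exactly $\pd_{\AB}\CAR<\infty$, which you would have no way to establish.

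Beyond this, your proposed spherical-functor route is genuinely different from the paper's. The paper does not verify the relative spherical-functor axioms over $\CAR$; instead it exhibits the complete-local twist directly as a composition of mutation functors $\Upphi_0'\circ\Upphi_0'\cong\RHom_{\widehat{\Lambda}}(\widehat{I}_\fib,-)$ where $\widehat{I}_\fib$ is a tilting $\widehat{\Lambda}$-module of projective dimension two (\ref{mut mut twist complete}\eqref{mut mut twist complete 2}), so the equivalence is built in and the difficulty you anticipate --- running the cotwist/comparison checks with bimodules over a non-semisimple Artinian ring --- is entirely sidestepped. Your worry is well-founded: even for $\Tcon$ the paper only records the input to the spherical framework (\ref{pd for N and ext}\eqref{pd for N and ext 3}) as an aside, and does not prove sphericality; for $\Tfib$ there is the additional subtlety that $\AB_\fib$ need not be the contraction algebra of any actual contraction (there is no flop of the scheme-theoretic fibre). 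The paper's gluing also proceeds through a projective envelope $\projenv$ of $X$, using Rizzardo's results on $\Dm(\coh\projenv)$ and a spanning-class argument, before descending to $X$; your sketch of the globalisation is plausible in outline but suppresses both the passage through $\projenv$ and the delicate Zariski-local definition of $\Lambda_\fib$ via localization and pulling out free summands in \S\ref{localnotation}, which is needed because global Krull--Schmidt fails and one could otherwise have $\Lambda/[L]=0$.
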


As with the $J$-twist, the construction of the functor $\Tfib$ is a three-step process.  However step one, establishing the functor $\Tfib$ on the formal fibre, is now significantly harder and this results in the additional $\mathds{Q}$-factorial assumption in \ref{Ftwists autoequivalence intro}.  We explain the reasons for this briefly in the next subsection.

The fibre twist does not commute with the pushdown along the contraction $\RDerived f_*$, and for this reason does not belong to the image of the homomorphism in \ref{flops global intro}. It is natural to ask whether it nevertheless lies in the subgroup generated by this image and twists by line bundles, and in particular whether the fibre twist and $J$-twist can be conjugate by a line bundle. The following theorem asserts that for the simplest type~$A$ case this may indeed hold, but that in general the two twists are not conjugate in this way, and so both will be needed to understand the derived autoequivalence group. 
\begin{thm}[=\ref{conj NC and fibre}]
Under the assumptions of \ref{Ftwists autoequivalence intro}, suppose further that $X\to X_{\con}$ contracts a single irreducible rational curve to a point $p$. Then there exists a functorial isomorphism
\[
\Tfib(x \otimes \conjlb) \cong \Tcon(x) \otimes \conjlb
\]
for some line bundle $\conjlb$ on $X$, if and only if the following conditions hold.
\begin{enumerate}
\item The point $p$ is cDV of Type $A$.
\item There exists a line bundle $\conjlb$ on $X$ such that $\deg(\conjlb|_{f^{-1}(p)})=-1$.
\end{enumerate}
\end{thm}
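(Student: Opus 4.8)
The plan is to recast the desired functorial isomorphism as a comparison between the two universal objects, and then read conditions (1) and (2) off that comparison. Write $\cE_{\con}$, $\CA$ for the universal sheaf and representing algebra of the one-curve twist $\Tcon$, and $\cE_{\fib}$, $\CAR$ for those of the fibre twist $\Tfib$, and set $C=f^{-1}(p)_{\redu}$. A functorial isomorphism $\Tfib(x\otimes\conjlb)\functcong\Tcon(x)\otimes\conjlb$ is the same as $\Tfib\functcong(\placeholder\otimes\conjlb)\circ\Tcon\circ(\placeholder\otimes\conjlb^{-1})$, and since conjugating a twist functor by the autoequivalence $\placeholder\otimes\conjlb$ merely replaces the universal object $\cE_{\con}$ by $\cE_{\con}\otimes\conjlb$ over the \emph{same} algebra $\CA$ (a line bundle carries no $\CA$-module structure, and $\RHom_X(\cE_{\con},\placeholder\otimes\conjlb^{-1})\functcong\RHom_X(\cE_{\con}\otimes\conjlb,\placeholder)$), the statement becomes: $\Tfib$ coincides with the one-curve twist attached to $\cE_{\con}\otimes\conjlb$. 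Comparing the functorial triangles of \ref{intro twists autoequivalence B} and \ref{Ftwists autoequivalence intro}, each of which presents the relevant functor as the cone of the canonical evaluation morphism onto $\Id$, uniqueness of such cones upgrades the isomorphism of functors to a natural isomorphism $\RHom_X(\cE_{\fib},\placeholder)\otimes^{\bf L}_{\CAR}\cE_{\fib}\functcong\RHom_X(\cE_{\con}\otimes\conjlb,\placeholder)\otimes^{\bf L}_{\CA}(\cE_{\con}\otimes\conjlb)$ compatible with the maps to $\Id$. Evaluating this at $\cE_{\fib}$ and at $\cE_{\con}\otimes\conjlb$, where the two evaluation morphisms become isomorphisms, and using $\Ext$-finiteness of complexes supported on $f^{-1}(p)$, one extracts $\CAR\functcong\CA$ as algebras together with an isomorphism $\cE_{\fib}\functcong\cE_{\con}\otimes\conjlb$ in $\Db(\coh X)$. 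This \emph{rigidity} step is the crux; see the end.

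Granting rigidity, the forward implication runs as follows. From $\CAR\functcong\CA$ and the commutativity of $\CAR$ (\cite{DW2}) we get that $\CA$ is commutative; by the classification of one-curve flopping contractions the contraction algebra of a single floppable curve is commutative exactly when $p$ is cDV of Type $A$, which is (1). Restricting $\cE_{\fib}\functcong\cE_{\con}\otimes\conjlb$ along the augmentation $\CA\to\mathbb{C}$ identifies $\cO_{f^{-1}(p)}$ with a $\conjlb$-twist of the object that $\cE_{\con}$ restricts to, which is a line bundle on $C$; this forces $f^{-1}(p)$ to be reduced, and a degree count then yields precisely $\deg(\conjlb|_{f^{-1}(p)})=-1$, which is (2).

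For the converse, assume $p$ is cDV of Type $A$ and fix a line bundle $\conjlb$ on $X$ with $\deg(\conjlb|_{f^{-1}(p)})=-1$. Since $p$ is Type $A$ the flopping curve has length one, so $f^{-1}(p)=C\cong\mathbb{P}^1$ is reduced and $\Ext^1_X(\cO_C,\cO_C)$ is at most one-dimensional; hence the noncommutative and commutative deformation theories of the curve coincide, $\CA\functcong\CAR$, and over this common base $\cE_{\con}$ and $\cE_{\fib}$ are the universal families of, respectively, the perverse simple and the structure sheaf of the deforming curve. These two universal objects therefore differ by (the pullback of) a line bundle on the formal neighbourhood of $f^{-1}(p)$, and by hypothesis (2) this bundle can be realised as the restriction of the \emph{global} line bundle $\conjlb$: matching restrictions along $\CA\to\mathbb{C}$ and invoking the universal property of $\CA$ (the residual $\CA$-linear automorphism ambiguity is absorbed into the isomorphism) gives $\cE_{\fib}\functcong\cE_{\con}\otimes\conjlb$ globally on $X$. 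Running the first paragraph in reverse then produces the required functorial isomorphism $\Tfib(x\otimes\conjlb)\functcong\Tcon(x)\otimes\conjlb$.

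The main obstacle is the rigidity step: one must show that an isomorphism between these generalised twist functors forces an isomorphism of the underlying pairs (representing algebra, universal object), up to the line-bundle twist. Because $\cE_{\con}$ and $\cE_{\fib}$ are not spherical objects but spherical \emph{functors} with non-trivial source categories $\Db(\CA)$ and $\Db(\CAR)$, the classical fact that a spherical twist determines its spherical object is not available verbatim; the argument has to rely on the explicit form of the functorial triangles, on the fact that $\RHom_X(\cE,\placeholder)\otimes^{\bf L}\cE$ is the (co)projection onto the subcategory of complexes supported on $f^{-1}(p)$, and on careful bookkeeping of shifts. A secondary difficulty, in the converse direction, is passing from the formal identification of $\cE_{\con}$ and $\cE_{\fib}$ near $p$ to an isomorphism of genuine complexes on $X$ — which is exactly what the global line-bundle hypothesis (2) supplies.
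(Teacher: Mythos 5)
Your proposal correctly identifies the right objects to compare (the universal families $\cE_J$, $\cE_{\fib}$ and the representing algebras $\CA$, $\CAR$), and the $(\Leftarrow)$ direction is close in spirit to the paper's argument. The paper's $(\Leftarrow)$ uses conditions (1) and (2) to deduce $E_0 \cong E_1\otimes\conjlb[1]$, then transports the representing algebra and universal family across the autoequivalence $(-\otimes\conjlb)[1]$ to obtain $\AB_{\fib}\cong\AB_J$ and $\cE_{\fib}\cong\cE_J\otimes\conjlb[1]$, from which the natural isomorphism of twists follows. Your version reaches the same conclusion but by a slightly off mechanism: the assertion that ``the noncommutative and commutative deformation theories of the curve coincide'' because $\Ext^1$ is small is not what is at work — the two deformation problems are of \emph{different} objects ($E_0=\omega_C[1]$ and $E_1=\cO_C(-1)$), and the point is precisely that (1) and (2) make these differ by an autoequivalence, not that the theories coincide a priori.

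The genuine gap is in $(\Rightarrow)$. Your entire forward implication rests on the ``rigidity step'': extracting from a natural isomorphism of endofunctors $\Tfib\functcong\Tconconj$ an isomorphism of pairs $(\CAR,\cE_{\fib})\cong(\CA,\cE_J\otimes\conjlb)$. You flag this yourself as the crux and do not supply an argument, only a list of what it ``has to rely on.'' This is not a bookkeeping issue. A natural isomorphism of functors does not automatically commute with the canonical maps $\Id\to\Tfib$ and $\Id\to\Tconconj$, so one cannot invoke ``uniqueness of cones'' to compare the twist-down terms; and even granting such compatibility, recovering the algebra and universal object from the functor $\RHom_X(\cE,\placeholder)\otimes^{\bf L}_{\AB}\cE$ is exactly the reconstruction problem that, as you note, fails verbatim for spherical functors. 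The paper avoids this entirely. Its $(\Rightarrow)$ conjugates by the dualizing functor $\dualizing$ to work in $\PerOne$, uses $(\Tfibdual)(\cO_C)\cong\cO_C[2]$ together with commutation of $\Tcondual$ with $\Rf_*$ to deduce the pushdown vanishing $\Rf_*(\cO_C\otimes\conjlb)=0$, passes to the formal fibre, and runs an explicit computation with the tilting bundle $\cW=\cO_{\mathfrak U}\oplus\cM_1$: placing $\cO_C\to\cO_{C^{\redu}}$ in a triangle with third term $\kernelSheaf$ filtered by $r-1$ copies of $\cO_{C^{\redu}}(-1)$, and forcing $d=\deg(\conjlb|_C)=-1$ and $r=1$ from the vanishing. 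That concrete reduction to a pushdown vanishing, and the explicit vertex-by-vertex calculation, is what makes the argument close — and it is absent from your proposal.
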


\subsection{Mutation and MMAs}
The construction of the functors $\Tcon$ and $\Tfib$ on the formal fibre $\mathfrak{U}\to\Spec \mathfrak{R}$, and the proof that they are twist autoequivalences, both rely on the theory of mutation from \cite[\S6]{IW4}.  As in \S\ref{2 actors}, there is a noncommutative ring $\AB:=\End_{\mathfrak{R}}(N )$ derived equivalent to $\mathfrak{U}$. Furthermore, as explained in \ref{N notation}, $N$ has a Krull--Schmidt decomposition $N=\bigoplus_{i=0}^{n_p} N_i$, where $N_0=\mathfrak{R}$ and the remaining $N_i$ are in one-to-one correspondence with the $n_p$ exceptional irreducible curves. 

To obtain the $J$-twist on $\mathfrak{U}$, we simply choose a subset of the curves, equivalently a subset $J\subseteq\{1,\hdots,n_p\}$.  Then, by mutating the module $N$ at $N_J:=\bigoplus_{j\in J}N_j$, we obtain a new module denoted $\upnu_JN$, see \S\ref{mut prelim} for details.   The following is an extension of \cite[\S5]{DW1} to the case $|J|>1$.
\begin{prop}[{=\ref{mu=nu for hyper}, \ref{pd for N and ext}}]\label{J mut intro} 
Suppose that $\mathfrak{U}\to\Spec \mathfrak{R}$ is a complete local $3$-fold flopping contraction, where $\mathfrak{U}$ has only Gorenstein terminal singularities.  Then for any $\indxset\subseteq \{1,\hdots,n_p\}$,
\begin{enumerate}
\item\label{J mut intro 1}  $\upnu_J\upnu_JN\cong N$.
\item\label{J mut intro 2} $\pd_{\AB}\AB_J=3$, and further 
\[
\Ext_{\AB}^t(\AB_J,S_j) 
\cong\left\{ \begin{array}{cl} \mathbb{C}&\mbox{if }t=0,3\\
0&\mbox{else,}\\  \end{array} \right.
\]
for all simple $\AB_J$-modules $S_j$.
\end{enumerate}
\end{prop}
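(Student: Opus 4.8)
The plan is to deduce both statements from the pair of exchange sequences attached to the mutation of $N$ at $N_J$, adapting the arguments of \cite[\S5]{DW1} (which treats the case $|J|=1$) and invoking the subset-mutation formalism of \cite[\S6]{IW4}. First I would record the input: since $\mathfrak{U}$ has only Gorenstein terminal singularities, $\mathfrak{R}$ is a complete local, normal, Gorenstein $3$-fold with at worst an isolated (cDV) singularity, and $N=\bigoplus_{i=0}^{n_p}N_i$ is a maximal modifying $\mathfrak{R}$-module with $\End_{\mathfrak{R}}(N)=\AB$ and $N_0=\mathfrak{R}$ (see \ref{N notation}); in particular $\Ext^{>0}_{\mathfrak{R}}(N,X)=0$ for all $X\in\add N$, and $\AB$ is Cohen--Macaulay of depth $3$ over $\mathfrak{R}$. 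Because $\mathfrak{R}=N_0\in\add(N/N_J)$, any minimal right $\add(N/N_J)$-approximation of a maximal Cohen--Macaulay module is surjective. The mutation at $N_J$ then gives $\upnu_J N=N/N_J\oplus\bigoplus_{j\in J}\upnu_J N_j$, together with, for each $j\in J$, a four-term exact sequence of $\mathfrak{R}$-modules
\[
0\to N_j\to V_j\to V'_j\to N_j\to 0,\qquad V_j,V'_j\in\add(N/N_J),
\]
obtained by splicing the two exchange sequences $0\to N_j\to V_j\to\upnu_J N_j\to 0$ and $0\to\upnu_J N_j\to V'_j\to N_j\to 0$ along $\upnu_J N_j$, where $V'_j\to N_j$ is a minimal right $\add(N/N_J)$-approximation. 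The substantive assertion packaged here is that the kernel of the approximation $V_j\to\upnu_J N_j$ is \emph{exactly} $N_j\in\add N$ --- that is, that the mutation is two-step periodic --- and I expect this to be the main obstacle. It is precisely where $\dim\mathfrak{R}=3$ is used: the argument is the depth/syzygy computation of \cite[\S5]{DW1}, carried out now with $\add(N/N_j)$ systematically replaced by $\add(N/N_J)$, combined with the fact (from \cite[\S6]{IW4}) that $\upnu_J N$ is again a modifying module.

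Granting this, statement (1) is immediate: mutating $\upnu_J N$ at its $J$-summand uses $\add\big((\upnu_J N)/(\upnu_J N)_J\big)=\add(N/N_J)$, and the sequence $0\to N_j\to V_j\to\upnu_J N_j\to 0$ exhibits the required minimal approximation $V_j\to\upnu_J N_j$ with kernel $N_j$, so $\upnu_J\upnu_J N_j\cong N_j$ for every $j\in J$ and hence $\upnu_J\upnu_J N\cong N$. For statement (2), apply $\Hom_{\mathfrak{R}}(N,-)$ to the four-term sequence. Using $\Ext^1_{\mathfrak{R}}(N,N_j)=0=\Ext^1_{\mathfrak{R}}(N,V'_j)$ this yields an exact sequence
\[
0\to \AB e_j\to \Hom_{\mathfrak{R}}(N,V_j)\to \Hom_{\mathfrak{R}}(N,V'_j)\to \AB e_j\to \Ext^1_{\mathfrak{R}}(N,\upnu_J N_j)\to 0,
\]
whose two middle terms lie in $\add\big(\bigoplus_{i\notin J}\AB e_i\big)$ as $V_j,V'_j\in\add(N/N_J)$. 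Since $V'_j\to N_j$ is a right $\add(N/N_J)$-approximation, the image of $\Hom_{\mathfrak{R}}(N,V'_j)\to\AB e_j$ is exactly the ideal $\AB e_{J^{\mathrm c}}\AB e_j$ of maps factoring through $\add(N/N_J)$, where $e_{J^{\mathrm c}}=\sum_{i\notin J}e_i$; hence $\Ext^1_{\mathfrak{R}}(N,\upnu_J N_j)\cong\AB e_j/\AB e_{J^{\mathrm c}}\AB e_j=\AB_J e_j$. So the displayed sequence is a length-three projective $\AB$-resolution of $\AB_J e_j$, and as $\AB_J=\bigoplus_{k\in J}\AB_J e_k$ we get $\pd_{\AB}\AB_J\le 3$.

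Finally I would read off the Ext groups by applying $\Hom_{\AB}(-,S_k)$ (for $k\in J$) to this resolution: the two middle terms contribute nothing, since their indecomposable summands sit at vertices in $J^{\mathrm c}$ while $S_k$ is supported at $k\in J$, whereas $\Hom_{\AB}(\AB e_j,S_k)=e_jS_k$ is $\mathbb{C}$ when $j=k$ and $0$ otherwise; all differentials of the resulting length-three complex vanish, so $\Ext^t_{\AB}(\AB_J e_j,S_k)=\delta_{jk}\,\mathbb{C}$ for $t\in\{0,3\}$ and $0$ for all other $t$. Summing over $j\in J$ gives the claimed formula for $\Ext^t_{\AB}(\AB_J,S_k)$, and in particular $\Ext^3_{\AB}(\AB_J,S_k)\neq 0$ forces $\pd_{\AB}\AB_J=3$ (this last point also follows from the noncommutative Auslander--Buchsbaum formula, since $\AB$ is Cohen--Macaulay of depth $3$ over $\mathfrak{R}$, $\AB_J$ has depth $0$, and $\pd_{\AB}\AB_J$ is finite by the resolution just built). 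The only genuinely new issue relative to \cite[\S5]{DW1} is the bookkeeping with a general subset $J$ rather than a single curve; once \cite[\S6]{IW4} is invoked to legitimise the simultaneous mutation and the two-step periodicity above, everything else is formal.
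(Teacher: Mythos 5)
Your plan for part (2) is essentially sound and, given part (1), reproduces the content of the cited result [HomMMP, A.7(3)]: apply $\Hom_{\mathfrak{R}}(N,-)$ to the spliced exchange sequences, identify the cokernel with $\AB_J e_j$ via the approximation property, and read off the $\Ext$ groups from the resulting minimal projective resolution (your observation that minimality kills the middle differentials after applying $\Hom_{\AB}(-,S_k)$ is the point, and the Auslander--Buchsbaum remark at the end is a clean alternative route to $\pd_{\AB}\AB_J=3$). The paper obtains the same resolution by citing [HomMMP, A.7(3)] and then computes exactly as you do.

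The gap is in part (1), and it is not a small one: you flag ``the kernel of $V_j\to\upnu_J N_j$ is exactly $N_j$'' as ``the main obstacle,'' and then defer to ``the depth/syzygy computation of \cite[\S5]{DW1}... combined with \cite[\S6]{IW4}'' without carrying it out. But the argument in [DW1, \S5] is for a single irreducible curve, and the hypersurface input there is $\mathfrak{R}$ itself; for a proper subset $J\subseteq\{1,\hdots,n\}$ with $1<|J|<n$, what one actually needs is a hypersurface (cDV) structure governing the contraction of the curves $\{C_j : j\in J\}$, and this does not come for free from $\mathfrak{R}$ being a hypersurface. The paper resolves this geometrically: contracting the curves in $J$ yields a factorisation $\mathfrak{U}\to\mathfrak{U}_{\con}\to\Spec\mathfrak{R}$ in which $\mathfrak{U}\to\mathfrak{U}_{\con}$ is a flopping contraction, so $\mathfrak{U}_{\con}$ inherits Gorenstein terminal (hence locally hypersurface) singularities, and then $\upnu_J\upnu_JN\cong N$ follows from [HomMMP, 2.25]. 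Your purely module-theoretic sketch never produces this intermediate hypersurface, which is precisely what makes the two-step periodicity hold. (The paper's much more delicate treatment of the $J=\{0\}$ case in \ref{mu=nu for R}, which requires $\mathds{Q}$-factoriality and a Riedtmann--Schofield argument, is further evidence that two-step periodicity of mutation is genuinely sensitive to which summands are mutated and does not follow formally from $\mathfrak{R}$ being a hypersurface.) To close the gap you would need either to reproduce the geometric observation about $\mathfrak{U}_{\con}$ and invoke [HomMMP, 2.25], or to supply a direct matrix-factorisation argument that applies simultaneously to all of $\add N_{J^c}$ rather than to $\add(N/N_j)$ for a single $j$.
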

Part \eqref{J mut intro 1} is the key to establishing that the $J$-twist is an autoequivalence, and part \eqref{J mut intro 2} shows the link to spherical functors (see \cite{AL1,AL2}).  Both parts follow quite easily from the fact that $\mathfrak{R}$ is a hypersurface singularity, using matrix factorisations.

However, to construct the fibre twist is significantly more complicated, since if we mutate the summand $\mathfrak{R}$ twice and obtain $\upnu_0\upnu_0N$, there is no easy reason why $\upnu_0\upnu_0N\cong N$ should be true.  Thus establishing the analogue of \ref{J mut intro}  is much harder, and requires an additional assumption of $\mathds{Q}$-factoriality.  It is well known that being $\mathds{Q}$-factorial does not pass to the formal fibre, so this presents a significant technical challenge; we postpone the discussion here and refer the reader to \ref{pd thm for fibre} and \ref{mu=nu for R}.

After constructing the autoequivalences $\Tcon$ and $\Tfib$ on the formal fibre, we then lift them to the Zariski local model.  For $\Tcon$, as in \cite[\S6]{DW1}, this lifting is quite easy, however lifting $\Tfib$ turns out to be more difficult, and requires a delicate argument that involves passing to a localisation and using lifting numbers.  In the proof that $\Tfib$ lifts, we rely heavily on the theory of maximal modification algebras (=MMAs), as recalled in \ref{MMintro}. These are the noncommutative version of minimal models.  It is an open problem as to whether the property of being an MMA passes to localizations at maximal ideals (it is known that it does not pass to the completion), however in this paper we do establish and use the following, which may be of independent interest.

\begin{thm}[=\ref{MMAsLocalize}]
Suppose that $R$ is a three-dimensional Gorenstein normal domain over $\mathbb{C}$, and that $\Lambda$ is derived equivalent to a $\mathds{Q}$-factorial terminalization of $\Spec R$.  Then $\Lambda$ is an MMA, and further for all $\m\in\Max R$, the algebra $\Lambda_\m$ is an MMA of $R_\m$.
\end{thm}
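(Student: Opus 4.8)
The plan is to prove the second (harder) assertion by localizing the \emph{geometry} attached to $\Lambda$ and re-applying the known characterization of MMAs, rather than by trying to localize the maximality condition in the definition of an MMA directly --- it is the latter that keeps the unconditional question open. Recall from the theory collected in \ref{MMintro} (following \cite{IW4}) the input I shall use twice: if $S$ is a three-dimensional Gorenstein normal domain (not assumed complete local) admitting a $\mathds{Q}$-factorial terminalization $g\colon Y\to\Spec S$, then every module-finite $S$-algebra that is derived equivalent to $\Db(\coh Y)$ is an MMA of $S$. Applied with $S=R$ and the given $\mathds{Q}$-factorial terminalization $g\colon Y\to\Spec R$, this yields at once that $\Lambda$ is an MMA of $R$, which is the first assertion.

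For the second assertion, fix $\m\in\Max R$ and form the base change $g_\m\colon Y_\m:=Y\times_{\Spec R}\Spec R_\m\to\Spec R_\m$. First I would check that $g_\m$ is again a $\mathds{Q}$-factorial terminalization of $\Spec R_\m$. Projectivity and birationality are preserved under the base change, and $Y_\m$ is integral and normal, being obtained from the integral normal scheme $Y$ by the flat (localization) base change along $\Spec R_\m\to\Spec R$. Since $Y_\m\to Y$ is flat and every local ring of $Y_\m$ coincides with a local ring of $Y$, the Gorenstein and terminal conditions, together with the crepancy identity $\omega_{Y_\m}\cong g_\m^*\omega_{\Spec R_\m}$, are all inherited from $Y$. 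The one delicate point is that $Y_\m$ is still $\mathds{Q}$-factorial: here I would use that $Y_\m$ is a quasi-compact Noetherian normal scheme, for which being $\mathds{Q}$-factorial is equivalent to every local ring being $\mathds{Q}$-factorial (given a Weil divisor, clear denominators over a finite affine cover), and then note that the local rings of $Y_\m$ are local rings of the $\mathds{Q}$-factorial scheme $Y$.

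Next I would transport the derived equivalence. The equivalence $\Db(\coh Y)\simeq\Db(\modCat\Lambda)$ sends $\Lambda$ to a perfect tilting object $\cT\in\Db(\coh Y)$ with $\RHom_Y(\cT,\cT)\cong\Lambda$. Pulling back along the flat morphism $Y_\m\to Y$ gives a perfect complex $\cT_\m$ on $Y_\m$, and flat base change along the proper morphism $g$ yields $\RHom_{Y_\m}(\cT_\m,\cT_\m)\cong\RHom_Y(\cT,\cT)\otimes_R R_\m$; hence $\cT_\m$ is again a tilting object, now with $\End_{Y_\m}(\cT_\m)\cong\Lambda\otimes_R R_\m=\Lambda_\m$, and it generates $\Db(\coh Y_\m)$ because $\cT$ generates $\Db(\coh Y)$. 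Thus $\Lambda_\m$ --- which is module-finite over $R_\m$ since $\Lambda$ is module-finite over $R$ --- is derived equivalent to $\Db(\coh Y_\m)$, and applying the recalled characterization with $S=R_\m$ and the terminalization $g_\m$ shows that $\Lambda_\m$ is an MMA of $R_\m$.

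I expect the main obstacle to be the $\mathds{Q}$-factoriality step of the second paragraph. This is exactly the property that fails to descend to the completion $\widehat R$ --- so that $\widehat Y\to\Spec\widehat R$ need not be $\mathds{Q}$-factorial, which is the reason the analogue for completions is false and the unconditional statement for localizations is open --- whereas localization, being sufficiently Zariski-local, does preserve it; pinning this down carefully for $Y_\m$ is the crux. The only other point needing attention is to confirm that the geometric characterization recalled in \ref{MMintro} is genuinely available for the (not necessarily complete) local ring $R_\m$, so that one is never forced to pass to a completion.
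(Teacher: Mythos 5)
Your plan---localize the geometry and re-apply a geometric characterization of MMAs---is natural, but it has a gap at precisely the step you relegate to a closing remark, and the paper's proof is structured to circumvent exactly that step.

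The characterization you ``recall'' is not in \ref{MMintro}, which is only the \emph{definition} of an MM module and an MMA, and it is not from \cite{IW4}. The result that module-finite algebras derived equivalent to a $\mathds{Q}$-factorial terminalization are MMAs is proved in \cite{IW5}, where the base $\Spec S$ is a variety. After localizing, $\Spec R_\m$ is no longer of finite type over $\K$, so it is not a variety, and the very notion of a $\mathds{Q}$-factorial terminalization of $\Spec R_\m$ as defined in \S\ref{MMA prelim} (a projective crepant birational morphism between \emph{varieties} with $\mathds{Q}$-factorial terminal target) is not available without additional work. You single out $\mathds{Q}$-factoriality of $Y_\m$ as the crux, but that step is fine: every local ring of $Y_\m$ is a local ring of $Y$ since localization is a flat epimorphism, exactly as the paper observes. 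The real difficulty---which you flag in your last sentence but treat as secondary---is that the global geometric characterization is simply not established in the generality in which you wish to invoke it; supplying it would be roughly equivalent to what the theorem is trying to prove. The paper avoids this entirely: after base-changing to $X' = Y\times_R\Spec R_\m$, it does \emph{not} claim that $X'\to\Spec R_\m$ is a $\mathds{Q}$-factorial terminalization in the sense of \S\ref{MMA prelim}. Instead it only uses the local observation that the stalks of $\cO_{X'}$ are isolated $\mathds{Q}$-factorial hypersurfaces, deduces via \cite[3.2(1)]{IW5} an embedding $\Dsg(\Lambda_\m)\hookrightarrow\bigoplus_{x\in\Sing X'}\uCM\cO_{X',x}$, invokes Dao \cite[3.1(1)]{DaoNCCR} to conclude that $\Dsg(\Lambda_\m)$ is rigid-free, and then applies the purely local criterion \cite[2.14]{IW5} (isolated singularities plus rigid-free singularity category implies MMA), together with the isolated-singularities statement \cite[4.2(2)]{IW5}, neither of which needs $\Spec R_\m$ to carry a terminalization in the variety sense. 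Substituting this homological route for your final appeal to the global characterization is what it would take to close the gap.
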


\subsection{Acknowledgements} We thank Alice Rizzardo for many discussions regarding the existence of adjoints in \S\ref{ZLT and GT section}.

\subsection{Conventions}\label{conventions}  We work over $\K$. Unqualified uses of the word \emph{module} refer to right modules, and $\mod A$ denotes the category of finitely generated right $A$-modules.    If $M\in\mod A$, we let $\add M$ denote all possible summands of finite sums of $M$.   If $S,T\in\mod R$ where $S$ is a summand of $T$, then we define the ideal $[S]$ to be the two-sided ideal of $\End_R(T)$ consisting of all morphisms factoring through $\add S$. Further, we use the functional convention for composing arrows, so $f\cdot g$ means $g$ then $f$.  With this convention, $M$ is an $\End_R(M)^{\op}$-module, $\Hom_R(M,N)$ is an $\End_R(M)$-module and an $\End_R(N)^{\op}$-module, in fact a bimodule.  Note also that $\Hom_R({}_{S}M_R, {}_{T}M_R)$ is an $S$--$T$ bimodule and $\Hom_{R^{\op}}({}_{R}M_S, {}_{R}M_T)$ is a $T$--$S$ bimodule.

If $X$ is a scheme, $\cO_{X,x}$ will denote the localization of the structure sheaf at the closed point $x\in X$,  whereas $\cO_x$ will always denote the skyscraper sheaf at $x$.  We will write $\widehat{\cO}_{X,x}$ for the completion of $\cO_{X,x}$ at the unique maximal ideal.  Throughout, {\em locally} will always mean Zariski locally, and when we discuss the completion, we will speak of working {\em complete locally}.

\section{Flops Setting and Notation}

In this section we fix notation, and provide the necessary preliminary results. 

\subsection{Perverse Sheaves}\label{SetupSection}
Consider a projective birational morphism $f\colon X\to X_{\con}$ between noetherian integral normal $\mathbb{C}$-schemes with $\Rf_*\cO_X=\cO_{X_\con}$, such that the fibres are at most one-dimensional.  

\begin{defin}\label{Per0 defin}
For such a morphism $f\colon X\to X_{\con}$, recall \cite{Bridgeland,VdB1d} that $\Per (X,X_{\con})$, the category of perverse sheaves on $X$, is defined
\[
\Per (X,X_{\con})=\left\{ a\in\Db(\coh X)\left| \begin{array}{c}H^i(a)=0\mbox{ if }i\neq 0,-1\\
f_*H^{-1}(a)=0\mbox{, }\Rfi{1}_* H^0(a)=0\\ \Hom(c,H^{-1}(a))=0\mbox{ for all }c\in\cC^0 \end{array}\right. \right\},
\]
where
\[
\cC:=\{ c\in\Db(\coh X)\mid \Rf_*c=0\}
\]
and $\cC^0$ denotes the full subcategory of $\cC$ whose objects have cohomology only in degree $0$. 
\end{defin}

\subsection{Global and Local Flops Notation}\label{global flops setup}
In this subsection, and for the remainder of this paper, we will make use of the following geometric setup.
\begin{setup}\label{flopsglobal}
(Global flops) We let $f\colon X\to X_{\con}$ be a flopping contraction, where $X$ is a quasi-projective $3$-fold with only Gorenstein terminal singularities in a neighbourhood of the curves contracted.  We write $\noniso f$ for the (finite) set of points in $X_{\con}$ above which $f$ is not an isomorphism.
\end{setup}

This is more general than the setup of \cite{DW1}, since the new adjoint technology of \cite{Alice} allows us to drop the assumption that $X$ is projective, and also there is no global restriction on singularities.  We will not assume that $X$ is $\mathds{Q}$-factorial unless explicitly stated.  With the assumptions in \ref{flopsglobal}, around each point $p\in \noniso f$ we can find an affine open neighbourhood $\Spec R$ containing $p$ but none of the other points in $\noniso f$, as shown below.
\def\singxshift{-2.5}
\def\singyshift{-0.5}
\def\totalxshift{0.75}
\def\totalyshift{0.25}
\def\overallscaling{0.7}
\[
\begin{tikzpicture}[>=stealth,scale=\overallscaling,xscale=1/2] 
\defThreeFold{\layoutstretch*\internalscaling*-0.5}{\internalscaling*2.5+\toplineshift}{black};
\node (a1) at (\layoutstretch*\internalscaling*-0.25+\totalxshift,\internalscaling*2.25+\toplineshift+\totalyshift) 
{\begin{tikzpicture} [xscale=\cutscaling*\internalscaling*0.75,yscale=\cutscaling*\internalscaling*0.75,bend angle=25, looseness=1,transform shape, rotate=\curvetilt]
\draw[red] (0,0,0) to [bend left=25] (1,0,-0.3);
\draw[red] (0.8,0,-0.3) to [bend left=25] (2,0,0);
\end{tikzpicture}};
\node (a1) at (\layoutstretch*\internalscaling*-0.25+\singxshift+\totalxshift,\internalscaling*2.25+\toplineshift+\singyshift+\totalyshift) 
{\begin{tikzpicture} [xscale=\cutscaling*\internalscaling*0.75,yscale=\cutscaling*\internalscaling*0.75,bend angle=25, looseness=1,transform shape, rotate=\curvetilt]
\draw[red] (0.5,0,0) to [bend left=25] (1.5,0,0);
\end{tikzpicture}};
\node at (\layoutstretch*\internalscaling*-0.1+6*\bottomlineshift+\totalxshift,\internalscaling*-0.2+\totalyshift) {$p_1$};
\node at (\layoutstretch*\internalscaling*-0.2+6*\bottomlineshift+\totalxshift,0+\totalyshift) 
{\begin{tikzpicture} [transform shape, rotate=-30]
\filldraw [black] (0.1,0.045,0) circle (1pt);
\draw [densely dotted] (0.1,0.045,0) circle (20pt);
\end{tikzpicture}};
\node at (\layoutstretch*\internalscaling*-0.1+6*\bottomlineshift+\singxshift+\totalxshift,\internalscaling*-0.2+\singyshift+\totalyshift) {$p_2$};
\node at (\layoutstretch*\internalscaling*-0.2+6*\bottomlineshift+\singxshift+\totalxshift,+\singyshift+\totalyshift) 
{\begin{tikzpicture} [transform shape, rotate=-30]
\filldraw [black] (0.1,0.045,0) circle (1pt);
\draw [densely dotted] (0.1,0.045,0) circle (20pt);
\end{tikzpicture}};
\defThreeFold{\layoutstretch*\internalscaling*-0.5+6*\bottomlineshift}{0}{black};

\draw[->] (\layoutstretch*\internalscaling*-0.25+2*\bottomlineshift,\internalscaling*1.05+\toplineshift)  -- node[right] {$\small f$} (\layoutstretch*\internalscaling*-0.25+4*\bottomlineshift,\internalscaling*0.5+\toplineshift);
\node at (\layoutstretch*\internalscaling*+1.4+6*\bottomlineshift,\internalscaling*-1.1) {$\small X_{\con}$};
\node at (\layoutstretch*\internalscaling*+1.1,\internalscaling*1.4+\toplineshift) {$\small X$};
\end{tikzpicture}
\]
We set $U:=f^{-1}(\Spec R)$ and thus consider the morphism of Gorenstein $3$-folds
\[
f|_{U}\colon U\to \Spec R.
\]
By construction, this is an isomorphism away from a single point, and above that point is a connected chain of rational curves.  Many of our global problems can be reduced to the following Zariski local setup.
\begin{setup}\label{flopslocal}
(Zariski local flops, single chain) Suppose that $f\colon U\to\Spec R$ is a crepant projective birational contraction of $3$-folds,  with fibres at most one-dimensional, which is an isomorphism away from precisely one point $\m\in\Max R$.  We assume that $U$ has only Gorenstein terminal singularities.  As notation, above $\m$ is a connected chain $C$ of $n_p$ curves with reduced scheme structure $C^{\redu}=\bigcup_{\indx=1}^{n_p} C_\indx$ such that each $C_\indx\cong\mathbb{P}^1$.  \end{setup}
Passing to the completion will bring technical advantages.
\begin{setup}\label{flopscompletelocal}
(Complete local flops) With notation and setup as in \ref{flopslocal}, we let $\mathfrak{R}$ denote the completion of $R$ at the maximal ideal $\m$.  We let $\clocCon\colon\mathfrak{U}\to \Spec \mathfrak{R}$ denote the formal fibre.  Above the unique closed point is a connected chain $C$ of $n_p$ curves with $C^{\redu}=\bigcup_{\indx=1}^{n_p} C_\indx$ such that each $C_\indx\cong\mathbb{P}^1$.  Because $R$ is Gorenstein terminal, necessarily $\mathfrak{R}$ is an isolated hypersurface singularity, by \cite[0.6(I)]{Pagoda}.
\end{setup}

We will often denote $n_p$ simply by $n$ where no confusion arises.

\subsection{The Contraction and Fibre Algebras} \label{localnotation}

In the Zariski local flops setup in \ref{flopslocal}, it is well-known \cite[3.2.8]{VdB1d} that there is a bundle $ \cV:=\cO_U\oplus\cN$  inducing a derived equivalence
\begin{eqnarray}
\begin{array}{c}
\begin{tikzpicture}
\node (a1) at (0,0) {$\Db(\coh U)$};
\node (a2) at (5,0) {$\Db(\mod \End_U(\cV))$};
\node (b1) at (0,-1) {$\Per(U,R)$};
\node (b2) at (5,-1) {$\mod\End_U(\cV) $};
\draw[->] (a1) -- node[above] {$\scriptstyle\RHom_U(\cV,-)$} node [below] {$\scriptstyle\sim$} (a2);
\draw[->] (b1) --  node [below] {$\scriptstyle\sim$} (b2);
\draw[right hook->] (b1) to (a1);
\draw[right hook->] (b2) to (a2);
\end{tikzpicture}
\end{array}\label{derived equivalence}
\end{eqnarray}
In this Zariski local setting there is choice in the construction of $\cV$, but in the setting of the formal fibre later in \S\ref{completelocalnotation}, there is a canonical choice.   

Throughout we set
\[
\Lambda:=\End_U(\cV) = \End_U(\cO_U\oplus\cN).
\]
Recall that if $\cF,\cG\in\coh U$ where $\cF$ is a summand of $\cG$, then we define the ideal $[\cF]$ to be the two-sided ideal of $\End_U(\cG)$ consisting of all morphisms factoring through $\add \cF$.

\begin{defin}\label{contraction algebra def 1}
With notation as above, we define the \emph{contraction algebra} associated to $\Lambda$ to be $\Lambda_{\con}:=\End_U(\cO_U\oplus\cN)/[\cO_U]$.
\end{defin}

We remark that $\Lambda_{\con}$ defined above depends on $\Lambda$ and thus the choice of derived equivalence \eqref{derived equivalence}.  In the complete local case there is a canonical choice for this; see \S\ref{completelocalnotation} below.  

\begin{lemma}\label{Lambdacon idempotents}
Under the Zariski local setup of \ref{flopslocal}, the basic algebra morita equivalent to $\Lambda_{\con}$ has precisely $n$ primitive idempotents.
\end{lemma}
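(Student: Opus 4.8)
The plan is to unwind the definitions and reduce the statement to a counting of indecomposable summands. By construction $\Lambda = \End_U(\cO_U \oplus \cN)$, and the contraction algebra is $\Lambda_{\con} = \End_U(\cO_U \oplus \cN)/[\cO_U]$. Since $\Lambda_{\con}$ is a module-finite algebra over the complete or Noetherian local ring $R$, it has a well-defined basic algebra, Morita equivalent to it, whose number of primitive idempotents equals the number of isomorphism classes of indecomposable projective $\Lambda_{\con}$-modules, equivalently the number of indecomposable summands (up to isomorphism) of $\Lambda_{\con}$ as a right module over itself. So the task is: \emph{show that $\Lambda_{\con}$, as a right $\Lambda_{\con}$-module, has exactly $n$ non-isomorphic indecomposable summands.}

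First I would pass to a Krull--Schmidt decomposition $\cN = \bigoplus_{i=1}^{n} \cN_i$ into indecomposables. Here the key input is the standard structure of the tilting bundle $\cV = \cO_U \oplus \cN$ from \cite{VdB1d}: the summands of $\cN$ are in bijection with the $n$ irreducible exceptional curves $C_1, \dots, C_n$ (this is exactly the indexing recalled later in \ref{N notation} for the complete local case, and holds Zariski locally by the same argument). Thus $\Lambda = \End_U(\cO_U \oplus \bigoplus_i \cN_i)$ has $n+1$ primitive idempotents in its basic algebra, one for $\cO_U$ and one for each $\cN_i$. The idempotent $\idem{0}$ cutting out the summand $\cO_U$ generates the two-sided ideal $[\cO_U]$, so $\Lambda_{\con} = \Lambda/\Lambda \idem{0} \Lambda$.

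The main step is then to check that quotienting by $[\cO_U]$ kills exactly the one idempotent class corresponding to $\cO_U$ and merges none of the others. Concretely, writing $e = 1 - \idem{0}$ for the sum of the remaining $n$ primitive idempotents in (a basic model of) $\Lambda$, one has a ring isomorphism $\Lambda_{\con} \cong e\Lambda e / (e[\cO_U]e)$, and $e[\cO_U]e = e \Lambda \idem{0} \Lambda e$ lies inside the Jacobson radical of $e\Lambda e$ (any morphism factoring through $\add \cO_U$ between indecomposables not isomorphic to $\cO_U$ is non-invertible, and in fact the contraction algebra is known to be finite-dimensional, hence this ideal is nilpotent modulo nothing that affects idempotents). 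Since passing to a quotient by an ideal contained in the radical induces a bijection on primitive idempotents (idempotents lift uniquely modulo the radical, and the radical contains no nonzero idempotents), the basic algebra of $\Lambda_{\con}$ has exactly $n$ primitive idempotents, namely the images of those indexed by $\cN_1, \dots, \cN_n$.

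The part requiring the most care is verifying that $e[\cO_U]e$ contains no idempotents that would collapse two of the $\cN_i$ together — i.e.\ that none of the $e_i$ become identified or become zero in the quotient. This is where one genuinely uses that $\cN_i \not\cong \cO_U$ and that the $\cN_i$ are pairwise non-isomorphic: an element of $e_i \Lambda e_j$ factoring through $\add\cO_U$ cannot be an isomorphism $\cN_i \xrightarrow{\sim} \cN_j$, and $e_i \Lambda \idem{0}\Lambda e_i$ consists of non-units in $\End_U(\cN_i)$ (which is local since $\cN_i$ is indecomposable over a complete, or Henselian, local base — in the Zariski local case one may either pass to the completion or invoke that $\End_U(\cN_i)$ has no nontrivial idempotents). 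Hence the ideal $[\cO_U]$, restricted to $e\Lambda e$, sits in the radical, and the count $n$ follows. I would present this cleanly by simply citing the idempotent-lifting fact: \emph{if $I \subseteq \operatorname{rad}(A)$ then $A$ and $A/I$ have the same number of primitive idempotents in their basic algebras}, and reduce everything to the single observation that $e\Lambda\idem{0}\Lambda e \subseteq \operatorname{rad}(e\Lambda e)$.
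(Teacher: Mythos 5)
Your argument is in the right spirit, but the opening step contains a genuine gap: you assume that $\cN$ on the quasi-projective $U$ admits a Krull--Schmidt decomposition $\cN=\bigoplus_{i=1}^n\cN_i$ with summands in bijection with the $n$ exceptional curves. Neither half of that claim is available Zariski locally. Krull--Schmidt can fail over $U$ (unique decomposition into indecomposables is a complete local phenomenon), and even granting some decomposition, there is no reason for $\cN$ to have exactly $n$ indecomposable summands, let alone for them to be in bijection with curves. Indeed, the paper is explicit that there is a \emph{choice} in the Zariski local $\cV=\cO_U\oplus\cN$, with a canonical decomposition only on the formal fibre; what is actually true is \eqref{a_i decomp}, namely $\widehat{\cV}\cong\cO_{\mathfrak{U}}^{\oplus a_0}\oplus\bigoplus_{i=1}^n\cN_i^{\oplus a_i}$ for multiplicities $a_i$ that need not equal one, and in particular the Zariski local $\cN$ may pick up copies of $\cO_{\mathfrak{U}}$ after completion. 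This is precisely the Krull--Schmidt pathology that later forces the introduction of $\Lambda_{\fib}$ via localization and \ref{take R out}.

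The paper's proof sidesteps all of this: it first invokes $\Lambda_{\con}\cong\widehat{\Lambda}_{\con}$ (from \cite[2.17]{DW1}, using that $\Lambda_{\con}$ is finite dimensional and supported only at $\m$) and only then reads off the answer from \eqref{a_i decomp}, where the basic algebra of $\widehat{\Lambda}$ is $\AB=\End_{\mathfrak{R}}(\bigoplus_{i=0}^n N_i)$ with $n+1$ primitive idempotents and factoring out $[N_0]$ leaves $n$. Your parenthetical remark that one ``may pass to the completion'' is in fact the whole proof, and it must come first rather than as a fallback. The rest of your argument --- that the image of $[\cO_U]$ lands in the radical of the complementary corner algebra, so the remaining idempotents neither vanish nor merge --- is the sound part, and is essentially what the paper compresses into ``the assertion is clear''; but it cannot be run until you have established what the primitive idempotent classes of $\Lambda$ actually are, and that requires the reduction to the complete local model.
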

\begin{proof}
As in \cite[2.17]{DW1}, $\Lambda_{\con}\cong\widehat{\Lambda}_{\con}$, and on the completion the assertion is clear, for example using \eqref{a_i decomp} below.
\end{proof}

Since $f$ is a flopping contraction, it follows from \cite[4.2.1]{VdB1d} that
\begin{eqnarray}
\Lambda=\End_{U}(\cV)\cong\End_{R}(R\oplus f_*\cN). \label{up=down VdB}
\end{eqnarray}
We set $L:=f_*\cN$ so that $\Lambda\cong\End_R(R\oplus L)$.  Translating \ref{contraction algebra def 1} through this isomorphism, the contraction algebra associated to $\Lambda$ becomes
\[
\Lambda_{\con}\cong \End_R(R\oplus L)/[R]\cong\End_R(L)/[R].
\]
There is a canonical ring homomorphism $\Lambda\to\Lambda_{\con}$, and we denote its kernel by $I_{\con}$.  Necessarily this is a two-sided ideal of $\Lambda$, so there is a short exact sequence
\begin{eqnarray}
0\to I_{\con}\to\Lambda\to\Lambda_{\con}\to 0\label{Icon ses 1}
\end{eqnarray}
of $\Lambda$-bimodules.  For global twist functors later, we need a more refined version of this.  Under the morita equivalence in \ref{Lambdacon idempotents}, $\Lambda_\con$ inherits $n$ primitive idempotents $e_1,\hdots,e_n$. We pick a subset $J\subseteq\{1,\hdots,n\}$ of these idempotents (equivalently, a subset of the exceptional curves in \ref{flopslocal}), and write
\[
\Lambda_J:=\frac{\Lambda_{\con}}{\Lambda_{\con}(1-\sum_{j\in J}e_j)\Lambda_{\con}}.
\]
The composition of the ring homomorphisms $\Lambda\to\Lambda_{\con}\to\Lambda_J$ is a surjective ring homomorphism.  We denote the kernel by $I_J$, which is a two-sided ideal of $\Lambda$, and thus for each subset $J\subseteq\{1,\hdots,n\}$, there is a short exact sequence 
\begin{eqnarray}
0\to I_J\to\Lambda\to\Lambda_J\to 0\label{Icon ses 1J}
\end{eqnarray}
of $\Lambda$-bimodules.  This exact sequence is needed later to extract a Zariski local twist functor from the formal fibre.

Naively, we want to repeat the above analysis for $\Lambda/[L]=\End_R(R\oplus L)/[L]$ to obtain a similar exact sequence, as this later will give another, different, derived autoequivalence.  However, by the failure of Krull--Schmidt it may happen that $R\in\add L$, in which case $\Lambda/[L]=0$.   We get round this problem by passing to the localization $\Lambda_\m$, where we can use lifting numbers.  Indeed, localizing $\Lambda$ with respect to the maximal ideal $\m$ in setup \ref{flopslocal} gives $\Lambda_\m\cong\End_{R_\m}(R_\m\oplus L_\m)$, so we can then use the following well-known lemma.

\begin{lemma}\label{take R out}
Suppose that $(S,\m)$ is a commutative noetherian local ring, and suppose that $T\in\mod S$.  Then we can write $T\cong S^{a}\oplus X$ for some $a\geq 0$ with $S\notin\add X$.
\end{lemma}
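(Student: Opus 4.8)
The plan is to extract the free part of $T$ greedily and then show that nothing free remains hidden inside the complement. Since $S$ is local and $T$ is finitely generated, the integer $\mu:=\dim_{S/\m}(T/\m T)$ is finite by Nakayama, and whenever $T\cong S^a\oplus X_a$ for some $S$-module $X_a$, tensoring with $S/\m$ gives $\mu=a+\dim_{S/\m}(X_a/\m X_a)\ge a$. Hence the set of $a\ge 0$ admitting such a decomposition is a nonempty (it contains $a=0$) bounded subset of $\mathbb{Z}_{\ge 0}$, so it has a maximal element. First I would fix such a maximal $a$ together with one decomposition $T\cong S^a\oplus X$ realising it.

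It then remains to verify that $S\notin\add X$. The only subtlety is that, a priori, $S$ could be a direct summand of some power $X^{\oplus k}$ without being a summand of $X$ itself; this is precisely where locality of $S$ enters. Supposing $S$ is a summand of $X^{\oplus k}$ for some $k\ge 1$, I would record the split inclusion and its retraction componentwise as tuples $(i_j\colon S\to X)_{j=1}^{k}$ and $(\pi_j\colon X\to S)_{j=1}^{k}$ satisfying $\sum_{j=1}^{k}\pi_j i_j=\Id_S$. Identifying $\End_S(S)$ with the local ring $S$, the fact that the sum $\sum_j \pi_j i_j$ is a unit forces at least one summand, say $\pi_1 i_1=u$, to be a unit of $S$; then $(u^{-1}\pi_1)\circ i_1=\Id_S$, so $i_1$ is a split monomorphism and $X\cong S\oplus X'$ for some $X'$. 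This gives $T\cong S^{a+1}\oplus X'$, contradicting the maximality of $a$, whence $S\notin\add X$ as claimed.

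I do not expect a genuine obstacle here: the statement is elementary and standard. The two points to keep honest in the bookkeeping are the finiteness of $\mu(T)$, which guarantees the greedy extraction terminates and a maximal $a$ exists, and the locality of $S$ (equivalently, that $S$ has local endomorphism ring, hence the finite exchange property), which is exactly what lets one pass from ``summand of $X^{\oplus k}$'' to ``summand of $X$''. Both are immediate from the hypotheses, so the argument above is complete modulo routine verification.
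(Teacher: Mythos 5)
Your proof is correct, but it takes a genuinely different (and more elementary) route than the paper. The paper passes to the completion $\widehat{S}$, invokes Krull--Schmidt for $\widehat{T}$ to pin down the free multiplicity $a$, then lifts the free summand back down to $T$ via \cite[1.15]{LW}, and finally descends the non-membership $\widehat{S}\notin\add\widehat{X}$ to $S\notin\add X$. You instead stay entirely over $S$: you bound the free multiplicity by $\mu(T)=\dim_{S/\m}(T/\m T)$, pick a maximal $a$ with $T\cong S^a\oplus X$, and then use the locality of $\End_S(S)\cong S$ to show that $S\in\add X$ would force $S$ to be a direct summand of $X$ itself (since if $\sum_j\pi_j i_j=\Id_S$ then some $\pi_j i_j$ is a unit in the local ring $S$), contradicting maximality. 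This is cleaner in the sense that it avoids completion and Krull--Schmidt machinery entirely; what the paper's approach buys is that it recycles the same technique (completion plus Krull--Schmidt) used elsewhere in the surrounding sections, and it directly produces the Krull--Schmidt data over $\widehat{S}$ that the later arguments work with anyway. Both arguments are sound, and yours is self-contained and arguably more transparent for this isolated lemma.
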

\begin{proof}
Since $\widehat{S}$ is complete local, we can write $\widehat{T}\cong \widehat{S}^{\oplus a}\oplus X_1^{\oplus a_1}\oplus\cdots\oplus X_n^{\oplus a_n}$ as its Krull--Schmidt decomposition into pairwise non-isomorphic indecomposables, where $a\geq 0$ and all $a_i\geq 1$. Since $\widehat{S}^{\oplus a}$ is a summand of $\widehat{T}$, it follows that $S^{\oplus a}$ is a summand of $T$ (see e.g.\ \cite[1.15]{LW}), so we can write $T\cong S^{\oplus a}\oplus X$ for some $X$.  Completing this decomposition, again by Krull--Schmidt $\widehat{X}\cong X_1^{\oplus a_1}\oplus\cdots\oplus X_n^{\oplus a_n}$ and so $\widehat{S}\notin\add\widehat{X}$.  It follows that $S\notin\add X$.
\end{proof}

By \ref{take R out}, we may pull out all free summands from $L_\m$, and write 
\[
\Lambda_\m\cong\End_{R_\m}(R_\m^{\oplus a}\oplus K)
\]
for some $K\in\mod R_\m$ with $R_\m\notin\add K$. 

\begin{defin}\label{Lambda fib def}
With the Zariski local flops setup in \ref{flopslocal}, we define the fibre algebra $\Lambda_{\fib}$ to be $\Lambda_\m/[K]$.
\end{defin}

Since by construction $R_\m\notin\add K$, the fibre algebra $\Lambda_{\fib}$ is non-zero.  Furthermore, the composition of ring homomorphisms
\[
\Lambda\xrightarrow{\uppsi_1}\Lambda_\m\xrightarrow{\uppsi_2}\Lambda_\m/[K]=\Lambda_{\fib}
\]
is a ring homomorphism, and we define $I_{\fib}:=\Ker(\uppsi_2\uppsi_1)$.
\begin{lemma}\label{fib ses bimodules}
With the Zariski local flops setup as in \ref{flopslocal}, as an $R$-module $\Lambda_{\fib}$ is supported only at $\m$, and further there is a short exact sequence
\begin{eqnarray}
0\to I_{\fib}\to\Lambda\to\Lambda_{\fib}\to 0\label{Ifib ses 1}
\end{eqnarray}
of $\Lambda$-bimodules.  
\end{lemma}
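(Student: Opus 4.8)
The plan is to dispatch the two assertions in order: the support statement first, since the short exact sequence then follows almost formally. To prove $\Lambda_{\fib}=\Lambda_\m/[K]$ is supported only at $\m$, I would localise at an arbitrary prime $\p\subsetneq\m$ and show $(\Lambda_{\fib})_\p=0$. The only geometric input needed is that $f$ is an isomorphism over the punctured spectrum $\Spec R\setminus\{\m\}$ (as in \ref{flopslocal}): there $L=f_*\cN$ agrees with the bundle $\cN$, so $L$ is locally free on $\Spec R\setminus\{\m\}$, and hence so is its direct summand $K$ (extracted from $L_\m$ via \ref{take R out}). Moreover $K\neq0$, since otherwise $L_\m$ would be free and $\Lambda_{\con}\cong\End_{R_\m}(R_\m\oplus K)/[R_\m]$ would vanish, contradicting \ref{Lambdacon idempotents}; and $K$ is torsion-free, being a summand of $L_\m=f_*\cN|_{\Spec R_\m}$ which is torsion-free as the pushforward of a bundle along the birational $f$. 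Thus $K_\p\neq0$ for every $\p$, so for $\p\subsetneq\m$ we have $\Lambda_\p\cong\End_{R_\p}(R_\p^{\oplus a}\oplus K_\p)$ with $K_\p$ a nonzero free $R_\p$-module; then $R_\p^{\oplus a}\oplus K_\p$ is itself free, so its identity morphism factors through $\add K_\p$, i.e.\ $[K_\p]=\End_{R_\p}(R_\p^{\oplus a}\oplus K_\p)$. Since $[K]_\p=[K_\p]$ because $\Hom$ commutes with localisation, this gives $(\Lambda_{\fib})_\p=0$, and as $\Lambda_{\fib}$ is already an $R_\m$-module the support claim follows.

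Next I would use this to show the composite $\uppsi_2\uppsi_1\colon\Lambda\to\Lambda_\m\to\Lambda_{\fib}$ is surjective, which is the only nontrivial point in producing the exact sequence. As $\Lambda_{\fib}$ is finitely generated over $R_\m$ and supported only at the maximal ideal, it has finite length, so $\m^N\Lambda_\m\subseteq[K]$ for some $N\gg0$. Hence $\Lambda_{\fib}$ is a quotient of $\Lambda_\m/\m^N\Lambda_\m$, and since $\m$ is maximal one has $\Lambda_\m/\m^N\Lambda_\m\cong\Lambda\otimes_R R/\m^N=\Lambda/\m^N\Lambda$, a quotient of $\Lambda$. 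Chasing the maps, $\uppsi_2\uppsi_1$ factors as $\Lambda\twoheadrightarrow\Lambda/\m^N\Lambda\twoheadrightarrow\Lambda_{\fib}$, so it is onto. Since $\uppsi_1$ and $\uppsi_2$ are ring homomorphisms, so is $\uppsi_2\uppsi_1$, and therefore $I_{\fib}=\Ker(\uppsi_2\uppsi_1)$ is a two-sided ideal of $\Lambda$ with $\Lambda_{\fib}\cong\Lambda/I_{\fib}$ as rings, hence as $\Lambda$-bimodules; this is exactly \eqref{Ifib ses 1}.

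The step I expect to be the main subtlety is the surjectivity of $\uppsi_2\uppsi_1$: the localisation map $\uppsi_1\colon\Lambda\to\Lambda_\m$ is certainly not surjective in general, and the resolution is precisely that $\Lambda_{\fib}$ is $\m$-power torsion, so the part of $\Lambda_\m$ outside the image of $\uppsi_1$ dies in the quotient. Apart from this bookkeeping, the essential ingredient is the local freeness of $K$ (equivalently $L$) on the punctured spectrum, which is immediate from $f$ being an isomorphism there; the remaining manipulations of the ideals $[-]$ under localisation are routine.
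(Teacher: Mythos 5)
Your proposal is correct, and both halves take a genuinely different route from the paper. For the support statement, the paper invokes \cite[6.19(3)]{IW4}, which in one step says that $\Lambda_\m/[K]$ has finite length because $R_\m$ is an isolated singularity; you instead unwind the geometric content behind that citation, arguing directly that $L=f_*\cN$ is locally free off $\m$ (since $f$ is an isomorphism there), that $K\neq0$ (via the nonvanishing of $\Lambda_{\con}$ from \ref{Lambdacon idempotents}) and torsion-free, so $K_\p$ is nonzero free for $\p\subsetneq\m$, hence $[K_\p]=\End_{R_\p}(R_\p^{\oplus a}\oplus K_\p)$ and $(\Lambda_{\fib})_\p=0$. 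This is more hands-on, and has the virtue of making explicit both the geometric input and the nonvanishing of $K$, which the paper leaves implicit in the citation. For the surjectivity of $\uppsi_2\uppsi_1$, the paper takes the cokernel $C$ and shows $C_\n=0$ for all maximal $\n$ (for $\n\neq\m$ because $\Lambda_\fib$ is supported only at $\m$; for $\n=\m$ because $(\uppsi_1)_\m$ is an isomorphism and $(\uppsi_2)_\m$ is onto). Your argument avoids the cokernel entirely: since $\Lambda_\fib$ has finite length, $\m^N$ kills it, so $\uppsi_2\uppsi_1$ factors as $\Lambda\twoheadrightarrow\Lambda/\m^N\Lambda\cong\Lambda_\m/\m^N\Lambda_\m\twoheadrightarrow\Lambda_\fib$. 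Both are valid; the paper's is slightly slicker, while yours isolates what is really going on (that $\Lambda_\fib$ is $\m$-power torsion, which is exactly why the failure of $\uppsi_1$ to be surjective is irrelevant).
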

\begin{proof}
Since $R_\m$ is an isolated singularity, $\Lambda_{\fib}=\Lambda_\m/[K]$ has finite length as an $R_\m$-module by \cite[6.19(3)]{IW4}.  Thus, as an $R$-module, it is supported only at $\m$.  The only thing that remains to be proved is that $\uppsi_2\uppsi_1$ is surjective.  If we let $C$ denote the cokernel, then since $\Lambda_{\fib}$ is supported only at $\m$, $(\Lambda_{\fib})_\n=0$ for all $\n\in\Max R$ with $\n\neq\m$, and so  $C_\n=0$ for all $\n\neq\m$.  But on the other hand $(\uppsi_1)_\m$ is an isomorphism, and $(\uppsi_2)_\m$ is clearly surjective, hence $(\uppsi_2\uppsi_1)_\m$ is also surjective, so $C_\m=0$.  Hence $C=0$.
\end{proof}

\subsection{Complete Local Derived Category Notation}\label{completelocalnotation}
In this subsection we consider the complete local flops setup in \ref{flopscompletelocal}, and fix notation.  Completing the base in \ref{flopslocal} with respect to $\m$ gives $\Spec \mathfrak{R}$ and we consider the formal fibre $\clocCon\colon\mathfrak{U}\to\Spec\mathfrak{R}$.  The above derived equivalence \eqref{derived equivalence} induces an equivalence
\[
\begin{array}{c}
\begin{tikzpicture}
\node (a1) at (0,0) {$\Db(\coh\mathfrak{U})$};
\node (a2) at (4,0) {$ \Db(\mod\widehat{\Lambda}).$};
\draw[->] (a1) -- node[above] {$\scriptstyle\RHom_{\mathfrak{U}}(\widehat{\cV},-)$} node [below] {$\scriptstyle\sim$} (a2);
\end{tikzpicture}
\end{array}
\]
This can be described much more explicitly.  Let $C=\clocCon^{-1}(\m)$ where $\m$ is the unique closed point of $\Spec \mathfrak{R}$. Then the reduced scheme $C^{\redu}=\bigcup _{\indx=1}^n C_\indx$ with $C_\indx\cong\mathbb{P}^1$. Let $\cL_\indx$ denote a line bundle on $\mathfrak{U}$ such that $\cL_\indx\cdot C_i=\updelta_{\indx i}$.  If the multiplicity of $C_\indx$ is equal to one, set $\cM_\indx:=\cL_\indx$, else define $\cM_\indx$ to be given by the maximal extension
\[
0\to\cO_{\mathfrak{U}}^{\oplus(r-1)}\to\cM_\indx\to\cL_\indx\to 0
\]
associated to a minimal set of $r-1$ generators of $H^1(\mathfrak{U},\cL_\indx^{*})$ \cite[3.5.4]{VdB1d}. 

\begin{notation}\label{N notation}
In the complete local flops setup of \ref{flopscompletelocal}, 
\begin{enumerate}
\item Set $\cN_\indx:=\cM_\indx^*$, $\cN_0:=\cO_{\mathfrak{U}}$ and $\cV_{\mathfrak{U}}:=\bigoplus_{\indx=0}^n\cN_\indx$.
\item Set $N_\indx:=H^0(\cN_\indx)$ and $N:=H^0(\cV_\mathfrak{U})$.  Set $N_0:=\mathfrak{R}$, so that $N=\bigoplus_{\indx=0}^n N_\indx$.
\item Put $\AB:=\End_{\mathfrak{R}}(N)$.
\end{enumerate}
\end{notation}
By \cite[3.5.5]{VdB1d}, $\cV_{\mathfrak{U}}$ is a basic progenerator of $\Per(\mathfrak{U},\mathfrak{R})$, and furthermore is a tilting bundle on $\mathfrak{U}$.   The rank of $N_\indx$ as an $\mathfrak{R}$-module,   $\rank_{\mathfrak{R}}^{\phantom 1}N_\indx$, is equal to the scheme-theoretic multiplicity of the curve $C_\indx$ \cite[3.5.4]{VdB1d}. Further, by \cite[3.5.5]{VdB1d} we can write
\begin{eqnarray}\label{complete local tilting}
\widehat{\cV}\cong \cO_{\mathfrak{U}}^{\oplus a_0}\oplus \bigoplus_{\indx=1}^n \cN_\indx^{\oplus a_\indx}\label{a_i decomp}
\end{eqnarray}
for some $a_\indx\in\mathbb{N}$ and so consequently $\widehat{\Lambda}\cong\End_{\mathfrak{R}}(\bigoplus_{\indx=0}^n N_\indx^{\oplus a_\indx})$, hence $\AB$ is the basic algebra morita equivalent to $\widehat{\Lambda}$.

\begin{defin}\label{def basic algebra}
For any $\indxset\subseteq \{0,1,\hdots,n\}$, set
\begin{enumerate}
\item $\cN_\indxset:=\bigoplus_{\indx\in \indxset}\cN_\indx$ and $\cN_{\indxset^c}:=\bigoplus_{\otherindx\notin \indxset}\cN_\otherindx$, so that $\cV_\mathfrak{U}=\cN_\indxset\oplus \cN_{\indxset^c}$.
\item $N_\indxset:=\bigoplus_{\indx\in \indxset}N_\indx$ and $N_{\indxset^c}:=\bigoplus_{\otherindx\notin \indxset}N_\otherindx$, so that $N=N_\indxset\oplus N_{\indxset^c}$.
\setcounter{tempenum}{\theenumi}
\end{enumerate}
There are three important special cases, namely
\begin{enumerate}
\setcounter{enumi}{\thetempenum} 
\item When $\indxset\subseteq\{1,\hdots,n\}$, where we write
\[
\AB_{\indxset}:=\End_{\mathfrak{R}}(N)/[N_{\indxset^c}],
\]
and call $\AB_\indxset$ the {\em contraction algebra associated to $\bigcup_{j\in J}C_j$}.
\item When $\indxset= \{1,\hdots,n\}$, where we call $\AB_\indxset$ the {\em contraction algebra associated to $\clocCon$}, and denote it by 
\[
\CA\cong  \End_{\mathfrak{R}}(N)/[\mathfrak{R}]\cong  \End_{\mathfrak{R}}\left(\oplus_{\indx=1}^n N_\indx\right)/[\mathfrak{R}].
\]
\item When $\indxset=\{0\}$, where we write
\[
\AB_{\fib}:=  \End_{\mathfrak{R}}(N)/[\oplus_{\indx=1}^nN_\indx]\cong\End_{\mathfrak{R}}(\mathfrak{R})/[\oplus_{\indx=1}^nN_\indx]\cong \mathfrak{R}/[\oplus_{\indx=1}^nN_\indx],
\]
and call $\AB_{\fib}$ the {\em fibre algebra associated to $\clocCon$}.
\end{enumerate}
\end{defin}

It follows from the definition that $\AB_{\fib}$ is always commutative.

\begin{remark}
We conjectured in \cite[1.4]{DW1} that $\CA$ distinguishes the analytic type of irreducible contractible flopping curves.  We remark here that $\AB_{\fib}$ certainly does not, as for example inspecting the Laufer flop in \cite[1.3]{DW1} we see that $\AB_{\fib}=\mathbb{C}$, but it is also well-known that $\AB_{\fib}=\mathbb{C}$ for the Atiyah flop.  This already demonstrates that $\CA$ gives more information than $\AB_{\fib}$, although both play a role in the derived symmetry group.
\end{remark}

The following is shown in a similar way to \cite[2.16]{DW1}.

\begin{lemma}\label{Acon and Afib under FBC}
With the complete local setup as above,  for all $\p\in\Spec\mathfrak{R}$,
\begin{enumerate}
\item $(\AB_J)_\p\cong (\AB_\p)_{J}$ and $(\AB_J)_\p\otimes_{\mathfrak{R}_\p}\widehat{\mathfrak{R}_\p}\cong (\widehat{\AB_\p})_{J}$ for all $J\subseteq\{1,\hdots,n\}$.
\item $(\AB_{\fib})_\p\cong (\AB_\p)_{\fib}$ and $(\AB_{\fib})_\p\otimes_{\mathfrak{R}_\p}\widehat{\mathfrak{R}_\p}\cong (\widehat{\AB_\p})_{\fib}$.
\end{enumerate}
\end{lemma}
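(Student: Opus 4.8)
The plan is to reduce everything to the behaviour of the bimodules $[N_{J^c}]$ (respectively $[\oplus_{i=1}^n N_i]$) under localization and completion, exactly mirroring the argument of \cite[2.16]{DW1} for the case of a single curve. The key point is that $\AB = \End_{\mathfrak{R}}(N)$ and $\AB_J = \End_{\mathfrak{R}}(N)/[N_{J^c}]$ are built from $\Hom$-modules between finitely generated $\mathfrak{R}$-modules, and both localization at $\p$ and completion at $\p$ are flat operations that commute with $\Hom$ when the source module is finitely presented. Concretely, $(\End_{\mathfrak{R}}(N))_\p \cong \End_{\mathfrak{R}_\p}(N_\p)$, and similarly after completing. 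So the entire content is to show that forming the quotient by the ideal $[N_{J^c}]$ is compatible with these base changes.

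First I would record the general fact: if $M, M'$ are finitely generated $\mathfrak{R}$-modules and $M_0$ is a summand of $M$, then for any flat $\mathfrak{R}$-algebra map $\mathfrak{R}\to\mathfrak{S}$ that is moreover localization or completion at a prime, one has a natural identification
\[
\big([M_0]\subseteq\End_{\mathfrak{R}}(M)\big)\otimes_{\mathfrak{R}}\mathfrak{S}\;\cong\;[M_0\otimes_{\mathfrak{R}}\mathfrak{S}]\subseteq\End_{\mathfrak{S}}(M\otimes_{\mathfrak{R}}\mathfrak{S}).
\]
The inclusion "$\subseteq$" is automatic: a morphism factoring through $\add M_0$ base-changes to one factoring through $\add(M_0\otimes\mathfrak{S})$. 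For the reverse inclusion, one uses that $[M_0]$ is the image of the composition map $\Hom_{\mathfrak{R}}(M_0^{\oplus k},M)\otimes\Hom_{\mathfrak{R}}(M,M_0^{\oplus k})\to\End_{\mathfrak{R}}(M)$ for $k\gg0$, and that flat base change commutes with such images of maps between finitely generated modules (all the relevant $\Hom$'s are finitely generated since $\mathfrak{R}$ is noetherian and $M$, $M_0$ are finite). Applying this with $\mathfrak{S}=\mathfrak{R}_\p$ and then with $\mathfrak{S}=\widehat{\mathfrak{R}_\p}$, and taking $M=N$, $M_0=N_{J^c}$, immediately yields part (1): $(\AB_J)_\p = (\End_{\mathfrak{R}}(N)/[N_{J^c}])_\p \cong \End_{\mathfrak{R}_\p}(N_\p)/[N_{J^c}{}_\p] = (\AB_\p)_J$, and likewise after completing, using that $N_\p = N_{0,\p}\oplus\bigoplus N_{i,\p}$ is still a Krull--Schmidt-type decomposition realizing the correct idempotents of $\End_{\mathfrak{R}_\p}(N_\p)$. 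Part (2) is the special case $J^c=\{1,\dots,n\}$ (equivalently $J=\{0\}$), after noting $\AB_{\fib}=\mathfrak{R}/[\oplus_{i=1}^n N_i]$, so the same flat base change gives $(\AB_{\fib})_\p\cong\mathfrak{R}_\p/[\oplus N_{i,\p}]\cong(\AB_\p)_{\fib}$ and the completed version analogously.

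The one genuine subtlety — and the step I would expect to require care rather than the formal base-change nonsense — is matching up the \emph{indexing of idempotents}: I must check that after localizing or completing, the summand $N_{J^c,\p}$ (resp.\ its completion) really is the summand of $N_\p$ that cuts out the complement of $J$ in the Morita-equivalent basic algebra, i.e.\ that no new indecomposable summands appear and none of the $N_i$ become decomposable or isomorphic to each other in a way that scrambles $J$. Completion is fine because $\mathfrak{R}$ is already complete local in setup \ref{flopscompletelocal}, so $\widehat{\mathfrak{R}_\p}$-decompositions refine $\mathfrak{R}_\p$-decompositions compatibly (cf.\ \cite[1.15]{LW}); for localization one uses that $N_i$ has constant rank equal to the multiplicity of $C_i$ and that $N_{i,\p}$ remains indecomposable or becomes free according to whether $\p$ lies in the support of the relevant curve, but in all cases the two-sided ideal $[N_{J^c}]$ localizes to $[N_{J^c,\p}]$ by the paragraph above, which is all that is actually needed for the stated isomorphisms. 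Once this bookkeeping is in place, both parts follow, and the argument is genuinely parallel to \cite[2.16]{DW1}, as the statement claims.
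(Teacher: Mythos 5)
Your proposal is correct, and it is essentially the argument that \cite[2.16]{DW1} carries out (the paper's proof is just a pointer to that reference). The core observation --- that $[M_0]$ is the image of the composition pairing $\Hom_{\mathfrak{R}}(M_0,M)\otimes_{\mathfrak{R}}\Hom_{\mathfrak{R}}(M,M_0)\to\End_{\mathfrak{R}}(M)$ between finitely generated $\mathfrak{R}$-modules, and that flat base change commutes with $\Hom$ out of a finitely presented module and with images --- is exactly what makes the factor ideals $[N_{J^c}]$ and $[\oplus_{i\geq1}N_i]$ compatible with $(-)_\p$ and $(-)\otimes_{\mathfrak{R}_\p}\widehat{\mathfrak{R}_\p}$. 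One small remark: the ``subtlety'' you flag about idempotent indexing is not actually present here, because the ideal $[N_{J^c}]$ depends only on the fixed submodule $N_{J^c}\subseteq N$, not on a Krull--Schmidt decomposition of its base change; $N_{J^c}$ base-changes to $(N_{J^c})_\p$ (and then to its completion) on the nose, and that is the object defining $(\AB_\p)_J$ and $(\widehat{\AB_\p})_J$, so no bookkeeping with indecomposables is required.
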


\begin{notation}\label{TandSnotation}
For the Zariski local flops setup \ref{flopslocal}, and its formal fibre \ref{flopscompletelocal}, as notation for the remainder of this paper,
\begin{enumerate}
\item Write $\mathbb{F}\colon\Mod\AB\to\Mod\widehat{\Lambda}$ for the morita equivalence induced from \eqref{complete local tilting}.
\item For each $i\in\{1,\hdots,n\}$ set $E_i:=\cO_{C_i}(-1)$, considered as a complex in degree zero.  Further, set $E_0:=\omega_C[1]$. 
\item For each $i\in\{0,1,\hdots,n\}$, define the simple $\Lambda$-modules $T_i$ to be the modules corresponding to the perverse sheaves $E_i$ across the derived equivalence \eqref{derived equivalence}.  Further, set $S_i:=\mathbb{F}^{-1}\widehat{T}_i$, which are the corresponding simple $\AB$-modules.
\item  For any $J\subseteq\{1,\hdots,n\}$, viewing $\mathbb{F}\AB_J$ and $\mathbb{F}\AB_{\fib}$ as $\Lambda$-modules via restriction of scalars, we put $\cE_J:=\mathbb{F}\AB_J\otimes_{\Lambda}^{\bf L}\cV$ and $\cE_{\fib}:=\mathbb{F}\AB_{\fib}\otimes_{\Lambda}^{\bf L}\cV$.  As in \cite[3.7]{DW1}, both are perverse sheaves, with $\cE_J$ concentrated in degree zero, and $\cE_\fib$ concentrated in degree $-1$.
\end{enumerate}

The above notation may be summarised as follows.
\begin{equation}\label{notation summary}
\begin{array}{c}
\begin{tikzpicture}[xscale=1]
\node (d1) at (2,0.8) {$\Db(\Qcoh U)$};
\node (e1) at (6,0.8) {$\Db(\Mod \Lambda)$};
\node (f1) at (9,0.8) {$\Db(\Mod \widehat{\Lambda})$};
\node (g1) at (12,0.8) {$\Db(\Mod \AB)$};
\draw[->] (d1.5) to  node[above] {$\scriptstyle\RHom_U(\cV,-)$} (e1.175);
\draw[<-] (d1.-5) to node [below]  {$\scriptstyle-\otimes_{\Lambda}^{\bf L}\cV$} (e1.-175);
\draw[->] (e1.5) to  node[above] {$\scriptstyle \otimes_{R_\m}\widehat{R}$} (f1.175);
\draw[<-] (e1.-5) to node [below]  {$\scriptstyle\rest$} (f1.-175);
\draw[<-]  (f1) -- node[above] {$\scriptstyle \mathbb{F}$} (g1);
\node (d2) at (2,0) {$E_i$};
\draw[<->] (d1.east |- 3.5,0) -- (e1.west |- 5.5,0);
\node (e2) at (6,0) {$T_i$};
\draw[|->] (e1.east |- 6.5,0) -- (f1.west |- 8.5,0);
\node (f2) at (9,0.08) {$\widehat{T}_i$};
\draw[<->] (f1.east |- 9.5,0) -- (g1.west |- 11.25,0);
\node (g2) at (12,0) {$S_i$};
\node (d3) at (2,-0.5) {$\cE_J$};
\draw[<->] (d1.east |- 3.5,-0.5) -- (e1.west |- 5.25,-0.5);
\node (e3) at (6.1,-0.5) {$\mathbb{F}\AB_J$};
\draw[<-|] (e1.east |- 6.85,-0.5) -- (f1.west |- 8.5,-0.5);
\node (f3) at (9.1,-0.5) {$\mathbb{F}\AB_J$};
\draw[<->] (f1.east |- 9.5,-0.5) -- (g1.west |- 11.25,-0.5);
\node (g3) at (12,-0.5) {$\AB_J$};

\node (d4) at (2,-1) {$\cE_\fib$};
\draw[<->] (d1.east |- 3.5,-1) -- (e1.west |- 5.25,-1);
\node (e4) at (6.1,-1) {$\mathbb{F}\AB_\fib$};
\draw[<-|] (e1.east |- 6.85,-1) -- (f1.west |- 8.5,-1);
\node (f4) at (9.1,-1) {$\mathbb{F}\AB_\fib$};
\draw[<->] (f1.east |- 9.5,-1) -- (g1.west |- 11.25,-1);
\node (g4) at (12,-1) {$\AB_\fib$};
\end{tikzpicture}
\end{array}
\end{equation}

\end{notation}

\subsection{Maximal Modification Algebras}\label{MMA prelim}
Later the fibre twist autoequivalence requires a restriction to $\mathds{Q}$-factorial singularities and some other technical results, which we review here.  For a commutative noetherian local ring $(R,\m)$ and $M\in\mod R$ recall that the \emph{depth} of $M$ is defined to be
\[
\depth_R M:=\inf \{ i\geq 0\mid \Ext^i_R(R/\m,M)\neq 0 \},
\]
which coincides with the maximal length of an $M$-regular sequence.  Keeping the assumption that $(R,\m)$ is local we say that $M\in\mod R$ is \emph{maximal Cohen--Macaulay} (=CM) if $\depth_R M=\dim R$.  

In the non-local setting, if $R$ is an arbitrary commutative noetherian ring we say that $M\in\mod R$ is \emph{CM} if $M_\p$ is CM for all prime ideals $\p$ in $R$, and we denote the category of CM $R$-modules by $\CM R$.  We say that $R$ is a \emph{CM ring} if $R\in\CM R$, and if further $\id_{R}R<\infty$, we say that $R$ is \emph{Gorenstein}. We write $\refl R$ for the category of reflexive $R$-modules.

Recall the following \cite{IW4}.
\begin{defin}\label{MMintro}
Suppose that $R$ is a $d$-dimensional CM ring.  We call $N\in\refl R$ a \emph{maximal modifying (=MM) module} if 
\[
\add N=\{ X\in\refl R\mid \End_{R}(N \oplus X)\in\CM R  \}.
\]
If $N$ is an MM module, we call $\End_R(N)$ a \emph{maximal modification algebra (=MMA).}  
\end{defin}

Throughout, we say a normal scheme $X$ is {\em $\mathds{Q}$-factorial} if for every Weil divisor $D$, there exists $n\in\mathbb{N}$ for which $nD$ is Cartier; this condition can be checked on the stalks $\cO_{X,x}$ of the closed points $x\in X$.  However this property is not complete local, so we say  $X$ is {\em complete locally $\mathds{Q}$-factorial} if the completion $\widehat{\cO}_{X,x}$ is $\mathds{Q}$-factorial for all closed points $x\in X$.

Recall that if $X$ and $Y$ are varieties over $\K$, then a projective birational morphism $f\colon Y\to X$ is called {\em crepant} if $f^*\omega_X=\omega_Y$.  A {\em $\mathds{Q}$-factorial terminalization} of $X$ is a crepant projective birational morphism $f\colon Y\to X$ such that $Y$ has only $\mathds{Q}$-factorial terminal singularities. When $Y$ is furthermore smooth, we call $f$ a {\em crepant resolution}. 

In our geometric setup later, we will require that localizations of MMAs are MMAs.  There is currently no known purely algebraic proof of this, but the following geometric proof suffices for our purposes.
\begin{thm}\label{MMAsLocalize}
Suppose that $R$ is a three-dimensional Gorenstein normal domain over $\mathbb{C}$, and that $\Lambda$ is derived equivalent to a $\mathds{Q}$-factorial terminalization of $\Spec R$.  Then for all $\m\in\Max R$, $\Lambda_\m$ is an MMA of $R_\m$.
\end{thm}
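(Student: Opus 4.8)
The plan is to argue geometrically: rather than attempt to show directly that the MM property localizes --- an open problem --- I would transfer the question to a $\mathds{Q}$-factorial terminalization, where $\mathds{Q}$-factoriality \emph{does} survive Zariski localization. The point is that localizing $\Spec R$ at $\m$ leaves the local rings at the closed points of the terminalization untouched, so, unlike passing to the completion, no $\mathds{Q}$-factoriality is lost.

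First I would realize $\Lambda$ geometrically. Let $f\colon Y\to\Spec R$ be the given $\mathds{Q}$-factorial terminalization. By Van den Bergh, $Y$ carries a tilting bundle, and using the structure theory of \cite{IW4} for algebras derived equivalent to a $\mathds{Q}$-factorial terminalization I would arrange this to be a bundle $\cV$ with $\Lambda\cong\End_Y(\cV)\cong\End_R(N)$ for a maximal modifying $R$-module $N:=f_*\cV$; in particular $\Lambda$ is an MMA, which is the global assertion. Now fix $\m\in\Max R$ and base change along the flat map $\Spec R_\m\hookrightarrow\Spec R$, so that $Y_\m:=f^{-1}(\Spec R_\m)$ is an open subscheme of $Y$ and $f_\m\colon Y_\m\to\Spec R_\m$ is a crepant projective birational morphism. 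Since the closed points of $Y_\m$ occur among those of $Y$ with the same stalks, and both terminality and $\mathds{Q}$-factoriality may be checked on these stalks, $f_\m$ is again a $\mathds{Q}$-factorial terminalization.

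Next I would check that $\cV|_{Y_\m}$ is a tilting bundle on $Y_\m$. The vanishing $\Ext^i_{Y_\m}(\cV|_{Y_\m},\cV|_{Y_\m})=0$ for $i>0$ is the $\m$-localization of $\Ext^i_Y(\cV,\cV)=0$, via flat base change for the coherent cohomology of $f$; this is legitimate because $f$ has at most one-dimensional fibres, so $\RDerived f_*$ has cohomological amplitude $[0,1]$ and all higher cohomology over the affine base $\Spec R$ vanishes. Generation of $\Db(\coh Y_\m)$ by $\cV|_{Y_\m}$ follows since $j^*\colon\Db(\coh Y)\to\Db(\coh Y_\m)$ is essentially surjective (coherent sheaves extend across the open subscheme $Y_\m$) and $\cV$ generates $\Db(\coh Y)$. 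As $\cV$ is coherent, localization likewise gives $\End_{Y_\m}(\cV|_{Y_\m})\cong\End_Y(\cV)_\m\cong\Lambda_\m$ (and $\End_R(N)_\m\cong\End_{R_\m}(N_\m)$). Hence $\Lambda_\m$ is derived equivalent to the $\mathds{Q}$-factorial terminalization $f_\m$ of the three-dimensional Gorenstein normal local domain $R_\m$, and applying the global assertion (equivalently, \cite{IW4}) over $R_\m$ shows $\Lambda_\m$ is an MMA of $R_\m$.

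The main obstacle I anticipate is the preservation of the $\mathds{Q}$-factorial terminalization structure under localization: this is exactly the step that forces one to stay Zariski-local rather than complete-local, since $\mathds{Q}$-factoriality of $R_\m$ need not pass to $\widehat{R_\m}$ --- and it is precisely this failure that leaves the purely algebraic ``MM-ness localizes'' question open. Secondary points requiring care are the generation claim for $\cV|_{Y_\m}$, and, in the first step, leaning on \cite{IW4} to present $\Lambda$ honestly as $\End_Y(\cV)$ for a tilting bundle rather than merely as an algebra abstractly derived equivalent to $Y$.
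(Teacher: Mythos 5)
Your strategy—base change to the Zariski localization $Y_\m\hookrightarrow Y$ rather than the completion, because $\mathds{Q}$-factoriality is checked on stalks and hence survives localization—is correct, and your observations that $Y_\m$ is again a $\mathds{Q}$-factorial terminalization of $\Spec R_\m$ and that tilting data restricts across the open immersion match the first step of the paper's proof. But the argument has two genuine gaps. First, Van den Bergh's tilting bundle exists only when $f$ has fibres of dimension at most one; the theorem makes no such assumption, and when $R$ is Gorenstein canonical but not terminal the terminalization extracts a crepant divisor, so has two-dimensional fibres for which no tilting bundle is available. Second, and more seriously, the final step---``applying the global assertion (equivalently, \cite{IW4}) over $R_\m$''---begs the question. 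The implication ``derived equivalent to a $\mathds{Q}$-factorial terminalization $\Rightarrow$ MMA'' is established in \cite{IW4,IW5} over \emph{complete} local rings, where Krull--Schmidt is available. It cannot be transferred to $R_\m$ by passing to $\widehat{R_\m}$, precisely because the stalk completions of $Y_\m$ may fail to be $\mathds{Q}$-factorial—the very obstruction you flag—and there is no pre-existing result to invoke for the non-complete local ring $R_\m$ itself: that is exactly what the theorem is proving. Showing $N_\m$ is MM over $R_\m$ is the hard part, not a black box.

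The paper sidesteps both difficulties with a singularity-category argument. After base changing to $X'=f^{-1}(\Spec R_\m)$, it notes the stalks of $\cO_{X'}$ are isolated $\mathds{Q}$-factorial cDV hypersurfaces, uses the embedding $\Dsg(\Lambda_\m)\hookrightarrow\bigoplus_{x\in\Sing X'}\uCM \cO_{X',x}$ of \cite[3.2(1)]{IW5} and Dao's theorem \cite[3.1(1)]{DaoNCCR} to deduce that $\Dsg(\Lambda_\m)$ is rigid-free, and then invokes the intrinsic MMA criterion \cite[2.14]{IW5} together with the fact that $\Lambda$ has isolated singularities \cite[4.2(2)]{IW5}. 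This route is agnostic about fibre dimension, never needs to produce an MM module over $R_\m$ explicitly, and crucially does not pass through any statement about a completion.
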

\begin{proof}
For $\m\in\Max R$, after base change
\[
\begin{array}{c}
\begin{tikzpicture}[yscale=1.25]
\node (Xp) at (-1,0) {$X^\prime$}; 
\node (X) at (1,0) {$X$};
\node (Rp) at (-1,-1) {$\Spec R_\m$}; 
\node (R) at (1,-1) {$\Spec R$};
\draw[->] (Xp) to node[above] {$\scriptstyle k$} (X);
\draw[->] (Rp) to node[above] {$\scriptstyle j$} (R);
\draw[->] (Xp) --  node[left] {$\scriptstyle \upvarphi$}  (Rp);
\draw[->] (X) --  node[right] {$\scriptstyle f$}  (R);
\end{tikzpicture}
\end{array}
\]
$\Lambda_\m$ is derived equivalent to $X^\prime$.  But the stalks of $\cO_{X^\prime}$ are isomorphic to stalks of $\cO_X$, and so in particular all stalks of  $\cO_{X^\prime}$ are isolated $\mathds{Q}$-factorial hypersurfaces.  By \cite[3.2(1)]{IW5}
\[
\Dsg(\Lambda_{\m})\hookrightarrow\bigoplus_{x\in\Sing X^\prime}\uCM \cO_{X^\prime,x}
\]  
and so $\Dsg(\Lambda_\m)$ is rigid-free since each $\uCM \cO_{X^\prime,x}$ is \cite[3.1(1)]{DaoNCCR}.  Since $\Lambda$ has isolated singularities \cite[4.2(2)]{IW5}, this implies that $\Lambda_\m$ is an MMA \cite[2.14]{IW5}.
\end{proof}

\subsection{Mutation Notation}\label{mut prelim}
Throughout this subsection we consider the complete local flops setting \ref{flopscompletelocal}, and use notation from \ref{N notation} and \ref{def basic algebra}, so in particular $\mathfrak{R}$ is an isolated complete local Gorenstein $3$-fold, and $\AB:=\End_{\mathfrak{R}}(N)$.   We set $(-)^*:=\Hom_\mathfrak{R}(-,\mathfrak{R})$.

Given our choice of summand $N_J$, we then mutate.  

\begin{setup}\label{setup2} As in \ref{def basic algebra}, write $N_J=\bigoplus_{j\in J}N_j$ as a direct sum of indecomposables.  For each $j\in J$, consider a \emph{minimal right $(\add N_{J^c})$-approximation}
\[
V_j\xrightarrow{a_j}N_j
\]
of $N_j$, which by definition means that
\begin{enumerate}
\item $V_j\in\add N_{J^c}$ and $(a_j\cdot)\colon\Hom_\mathfrak{R}(N_{J^c},V_j)\to\Hom_\mathfrak{R}(N_{J^c},N_j)$ is surjective.
\item If $g\in\End_\mathfrak{R}(V_j)$ satisfies $a_j=a_jg$, then $g$ is an automorphism.
\end{enumerate}
Since $\mathfrak{R}$ is complete, such an $a_j$ exists and is unique up to isomorphism. Thus there are exact sequences
\begin{gather}
0\to \Ker a_j\xrightarrow{c_j} V_j\xrightarrow{a_j}N_j\label{K0prime}\\
\nonumber \phantom{.}0\to \Hom_\mathfrak{R}(N_{J^c},\Ker a_j)\xrightarrow{c_j\cdot } \Hom_\mathfrak{R}(N_{J^c},V_j)\xrightarrow{a_j\cdot }\Hom_\mathfrak{R}(N_{J^c},N_j)\to 0.
\end{gather}
Summing the sequences \eqref{K0prime} over all $j\in J$ gives exact sequences
\begin{gather}
0\to \Ker a_J\xrightarrow{c_J} V_J\xrightarrow{a_J}N_J\label{K0}\\
\nonumber \phantom{.}0\to \Hom_\mathfrak{R}(N_{J^c},\Ker a_J)\xrightarrow{c_J\cdot } \Hom_\mathfrak{R}(N_{J^c},V_J)\xrightarrow{a_J\cdot }\Hom_\mathfrak{R}(N_{J^c},N_J)\to 0.
\end{gather}
Note that applying $\Hom_\mathfrak{R}(N,-)$ to \eqref{K0} yields an exact sequence
\begin{eqnarray}
0\to \Hom_\mathfrak{R}(N,\Ker a_J)\xrightarrow{c_J\cdot} \Hom_\mathfrak{R}(N,V_J)\xrightarrow{a_J\cdot}\Hom_\mathfrak{R}(N,N_J)\to\AB_J\to 0\label{begin lambdacon} 
\end{eqnarray}
of $\AB$-modules.

Dually, for each $j\in J$, consider a minimal right $(\add N_{J^c}^*)$-approximation
\[
U_j^*\xrightarrow{b_j}N_j^*
\]
of $N_j^*$, thus 
\begin{gather}
\nonumber 0\to \Ker b_j\xrightarrow{d_j} U_j^*\xrightarrow{b_j}N_j^*\\
\nonumber 0\to \Hom_\mathfrak{R}(N_{J^c}^*,\Ker b_j)\xrightarrow{d_j\cdot }
\Hom_\mathfrak{R}(N_{J^c}^*,U_j^*)\xrightarrow{b_j\cdot }
\Hom_\mathfrak{R}(N_{J^c}^*,N_j^*) 
\to 0
\end{gather}
are exact.  Summing over all $j\in J$ gives exact sequences
\begin{gather}
0\to \Ker b_J\xrightarrow{d_J} U_J^*\xrightarrow{b_J}N_J^*\label{K1}\\
\nonumber \phantom{.}0\to \Hom_\mathfrak{R}(N_{J^c}^*,\Ker b_J)\xrightarrow{d_J\cdot }
\Hom_\mathfrak{R}(N_{J^c}^*,U_J^*)\xrightarrow{b_J\cdot }
\Hom_\mathfrak{R}(N_{J^c}^*,N_J^*) 
\to 0.
\end{gather}
\end{setup}

\begin{defin}\label{mut defin main}
With notation as above, in particular $\AB:=\End_\mathfrak{R}(N)$, we define the \emph{left mutation} of $N$ at $N_J$ as
\[
\nuJ N:=N_{J^c}\oplus (\Ker b_J)^*,
\]
and set $\nuJ\AB:=\End_\mathfrak{R}(\nuJ N)$.
\end{defin}

One of the key properties of mutation is that it always gives rise to a derived equivalence. With the setup as above, the derived equivalence between $\AB$ and $\nuJ\AB$ is given by a tilting $\AB$-module $T_J$ constructed as follows.  There is an exact sequence
\begin{eqnarray}
0\to N_J\xrightarrow{b_J^*} U_J\xrightarrow{d_J^*} (\Ker b_J)^*\label{key track mut}
\end{eqnarray}
obtained by dualizing \eqref{K1}.  Applying $\Hom_\mathfrak{R}(N,-)$ induces $(b_J^*\cdot )\colon\Hom_\mathfrak{R}(N,N_J)\to\Hom_\mathfrak{R}(N,U_J)$, so denoting the cokernel by $C_J$ there is an exact sequence
\begin{eqnarray}
0\to \Hom_\mathfrak{R}(N,N_J)\xrightarrow{ b_J^*\cdot} \Hom_\mathfrak{R}(N,U_J)\to C_J\to 0.
\label{defin of CI}
\end{eqnarray}
The tilting $\AB$-module $T_J$ is defined to be $
T_J:=\Hom_\mathfrak{R}(N,N_{J^c})\oplus C_J$.  It turns out that $\End_{\AB}(T_J)\cong \nuJ\AB$ \cite[6.7, 6.8]{IW4}, and there is always an equivalence 
\[
\Upphi_J:=\RHom(T_J,-)\colon\Db(\mod\AB)\xrightarrow{\sim} \Db(\mod\nuJ\AB),
\]
which is called the {\em mutation functor} \cite[6.8]{IW4}.

\section{On the Braiding of Flops}

In this section we establish the braiding of flop functors in dimension three, and describe the combinatorial objects that allow us to read off the length of the braid relations which appear.  To account for the inconvenient fact that, algebraically, curves are often forced to flop together, we begin by establishing braiding in the complete local case, before addressing the Zariski local and global cases later.  Thus throughout this section, we will assume that the curves are all individually floppable.

\subsection{Moduli Tracking}\label{moduli tracking section}
Throughout this subsection we consider the complete local flops setup of \ref{flopscompletelocal}, and use the notation from \S\ref{completelocalnotation}.  Thus there is a flopping contraction $\clocCon\colon \mathfrak{U}\to \Spec \mathfrak{R}$ of $3$-folds, where $\mathfrak{U}$ has Gorenstein terminal singularities, and $\mathfrak{U}$ is derived equivalent to $\AB:=\End_{\mathfrak{R}}(N)$ from \ref{N notation}, where $N=\bigoplus_{i=0}^nN_i$.  The $N_i$ with $i>0$ correspond to the $n$ curves in the exceptional locus.

To prove the braiding of flops in this setting, we will heavily use the following moduli tracking result.  As notation, we present $\AB$ as a quiver with relations, and consider King stability; see for example \cite[\S5]{HomMMP} for a brief introduction in this setting. In particular, we define the dimension vector $\rk\AB:=(\rank_{\mathfrak{R}}N_i)_{i=0}^n$, and denote the space of stability parameters by $\Uptheta$.  

Given $\upvartheta\in\Uptheta$, denote by $\cS_{\upvartheta}(\AB)$  the full subcategory of finite length $\AB$-modules which has as objects the $\upvartheta$-semistable objects, and denote by $\cS_{\rk,\upvartheta}(\AB)$  the full subcategory of $\cS_{\upvartheta}(\AB)$ consisting of those objects with dimension vector $\rk\AB$.  We let $\cM_{\rk,\upvartheta}(\AB)$ denote the moduli space of $\upvartheta$-stable $\AB$-representations of dimension vector $\rk\AB$.  

Since by definition any stability condition satisfies $\upvartheta\cdot\rk\AB=0$, the fact that $N_0=\mathfrak{R}$ has rank one then implies that
\[
\upvartheta_{0}=-\sum_{i=1}^n(\rank_{\mathfrak{R}} N_i)\upvartheta_i
\]
and so  $\Uptheta=\mathds{Q}^{n}$, with co-ordinates $\upvartheta_i$ for each $i=1,\hdots,n$.

\begin{thm}[{Moduli Tracking}]\label{moduli main}
In the complete local flops setup of \ref{flopscompletelocal}, choose a subset of curves $J$, equivalently a subset $J\subseteq\{1,\hdots,n\}$.  Applying the mutation setup of \ref{setup2} to $N=\bigoplus_{i=0}^nN_i$ with summand $N_J:=\bigoplus_{j\in J}N_j$, consider the mutation exchange sequence~\eqref{key track mut}
\[
0\to N_j\to U_j\to (\Ker b_j)^*,
\]
for each $j\in J$. By Krull--Schmidt, $U_j$ decomposes into
\[
U_j\cong \bigoplus_{i\notin J}N_i^{\oplus b_{j,i}}
\]
for some collection of $b_{j,i}\geq 0$. Write $\boldb$ for the data $(b_{j,i})$ with $j\in J$, $i\notin J$. Then for any stability parameter $\upvartheta\in\Uptheta$ define the vector $\upnu_{\boldb}\upvartheta$ by
\[
(\upnu_{\boldb}\upvartheta)_i=\left\{\begin{array}{cl} \upvartheta_i+\sum_{j\in J} b_{j,i}\upvartheta_j&\mbox{if }i\notin J \vspace{0.1cm} \\ -\upvartheta_i&\mbox{if }i\in J. \end{array}\right.
\]
If $\upvartheta_j>0$ for all $j\in J$, then
\begin{enumerate}
\item\label{moduli main 1} The mutation functor $\Upphi_J$ restricts to a categorical equivalence
\[
\begin{tikzpicture}
\node (A) at (0,0) {$\mathcal{S}_{\rk,\upvartheta}(\AB)$};
\node (B) at (2.75,0) {$\mathcal{S}_{\rk,\upnu_{\boldb}\upvartheta}(\nuJ\AB)$};
\draw[->] (0.75,0) -- node [above] {$\scriptstyle \Upphi_J$} (1.5,0);
\end{tikzpicture}
\]
where the left-hand side has dimension vector $\rk\AB$, and the right-hand side dimension vector $\rk\nuJ\AB$.  This categorical equivalence preserves $S$-equivalence classes, and  $\upvartheta$-stable modules correspond to $\upnu_{\boldb}\upvartheta$-stable modules. Further, $\upvartheta$ is generic if and only if $\upnu_{\boldb}\upvartheta$ is generic.
\item\label{moduli main 2}  As schemes
\[
\cM_{\rk,\upvartheta}(\AB)\cong \cM_{\rk,\upnu_{\boldb}\upvartheta}(\nuJ\AB).
\]
\end{enumerate}
\end{thm}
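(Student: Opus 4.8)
The plan is to transport everything through the mutation equivalence $\Upphi_J=\RHom(T_J,-)\colon\Db(\mod\AB)\xrightarrow{\sim}\Db(\mod\nuJ\AB)$ of \S\ref{mut prelim}, showing that the hypothesis $\upvartheta_j>0$ for $j\in J$ makes $\Upphi_J$ $t$-exact on the relevant subcategory of semistable modules, and then upgrading this to a morphism of moduli schemes by arguing in families, following the method of \cite[\S5]{HomMMP} for a single vertex. The first step is to compute the action of $\Upphi_J$ on the simple $\AB$-modules $S_i$: using the exchange sequences \eqref{key track mut}, the defining sequence \eqref{defin of CI} of $C_J$, and \cite[6.7, 6.8]{IW4}, one finds $\Upphi_J(S_j)\cong S'_j[-1]$ for $j\in J$ (writing $S'_i$ for the simple $\nuJ\AB$-modules), while for $i\notin J$ the complex $\Upphi_J(S_i)$ is concentrated in degree $0$ and satisfies $[\Upphi_J(S_i)]=[S'_i]+\sum_{j\in J}b_{j,i}[S'_j]$ in $K_0$. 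Consequently, whenever $\Upphi_J(M)$ is a module its dimension vector $\mathbf{d}'$ is obtained from that of $M$ by $d'_i=d_i$ for $i\notin J$ and $d'_j=\sum_{i\notin J}b_{j,i}d_i-d_j$ for $j\in J$, and $(\upnu_{\boldb}\upvartheta)\cdot\mathbf{d}'=\upvartheta\cdot\mathbf{d}$; that is, $\upnu_{\boldb}$ is exactly the transformation of stability parameters dual to this transformation of dimension vectors. Chasing ranks through \eqref{key track mut} gives $\mathbf{d}'=\rk\nuJ\AB$ when $\mathbf{d}=\rk\AB$.

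For part \eqref{moduli main 1}, let $\mathcal{F}_J\subseteq\mod\AB$ be the Serre subcategory of finite length modules whose composition factors all lie in $\{S_j:j\in J\}$. The calculation above shows that $\Upphi_J^{-1}(\mod\nuJ\AB)$ is the tilt of $\mod\AB$ at the torsion pair associated to $\mathcal{F}_J$, and in particular that $\Upphi_J$ is $t$-exact, with image in $\mod\nuJ\AB$, on the corresponding torsion class. The hypothesis $\upvartheta_j>0$ for all $j\in J$ is precisely what places $\cS_{\upvartheta}(\AB)$ inside this torsion class: a $\upvartheta$-semistable module cannot surject onto $S_j$ for $j\in J$, since $\upvartheta_j>0$ then contradicts the defining semistability inequality. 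Hence $\Upphi_J(\cS_{\upvartheta}(\AB))\subseteq\mod\nuJ\AB$, and the same argument applied to quotients of such modules shows that the relevant sub/quotient structure is preserved; combined with the fact that $\Upphi_J$ carries $\upvartheta$-weights to $\upnu_{\boldb}\upvartheta$-weights (from the $K_0$ computation above), one concludes, exactly as in \cite[\S5]{HomMMP} for a single vertex, that $M$ is $\upvartheta$-(semi)stable if and only if $\Upphi_J(M)$ is $\upnu_{\boldb}\upvartheta$-(semi)stable, with Jordan--Hölder factors and hence $S$-equivalence classes corresponding. Running the same argument for the reverse tilt $\Upphi_J^{-1}$, which is legitimate since $(\upnu_{\boldb}\upvartheta)_j=-\upvartheta_j<0$ for $j\in J$ and $\nuJ\nuJ N\cong N$ by \ref{J mut intro}\eqref{J mut intro 1}, produces the inverse functor and fixes the dimension vectors. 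Since semistable, stable and strictly semistable objects of dimension vector $\rk\AB$ are thereby matched with those of dimension vector $\rk\nuJ\AB$, $\upvartheta$ is generic if and only if $\upnu_{\boldb}\upvartheta$ is.

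For part \eqref{moduli main 2}, I would upgrade the preceding to families. Since $T_J$ is perfect as an $\AB$-module, $\RHom(T_J,-)$ commutes with base change, so it applies to any $B$-flat family of $\AB$-representations; when the fibres lie in $\cS_{\rk,\upvartheta}(\AB)$ the result is fibrewise concentrated in degree $0$, so by cohomology and base change it is globally concentrated in degree $0$ with $B$-flat $H^0$, giving a $B$-flat family of $\upnu_{\boldb}\upvartheta$-stable $\nuJ\AB$-representations of dimension vector $\rk\nuJ\AB$. Applied to the universal family this yields a morphism $\cM_{\rk,\upvartheta}(\AB)\to\cM_{\rk,\upnu_{\boldb}\upvartheta}(\nuJ\AB)$; the reverse mutation gives a morphism the other way, and the two composites are induced by the identity at the level of universal families, hence are the identity. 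So the two moduli spaces are isomorphic as schemes. (If $\upvartheta$ is not generic, one argues identically with the GIT quotients parametrising $S$-equivalence classes.)

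The main obstacle is the middle step: identifying the precise torsion pair realised by $\Upphi_J$ and verifying, using $\upvartheta_j>0$, that $\Upphi_J$ is $t$-exact on $\cS_{\upvartheta}(\AB)$ and intertwines $\upvartheta$-stability with $\upnu_{\boldb}\upvartheta$-stability with $S$-equivalence classes preserved. For mutation at a single vertex this is \cite[\S5]{HomMMP}; the genuinely new point here is that the tilt must be carried out at the whole Serre subcategory $\mathcal{F}_J$ simultaneously, which is exactly what makes the numbers $b_{j,i}$ enter the dimension-vector and stability transformations. The family-level argument in the last step is then essentially formal, its one delicate ingredient being the base-change statement that degree-$0$ concentration of the mutation functor persists in flat families.
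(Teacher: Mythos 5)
The paper's own proof is a two-line citation: part \eqref{moduli main 1} is deduced as a special case of \cite[5.12]{HomMMP}, with assumption~(b) of that theorem verified using \cite[2.25, 3.5]{HomMMP}, and part \eqref{moduli main 2} is exactly \cite[5.13]{HomMMP}. Your proposal takes a genuinely different route: you attempt to re-derive the result from first principles. Your strategy --- compute $\Upphi_J$ on simples, identify the torsion pair that $\Upphi_J$ tilts at, use $\upvartheta_j>0$ to place $\cS_\upvartheta(\AB)$ in the torsion class $\cT_J$, match dimension-vector and stability transformations via the $K_0$ computation, and then promote to families by base change --- is the correct moral content of \cite[\S5]{HomMMP}, and the $K_0$/dimension-vector bookkeeping you do (including the verification $(\upnu_{\boldb}\upvartheta)\cdot\mathbf{d}'=\upvartheta\cdot\mathbf{d}$ and $\mathbf{d}'=\rk\nuJ\AB$) is accurate. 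The trade-off is clear: the paper's proof is economical because \cite[5.12]{HomMMP} is already formulated for general summands $N_J$ (not just a single indecomposable), so there is no new content to prove; your argument is self-contained but necessarily reproduces a nontrivial amount of \cite[\S5]{HomMMP}. Your closing remark that the ``genuinely new point'' is handling $|J|>1$ is therefore a slight mischaracterisation --- \cite[5.12]{HomMMP} already handles it --- but this is an honest misconception from not having access to that reference.

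Two points deserve more care. First, the claim that a $\upvartheta$-semistable module cannot surject onto $S_j$ when $\upvartheta_j>0$ depends on the sign convention for King stability; with the convention $\upvartheta(\textrm{sub})\leq 0$ one gets $\upvartheta(\textrm{quotient})\geq 0$, which does \emph{not} exclude $S_j$ as a quotient when $\upvartheta_j>0$, whereas the opposite convention does. Your argument requires the convention to be fixed consistently with the identification of $\cT_J$ as the torsion class $\{M:\Hom(M,\cF_J)=0\}$ on which $\Upphi_J$ is exact, so you should either pin down which convention \cite{HomMMP} uses, or phrase the argument convention-independently by noting which torsion pair $\Upphi_J$ realises directly (via $\cT=\Ker\Ext^1(T_J,-)$ and $\cF=\Ker\Hom(T_J,-)$) and then checking that the semistable subcategory sits on the correct side. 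Second, the paper's citation explicitly verifies assumption~(b) of \cite[5.12]{HomMMP}; in your re-derivation the analogous load is carried by the invocation of $\upnu_J\upnu_JN\cong N$ (your reference to \ref{mu=nu for hyper}), which is legitimate but should be flagged as a hypothesis you are consuming rather than proving.
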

\begin{proof}
(1) This is a special case of \cite[5.12]{HomMMP}, using \cite[2.25, 3.5]{HomMMP} to see that the assumption~(b) of \cite[5.12]{HomMMP} is satisfied.\\
(2) This is \cite[5.13]{HomMMP}.
\end{proof}

The stability parameter space $\Uptheta$ has a wall and chamber structure, and the combinatorics of this turns out to control the braiding.  The strictly semi-stable parameters cut out codimension-one walls, separating the generic stability conditions into chambers.  Within a given chamber, the set of semi-stable representations does not vary.  The following is known.
\begin{prop}\label{C+chamber}
In the complete local flops setup of \ref{flopscompletelocal}\begin{enumerate}
\item\label{C+chamber 1}  The region
\[
C_+:=\{\upvartheta\in\Uptheta\mid \upvartheta_i>0\mbox{ for all }i>0\}
\]
of $\Uptheta$ is a chamber.
\item\label{C+chamber 2}  $\Uptheta$ has a finite number of chambers, and the walls are given by a finite collection of hyperplanes containing the origin.  The co-ordinate hyperplanes $\upvartheta_i=0$ are included in this collection.
\item\label{C+chamber 3} Considering iterated mutations at indecomposable summands, tracking the chamber $C_+$ from $\upnu_{j_1}\hdots\upnu_{j_t}\AB$ back to $\Uptheta$ gives all the chambers of $\Uptheta$.
\end{enumerate}
\end{prop}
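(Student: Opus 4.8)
The plan is to assemble all three statements from the GIT description of minimal models set up in \cite[\S5--\S7]{HomMMP}, together with two classical facts about terminal $3$-folds: that $\Spec\mathfrak{R}$ has only finitely many minimal models, and that any two of them are connected by a sequence of flops \cite{KollarFlops}. Throughout, one works with the derived equivalence $\Db(\coh\mathfrak{U})\simeq\Db(\mod\AB)$ induced by the tilting bundle $\cV_{\mathfrak{U}}$ of \ref{N notation}, under which the dimension vector $\rk\AB=(\rank_{\mathfrak{R}}N_i)_{i=0}^n$ is the class of a closed point of $\mathfrak{U}$, and with the resulting dictionary identifying chambers of $\Uptheta$ with the minimal models of $\Spec\mathfrak{R}$ reachable from $\mathfrak{U}$.

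For \eqref{C+chamber 1}, I would identify $C_+$ with the chamber attached to $\mathfrak{U}$ itself. By \cite[\S5]{HomMMP} some generic $\upvartheta\in C_+$ satisfies $\cM_{\rk,\upvartheta}(\AB)\cong\mathfrak{U}$ with universal family $\cV_{\mathfrak{U}}$, and for such $\upvartheta$ the $\upvartheta$-semistable modules of dimension vector $\rk\AB$ are exactly the point objects of $\Per(\mathfrak{U},\mathfrak{R})$, a set intrinsic to $\mathfrak{U}$ which does not change as $\upvartheta$ ranges over the generic locus of $C_+$. Hence no wall meets the interior of $C_+$, so $C_+$ is contained in a single chamber; and since $C_+$ is a full-dimensional open rational cone whose bounding hyperplanes $\upvartheta_i=0$ are walls by \eqref{C+chamber 2}, it must be that entire chamber.

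For \eqref{C+chamber 2}, finiteness of the minimal models of $\Spec\mathfrak{R}$, a consequence of the termination of $3$-fold flops, gives finitely many chambers; the facets of each chamber record the finitely many partial contractions of the corresponding minimal model, so the whole collection of walls is finite, and every wall is a hyperplane through the origin since King (semi)stability is cut out by the sign of the linear functional $\upvartheta\cdot\underline{d}$ evaluated on dimension vectors $\underline{d}$ of subobjects. The coordinate hyperplane $\upvartheta_i=0$ is a wall because it is exactly the locus on which the simple module $S_i$, which by \ref{TandSnotation} corresponds to $\cO_{C_i}(-1)$, occurs as a strictly semistable factor; on the geometric side it is the wall along which the single curve $C_i$ is contracted.

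For \eqref{C+chamber 3}, the plan is to iterate the wall-crossing formula of \ref{moduli main} and invoke connectedness of minimal models. Applying \ref{moduli main} with $J=\{j\}$ and $\upvartheta\in C_+$, the linear map $\upnu_{\boldb}$ carries $C_+$ onto the chamber of the stability space of $\upnu_j\AB$ attached to the flop of $\mathfrak{U}$ at $C_j$, so tracking the $C_+$-chamber of $\upnu_j\AB$ back through $\upnu_{\boldb}^{-1}$ lands it on the chamber of $\Uptheta$ adjacent to $C_+$ across $\upvartheta_j=0$. Iterating, the chamber obtained by tracking $C_+$ back from $\upnu_{j_1}\cdots\upnu_{j_t}\AB$ is the one attached to the $3$-fold obtained from $\mathfrak{U}$ by the corresponding sequence of single-curve flops. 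Since every exceptional curve is individually floppable here, each minimal model of $\Spec\mathfrak{R}$ is reached from $\mathfrak{U}$ by such a sequence \cite{KollarFlops}, and every chamber is attached to a minimal model, so every chamber arises in this way. The step I expect to be the main obstacle is this last piece of bookkeeping: one must verify that a \emph{single} mutation at an \emph{indecomposable} summand $N_j$ corresponds to crossing a \emph{single} facet of $C_+$; equivalently, that the relative interior of the face $\{\upvartheta_j=0,\ \upvartheta_i>0 \text{ for } i\neq j\}$ meets no further wall, so it has a unique adjacent chamber, and conversely that each facet-crossing is realised algebraically by such a mutation rather than by a more elaborate tilting equivalence. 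Both follow from \cite[\S6]{IW4} together with \cite[\S5--\S7]{HomMMP}: over the relative interior of that face only $C_j$ is contracted, so only the hyperplane $\upvartheta_j=0$ passes through it, and the associated flop is exactly $\upnu_j$. Granting this, parts \eqref{C+chamber 1}--\eqref{C+chamber 3} follow, which is why the result can be attributed directly to \cite[\S5--\S7]{HomMMP}.
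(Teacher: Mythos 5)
The paper's proof is a one-line citation of \cite[5.16, 5.23]{HomMMP}, so your sketch is necessarily longer; but it reconstructs exactly what those results say and follows the same framework (King stability for $\AB$, the moduli-tracking formula of \ref{moduli main}, and finiteness/connectedness of minimal models via \cite{KollarFlops}), so the approach is essentially the same, just with the details unpacked. A few remarks, none fatal. For \eqref{C+chamber 1}, the claim that the $\upvartheta$-stables are exactly the point objects of $\Per(\mathfrak{U},\mathfrak{R})$ and do not vary over the generic locus of $C_+$ should explicitly be attributed to \cite[\S5.2]{Joe}, which the paper cites for precisely this purpose elsewhere. For \eqref{C+chamber 3}, the intermediate sentence ``the linear map $\upnu_{\boldb}$ carries $C_+$ onto the chamber \dots attached to the flop of $\mathfrak{U}$ at $C_j$'' is misworded: $\upnu_{\boldb}$ maps $C_+$ of $\Uptheta_\AB$ to the chamber of $\Uptheta_{\upnu_j\AB}$ corresponding to $\mathfrak{U}$ itself (not its flop), and the ``tracking back'' of the paper is the forward map $\upnu_{\boldb'}$ for the mutation $\upnu_j\AB\to\AB$ rather than literally $\upnu_{\boldb}^{-1}$ --- these coincide only because $\upnu_j\upnu_j N\cong N$ by \ref{mu=nu for hyper}, which you implicitly use and should cite. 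Finally, the appeal to connectedness of minimal models by flops is exactly right in the complete local setting, since there every reduced exceptional curve is individually floppable; this is harmless here but would not carry over verbatim to the Zariski local setup, which is why the paper treats that case separately in \S\ref{2 curve section}.
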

\begin{proof}
This follows immediately from \cite[5.16, 5.23]{HomMMP}.
\end{proof}

\begin{remark}
Moduli tracking works in both directions, and this is important for our application.  First, \ref{C+chamber}\eqref{C+chamber 3} allows us to fix the input $\AB$, and track moduli from some iterated mutation  $\upnu_{j_1}\hdots\upnu_{j_t}\AB$ back to $\AB$.  This procedure computes the chamber structure for the fixed $\AB$, which gives the combinatorial data needed to state theorems on braiding later on, in \S\ref{2 curve section}.  Second, \ref{moduli main} also allows us to track moduli starting from $\AB$ to some iterated mutation  $\upnu_{j_1}\hdots\upnu_{j_t}\AB$.  This second direction is needed to prove the theorems, in particular to establish the braiding in \ref{2braidcompletelocal}.
\end{remark}

\subsection{Chamber Structures} We keep the notation and setting from above.  In this subsection we give an example of a chamber structure arising in $3$-fold flops.  Although not strictly needed for the proof of the main theorem, this illustrates some new phenomena and subtleties in the combinatorics.

With input the flopping contraction $\mathfrak{U}\to\Spec\mathfrak{R}$ of \ref{flopscompletelocal}, by Reid's general elephant principle \cite[1.1, 1.14]{Pagoda}, cutting by a generic hyperplane section $g\in\mathfrak{R}$ gives
\[
\begin{array}{c}
\begin{tikzpicture}[yscale=1.25]
\node (Xp) at (-1,0) {$\mathfrak{U}_g$}; 
\node (X) at (1,0) {$\mathfrak{U}$};
\node (Rp) at (-1,-1) {$\Spec (\mathfrak{R}/g)$}; 
\node (R) at (1,-1) {$\Spec \mathfrak{R}$};
\draw[->] (Xp) to node[above] {$\scriptstyle $} (X);
\draw[->] (Rp) to node[above] {$\scriptstyle $} (R);
\draw[->] (Xp) --  node[left] {$\scriptstyle \upphi$}  (Rp);
\draw[->] (X) --  node[right] {$\scriptstyle \clocCon$}  (R);
\end{tikzpicture}
\end{array}
\]
where $\mathfrak{R}/g$ is an ADE singularity and $\upphi$ is a partial crepant resolution.  By general theory, $\End_{\mathfrak{R}/g}(N/gN)$ is derived equivalent to $\mathfrak{U}_g$, and so the module $N_i$ cuts to $N_i/gN_i$, which is precisely one of the CM modules corresponding to a vertex in an ADE Dynkin diagram via the McKay correspondence. 

Following the notation from \cite{Katz}, we encode $\mathfrak{U}_g$ pictorially by describing which curves are blown down from the minimal resolution.  The diagrams 

\[
\begin{array}{cccc}
\begin{array}{c}
\begin{tikzpicture}
\node (-1) at (-0.75,0) [DW] {};
\node (0) at (0,0) [DW] {};
\node (1) at (0.75,0) [DW] {};
\node (1b) at (0.75,0.75) [DW] {};
\node (2) at (1.5,0) [DW] {};
 \node (3) at (2.25,0) [DW] {};
\draw [-] (-1) -- (0);
\draw [-] (0) -- (1);
\draw [-] (1) -- (2);
\draw [-] (2) -- (3);
\draw [-] (1) -- (1b);
\end{tikzpicture}
\end{array}
&&
\begin{array}{c}
\begin{tikzpicture}
\node (-1) at (-0.75,0) [DB] {};
\node (0) at (0,0) [DW] {};
\node (1) at (0.75,0) [DW] {};
\node (1b) at (0.75,0.75) [DB] {};
\node (2) at (1.5,0) [DB] {};
 \node (3) at (2.25,0) [DB] {};
 \draw [-] (-1) -- (0);
\draw [-] (0) -- (1);
\draw [-] (1) -- (2);
\draw [-] (2) -- (3);
\draw [-] (1) -- (1b);
\end{tikzpicture}
\end{array}
\end{array}
\]
represent, respectively, the minimal resolution of the $E_6$ surface singularity, and the partial resolution obtained from it by contracting the curves corresponding to the black vertices.

\begin{example}\label{Katz example}
There is an example \cite[2.3]{Katz} of a $cD_4$ singularity $\mathfrak{R}$ with crepant resolution $X\to\Spec \mathfrak{R}$ with two curves above the origin, that cuts under generic hyperplane section to give the configuration 
\begin{eqnarray}
\begin{array}{c}
\begin{tikzpicture}
\node (0) at (0,0) [DW] {};
\node (1) at (0.75,0) [DW] {};
\node (1b) at (0.75,0.75) [DB] {};
\node (2) at (1.5,0) [DB] {};
\draw [-] (0) -- (1);
\draw [-] (1) -- (2);
\draw [-] (1) -- (1b);
\end{tikzpicture}
\end{array}\label{D4 config example}
\end{eqnarray}
\end{example}

By \ref{C+chamber}\eqref{C+chamber 3}, tracking moduli from iterated mutations back to $\AB$ computes the chamber structure of $\Uptheta_{\AB}$.  We illustrate this in the above example, referring the reader to \cite[7.2]{HomMMP} for more examples.

\begin{example}\label{knitting example}
As notation order the vertices

\[
\begin{array}{c}
\begin{tikzpicture}
\node (0) at (0,0) [DW] {};
\node (1) at (0.75,0) [DW] {};
\node (1b) at (0.75,0.75) [DB] {};
\node (2) at (1.5,0) [DB] {};
\draw [-] (0) -- (1);
\draw [-] (1) -- (2);
\draw [-] (1) -- (1b);
\node (0) at (0,-0.3) {$\scriptstyle 2$};
\node (1) at (0.75,-0.3) {$\scriptstyle 1$};
\end{tikzpicture}
\end{array}
\]
meaning that $N_1$ corresponds to the middle curve, and $N_2$ corresponds to the left-hand curve. The mutation exchange sequences are obtained by knitting (for details, see \cite[5.19, 5.24]{HomMMP}), so that in this example the $b$'s are determined by the data 
\begin{align}
U_1&\cong N_2\label{1}\\
U_2&\cong R^{\oplus 2}\oplus N_1^{\oplus 2}.\label{2}
\end{align}
First, we track the $C_+$ chamber from $\upnu_1\AB$ to $\AB$.  By \ref{moduli main}, 
\[
\begin{array}{c}
\upphi_1\\
\upphi_2
\end{array}
\stackrel{\scriptsize\mbox{\eqref{1}}}{\mapsto} 
\begin{array}{c}
-\upphi_1\\
\upphi_1+\upphi_2
\end{array}
\]
since from \eqref{1} the relevant $b$ is one, so we negate $\upphi_1$ and add $1\upphi_1$ to its neighbour.  Thus the $C_+$ chamber (namely $\upphi_1>0$, $\upphi_2>0$) from $\upnu_1\AB$ corresponds to  the region $\upvartheta_1<0$ and $\upvartheta_1+\upvartheta_2>0$ of $\Uptheta_{\AB}$, and thus this gives a chamber for $\Uptheta_{\AB}$.  

Next, we track the $C_+$ chamber from $\upnu_2\upnu_1\AB$ to $\upnu_1\AB$ to $\AB$.  By the same logic   
\[
\begin{array}{c}
\upphi_1\\
\upphi_2
\end{array}
\stackrel{\scriptsize\mbox{\eqref{2}}}{\mapsto} 
\begin{array}{c}
\upphi_1+2\upphi_2\\
-\upphi_2
\end{array}
\stackrel{\scriptsize\mbox{\eqref{1}}}{\mapsto} 
\begin{array}{c}
-(\upphi_1+2\upphi_2)\\
-\upphi_2+(\upphi_1+2\upphi_2)
\end{array}
=
\begin{array}{r}
-\upphi_1-2\upphi_2\\
\upphi_1+\upphi_2
\end{array}
\]
which gives the region $\upvartheta_1+2\upvartheta_2>0$
and $\upvartheta_1+\upvartheta_2<0$ of $\Uptheta_{\AB}$, and so this too is a chamber.  

Next, tracking $C_+$ from $\upnu_1\upnu_2\upnu_1\AB$ to $\upnu_2\upnu_1\AB$ to $\upnu_1\AB$ to $\AB$ gives 
\[
\begin{array}{c}
\upphi_1\\
\upphi_2
\end{array}
\stackrel{\scriptsize\mbox{\eqref{1}}}{\mapsto} 
\begin{array}{c}
-\upphi_1\\
\upphi_1+\upphi_2
\end{array}
\stackrel{\scriptsize\mbox{\eqref{2}}}{\mapsto} 
\begin{array}{c}
\upphi_1+2\upphi_2\\
-\upphi_1-\upphi_2
\end{array}
\stackrel{\scriptsize\mbox{\eqref{1}}}{\mapsto} 
\begin{array}{c}
-\upphi_1-2\upphi_2\\
\upphi_2
\end{array}
\]
which gives the region $\upvartheta_1+2\upvartheta_2<0$ and $\upvartheta_2>0$.  

Continuing in this fashion, $\Uptheta_{\AB}$ has the following eight chambers.
\[
\begin{array}{ccc}
\begin{array}{c}
\begin{tikzpicture}[scale=0.75]
\coordinate (A1) at (135:2cm);
\coordinate (A2) at (-45:2cm);
\coordinate (B1) at (-33.690:2cm);
\coordinate (B2) at (146.31:2cm);
\coordinate (C1) at (153.435:2cm);
\coordinate (C2) at (-26.565:2cm);
\coordinate (D1) at (161.565:2cm);
\coordinate (D2) at (-18.435:2cm);
\draw[red] (A1) -- (A2);
\draw[green] (C1) -- (C2);
\draw[->] (-2,0)--(2,0);
\node at (2.5,0) {$\upvartheta_1$};
\draw[->] (0,-2)--(0,2);
\node at (0.5,2.25) {$\upvartheta_2$};
\draw[densely dotted,gray] (0,0) circle (2cm);
\end{tikzpicture}
\end{array}
&&
\begin{array}{cl}
&\upvartheta_1=0\\
&\upvartheta_2=0\\
\begin{array}{c}\begin{tikzpicture}\node at (0,0){}; \draw[red] (0,0)--(0.5,0);\end{tikzpicture}\end{array}&\upvartheta_1+\upvartheta_2=0\\
\begin{array}{c}\begin{tikzpicture}\node at (0,0){}; \draw[green] (0,0)--(0.5,0);\end{tikzpicture}\end{array}&\upvartheta_1+2\upvartheta_2=0
\end{array}
\end{array}
\]
\end{example}

\subsection{Hyperplane Arrangements for 3-Fold Flops}\label{hyper local global section}
In this subsection, we explain how to obtain a hyperplane arrangement for any flopping contraction $f \colon X\to X_{\con}$ in the global quasi-projective setup of \ref{flopsglobal}, and we describe its basic properties.  This fixes notation for the remainder of the paper.

\begin{notation}\label{Hyperplane Hp}
For each $p\in\noniso f$, by considering the formal fibre above $p$, by \ref{C+chamber}\eqref{C+chamber 2} the space of stability conditions on this formal fibre has a chamber structure in which the codimension-one walls are all hyperplanes through the origin.  We denote by $\cH^p$ the set of codimension-one walls.
\end{notation}

Recall that for a hyperplane arrangement $\cA$ in $\mathbb{R}^{a}$, and a hyperplane arrangement $\cB$ in $\mathbb{R}^{b}$, their product is defined to be
\[
\cA\textstyle\prod \cB:=\{ H\oplus \mathbb{R}^b\mid H\in\cA\}\cup\{ \mathbb{R}^a\oplus H\mid H\in \cB\},
\] 
which is a hyperplane arrangement in $\mathbb{R}^{a+b}$.  
\begin{notation}
Under the global quasi-projective flops setup of \ref{flopsglobal}, we define
\[
\cH:=\prod_p\cH^{p},
\]
where $p$ ranges over $\noniso f$. This is a hyperplane arrangement in $\mathbb{R}^{n}$, where $n$ is the total number of irreducible curves contracted.
\end{notation}
For our purposes later, we require more precise information regarding the structure of $\cH$ and $\cH^p$.  Recall that a real hyperplane arrangement is called \emph{simplicial} if the intersection of all the hyperplanes is $\{ 0\}$, and furthermore every chamber is an open simplicial cone. The following result, which is an immediate consequence of the Homological MMP, will be used heavily.

\begin{lemma}\label{issimplicial}
With notation as above, $\cH^p$ is a simplicial hyperplane arrangement for all $p\in\noniso f$\!, and so consequently $\cH$ is a simplicial hyperplane arrangement.
\end{lemma}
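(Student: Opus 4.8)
The plan is to reduce the statement to the complete local setting, where the chamber structure $\cH^p$ is defined, and then invoke the Homological MMP directly. First I would recall that $\cH^p$ is, by Notation~\ref{Hyperplane Hp}, the set of codimension-one walls in the space of stability conditions $\Uptheta$ attached to the formal fibre $\clocCon\colon\mathfrak{U}\to\Spec\mathfrak{R}$ above $p$. By \ref{C+chamber}\eqref{C+chamber 2} these walls are finitely many hyperplanes through the origin, and the co-ordinate hyperplanes $\upvartheta_i=0$ are among them, so in particular the intersection of all the hyperplanes in $\cH^p$ is contained in $\{\upvartheta_i=0 \text{ for all }i\}=\{0\}$; thus the first defining property of simpliciality is immediate.

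The substantive point is that every chamber of $\cH^p$ is an open simplicial cone. Here I would appeal to \ref{C+chamber}\eqref{C+chamber 3}: every chamber of $\Uptheta_{\AB}$ is obtained by tracking the distinguished chamber $C_+$ back along a sequence of mutations, and under moduli tracking (\ref{moduli main}) each such tracking step is given by the linear map $\upnu_{\boldb}$, which is invertible (it negates one block of coordinates and adds a multiple of it to the complementary block, hence has determinant $\pm1$). Since $C_+=\{\upvartheta_i>0\text{ for all }i>0\}$ is visibly an open simplicial cone (the positive orthant in the $\upvartheta_1,\dots,\upvartheta_n$ coordinates, cf.\ \ref{C+chamber}\eqref{C+chamber 1}), and the image of a simplicial cone under an invertible linear map is again a simplicial cone, it follows that every chamber of $\Uptheta_{\AB}$ is an open simplicial cone. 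Combined with the previous paragraph, $\cH^p$ is simplicial. I expect this to be the main obstacle: one must be careful that the mutation-tracking maps are genuinely linear isomorphisms of $\Uptheta$ (not merely piecewise-linear), and that the composite of a chain of them carries $C_+$ onto a chamber with no degeneration of the cone; both facts are exactly what \cite[\S5]{HomMMP} supplies, so the argument is really a citation with bookkeeping.

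Finally, for the global statement, I would observe that a finite product $\cA\prod\cB$ of simplicial arrangements is simplicial: the intersection of all hyperplanes in $\cA\prod\cB$ is $\big(\bigcap\cA\big)\oplus\big(\bigcap\cB\big)=\{0\}\oplus\{0\}=\{0\}$, and each chamber of $\cA\prod\cB$ is a product of a chamber of $\cA$ with a chamber of $\cB$, hence a product of open simplicial cones, which is again an open simplicial cone. Since $\cH=\prod_{p\in\noniso f}\cH^p$ is a finite product of the simplicial arrangements $\cH^p$, it is simplicial. This last paragraph is entirely routine linear algebra once the local case is in hand.
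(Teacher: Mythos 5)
Your proof is correct and follows essentially the same route as the paper: reduce to the intersection property via \ref{C+chamber}\eqref{C+chamber 2}, use \ref{C+chamber}\eqref{C+chamber 3} to obtain every chamber as the image of $C_+$ under a composite of tracking maps, and then pass to the product arrangement. The paper is terser at exactly the two points you spell out — it leaves implicit that each $\upnu_{\boldb}$ is an invertible linear map (so that images of simplicial cones remain simplicial), and it cites the product statement as well known rather than proving it — so your write-up is a slightly more explicit version of the same argument.
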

\begin{proof}
By \ref{C+chamber}\eqref{C+chamber 2}, the codimension-one walls of $\Uptheta_\AB$ are given by a finite collection of hyperplanes, all of which contain the origin, and furthermore the co-ordinate hyperplanes $x_i=0$ are included in this collection.  It follows that the intersection of all the hyperplanes is the origin.  Further, since $C_+$ is clearly an open simplicial cone, and by \ref{C+chamber}\eqref{C+chamber 3} every chamber of $\Uptheta_\AB$ is given by tracking the chamber $C_+$ under moduli tracking, it follows that all chambers are open simplicial cones.  This proves $\cH^p$ is a simplicial hyperplane arrangement for all $p\in\noniso f$, and it is well known that the product of simplicial arrangements is simplicial.
\end{proof}

As one further piece of notation, for a real hyperplane arrangement $\cA$ in $\mathbb{R}^a$, as is standard we write
\[
\cA_{\mathbb{C}}:=\bigcup_{H\in\cA} H_{\mathbb{C}}
\]
where $H_{\mathbb{C}}$ denotes the complexification of $H$.  It is well known that the fundamental group of the complexified complement behaves well under products, so
\[
\fundgp(\mathbb{C}^{n}\backslash {\cH}_\mathbb{C})=
\bigoplus_p
\fundgp(\mathbb{C}^{n_p}\backslash \cH^{p}_\mathbb{C}),
\]
where $p$ ranges over $\noniso f$.

\subsection{Braiding: Two-Curve Case}\label{2 curve section}
The aim of this subsection is to prove the following.

\begin{thm}\label{braid 2 curve global}
With the global quasi-projective flops setup $f\colon X\to X_{\con}$ of \ref{flopsglobal}, suppose that $f$ contracts precisely two independently floppable irreducible curves.  Then
\[
\underbrace{\flop_1\circ\flop_2\circ\flop_1\circ\cdots}_{d}\functcong
\flop
\functcong
\underbrace{\flop_2\circ\flop_1\circ\flop_2\circ\cdots}_{d}
\]
where $d$ is the number of hyperplanes in $\cH$.  Furthermore:
\begin{enumerate}
\item\label{braid 2 curve global 1} If the curves intersect, then $d\geq 3$.
\item\label{braid 2 curve global 2} If the curves are disjoint, then $d=2$.
\end{enumerate}
\end{thm}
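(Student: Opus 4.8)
The plan is to make two reductions, handle the core case by moduli tracking, and then read off the bounds on $d$.

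\emph{Reductions.} Since $f$ has connected fibres, the two contracted curves are disjoint exactly when they lie over two distinct points of $\noniso f$, and they intersect exactly when both lie over a single point $p$. In the disjoint case $\cH=\cH^{p_1}\prod\cH^{p_2}$ with each $\cH^{p_i}$ a single hyperplane in $\mathbb{R}^1$, so $d=2$; moreover $\flop_1,\flop_2$ have Fourier--Mukai kernels supported over the two disjoint points, hence commute, with composite the flop of the union, giving $\flop_1\circ\flop_2\functcong\flop\functcong\flop_2\circ\flop_1$. This settles the theorem, and part~\eqref{braid 2 curve global 2}, when the curves are disjoint. In the intersecting case $\cH=\cH^p$ is a simplicial arrangement in $\mathbb{R}^2$ by \ref{issimplicial}; since every functor and relation in play is built from Fourier--Mukai equivalences with kernels supported over $p$, one may restrict to $f|_U\colon U\to\Spec R$ and then complete, so it suffices to prove the braid relation in the complete local setup \ref{flopscompletelocal}, with $\clocCon\colon\mathfrak{U}\to\Spec\mathfrak{R}$ contracting a connected chain $C_1\cup C_2$ of two (individually floppable) curves.

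\emph{The complete local case.} The arrangement $\cH=\cH^p\subset\mathbb{R}^2$ consists of $d$ distinct lines through $0$, among them the coordinate axes $\upvartheta_1=0$ and $\upvartheta_2=0$ by \ref{C+chamber}\eqref{C+chamber 2}, hence has $2d$ cyclically ordered chambers, with $C_+$ and $-C_+$ antipodal and therefore separated by exactly $d$ walls going either way. By \ref{C+chamber}\eqref{C+chamber 3} each chamber arises by iterated single-summand mutation, crossing one wall corresponds to flopping one (individually floppable) curve, and the two curves alternate along a shortest chamber walk (flopping the same curve twice in succession returns to the previous chamber); so the two walks from $C_+$ to $-C_+$ spell out precisely $\underbrace{\flop_1\circ\flop_2\circ\cdots}_{d}$ and $\underbrace{\flop_2\circ\flop_1\circ\cdots}_{d}$. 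The minimal model attached to $-C_+$ is, by \ref{moduli main} with $J=\{1,2\}$ (using $\upnu_J\upnu_JN\cong N$ from \ref{J mut intro}), the flop $\mathfrak{U}^+$ of the whole exceptional fibre, reached directly by the single big mutation $\upnu_{\{1,2\}}$; thus both braid words and $\flop$ are $\mathfrak{R}$-linear Fourier--Mukai equivalences $\Db(\coh\mathfrak{U})\to\Db(\coh\mathfrak{U}^+)$.

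\emph{The functor identity.} I will show each braid word $F$ is isomorphic to $\flop$ (symmetrically for the other word). By \ref{moduli main}, under the equivalence $\Db(\coh\mathfrak{U})\cong\Db(\AB)$ with $\mathfrak{U}=\cM_{\rk,C_+}(\AB)$ a skyscraper $\cO_x$ corresponds to a $\upvartheta$-stable module of dimension vector $\rk\AB$, and along either walk, or directly via $\upnu_{\{1,2\}}$, \ref{moduli main} carries it to a stable module over the mutated algebra, i.e.\ to a skyscraper on $\mathfrak{U}^+$; so $F(\cO_x)$ and $\flop(\cO_x)$ are skyscrapers, the induced scheme maps $\mathfrak{U}\to\mathfrak{U}^+$ both coincide with the canonical birational map over $\Spec\mathfrak{R}$, and over the non-exceptional open $U_0:=\mathfrak{U}\setminus(C_1\cup C_2)$ all these functors literally agree (restriction of the structure sheaf of the graph of the isomorphism $U_0\cong U_0^+$). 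Hence $\flop^{-1}\circ F$ is an $\mathfrak{R}$-linear autoequivalence of $\Db(\coh\mathfrak{U})$ fixing every skyscraper and restricting to the identity over $U_0$; the plan is to conclude that such a functor is $-\otimes\mathcal{L}$ for a line bundle $\mathcal{L}$ with $\mathcal{L}|_{U_0}\cong\cO_{U_0}$, whence $\mathcal{L}\cong\cO_{\mathfrak{U}}$ since $\mathfrak{U}$ is normal and $C_1\cup C_2$ has codimension two, so $F\functcong\flop$. The hard part is exactly this last implication: moduli tracking gives agreement of $F$ and $\flop$ on objects at once, but upgrading it to a natural isomorphism of functors is a rigidity statement about Fourier--Mukai kernels (a derived autoequivalence that is the identity over a dense open must be a line-bundle twist), complicated mildly by the non-smoothness of $\mathfrak{U}$, which I expect to handle by working through the noncommutative model $\AB$.

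\emph{Bounds on $d$.} Part~\eqref{braid 2 curve global 2} was noted above. For~\eqref{braid 2 curve global 1}, assume the curves intersect, so $\cH=\cH^p\subset\mathbb{R}^2$ contains $\{\upvartheta_1=0\}$ and $\{\upvartheta_2=0\}$; it remains to rule out $d=2$, i.e.\ $\cH=\{\upvartheta_1=0\}\cup\{\upvartheta_2=0\}$. Applying \ref{moduli main} to the single-summand mutation $\upnu_1$ and tracking the chamber $C_+$ of $\upnu_1\AB$ back to $\Uptheta_{\AB}$ --- again a chamber, by \ref{C+chamber}\eqref{C+chamber 3} --- yields the region $\{\upvartheta_1<0,\ \upvartheta_2+b_{1,2}\upvartheta_1>0\}$, where $b_{1,2}$ is the multiplicity of $N_2$ in the mutation middle term $U_1$ of \eqref{key track mut}. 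If $d=2$ this must be a coordinate quadrant, forcing $b_{1,2}=0$, and symmetrically $b_{2,1}=0$; but when $C_1$ and $C_2$ intersect the quiver of $\AB=\End_{\mathfrak{R}}(N)$ has arrows between the vertices $1$ and $2$ by the description of the presentation of $\AB$ via the intersection theory of the curves \cite[2.15]{HomMMP}, which forces $b_{1,2}\geq1$ --- a contradiction. Hence $d\geq3$.
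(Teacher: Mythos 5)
Your overall plan mirrors the paper's: reduce to the complete local case, use moduli tracking (\ref{moduli main}) to follow stable modules around both braid walks and the direct big flop $\upnu_{\{1,2\}}$, and then conclude by a rigidity argument for Fourier--Mukai kernels. Two points deserve scrutiny.

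First, the step you flag as ``the hard part'' is indeed a genuine gap. You want to upgrade object-level agreement of $F$ and $\flop$ to a natural isomorphism by arguing that $\flop^{-1}\circ F$ is $-\otimes\cL$ and then killing $\cL$ via normality and codimension; but neither ``preserves skyscrapers $+$ identity on $U_0$ $\Rightarrow$ line bundle twist'' nor the claim that the kernel of $F$ restricts to the graph structure sheaf on $U_0\times U_0^+$ is justified, and you acknowledge as much. The paper closes this cleanly with a different trio of conditions: a Fourier--Mukai autoequivalence of $\Db(\coh\mathfrak{U})$ that sends skyscrapers to skyscrapers, fixes $\cO_{\mathfrak{U}}$, and commutes with $\RDerived f_*$ is naturally isomorphic to the identity (citing \cite[7.16(1)]{DW1}). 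The pushdown-commuting condition, which you invoke implicitly via ``the canonical birational map over $\Spec\mathfrak{R}$,'' is what nails down the automorphism of $\mathfrak{U}$; without making this the explicit hypothesis you cannot apply the known Bridgeland-type rigidity. Relatedly, your reduction to the complete local case (``every functor in play is built from FM kernels supported over $p$, so restrict and complete'') is stated too casually: the braid word acts on all of $\Db(\coh X)$, not just objects supported near the exceptional curve, and the paper needs both \ref{alg comm} (to identify the Zariski-local functors with the mutation functors on the completion via moduli tracking) and \ref{diagram commutes U to X} (base change of FM kernels along the open immersion) to make the reduction honest.

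Second, for the bound $d\geq 3$ you take a different route from the paper: you argue that intersecting curves give arrows between vertices $1$ and $2$ in the quiver of $\AB$ via \cite[2.15]{HomMMP}, and that this forces $b_{1,2}\geq 1$, contradicting $d=2$. This is plausible but the implication ``arrows between the vertices $\Rightarrow$ $N_2$ appears in the approximation $U_1$'' is nontrivial: it depends on the direction of the arrows, the convention for composition in $\AB=\End_{\mathfrak{R}}(N)$, and the fact that $U_1$ arises from a dual of a right $\add N_{J^c}^*$-approximation rather than directly from the radical of $\Hom$-spaces. The paper sidesteps this by computing the exchange sequences concretely through knitting on the AR quiver of the Kleinian singularity obtained by a general hyperplane section \cite[5.19, 5.23]{HomMMP}, where the adjacency of the corresponding Dynkin vertices makes the nonvanishing immediate. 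Your route may well work, but as stated it relies on an unproved assertion about approximations.
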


We split the proof: the statement and proof of \ref{braid 2 curve global}\eqref{braid 2 curve global 1} is contained in \ref{braid 2 curve global intersect}, and similarly \ref{braid 2 curve global}\eqref{braid 2 curve global 2} is contained in \ref{braid 2 curve global disjoint}.

We first prove \ref{braid 2 curve global}\eqref{braid 2 curve global 1}.  To do this, we give a complete local version of the result in \ref{2braidcompletelocal}.  We then establish a Zariski local version in \ref{braid 2 curve local}, before finally giving the result globally in \ref{braid 2 curve global intersect}.  Before beginning the proof, which is notationally complicated, we first illustrate the strategy in an example.

\begin{example}\label{braiding knitting example}
We continue the complete local example \ref{knitting example}.  Label the minimal models arising from the chambers by
\[
\begin{array}{c}
\begin{tikzpicture}[scale=0.75]
\coordinate (A1) at (135:2cm);
\coordinate (A2) at (-45:2cm);
\coordinate (B1) at (-33.690:2cm);
\coordinate (B2) at (146.31:2cm);
\coordinate (C1) at (153.435:2cm);
\coordinate (C2) at (-26.565:2cm);
\coordinate (D1) at (161.565:2cm);
\coordinate (D2) at (-18.435:2cm);
\draw[red] (A1) -- (A2);
\draw[green] (C1) -- (C2);
\draw[-] (-2,0)--(2,0);
\draw[-] (0,-2)--(0,2);
\node at (45:2cm)  {$\scriptstyle \mathfrak{U}$};
\node at (112.5:2cm) {$\scriptstyle \mathfrak{U}_1$};
\node at (145:2cm) {$\scriptstyle \mathfrak{U}_{21}$};
\node at (164.25:2cm) {$\scriptstyle \mathfrak{U}_{121}$};
\node  at (225:2cm) {$\scriptstyle \mathfrak{U}_{1212}$};
\node at (-67.5:2cm) {$\scriptstyle \mathfrak{U}_{212}$};
\node at (-35:2cm) {$\scriptstyle \mathfrak{U}_{12}$};
\node at (-15.75:2cm) {$\scriptstyle \mathfrak{U}_{2}$};
\end{tikzpicture}
\end{array}
\] 
By \cite[4.9]{HomMMP}, the above chamber structure implies that 
\[
\upnu_2\upnu_1\upnu_2\upnu_1 N\cong\upnu_1\upnu_2\upnu_1\upnu_2 N
\]
since the chamber $C_+$ for both $\End_{\mathfrak{R}}(\upnu_2\upnu_1\upnu_2\upnu_1 N)$ and $\End_{\mathfrak{R}}(\upnu_1\upnu_2\upnu_1\upnu_2 N)$ gives, under moduli tracking, the same chamber on $\Uptheta_{\AB}$.  Hence there is a diagram of mutation functors
\begin{eqnarray}
\begin{array}{cc}
\begin{tikzpicture}
\node (C1) at (9,0)  {$\Db(\upnu_1\upnu_2\upnu_1\upnu_2\AB)$};
\node (C2) at (7,1)  {$\Db(\upnu_2\upnu_1\upnu_2\AB)$};
\node (C3) at (4,1)  {$\Db(\upnu_1\upnu_2\AB)$};
\node (C4) at (1,1)  {$\Db(\upnu_2\AB)$};
\node (C5) at (-1,0)  {$\Db(\AB)$};
\node (C6) at (1,-1)  {$\Db(\upnu_1\AB)$};
\node (C7) at (4,-1)  {$\Db(\upnu_2\upnu_1\AB)$};
\node (C8) at (7,-1)  {$\Db(\upnu_1\upnu_2\upnu_1\AB)$};
\draw[->] (C5) -- node[above left] {$\scriptstyle \Upphi_2$} (C4);
\draw[->] (C4) -- node[above] {$\scriptstyle \Upphi_1$} (C3);
\draw[->] (C3) -- node[above] {$\scriptstyle \Upphi_2$} (C2);
\draw[->] (C2) -- node[above right] {$\scriptstyle \Upphi_1$} (C1);
\draw[->] (C5) -- node[below left] {$\scriptstyle \Upphi_1$} (C6);
\draw[->] (C6) -- node[below] {$\scriptstyle \Upphi_2$} (C7);
\draw[->] (C7) -- node[below] {$\scriptstyle \Upphi_1$} (C8);
\draw[->] (C8) -- node[below right] {$\scriptstyle \Upphi_2$} (C1);
\end{tikzpicture}
\end{array}\label{mut functors}
\end{eqnarray}
Further, by \cite[4.2]{HomMMP} the inverse of the flop functor is functorially isomorphic to mutation, so \eqref{mut functors} is functorially isomorphic to the diagram of functors
\begin{eqnarray}
\begin{array}{cc}
\begin{tikzpicture}
\node (C1) at (9,0)  {$\Db(\mathfrak{U}_{1212})$};
\node (C2) at (7,1)  {$\Db(\mathfrak{U}_{212})$};
\node (C3) at (4,1)  {$\Db(\mathfrak{U}_{12})$};
\node (C4) at (1,1)  {$\Db(\mathfrak{U}_2)$};
\node (C5) at (-1,0)  {$\Db(\mathfrak{U})$};
\node (C6) at (1,-1)  {$\Db(\mathfrak{U}_1)$};
\node (C7) at (4,-1)  {$\Db(\mathfrak{U}_{21})$};
\node (C8) at (7,-1)  {$\Db(\mathfrak{U}_{121})$};
\draw[->] (C5) -- node[above left] {$\scriptstyle \flop_2^{-1}$} (C4);
\draw[->] (C4) -- node[above] {$\scriptstyle \flop_1^{-1}$} (C3);
\draw[->] (C3) -- node[above] {$\scriptstyle \flop_2^{-1}$} (C2);
\draw[->] (C2) -- node[above right] {$\scriptstyle \flop_1^{-1}$} (C1);
\draw[->] (C5) -- node[below left] {$\scriptstyle \flop_1^{-1}$} (C6);
\draw[->] (C6) -- node[below] {$\scriptstyle \flop_2^{-1}$} (C7);
\draw[->] (C7) -- node[below] {$\scriptstyle \flop_1^{-1}$} (C8);
\draw[->] (C8) -- node[below right] {$\scriptstyle \flop_2^{-1}$} (C1);
\end{tikzpicture}
\end{array}\label{flop functors}
\end{eqnarray}
Now choose a skyscraper $\cO_x$ in $\Db(\mathfrak{U})$.  By \cite[\S5.2]{Joe}, this corresponds to some $\upvartheta$-stable module $M$ in $\Db(\AB)$ for $\upvartheta\in C_+$.  Remarkably, under the mutation functors in \eqref{mut functors}, this module $M$ is always sent to a module. This follows by using \ref{moduli main}\eqref{moduli main 1} repeatedly. Indeed the new module is stable for some stability parameter, which may be calculated using the formula given in \ref{moduli main}. Tracking this data we see that under mutation the module $M$ gets sent to modules stable for parameters as follows.
\begin{eqnarray}
\begin{array}{cc}
\begin{tikzpicture}
\node (C1) at (9,0)  {$\begin{array}{c}
-\upvartheta_1\\
-\upvartheta_2
\end{array}$};
\node (C1n) at (8.7,0.3) {};
\node (C1s) at (8.7,-0.3) {};
\node (C2) at (7,1)  {$\begin{array}{c}
\upvartheta_1\\
-\upvartheta_1-\upvartheta_2
\end{array}$};
\node (C3) at (4,1)  {$\begin{array}{c}
-\upvartheta_1-2\upvartheta_2\\
\upvartheta_1+\upvartheta_2
\end{array}$};
\node (C4) at (1,1)  {$\begin{array}{c}
\upvartheta_1+2\upvartheta_2\\
-\upvartheta_2
\end{array}$};
\node (C5) at (-1,0)  {$\begin{array}{c}
\upvartheta_1\\
\upvartheta_2
\end{array}$};
\node (C5n) at (-0.7,0.3) {};
\node (C5s) at (-0.7,-0.3) {};
\node (C6) at (1,-1)  {$\begin{array}{c}
-\upvartheta_1\\
\upvartheta_1+\upvartheta_2
\end{array}$};
\node (C7) at (4,-1)  {$\begin{array}{c}
\upvartheta_1+2\upvartheta_2\\
-\upvartheta_1-\upvartheta_2
\end{array}$};
\node (C8) at (7,-1)  {$\begin{array}{c}
-\upvartheta_1-2\upvartheta_2\\
\upvartheta_2
\end{array}$};
\draw[->] (C5n) -- node[above left] {$\scriptstyle \Upphi_2$} (C4);
\draw[->] (C4) -- node[above] {$\scriptstyle \Upphi_1$} (C3);
\draw[->] (C3) -- node[above] {$\scriptstyle \Upphi_2$} (C2);
\draw[->] (C2) -- node[above right] {$\scriptstyle \Upphi_1$} (C1n);
\draw[<-] (C5s) -- node[below left] {$\scriptstyle \Upphi_1^{-1}$} (C6);
\draw[<-] (C6) -- node[below] {$\scriptstyle \Upphi_2^{-1}$} (C7);
\draw[<-] (C7) -- node[below] {$\scriptstyle \Upphi_1^{-1}$} (C8);
\draw[<-] (C8) -- node[below right] {$\scriptstyle \Upphi_2^{-1}$} (C1s);
\end{tikzpicture}
\end{array}\label{mut functors stab}
\end{eqnarray}
It follows immediately that the composition of mutations
\[
\Upphi_1^{-1}\circ\Upphi_2^{-1}\circ\Upphi_1^{-1}\circ\Upphi_2^{-1}\circ\Upphi_1\circ\Upphi_2\circ\Upphi_1\circ\Upphi_2
\]
sends $M$, which is $\upvartheta$-stable for $\upvartheta=(\upvartheta_1,\upvartheta_2)\in C_+$, to a module which is also $\upvartheta$-stable.  Since \eqref{mut functors} is functorially isomorphic to \eqref{flop functors}, it follows that 
\[
\Uppsi:=\flop_1\circ\flop_2\circ\flop_1\circ\flop_2\circ\flop_1^{-1}\circ\flop_2^{-1}\circ\flop_1^{-1}\circ\flop_2^{-1}
\]
sends the skyscraper $\cO_x$ to some object in $\Db(\coh \mathfrak{U})$ corresponding to a $\upvartheta$-stable module.  But again by \cite[\S5.2]{Joe} these are precisely the skyscrapers.  Hence we obtain that $\Uppsi$ is a Fourier--Mukai equivalence that takes skyscrapers to skyscrapers, fixes $\cO_\mathfrak{U}$ and commutes with the pushdown $\RDerived f_*$.  It follows that $\Uppsi\functcong\Id$ and so
\[
\flop_1\circ\flop_2\circ\flop_1\circ\flop_2\functcong\flop_2\circ\flop_1\circ\flop_2\circ\flop_1.
\]
\end{example}

The following proposition is a simple extension of the above example. Recall that $\Upphi_{\{1,2\}}$ denotes the mutation functor for the summand $N_1 \oplus N_2$.

\begin{prop}\label{2braidcompletelocal}
Under the complete local flops setup $\mathfrak{U} \to \Spec\mathfrak{R}$ of \ref{flopscompletelocal}, suppose that precisely two irreducible curves are contracted. Then
\[
\underbrace{\Upphi_1\circ\Upphi_2\circ\Upphi_1\circ\cdots}_{d}\functcong
\Upphi_{\{1,2\}}\functcong
\underbrace{\Upphi_2\circ\Upphi_1\circ\Upphi_2\circ\cdots}_{d}
\]
and 
\[
\underbrace{\flop_1\circ\flop_2\circ\flop_1\circ\cdots}_{d}\functcong
\flop_{\{1,2\}}\functcong
\underbrace{\flop_2\circ\flop_1\circ\flop_2\circ\cdots}_{d}
\]
where $d$ is the number of hyperplanes in $\cH^p$.  Further, $d\geq 3$.
\end{prop}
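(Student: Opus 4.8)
The plan is to imitate Example~\ref{braiding knitting example}, the only genuinely new ingredient being the combinatorics of a general central arrangement of $d$ lines. First I would record the structure of $\cH^p$: it is a central arrangement of $d$ distinct lines in $\mathbb{R}^2$, hence has $2d$ chambers cyclically ordered around the origin, and by \ref{C+chamber}\eqref{C+chamber 1} the positive quadrant $C_+$ is one of them, so its two walls lie on the coordinate lines. By \ref{C+chamber}\eqref{C+chamber 3} every chamber is the moduli-tracking image of the $C_+$-chamber of an iterated mutation of $\AB$, and since one never immediately re-mutates the summand just mutated (as $\upnu_i\upnu_i N\cong N$), the two semicircular routes from $C_+$ to the antipodal chamber $-C_+$ both cross all $d$ walls, in opposite orders, and correspond to the iterated mutations $\underbrace{\upnu_1\upnu_2\upnu_1\cdots}_{d}$ and $\underbrace{\upnu_2\upnu_1\upnu_2\cdots}_{d}$; meanwhile $\upnu_{\{1,2\}}$ passes directly from $C_+$ to $-C_+$, since for $J=\{1,2\}$ moduli tracking acts by $\upvartheta\mapsto-\upvartheta$. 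As all three iterated mutations land in the chamber $-C_+$, \cite[4.9]{HomMMP} gives
\[
\underbrace{\upnu_1\upnu_2\upnu_1\cdots}_{d}N\ \cong\ \upnu_{\{1,2\}}N\ \cong\ \underbrace{\upnu_2\upnu_1\upnu_2\cdots}_{d}N ,
\]
so the three corresponding compositions of mutation functors are all equivalences $\Db(\mod\AB)\to\Db(\mod\upnu_{\{1,2\}}\AB)$.

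To upgrade these module isomorphisms to functorial isomorphisms I would run the skyscraper-tracking argument of Example~\ref{braiding knitting example}. Set $F=\underbrace{\Upphi_1\circ\Upphi_2\circ\cdots}_{d}$ and $G=\underbrace{\Upphi_2\circ\Upphi_1\circ\cdots}_{d}$; it suffices to prove $\Upphi_{\{1,2\}}^{-1}\circ F\functcong\Id$ and $\Upphi_{\{1,2\}}^{-1}\circ G\functcong\Id$ (the argument for $G$ is identical, using the other route). Via the identification of the mutation functor with the inverse flop \cite[4.2]{HomMMP} each factor is a Fourier--Mukai equivalence, so $\Upphi_{\{1,2\}}^{-1}\circ F$ is a Fourier--Mukai autoequivalence of $\Db(\coh\mathfrak{U})$; it fixes $\cO_\mathfrak{U}$ and commutes with the pushdown $\RDerived\clocCon_*$. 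A skyscraper $\cO_x$ corresponds by \cite[\S5.2]{Joe} to a $\upvartheta$-stable $\AB$-module of dimension vector $\rk\AB$ with $\upvartheta\in C_+$, i.e.\ to a point of $\cM_{\rk,\upvartheta}(\AB)=\mathfrak{U}$. Applying Moduli Tracking \ref{moduli main} at each step — the hypothesis $\upvartheta_j>0$ holding because, as in the example, the coordinate being mutated is at every stage a non-negative combination of $\upvartheta_1,\upvartheta_2$ and so is positive on $C_+$ — both $F$ and $\Upphi_{\{1,2\}}$ send this module to a $\upnu_{\{1,2\}}\AB$-module which, by the scheme isomorphism of \ref{moduli main}\eqref{moduli main 2}, represents a point of the minimal model $\mathfrak{U}$ again; this forces the stability parameter into the chamber of $\Uptheta_{\upnu_{\{1,2\}}\AB}$ that recovers $\mathfrak{U}$ from $\upnu_{\{1,2\}}\AB$, namely $-C_+$ (there the chamber $C_+$ recovers the flop of both curves, not $\mathfrak{U}$). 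Hence $\Upphi_{\{1,2\}}^{-1}\circ F$ returns $\upvartheta$-stable modules with $\upvartheta\in C_+$ to $\upvartheta$-stable modules with $\upvartheta\in C_+$, i.e.\ takes skyscrapers to skyscrapers, fixing each $\cO_x$ up to support. It then follows, exactly as in Example~\ref{braiding knitting example}, that $\Upphi_{\{1,2\}}^{-1}\circ F\functcong\Id$; this proves the $\Upphi$-version of the braid relation, and the $\flop$-version follows by taking inverses and invoking \cite[4.2]{HomMMP} once more.

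For the final assertion, \ref{C+chamber}\eqref{C+chamber 2} shows that the two coordinate lines belong to $\cH^p$, so $d\geq2$; it remains to exhibit a third line. In the setup \ref{flopscompletelocal} the exceptional fibre is a connected chain, so $C_1$ and $C_2$ meet, and hence the knitting combinatorics computing the mutation exchange sequences (\cite[\S5]{HomMMP}, cf.\ \cite[2.15]{HomMMP}; equivalently, cut by Reid's general elephant as in \ref{Katz example} to two intersecting $(-2)$-curves on an ADE surface) force $N_2$ to appear as a summand of $U_1$ in the exchange sequence \eqref{key track mut} at $N_1$, that is $b_{1,2}\geq1$ in the notation of Moduli Tracking. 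Tracking the $C_+$-chamber of $\upnu_1\AB$ back to $\Uptheta_\AB$ then produces the chamber $\{\upvartheta_1<0,\ b_{1,2}\upvartheta_1+\upvartheta_2>0\}$, whose two bounding rays lie on $\upvartheta_1=0$ and on $b_{1,2}\upvartheta_1+\upvartheta_2=0$; since $b_{1,2}\geq1$ the latter is distinct from both coordinate lines, so $\cH^p$ contains at least three lines and $d\geq3$.

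The step I expect to be the main obstacle is the second paragraph: upgrading the statement that the relevant modules and their stability parameters match up to the statement that the functors are functorially isomorphic. This needs the Fourier--Mukai formalism throughout — in particular one must check that the Fourier--Mukai property and the commutation with $\RDerived\clocCon_*$ survive each composition — together with the rigidity input that a Fourier--Mukai autoequivalence of $\Db(\coh\mathfrak{U})$ which takes every skyscraper to a skyscraper, fixes $\cO_\mathfrak{U}$, and commutes with $\RDerived\clocCon_*$ must be $\Id$ (its kernel is supported on $\mathfrak{U}\times_{\Spec\mathfrak{R}}\mathfrak{U}$ and restricts to $\cO_x$ along the diagonal, hence is $\cO_\Delta$ up to a line bundle twist on $\mathfrak{U}$, which is trivialised by fixing $\cO_\mathfrak{U}$). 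By contrast the $d\geq3$ part is comparatively soft, once one quotes the identification of $b_{1,2}$ with a positive intersection multiplicity of the two curves.
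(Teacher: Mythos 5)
Your proposal is essentially correct and follows the same strategy as the paper's proof: identify the chamber structure of $\cH^p$ as a central line arrangement with $C_+$ a chamber whose bounding walls lie on the coordinate axes, use moduli tracking (\ref{moduli main}) to send the $C_+$-chambers of the iterated mutations back to $\Uptheta_\AB$ and deduce the module-level isomorphism $\upnu_1\upnu_2\cdots N\cong\upnu_{\{1,2\}}N\cong\upnu_2\upnu_1\cdots N$, then chase stability parameters of a $\upvartheta$-stable module around the resulting diagram of mutation functors to see that the relevant compositions take skyscrapers to skyscrapers, fix $\cO_{\mathfrak{U}}$, and commute with $\RDerived\clocCon_*$, hence are $\Id$. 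The two places where you cite slightly differently from the paper's proof are (a) the module-level isomorphism, where you invoke \cite[4.9]{HomMMP} as in Example~\ref{braiding knitting example}, while the paper's proof of the proposition gives a self-contained moduli-tracking argument through \cite[4.2]{HomMMP} and \cite[5.2.5]{Joe}; and (b) the assertion $d\geq3$, where the paper simply quotes knitting on AR quivers of Kleinian singularities, while you unpack this into the concrete statement $b_{1,2}\geq1$ (forced since the two curves lie in a connected chain, hence intersect) and exhibit a third line $b_{1,2}\upvartheta_1+\upvartheta_2=0$ from tracking the $C_+$-chamber of $\upnu_1\AB$. Both of these variations are legitimate, and (b) is arguably a useful elaboration of the paper's terse remark. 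One small point of caution: in your moduli-tracking step you apply \ref{moduli main} iteratively, and the hypothesis $\upvartheta_j>0$ must be re-verified at each step as you move across the arrangement; the paper does this by explicitly writing the parameters as $\pm\ell_i$ at every stage and observing all $\ell_i>0$ on $C_+$, whereas your justification ("non-negative combination of $\upvartheta_1,\upvartheta_2$") is correct but compresses this; making it precise requires noting that each $\ell_i$ is positive on $C_+$ precisely because $C_+$ is a chamber of the central arrangement, so none of the lines $\ell_i=0$ meets $C_+$.
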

\begin{proof}
Consider $\AB=\End_{\mathfrak{R}}(N)$, which is derived equivalent to $\mathfrak{U}$.  By \ref{C+chamber}, $\cH^p$ is a hyperplane arrangement in $\mathbb{R}^2$, with $C_+$ being a chamber.  This implies that the hyperplane arrangement is
\begin{equation}\label{chamber structure}
\def\lowerpinch{14}
\def\upperpinch{18}
\def\dotpos{138}
\begin{array}{ccc}
\begin{array}{c}
\begin{tikzpicture}[scale=0.75]
\coordinate (A1) at (90+\upperpinch:2cm);
\coordinate (A2) at (-90+\upperpinch:2cm);
\coordinate (B1) at (90+2*\upperpinch:2cm);
\coordinate (B2) at (-90+2*\upperpinch:2cm);
\coordinate (C1) at (180-2*\lowerpinch:2cm);
\coordinate (C2) at (-2*\lowerpinch:2cm);
\coordinate (D1) at (180-\lowerpinch:2cm);
\coordinate (D2) at (-\lowerpinch:2cm);
\draw[red] (A1) -- (A2);
\draw[red] (B1) -- (B2);
\draw[red] (C1) -- (C2);
\draw[red] (D1) -- (D2);
\draw[-] (-2,0)--(2,0);
\filldraw[red] (\dotpos-5:1.5cm) circle (0.5pt);
\filldraw[red] (\dotpos:1.5cm) circle (0.5pt);
\filldraw[red] (\dotpos+5:1.5cm) circle (0.5pt);
\filldraw[red] (\dotpos-180-5:1.5cm) circle (0.5pt);
\filldraw[red] (\dotpos-180:1.5cm) circle (0.5pt);
\filldraw[red] (\dotpos-180+5:1.5cm) circle (0.5pt);
\node at (0:2.5cm) {$\ell_1$};
\node at (0-\lowerpinch:2.5cm) {$\ell_2$};
\node at (0-2*\lowerpinch:2.5cm) {$\ell_3$};
\node at (-90+2*\upperpinch:2.5cm) {$\ell_{d-2}$};
\node at (-90+\upperpinch:2.5cm) {$\ell_{d-1}$};
\node at (-90:2.5cm) {$\ell_{d}$};
\draw[-] (0,-2)--(0,2);
\draw[densely dotted,gray] (0,0) circle (2cm);
\end{tikzpicture}
\end{array}
\end{array}
\end{equation}
for some lines given by $\ell_1=0$, $\ell_2=0$, \dots, $\ell_d=0$, where
\[
\ell_1=\upvartheta_2
\quad\text{and}\quad
\ell_d=\upvartheta_1.
\] 
The fact that $d\geq 3$ follows immediately by knitting on the AR quivers of Kleinian singularities, as in \cite[5.23, 5.19]{HomMMP}.

We first claim that the above chamber structure \eqref{chamber structure} implies that 
\begin{eqnarray}
\underbrace{\hdots\upnu_2\upnu_1\upnu_2}_{d}N \cong
\upnu_{\{1,2\}}N \cong
\underbrace{\hdots\upnu_1\upnu_2\upnu_1}_{d}N. \label{N both ways}
\end{eqnarray}
For notation, as in \ref{braiding knitting example}, we let $\mathfrak{U}_{\hdots kji}$ denote the scheme obtained from $\mathfrak{U}$ by first flopping curve $i$, then curve $j$, then curve $k$ and so on (in that order).  By \cite[4.2]{HomMMP}, we have
\[
\upnu_1N\cong H^0(\cV_{\mathfrak{U}_1})
\quad\text{and}\quad
\upnu_2N\cong H^0(\cV_{\mathfrak{U}_2}).
\]
Iterating, again by \cite[4.2]{HomMMP} the left- and right-hand terms of \eqref{N both ways} are $H^0(\cV_{\mathfrak{U}_{\hdots 212}})$ and $H^0(\cV_{\mathfrak{U}_{\hdots 121}})$, respectively.  Further, by \cite[5.2.5]{Joe}, the scheme $\mathfrak{U}_{\hdots 212}$ can be obtained as the moduli in the chamber $C_+$ for $\hdots\upnu_{2}\upnu_1\upnu_2\AB$, and $\mathfrak{U}_{\hdots 121}$ can be obtained as the moduli in the chamber $C_+$ for $\hdots\upnu_{1}\upnu_2\upnu_1\AB$.   Tracking these chambers back to $\Uptheta_{\AB}$, which we can do by \ref{C+chamber}\eqref{C+chamber 3} (see also the proof of \cite[5.22]{HomMMP}), both give the region
\[
C_-=\{ \upvartheta\mid \upvartheta_i<0\mbox{ for }i=1,2 \}\subset\Uptheta_{\AB},
\]
and so $\mathfrak{U}_{\hdots 212}\cong\mathfrak{U}_{\hdots 121}$ as schemes over $\Spec \mathfrak{R}$.  In fact, again by moduli tracking, both $\mathfrak{U}_{\hdots 212}$ and $\mathfrak{U}_{\hdots 121}$  are isomorphic to the scheme obtained from $\mathfrak{U}$ by flopping $C_1\bigcup C_2$, which we denote by $\mathfrak{U}_{\{1,2\}}$. Using \cite[4.2]{HomMMP} once again, the middle term of \eqref{N both ways} is $H^0(\cV_{\mathfrak{U}_{\{1,2\}}}\!)$. Taking global sections of the progenerator of perverse sheaves on $\mathfrak{U}_{\{1,2\}}$ gives  \eqref{N both ways}, as claimed.

Because of \eqref{N both ways}, there exists a diagram of mutation functors
\begin{eqnarray}
\begin{array}{cc}
\begin{tikzpicture}
\node (C1) at (7.75,0)  {$\Db(\hdots\upnu_2\upnu_1\upnu_2\AB)$};
\node (C2) at (4.5,1)  {$\Db(\hdots\upnu_1\upnu_2\AB)$};
\node (C3) at (3,1)  {$\hdots$};
\node (C4) at (1,1)  {$\Db(\upnu_2\AB)$};
\node (C5) at (-1,0)  {$\Db(\AB)$};
\node (C6) at (1,-1)  {$\Db(\upnu_1\AB)$};
\node (C7) at (3,-1)  {$\hdots$};
\node (C8) at (4.5,-1)  {$\Db(\hdots\upnu_2\upnu_1\AB)$};
\draw[->] (C5.north east) -- node[above left] {$\scriptstyle \Upphi_2$} (C4.south west);
\draw[->] (C4) -- node[above] {$\scriptstyle \Upphi_1$} (C3);
\draw[->] (C2.south east) -- node[above right] {$\scriptstyle $} (C1.north west);
\draw[->] (C5.south east) -- node[below left] {$\scriptstyle \Upphi_1$} (C6.north west);
\draw[->] (C6) -- node[below] {$\scriptstyle \Upphi_2$} (C7);
\draw[->] (C8.north east) -- node[below right] {$\scriptstyle $} (C1.south west);
\draw[->] (C5) -- node[above] {$\scriptstyle \Upphi_{\{1,2\}}$} (C1);
\end{tikzpicture}
\end{array}\label{mut general braid}
\end{eqnarray}
where the functors on the right-hand side depend on whether $d$ is odd or even; respectively they are
\[
\begin{array}{c@{\:}c@{\:}c}
\begin{array}{c}
\begin{tikzpicture}
\node (C1) at (7.75,0)  {$\Db(\hdots\upnu_2\upnu_1\upnu_2\AB)$};
\node (C2) at (4.5,1)  {$\Db(\hdots\upnu_1\upnu_2\AB)$};
\node (C8) at (4.5,-1)  {$\Db(\hdots\upnu_2\upnu_1\AB)$};
\draw[->] (C2.south east) -- node[above right] {$\scriptstyle \Upphi_2$} (C1.north west);
\draw[->] (C8.north east) -- node[below right] {$\scriptstyle \Upphi_1$} (C1.south west);
\end{tikzpicture}
\end{array}
&\mbox{and}&
\begin{array}{c}
\begin{tikzpicture}
\node (C1) at (7.75,0)  {$\Db(\hdots\upnu_2\upnu_1\upnu_2\AB).$};
\node (C2) at (4.5,1)  {$\Db(\hdots\upnu_1\upnu_2\AB)$};
\node (C8) at (4.5,-1)  {$\Db(\hdots\upnu_2\upnu_1\AB)$};
\draw[->] (C2.south east) -- node[above right] {$\scriptstyle \Upphi_1$} (C1.north west);
\draw[->] (C8.north east) -- node[below right] {$\scriptstyle \Upphi_2$} (C1.south west);
\end{tikzpicture}
\end{array}
\end{array}
\]
We next chase moduli around \eqref{mut general braid}, repeatedly applying \ref{moduli main} using the characterisation of the chamber structure \eqref{chamber structure}. Consider a $\upvartheta$-stable $\AB$-module $M$, for $\upvartheta\in C_+$. Tracking $M$ around \eqref{mut general braid}, we find that it is sent to a module which is stable for the following parameters: when $d$ is even
\[
\begin{tikzpicture}[scale=1.1]
\node (C1) at (9.25,0)  {$\begin{array}{c}
-\ell_{n}\\
-\ell_1
\end{array}$
\!=\!
$\begin{array}{c}
-\upvartheta_1\\
-\upvartheta_2
\end{array}$};
\node (C2b) at (7,1)  {$\begin{array}{c}
\ell_{n}\\
-\ell_{n-1}
\end{array}$};
\node (C2c) at (5,1)  {$\begin{array}{c}
-\ell_{n-2}\\
\ell_{n-1}
\end{array}$};
\node (C2a) at (4,1)  {$\hdots$};
\node (C3) at (3,1)  {$\begin{array}{c}
-\ell_2\\
\ell_3
\end{array}$};
\node (C4) at (1,1)  {$\begin{array}{c}
\ell_2\\
-\ell_1
\end{array}$};
\node (C5) at (-1,0)  {$\begin{array}{c}
\upvartheta_1\\
\upvartheta_2
\end{array}$
=
$\begin{array}{c}
\ell_n\\
\ell_1
\end{array}$};
\node (C6) at (1,-1)  {$\begin{array}{c}
-\ell_n\\
\ell_{n-1}
\end{array}$};
\node (C7) at (3,-1)  {$\begin{array}{c}
\ell_{n-2}\\
-\ell_{n-1}
\end{array}$};
\node (C7a) at (4,-1)  {$\hdots$};
\node (C8a) at (5,-1)  {$\begin{array}{c}
\ell_2\\
-\ell_3
\end{array}$};
\node (C8b) at (7,-1)  {$\begin{array}{c}
-\ell_2\\
\ell_1
\end{array}$};
\draw[->] (C5) -- node[above left] {$\scriptstyle \Upphi_2$} (C4);
\draw[->] (C4) -- node[above] {$\scriptstyle \Upphi_1$} (C3);
\draw[->] (C2c) -- node[above] {$\scriptstyle \Upphi_2$} (C2b);
\draw[->] (C2b) -- node[above right] {$\scriptstyle \Upphi_1$} (C1);
\draw[->] (C5) -- node[below left] {$\scriptstyle \Upphi_1$} (C6);
\draw[->] (C6) -- node[below] {$\scriptstyle \Upphi_2$} (C7);
\draw[->] (C8a) -- node[below] {$\scriptstyle \Upphi_1$} (C8b);
\draw[->] (C8b) -- node[below right] {$\scriptstyle \Upphi_2$} (C1);
\end{tikzpicture}
\]
and when $d$ is odd
\[
\begin{tikzpicture}[scale=1.1]
\node (C1) at (7.25,0)  {$\begin{array}{c}
-\ell_{1}\\
-\ell_n
\end{array}
\!=\!
\begin{array}{c}
-\upvartheta_2\\
-\upvartheta_1
\end{array}$};
\node (C2) at (5,1)  {$\begin{array}{c}
-\ell_{n-1}\\
\ell_{n}
\end{array}$};
\node (C2a) at (4,1)  {$\hdots$};
\node (C3) at (3,1)  {$\begin{array}{c}
-\ell_2\\
\ell_3
\end{array}$};
\node (C4) at (1,1)  {$\begin{array}{c}
\ell_2\\
-\ell_1
\end{array}$};
\node (C5) at (-1,0)  {$\begin{array}{c}
\upvartheta_1\\
\upvartheta_2
\end{array}$
=
$\begin{array}{c}
\ell_n\\
\ell_1
\end{array}$};
\node (C6) at (1,-1)  {$\begin{array}{c}
-\ell_n\\
\ell_{n-1}
\end{array}$};
\node (C7) at (3,-1)  {$\begin{array}{c}
\ell_{n-2}\\
-\ell_{n-1}
\end{array}$};
\node (C7a) at (4,-1)  {$\hdots$};
\node (C8) at (5,-1)  {$\begin{array}{c}
\ell_1\\
-\ell_2
\end{array}$};
\draw[->] (C5) -- node[above left] {$\scriptstyle \Upphi_2$} (C4);
\draw[->] (C4) -- node[above] {$\scriptstyle \Upphi_1$} (C3);
\draw[->] (C2) -- node[above right] {$\scriptstyle \Upphi_2$} (C1);
\draw[->] (C5) -- node[below left] {$\scriptstyle \Upphi_1$} (C6);
\draw[->] (C6) -- node[below] {$\scriptstyle \Upphi_2$} (C7);
\draw[->] (C8) -- node[below right] {$\scriptstyle \Upphi_1$} (C1);
\end{tikzpicture}
\]
In either case,
\begin{eqnarray}
(\Upphi_1^{-1}\circ\Upphi_2^{-1}\circ\Upphi_1^{-1}\circ \cdots)\circ(\cdots\circ\Upphi_2\circ\Upphi_1\circ\Upphi_2)\label{all way round}
\end{eqnarray}
sends a $\upvartheta$-stable module to a $\upvartheta$-stable module. Similarly, since $\Upphi_{\{1,2\}}$ negates both parameters, 
\begin{eqnarray}
\Upphi_{\{1,2\}}^{-1}\circ(\cdots\circ\Upphi_2\circ\Upphi_1\circ\Upphi_2)\label{half way round then straight back}
\end{eqnarray}
sends a $\upvartheta$-stable module to a module stable for some parameter in $C_+$.

Now by \cite[4.2]{HomMMP} mutation is functorially isomorphic to the inverse of the flop functor, and by \cite[\S5.2]{Joe} under the derived equivalence skyscrapers correspond precisely to $\upvartheta$-stable modules, for $\upvartheta\in C_+$. Hence \eqref{all way round} and \eqref{half way round then straight back} are functorially isomorphic to the corresponding chain of inverse flop functors, and it follows that
\begin{eqnarray*}
(\flop_1\circ\flop_2\circ\flop_1\circ\cdots)\circ(\cdots\circ\flop_2^{-1}\circ\flop_1^{-1}\circ\flop_2^{-1})
\\
\text{and}\quad\flop_{\{1,2\}}\circ(\cdots\circ\flop_2^{-1}\circ\flop_1^{-1}\circ\flop_2^{-1}) 
\end{eqnarray*}
are equivalences that take skyscrapers to skyscrapers.  Since they also fix the structure sheaf $\cO_{\mathfrak{U}}$, and commute with the pushdown $\RDerived f_*$ as in \cite[7.16(1)]{DW1}, it follows that both are naturally isomorphic to the identity. Finally, we deduce that \eqref{all way round} and \eqref{half way round then straight back} are also naturally isomorphic to the identity, and the result follows.
\end{proof}

We next lift the above into the algebraic setting.  To do this,  we use the Zariski local tilting bundle $\cV$ in \S\ref{localnotation}.
\begin{lemma}\label{alg comm}
Under the Zariski local flops setup $U\to\Spec R$ of \ref{flopslocal}, suppose that there are precisely two irreducible curves contracted, and that they intersect. Denote by $U^+$ the flop of $U$ at one of the curves, and consider the tilting bundles $\cV$ and $\cV^+$ from \S\ref{localnotation}.  Set $\Lambda:=\End_U(\cV)$ and $\Lambda^+:=\End_{U^+}(\cV^+)$, then
\begin{enumerate}
\item\label{alg comm 1} $Z:=\Hom_R(H^0(\cV),H^0(\cV^+))$ is a tilting $\Lambda$-module with $\End_\Lambda(Z)\cong\Lambda^+$.
\item\label{alg comm 2}  The following is a commutative diagram of equivalences.
\begin{eqnarray}
\begin{array}{c}
\begin{tikzpicture}
\node (A1) at (0,0) {$\Db(\coh U)$};
\node (A2) at (0,-1.5) {$\Db(\coh U^+)$};
\node (B1) at (5,0) {$\Db(\mod\Lambda)$};
\node (B2) at (5,-1.5) {$\Db(\mod \Lambda^+)$};
\draw[->] (A1) -- node[above] {$\scriptstyle \RHom_U(\cV,-)$} (B1);
\draw[->] (A2) -- node[above] {$\scriptstyle \RHom_{U^+}(\cV^+\!,-)$} (B2);
\draw[->] (A1) -- node[left] {$\scriptstyle \flop^{-1}$} (A2);
\draw[->] (B1) -- node[right] {$\scriptstyle \RHom_\Lambda(Z,-)$} (B2);
\end{tikzpicture}
\end{array}\label{Zar local comm}
\end{eqnarray}
\end{enumerate}
\end{lemma}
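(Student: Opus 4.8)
The plan is to reduce both parts to the complete local setting of \ref{flopscompletelocal}, where (1) is the mutation theory of \cite[\S6]{IW4} and (2) is the identification of the inverse flop functor with the mutation functor in \cite[4.2]{HomMMP}, and then to propagate these statements to the Zariski local model using \eqref{up=down VdB} together with the fact that $f$ and $f^+$ are isomorphisms away from the single point $\m\in\Max R$, as in the lifting arguments of \cite[\S6]{DW1}. Write $\cV^+=\cO_{U^+}\oplus\cN^+$ and $f^+\colon U^+\to\Spec R$ for the flop and its contraction, and set $L:=f_*\cN$ and $L^+:=f^+_*\cN^+$, so that $\Lambda\cong\End_R(R\oplus L)$ and $\Lambda^+\cong\End_R(R\oplus L^+)$ by \eqref{up=down VdB}, with $H^0(\cV)=R\oplus L$ and $H^0(\cV^+)=R\oplus L^+$.

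For part (1), the first step is to show that, on the level of $R$-modules, flopping the chosen curve $C_i$ is the mutation of \ref{mut defin main} at the summand $L_i$ of $L$ attached to $C_i$, that is $R\oplus L^+\cong\upnu_i(R\oplus L)$. Since mutation of reflexive modules commutes with completion, this can be checked after completing at $\m$, where $\widehat{R\oplus L}$ is a sum of powers of the indecomposable summands of $N$ by \eqref{a_i decomp}, and the completion of $R\oplus L^+$ is $H^0(\cV_{\mathfrak U^+})\cong\upnu_iN$ by \cite[4.2]{HomMMP}, which matches. Granting this, $Z=\Hom_R(R\oplus L,\upnu_i(R\oplus L))$, which is a tilting $\Lambda$-module by \cite[\S6]{IW4}; moreover $R\oplus L$ is a generator (it contains $R$ as a summand), so $\Hom_R(R\oplus L,-)$ is fully faithful, whence $\End_\Lambda(Z)\cong\End_R(R\oplus L^+)\cong\Lambda^+$. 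I would verify the tilting conditions themselves locally on $\Max R$: at a maximal ideal other than $\m$ the flop is an isomorphism, so $R\oplus L$ and $R\oplus L^+$ become identified and $Z$ localises to $\Lambda$; at $\m$ one completes, where $\widehat{Z_\m}$ is Morita equivalent to the mutation tilting module $T_i$ over $\AB$ from \S\ref{mut prelim}, whose finite projective dimension and vanishing higher self-extensions descend by faithful flatness of completion.

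For part (2), transport the Bridgeland--Chen functor $\flop^{-1}\colon\Db(\coh U)\to\Db(\coh U^+)$ of \cite{Bridgeland,Chen} through the two horizontal equivalences of \eqref{Zar local comm} to an equivalence $\cF\colon\Db(\mod\Lambda)\to\Db(\mod\Lambda^+)$, and compare it with $\RHom_\Lambda(Z,-)$ from part (1). Since $\Lambda$ and $\Lambda^+$ are module-finite algebras over the Noetherian ring $R$, both equivalences are standard, so each is represented by a complex of $\Lambda$-$\Lambda^+$-bimodules, and the claim reduces to showing that the bimodule $W$ representing $\cF$ is isomorphic to $Z$. Both $W$ and $Z$ are module-finite over the centre, so this may be tested after localising and completing at the maximal ideals of $R$: away from $\m$ both functors restrict to the identity (for $\RHom_\Lambda(Z,-)$ because $Z$ localises to $\Lambda$, and for $\cF$ because $\flop^{-1}$ is an isomorphism there), so $W$ and $Z$ agree there; and after completing at $\m$, \cite[4.2]{HomMMP} identifies $\cF$ with the mutation functor $\Upphi_i\cong\RHom_{\AB}(T_i,-)$, with $T_i$ Morita equivalent to $\widehat{Z_\m}$, so $\widehat{W_\m}\cong\widehat{Z_\m}$. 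A gluing argument as in \cite[\S6]{DW1} then assembles these comparisons into $W\cong Z$, which is exactly the commutativity of \eqref{Zar local comm}. As an alternative to the bimodule comparison, one can rerun the moduli-tracking computation of \ref{2braidcompletelocal} with the Zariski local bundle $\cV$, checking that $\RHom_\Lambda(Z,-)$ sends the complexes computing $\RHom_U(\cV,\cO_x)$ to those computing $\RHom_{U^+}(\cV^+,\cO_x)$ for every closed point $x$, together with the usual compatibility with pushforward along the contractions.

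The step I expect to be the main obstacle is the passage from the complete local to the Zariski local statement in part (2): one must check that the complete local identification of the transported flop functor with mutation is compatible with the base change $R_\m\to\widehat{R_\m}$ to the formal fibre, and then genuinely glue the resulting local isomorphisms of kernels (equivalently, of bimodules) as in \cite[\S6]{DW1}, rather than merely quoting \cite[4.2]{HomMMP} as a black box. By contrast, part (1) is comparatively routine: it is a local statement on $\Max R$ that reduces at $\m$ to the mutation theory of \cite[\S6]{IW4} and \cite[4.2]{HomMMP} already in play in \ref{2braidcompletelocal}.
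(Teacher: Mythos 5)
Your part~(1) is essentially the paper's argument: reduce the tilting conditions to the formal fibre at $\m$ (where they become the mutation tilting module over $\AB$ via the Morita compatibility diagram), observe $Z$ localises to a progenerator elsewhere, and get the endomorphism ring statement by reflexive equivalence using \eqref{up=down VdB}. One soft spot: your intermediate claim $R\oplus L^+\cong\upnu_i(R\oplus L)$ presupposes that a summand $L_i$ of $L$ corresponding to a single curve is well-defined over the non-complete ring $R$; as the discussion around \ref{take R out} and the definition of $\Lambda_{\fib}$ in \S\ref{localnotation} emphasises, Krull--Schmidt can fail Zariski locally, so such a decomposition need not exist, and the paper deliberately avoids forming a Zariski-local mutation. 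Since you only actually \emph{verify} the tilting conditions after completing, this is a presentational rather than a logical flaw, but the claim as phrased is not literally meaningful. For part~(2), your primary route (represent both functors by $\Lambda$--$\Lambda^+$-bimodule kernels, compare the kernels point-by-point, and glue) is genuinely different from the paper's. The paper instead runs the argument you mention as an afterthought: write $\Uppsi$ for the loop around the square, show $\Uppsi$ fixes every skyscraper $\cO_u$ by passing to the formal fibre (where \cite[4.2]{HomMMP} and the Morita diagram give commutativity), note $\Uppsi$ fixes $\cO_U$ and commutes with $\RDerived f_*$, and conclude $\Uppsi\functcong\Id$. This buys you two things your bimodule route would have to supply by hand: you never need a Rickard-type representability statement for the transported flop functor, and the ``fixes skyscrapers, fixes $\cO_U$, commutes with pushdown'' criterion already produces the global natural isomorphism without any separate gluing of local kernel isomorphisms. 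You correctly identified that the formal-fibre-to-Zariski-local passage is the crux; the paper's choice of the skyscraper criterion is precisely what makes that passage painless.
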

\begin{proof}
(1)  For the statement on endomorphism rings, by repeated use of \eqref{up=down VdB} we see that
\[
\End_\Lambda(Z)\cong\End_{\End_R(H^0(\cV))}^{\phantom{\End}}\left(\Hom_R(H^0(\cV),H^0(\cV^+))\right)\cong \End_R(H^0(\cV^+))\cong\Lambda^+,
\]
where the second isomorphism is reflexive equivalence.
Now the property of being a tilting module can be checked complete locally, and certainly $\widehat{Z}_\n$ is a tilting $\widehat{\Lambda}_\n$-module for all $\n\in\Max R$ with $\n\neq \m$.  Further, for the point $\m$, by \eqref{a_i decomp} we know that $\add (H^0(\cV)\otimes_R\mathfrak{R})=\add N$ and similarly $\add(H^0(\cV^+)\otimes_R\mathfrak{R}) =\add N^+$.  But exactly as in \cite[5.8]{DW1} (or in \ref{mutation Morita diagram} below), there is a commutative diagram
\begin{eqnarray}
\begin{array}{c}
\begin{tikzpicture}
\node (A1) at (0,0) {$\Db(\mod\AB)$};
\node (A2) at (0,-1.5) {$\Db(\mod \ABplus)$};
\node (B1) at (4,0) {$\Db(\mod\widehat{\Lambda})$};
\node (B2) at (4,-1.5) {$\Db(\mod\widehat{\Lambda}^+)$.};
\draw[->] (A1) -- node[above] {$\scriptstyle morita$} (B1);
\draw[->] (A2) -- node[above] {$\scriptstyle morita$} (B2);
\draw[->] (A1) -- node[left] {$\scriptstyle \RHom_{\AB}(\Hom_\mathfrak{R}(N,N^+),-)$} (A2);
\draw[->] (B1) -- node[right] {$\scriptstyle  \RHom_{\widehat{\Lambda}}(\widehat{Z},-)$} (B2);
\end{tikzpicture}
\end{array}\label{the last morita}
\end{eqnarray}
Say curve $i$ has been flopped, then by \cite[4.17(1)]{HomMMP} $N^+=\upnu_i N$, and so the left-hand functor is the mutation functor (see e.g.\ \cite[2.22(1)]{HomMMP} or \ref{elementary mut for flops}\eqref{elementary mut for flops 2}), which is an equivalence.  Hence the right-hand functor is also an equivalence, and the statement follows.  \\
(2) For each $\n\in\Max R$ consider the formal fibre version of the diagram.  If $\n\neq \m$ then the diagram clearly commutes, since $f\colon U\to\Spec R$ is an isomorphism away from $\m$.  If $\n=\m$ then the diagram commutes by combining \eqref{the last morita} with \cite[4.2]{HomMMP}.

Now write $\Uppsi$ for a composition of the four equivalences in the square \eqref{Zar local comm} to give an autoequivalence of $\Db(\coh U)$. Consider a skyscraper $\cO_u\in\Db(\coh U)$. Since the formal fibre versions of the diagram \eqref{Zar local comm} commute, it follows that $\Uppsi$ fixes $\cO_u$. Furthermore $\Uppsi$ fixes $\cO_U$, and commutes with the pushdown $\RDerived f_*$ by the compatibility result of \cite[2.14(2)]{HomMMP}. We conclude that $\Uppsi$ is functorially isomorphic to the identity, and thence that \eqref{Zar local comm} is functorially commutative.
\end{proof}

Using the above, we can now extend \ref{2braidcompletelocal} into the Zariski local setting:
\begin{thm}\label{braid 2 curve local}
Under the Zariski local flops setup $f\colon U\to\Spec R$ of \ref{flopslocal},  suppose that $f$ contracts precisely two independently floppable irreducible curves to a point $p$. Then
\[
\underbrace{\flop_1\circ\flop_2\circ\flop_1\circ\cdots}_{d}\functcong
\flop_{\{1,2\}}\functcong
\underbrace{\flop_2\circ\flop_1\circ\flop_2\circ\cdots}_{d}
\]
where $d$ is the number of hyperplanes in $\cH^p$.  Further, $d\geq 3$.
\end{thm}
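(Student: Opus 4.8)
The plan is to deduce this Zariski local statement from the complete local version \ref{2braidcompletelocal}, using \ref{alg comm} to transport flop functors to mutation functors in a way compatible with passage to the formal fibre. As in \ref{2braidcompletelocal}, the moduli-tracking argument over $\Uptheta_{\AB}$ gives isomorphisms $U_{\dots 212}\cong U_{\dots 121}\cong U_{\{1,2\}}$ of schemes over $\Spec R$ --- the Zariski local analogue of \eqref{N both ways} --- since $\cH^p$ for $f\colon U\to\Spec R$ is by definition the arrangement of the formal fibre, so its description \eqref{chamber structure} from \ref{2braidcompletelocal} holds unchanged. Using these identifications, it suffices to show that $(\flop_1\circ\flop_2\circ\cdots)\circ(\cdots\circ\flop_2^{-1}\circ\flop_1^{-1})$ and $\flop_{\{1,2\}}\circ(\cdots\circ\flop_2^{-1}\circ\flop_1^{-1})$ are each functorially isomorphic to the identity on $\Db(\coh U)$, since composing the two resulting isomorphisms gives the braid relation with $\flop_{\{1,2\}}$ in the middle.

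First I would iterate \ref{alg comm}\eqref{alg comm 2} along the chain of single-curve flops $\cdots\circ\flop_2^{-1}\circ\flop_1^{-1}$ to obtain a commutative ladder of squares, whose outer square identifies this composite, under $\RHom_U(\cV,-)$, with a composite of mutation functors on $\Db(\mod\Lambda)$. The tilting bimodules here compose correctly: by \eqref{up=down VdB} and reflexive equivalence, the bimodule for a single flop of $U$ at one curve is $\Hom_R(H^0(\cV),H^0(\cV^+))$, and these glue along a chain exactly as in the proof of \ref{alg comm}\eqref{alg comm 1}, using that $N^+\cong\upnu_iN$ by \cite[4.17(1)]{HomMMP}. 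Running the same argument while instead mutating $\Lambda$ at the summand corresponding to $N_1\oplus N_2$, and using \cite[4.2]{HomMMP} to identify $\upnu_{\{1,2\}}N$ with the global sections of the progenerator on $U_{\{1,2\}}$, produces the analogous commutative square for $\flop_{\{1,2\}}$. After these reductions, the two functors above correspond under the tilting equivalence to composites of the mutation functors $\Upphi_i^{\pm1}$ and $\Upphi_{\{1,2\}}^{-1}$, which by the moduli-tracking computation in the proof of \ref{2braidcompletelocal} send $\upvartheta$-stable $\AB$-modules to $\upvartheta$-stable $\AB$-modules for $\upvartheta\in C_+$.

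Next I would check that these functors send skyscrapers to skyscrapers. For $u\in U$ over a point of $\Spec R$ where $f$ is an isomorphism, every single-curve flop and the two-curve flop restricts to an isomorphism on a neighbourhood of $u$, so the image of $\cO_u$ is a skyscraper. For $u$ over $p$, restricting to the formal fibre $\clocCon\colon\mathfrak{U}\to\Spec\mathfrak{R}$ identifies the functor with the corresponding composite of mutation functors on $\Db(\coh\mathfrak{U})$, which by \ref{2braidcompletelocal} is functorially isomorphic to the identity; together with the fact that the image of $\cO_u$ is supported over $p$, this forces it to be $\cO_u$ again. Since these functors also fix $\cO_U$ (the summand $\cO_U$ of $\cV$ is untouched by mutations at summands of $\cN$) and commute with $\RDerived f_*$ as in \cite[7.16(1)]{DW1} and \cite[2.14(2)]{HomMMP}, they are functorially isomorphic to the identity, which gives the braid relation. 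Finally $d\geq 3$ is immediate, as $d$ counts the hyperplanes of the same arrangement $\cH^p$ for which \ref{2braidcompletelocal} already established $d\geq 3$ by knitting on the AR quiver of the associated Kleinian singularity, as in \cite[5.19, 5.23]{HomMMP}.

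The step I expect to be the main obstacle is the bookkeeping needed to assemble the ladder of \ref{alg comm}-type squares with correctly composing tilting bimodules, and in particular matching the intrinsically defined Bridgeland--Chen flop $\flop_{\{1,2\}}$ of the reducible curve $C_1\cup C_2$ with mutation of $\Lambda$ at the summand for $N_1\oplus N_2$; once these compatibilities are set up, the rest is a direct transfer of the complete local argument.
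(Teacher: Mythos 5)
Your proposal is correct and follows essentially the same route as the paper: reduce to the complete local braiding of \ref{2braidcompletelocal} by tracking skyscrapers, using \ref{alg comm}\eqref{alg comm 2} to pass the composite flop functors to mutation functors on $\Db(\mod\Lambda)$, then exploiting that a $\upvartheta$-stable $\Lambda$-module is supported at a single point so the tracking can be done after completion. The additional detail you supply --- iterating the \ref{alg comm} squares into a ladder with correctly composing tilting bimodules, explicitly matching $\flop_{\{1,2\}}$ with mutation at $N_1\oplus N_2$, and observing that $\cO_U$ is fixed since the summand $\cO_U$ of $\cV$ is untouched by mutation at summands of $\cN$ --- is what the paper's ``similar argument'' and terse bookkeeping leave implicit, so your version is a faithful expansion of the same proof rather than a genuinely different argument.
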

\begin{proof}
Since each algebraic flop, after passing to the formal fibre, is still a flop, 
iteratively flopping all possible combinations of all possible subsets of the two curves (which, since the two curves are individually floppable, we may do) gives the same number of schemes in both cases, and the combinatorics are the same for both.  In particular $d\geq 3$ by \ref{2braidcompletelocal}.

To show braiding for a chain of algebraic flops, again we track skyscrapers $\cO_u$ under the chain
\begin{eqnarray}
(\flop_1\circ\flop_2\circ\flop_1\circ\cdots)\circ(\cdots\circ\flop_2^{-1}\circ\flop_1^{-1}\circ\flop_2^{-1}).\label{flop way round alg}
\end{eqnarray}
Using \ref{alg comm}\eqref{alg comm 2}, we can reduce the problem to tracking $\upvartheta$-stable $\Lambda$-modules $M$ for $\upvartheta\in C_+$.  Since $M$ is supported only at a single point $\n$ as an $R$-module, 
\[
\RHom_\Lambda(Z,M)\otimes_R\widehat{R}_\n\cong\RHom_{\widehat{\Lambda}_\n}(\widehat{Z}_\n,M)
\]
and so it suffices to track $M$ complete locally.  If $\n\neq \m$ it is clear that $M$ tracks to itself. If $\n=\m$ then by \eqref{the last morita} $\RHom_{\widehat{\Lambda}}(\widehat{Z},-)$ is naturally isomorphic to the mutation functor.  Hence by \ref{2braidcompletelocal} all skyscrapers track to skyscrapers under \eqref{flop way round alg}, so again since the structure sheaf is fixed and the functors all commute with the pushdown, \eqref{flop way round alg} is functorially isomorphic to the identity. A similar argument shows that
\[
\flop_{\{1,2\}}\circ(\cdots\circ\flop_2^{-1}\circ\flop_1^{-1}\circ\flop_2^{-1}) 
\]
is functorially isomorphic to the identity, and the result follows.
\end{proof}

Next we focus on the global setting of \ref{braid 2 curve global}, for the case of two intersecting curves. Recall that we have a contraction $f\colon X\to X_{\con}$ mapping these curves to a point $p$. As in \S\ref{global flops setup}, put $C=f^{-1}(p)$, so that $C^{\redu}=C_1\cup C_2$ with $C_i\cong\mathbb{P}^1$. Then we choose an affine open neighbourhood $U_{\con} \cong \Spec R$ around $p$, so that setting $U:=f^{-1}(U_{\con})$, we have the commutative diagram
\begin{eqnarray*}
\begin{array}{c}
\begin{tikzpicture}
\node (C) at (-0.6,0) {$C^{}$}; 
\node (U) at (1,0) {$U$};
\node (X) at (3,0) {$X$};
\draw[right hook->] (C) to node[pos=0.6,above] {$\scriptstyle e$} (U);
\draw[right hook->] (U) to node[above] {$\scriptstyle i$} (X);

\node (x) at (-0.6,-1.5) {$\phantom{R}p\phantom{R}$}; 
\node (Uc) at (1,-1.5) {$U_{\con}$};
\node (Xc) at (3,-1.5) {$X_{\con}$};
\node at (0.1,-1.5) {$\in$}; 
\draw[right hook->] (Uc) to (Xc);

\draw[->] (X) --  node[right] {$\scriptstyle f$} (Xc);
\draw[->] (U) --  node[right] {$\scriptstyle f|_U$}  (Uc);
\draw[|->] (C) --  (x);
\end{tikzpicture}
\end{array}
\end{eqnarray*}
where $e$ is a closed embedding, and $i$ is an open embedding. Choose one of the curves $C_i$, and write $U^+$ and $X^+$ for the schemes obtained by flopping $C_i$ in $U$ and $X$ respectively. The following is similar to \cite[7.8]{DW1}, and is well-known to experts.

\begin{lemma}\label{diagram commutes U to X}  In the setting of \ref{braid 2 curve global}, and with notation as above, the following diagram is naturally commutative.
\[
\begin{array}{c}
\opt{10pt}{\begin{tikzpicture}}
\opt{12pt}{\begin{tikzpicture}[scale=1.2]}
\node (a1) at (0,0) {$\D(\Qcoh U)$};
\node (a2) at (3,0) {$\D(\Qcoh X)$};
\node (b1) at (0,-1.5) {$\D(\Qcoh U^+)$};
\node (b2) at (3,-1.5) {$\D(\Qcoh X^+)$};
\draw[->] (a1) to node[above] {$\scriptstyle \Ri_*$} (a2);
\draw[->] (b1) to node[above] {$\scriptstyle \Ri^+_*$} (b2);
\draw[->] (a1) to node[left] {$\scriptstyle \flop_i$} (b1);
\draw[->] (a2) to node[right] {$\scriptstyle \flop_i$} (b2);
\end{tikzpicture}
\end{array}
\]
\end{lemma}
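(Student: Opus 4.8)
The plan is to realise both vertical functors as Fourier--Mukai transforms and then reduce the claim to base change and the projection formula along the open immersions appearing in the square. Recall from \cite{Bridgeland,Chen} that the flop functor is of Fourier--Mukai type: writing $Z:=X\times_{X_{\con}}X^{+}$ with projections $p\colon Z\to X$, $q\colon Z\to X^{+}$, there is a kernel $\cP$ with $\flop_i(a)\functcong\RDerived{q}_{*}(\cP\otimes^{\bf L}\LDerived{p}^{*}a)$ for $a\in\D(\Qcoh X)$, and similarly the Zariski local flop $\flop_i\colon\D(\Qcoh U)\to\D(\Qcoh U^{+})$ has a kernel $\cP_{U}$ on $Z_{U}:=U\times_{U_{\con}}U^{+}$. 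Since $U=f^{-1}(U_{\con})$ and, by the local nature of flops, $U^{+}=(f^{+})^{-1}(U_{\con})$, one checks directly that $Z_{U}$ is exactly the open subscheme $p^{-1}(U)=q^{-1}(U^{+})$ of $Z$; writing $\iota\colon Z_{U}\hookrightarrow Z$ for this open immersion and $p^{U}\colon Z_{U}\to U$, $q^{U}\colon Z_{U}\to U^{+}$ for the restrictions of $p,q$, one has $p\circ\iota=i\circ p^{U}$ and $q\circ\iota=i^{+}\circ q^{U}$, with both squares Cartesian. The one geometric input that must be isolated (discussed below) is the identification $\cP_{U}\functcong\LDerived{\iota}^{*}\cP$.

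Granting this, the verification is a formal diagram chase, valid in the unbounded derived categories of quasi-coherent sheaves. For $a\in\D(\Qcoh U)$, base change along the Cartesian square relating $Z_{U},Z,U,X$---which applies since $i$ is flat, being an open immersion---gives $\LDerived{p}^{*}\Ri_{*}a\functcong\RDerived{\iota}_{*}\LDerived{(p^{U})}^{*}a$; the projection formula for $\iota$ together with $\cP_{U}\functcong\LDerived{\iota}^{*}\cP$ then gives $\cP\otimes^{\bf L}\RDerived{\iota}_{*}\LDerived{(p^{U})}^{*}a\functcong\RDerived{\iota}_{*}(\cP_{U}\otimes^{\bf L}\LDerived{(p^{U})}^{*}a)$; and applying $\RDerived{q}_{*}$ and using $q\circ\iota=i^{+}\circ q^{U}$,
\begin{align*}
\flop_i(\Ri_{*}a)&\functcong\RDerived{q}_{*}\RDerived{\iota}_{*}\big(\cP_{U}\otimes^{\bf L}\LDerived{(p^{U})}^{*}a\big)\\
&\functcong\Ri^{+}_{*}\RDerived{q^{U}}_{*}\big(\cP_{U}\otimes^{\bf L}\LDerived{(p^{U})}^{*}a\big)=\Ri^{+}_{*}\big(\flop_i a\big).
\end{align*}
All the isomorphisms used are functorial in $a$, so the square commutes naturally, as required.

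The step I expect to be the only genuine content is the kernel identification $\cP_{U}\functcong\LDerived{\iota}^{*}\cP$, equivalently the assertion that the Bridgeland--Chen flop is compatible with restriction to open subschemes of the base $X_{\con}$. This is standard, and can be justified in two ways. First, $X^{+}$ is the fine moduli space of perverse point sheaves on $X$; such sheaves are supported on the (at most one-dimensional) fibres of $f$, so performing the same moduli construction over the open $U_{\con}$ returns $X^{+}\times_{X_{\con}}U_{\con}=U^{+}$ together with the restriction of the universal family, and the Fourier--Mukai kernels are built from these universal families. Alternatively, in the algebraic framework of the present paper one can deduce it from the tilting description, combining \ref{alg comm} with \cite[4.2]{HomMMP} in the manner of the proof of \ref{braid 2 curve local}. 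Either way the moral is that the flop functor depends only on a neighbourhood of the contracted curves, all of which lie in $U$; granting this, the lemma follows formally as above, exactly as \cite[7.8]{DW1} handles the single-curve case.
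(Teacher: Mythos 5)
Your proof is correct and matches the paper's approach: both work via the Fourier--Mukai kernel of the flop (the structure sheaf of the fibre product over the contracted base), identify the local fibre product as the open preimage of $U$ inside the global one, and conclude by flat base change along the resulting cartesian square. Your concern that the kernel identification $\cP_U\functcong\LDerived\iota^*\cP$ is the ``only genuine content'' is slightly overstated: with Bridgeland's normalisation $\cP=\cO_Z$ and $\cP_U=\cO_{Z_U}$, this is immediate since $\iota$ is an open immersion, which is exactly the viewpoint the paper adopts by taking the kernel to be $g_{X*}\cO$.
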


\begin{proof}
Write $\Gamma_U$ (respectively $\Gamma_X$) for the fibre product of $U$ (respectively $X$) with its flop, over the contracted base $U_{\con}$ (respectively $X_{\con}$). Then there are maps 
\[
g_U\colon \Gamma_U \to U \times U^+\quad\text{and}\quad g_X\colon \Gamma_X \to X \times X^+,
\]
and a natural inclusion $\iota\colon \Gamma_U \to \Gamma_X$. Using this notation to translate the claim into the language of Fourier--Mukai kernels \cite[5.12]{HuybrechtsFM}, we require that
\[
\LDerived(i \times \Id)^* g_{X *} \cO \cong \RDerived(\Id \times i^+)_* g_{U *} \cO.
\]
This follows by considering the diagram
\[
\begin{array}{c}
\opt{10pt}{\begin{tikzpicture}}
\opt{12pt}{\begin{tikzpicture}[scale=1.2]}
\node (a1) at (0,0) {$\Gamma_U$};
\node (a2) at (3,0) {$U \times X^+$};
\node (b1) at (0,-1.5) {$\Gamma_X$};
\node (b2) at (3,-1.5) {$X \times X^+$};
\draw[->] (a1) to node[above] {$\scriptstyle h $} (a2);
\draw[->] (b1) to node[above] {$\scriptstyle g_X$} (b2);
\draw[->] (a1) to node[left] {$\scriptstyle \iota$} (b1);
\draw[->] (a2) to node[right] {$\scriptstyle i \times \Id$} (b2);
\end{tikzpicture}
\end{array}
\]
where $h$ is the natural map $(\Id \times i^+) \circ g_U $. Under the birational correspondence between $X$ and $X^+$,  points of $U$ correspond to points of $U^+$ and vice versa, so this square is cartesian. The right-hand map is flat, and so base change gives $\LDerived(i \times \Id)^* \RDerived g_{X *} \cong \RDerived h_*  \LDerived \iota^*.$ The result follows by applying this isomorphism to $\cO$ on $\Gamma_X$.
\end{proof}

Using \ref{diagram commutes U to X}, we can extend \ref{braid 2 curve local} to the global setting:

\begin{cor}\label{braid 2 curve global intersect}
With the global quasi-projective flops setup $f\colon X\to X_{\con}$ of \ref{flopsglobal}, suppose that $f$ contracts precisely two independently floppable irreducible curves to a point $p$. Then
\[
\underbrace{\flop_1\circ\flop_2\circ\flop_1\circ\cdots}_{d}\functcong
\flop_{\{1,2\}}
\functcong
\underbrace{\flop_2\circ\flop_1\circ\flop_2\circ\cdots}_{d}
\]
where $d$ is the number of hyperplanes in $\cH^p$.  Further, $d\geq 3$.
\end{cor}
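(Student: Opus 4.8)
The plan is to reduce to the Zariski local statement \ref{braid 2 curve local} by the same skyscraper–tracking argument used there, with the local-to-global compatibility Lemma \ref{diagram commutes U to X} playing the role that \ref{alg comm} played in passing from the complete local case to the Zariski local one. First I would record two elementary reductions. The arrangement $\cH^p$ is by definition read off from the formal fibre over $p$, and $X$, the Zariski local model $U:=f^{-1}(U_{\con})$, and the formal fibre $\mathfrak U$ all share this formal fibre; hence the number $d$ of hyperplanes in $\cH^p$ is the same whether computed from $X$ or from $U$, and the inequality $d\geq 3$ is then immediate from \ref{braid 2 curve local}. Similarly, since flopping is local over the contracted base, the two curves are independently floppable in $X$ if and only if they are in $U$, so all the flop functors in the statement exist, as does the flop functor $\flop_{\{1,2\}}$ of $C=f^{-1}(p)$ over $X_{\con}$.

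Next, set
\[
\Uppsi:=\big(\underbrace{\flop_1\circ\flop_2\circ\flop_1\circ\cdots}_{d}\big)\circ\big(\underbrace{\cdots\circ\flop_2^{-1}\circ\flop_1^{-1}\circ\flop_2^{-1}}_{d}\big)\colon\Db(\coh X)\to\Db(\coh X),
\]
a Fourier--Mukai equivalence. It fixes $\cO_X$ because each flop functor sends the structure sheaf to the structure sheaf, and it commutes with the pushdown $\RDerived f_*$ exactly as in \cite[7.16(1)]{DW1}. By the argument recalled in the proof of \ref{2braidcompletelocal}, it then suffices to prove that $\Uppsi$ sends skyscrapers to skyscrapers; in fact I would show $\Uppsi(\cO_x)\functcong\cO_x$ for every closed point $x\in X$. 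If $x\notin C$ this is trivial, since the flop is an isomorphism away from $p$ and hence every flop functor occurring in $\Uppsi$ is an isomorphism in a neighbourhood of $x$.

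For $x\in C$, write $i\colon U\hookrightarrow X$ for the open embedding, so that $\Ri_*\cO_x\functcong\cO_x$. Applying \ref{diagram commutes U to X} repeatedly --- it applies at each intermediate model, since the global flops setup of \ref{flopsglobal} is preserved under flopping and each intermediate variety retains the affine neighbourhood $U_{\con}$ of $p$, and its statement for inverse flops follows by inverting the square --- we obtain, for any word $w$ of flop functors, $\flop_w(\cO_x)\functcong\Ri'_*\big(\flop^U_w(\cO_x)\big)$, where $\flop^U_w$ is the matching word of flop functors on the local model and $\Ri'_*$ is pushforward along the induced open embedding of the flopped local model into the flopped global model. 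Applying this to all of $\Uppsi$ and invoking \ref{braid 2 curve local}, which gives the corresponding autoequivalence $\Uppsi^U$ of $\Db(\coh U)$ functorially isomorphic to the identity, yields $\Uppsi(\cO_x)\functcong\Ri_*\cO_x=\cO_x$. Hence $\Uppsi$ takes skyscrapers to skyscrapers, and combined with the previous paragraph this forces $\Uppsi\functcong\Id$, i.e.\ the two length-$d$ braid words in $\flop_1,\flop_2$ agree. Running the identical argument on $\flop_{\{1,2\}}\circ\big(\underbrace{\cdots\circ\flop_2^{-1}\circ\flop_1^{-1}\circ\flop_2^{-1}}_{d}\big)$ --- using the evident analogue of \ref{diagram commutes U to X} for $\flop_{\{1,2\}}$, whose proof is the same cartesian-square computation --- shows this composite is also functorially the identity, which gives $\underbrace{\flop_1\circ\flop_2\circ\cdots}_{d}\functcong\flop_{\{1,2\}}$ and completes the proof.

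The main obstacle I anticipate is the careful bookkeeping underlying the third paragraph: one must check that flopping a single curve is compatible with restriction to $U$, so that the flopped local model is genuinely an open subscheme of the flopped global model and the pushforwards $\Ri'_*$ are defined at every stage, and that \ref{diagram commutes U to X}, its inverse-flop variant, and its variant for $\flop_{\{1,2\}}$ may be chained through the entire braid word. This is standard and well known to experts, but writing it down cleanly requires setting up the chain of intermediate models together with their open embeddings explicitly and verifying that the base-change hypotheses hold at each stage.
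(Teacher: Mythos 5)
Your proposal is correct and follows essentially the same route as the paper: track skyscrapers, split into $x\in C$ versus $x\notin C$, reduce to the Zariski local statement \ref{braid 2 curve local} via \ref{diagram commutes U to X}, and conclude by the standard ``fixes $\cO_X$, commutes with $\Rf_*$, preserves skyscrapers'' argument. The only cosmetic difference is that the paper works directly with $\flop_{\{1,2\}}\circ(\cdots\circ\flop_2^{-1}\circ\flop_1^{-1}\circ\flop_2^{-1})$ and gets the remaining isomorphism by symmetry, whereas you first show the two length-$d$ words agree and then separately compare against $\flop_{\{1,2\}}$; both are fine.
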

\begin{proof}
Once again, we track skyscrapers $\cO_x$ under the chain
\begin{eqnarray}
\flop_{\{1,2\}}\circ(\cdots\circ\flop_2^{-1}\circ\flop_1^{-1}\circ\flop_2^{-1}).\label{flop way round alg 2}
\end{eqnarray}
If $x\notin U$, then certainly $x\notin C$, so all the flop functors take $\cO_x$ to $\cO_x$, and hence the composition does.  On the other hand, if $x\in U$ then we can combine \ref{braid 2 curve local} and \ref{diagram commutes U to X} to conclude that the chain of functors in \eqref{flop way round alg 2} takes $\cO_x$ to $\cO_x$.  Thus, either way, \eqref{flop way round alg 2} preserves skyscrapers, so again since the structure sheaf is fixed and the functors commute with the pushdown, \eqref{flop way round alg} is functorially isomorphic to the identity. This gives one of the isomorphisms in the statement, and the other follows by symmetry.
\end{proof}

In contrast, the disjoint curves case is easy, and is well-known.
 
\begin{lemma}\label{braid 2 curve global disjoint}
With the global quasi-projective flops setup $f\colon X\to X_{\con}$ of \ref{flopsglobal}, suppose that $f$ contracts precisely two independently floppable disjoint curves. If the first curve contracts to a point $p_1$, and the second curve contracts to $p_2$, then
\begin{enumerate}
\item $\cH=\cH^{p_1}\textstyle\prod\cH^{p_2}=\{ \{(x,y)\in\mathbb{R}^2\mid x=0\}, \{(x,y)\in\mathbb{R}^2\mid y=0\}\}$, and so $d$, the number of hyperplanes in $\cH$, equals two. 
\item There is a functorial isomorphism
\[
\underbrace{\flop_1\circ\flop_2}_{d}\functcong\flop_{\{1,2\}}\functcong\underbrace{\flop_2\circ\flop_1}_{d}.
\]
\end{enumerate}
\end{lemma}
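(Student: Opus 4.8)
For part (1), I would argue directly from the definitions. Since the two curves are disjoint and are contracted to the distinct points $p_1$ and $p_2$, exactly one irreducible curve lies above each $p_i$, so $n_{p_i}=1$ and $\cH^{p_i}$ is a hyperplane arrangement in $\mathbb{R}^1$. By \ref{C+chamber}\eqref{C+chamber 2} the coordinate hyperplane $\{x_1=0\}$ belongs to $\cH^{p_i}$, and this is the only linear hyperplane in $\mathbb{R}^1$; hence $\cH^{p_i}=\{\,\{0\}\,\}$. Substituting into the definition of the product arrangement gives $\cH=\cH^{p_1}\prod\cH^{p_2}=\{\{(x,y)\mid x=0\},\{(x,y)\mid y=0\}\}$, so $d=2$.

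For part (2), the plan is to run the skyscraper-tracking argument of \ref{braid 2 curve global intersect}, which degenerates substantially because the two flops take place over disjoint parts of the base. First choose disjoint affine opens $U_{\con,i}\ni p_i$ in $X_{\con}$, and set $U_i:=f^{-1}(U_{\con,i})$, so that $C_1\subset U_1$ and $C_2\subset U_2$ lie in disjoint opens of $X$, and $\{U_1,U_2,X\setminus(C_1\cup C_2)\}$ is an open cover of $X$. The Bridgeland--Chen flop functor $\flop_i$ is Fourier--Mukai along the structure sheaf of the fibre product over $X_{\con}$, and this fibre product is the diagonal over $X_{\con}\setminus\{p_i\}$; hence $\flop_i$ restricts over $X_{\con}\setminus\{p_i\}$ to the canonical identification of derived categories, and in particular $\flop_2$ fixes $\cO_x$ for every $x\notin C_2$ while $\flop_1$ fixes $\cO_x$ for every $x\notin C_1$. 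Moreover $\flop_{\{1,2\}}$ is the flop of $C_1\cup C_2$, which is a local operation over the base: over $U_{\con,1}$ it agrees with the flop of $C_1$ in $U_1$, over $U_{\con,2}$ with the flop of $C_2$ in $U_2$, and over $X_{\con}\setminus\{p_1,p_2\}$ it is the canonical identification; this is the disjoint-support analogue of \ref{diagram commutes U to X}, applied separately over $p_1$ and $p_2$ via the same base-change argument.

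With this locality established, I would track skyscrapers under $\flop_{\{1,2\}}\circ(\flop_2\circ\flop_1)^{-1}$. If $x\notin C_1\cup C_2$, every functor fixes $\cO_x$. If $x\in C_1$, then because $\flop_2$ is the identity near $C_1$ the composite $\flop_2\circ\flop_1$ does to $\cO_x$ exactly what the local flop of $C_1$ in $U_1$ does, and this coincides with what $\flop_{\{1,2\}}$ does near $C_1$; the case $x\in C_2$ is symmetric. Thus $\flop_{\{1,2\}}\circ(\flop_2\circ\flop_1)^{-1}$ is a Fourier--Mukai autoequivalence of $\Db(\coh X)$ that takes skyscrapers to skyscrapers, fixes $\cO_X$, and commutes with $\RDerived f_*$; by the argument used repeatedly above (cf.\ the end of the proof of \ref{2braidcompletelocal}) it is functorially isomorphic to the identity, giving $\flop_2\circ\flop_1\functcong\flop_{\{1,2\}}$, and $\flop_1\circ\flop_2\functcong\flop_{\{1,2\}}$ follows by interchanging the indices $1$ and $2$.

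The only step needing genuine care is the locality statement for $\flop_{\{1,2\}}$ — that the flop of the disjoint union decomposes as the two local flops glued to the identity — which is the disjoint analogue of \ref{diagram commutes U to X} and should be written out via that lemma's base-change argument applied over each $U_{\con,i}$ separately; everything else is routine once this is in hand. An alternative, perhaps cleaner, route would be purely kernel-theoretic: show that $\cO_{X\times_{X_{\con}}X_{\{1,2\}}}$ is the convolution of $\cO_{X\times_{X_{\con}}X_1}$ and $\cO_{X_1\times_{X_{\con}}X_{\{1,2\}}}$, which again reduces to the disjointness of the supports of the two Fourier--Mukai kernels.
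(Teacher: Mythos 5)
Your proof is correct and follows essentially the same route as the paper's: part~(1) comes down to the observation that a simplicial arrangement in $\mathbb{R}^1$ is a single hyperplane, and part~(2) is the locality-of-flops plus skyscraper-tracking argument, finishing with the standard criterion (Fourier--Mukai autoequivalence fixing skyscrapers and $\cO_X$, commuting with $\RDerived f_*$). The paper's version is terser, simply invoking \ref{alg comm} and the argument of \ref{braid 2 curve global intersect}, whereas you spell out the disjoint-support locality of $\flop_{\{1,2\}}$ explicitly; that added detail is harmless and arguably clarifying, but it is the same proof.
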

\begin{proof}
(1) Since each $\cH^{p_i}$ is a simplicial hyperplane arrangement in $\mathbb{R}^1$, this is clear.\\
(2) Since the curves are disjoint, from the definition of flop it is immediate that the order of flops does not matter.  Further, since flop functors are local over the common singular base, chasing skyscrapers and using \ref{alg comm} the result is immediate, using the same argument as in \ref{braid 2 curve global intersect}.
\end{proof}

\subsection{Braiding:\ General Case}\label{braid global section}
The aim of this subsection is to use the braiding in the two-curve case above, together with properties of simplicial hyperplane arrangements, to extend the braiding results to more than two curves.  

For this, recall that simplicial hyperplane arrangements were studied in the seminal work of Deligne \cite{Deligne}, and the resulting \emph{Deligne groupoid} controls much of the combinatorics.  This has many equivalent definitions in the literature, but here for convenience we follow Paris \cite[\S2.A, \S2.B]{Paris}.

\begin{defin}\label{Paris defin}
Given the real simplicial hyperplane arrangement $\cH$ from \S\ref{hyper local global section}, we first associate the oriented graph $X_1$ which has vertices $v_i$ corresponding to the chambers of $\cH$, with an arrow $a\colon v_i\to v_j$ between pairs of vertices corresponding to adjacent chambers.  A~\emph{path of length~$n$} in $X_1$ is defined to be a formal symbol
\[
a_1^{\varepsilon_1}a_2^{\varepsilon_2}\hdots a_n^{\varepsilon_n}
\]
with each $\varepsilon_i\in\{\pm 1\}$, whenever there exists a sequence of vertices $v_0,\hdots,v_n$ of $X_1$ such that $a_i\colon v_{i-1}\to v_i$ if $\varepsilon_i=1$, and $a_i\colon v_{i-1}\leftarrow v_i$ if $\varepsilon_i=-1$.  A path is said to be \emph{positive} if each $\varepsilon_i=1$. Such a path is called \emph{minimal} if there is no positive path in $X_1$ of smaller length, and with the same endpoints.

Let $\sim$ be the smallest \emph{identification} (an equivalence relation satisfying appropriate properties, see \cite[p152]{Paris}) on the set of paths of $X_1$ such that if $f$ and $g$ are both positive minimal paths with the same endpoints then $f\sim g$.  Then the pair $(X_1,\sim)$ determines a groupoid, the \emph{Deligne groupoid} $\mathds{G}_{\cH}$, where the objects are the vertices, and the morphisms are the equivalence classes of paths.
\end{defin}

\begin{example}
The following illustrates part of the oriented graph $X_1$ for the hyperplane arrangement $\cH$ in $\mathbb{R}^3$ from \S\ref{Intro new results}.
\[
\begin{array}{c}
\begin{tikzpicture}[scale=1.2,bend angle=15, looseness=1,>=stealth]
\rotateRPY{-5}{-15}{5}
\begin{scope}[RPY]
\filldraw[gray!20] (-1,-1,1) -- (-1,1,-1) -- (1,-1,1) -- (-1,-1,1);
\filldraw[red!80!black!20] (-1,1,1) -- (-1,-1,1) -- (0,0,0) -- cycle;
\filldraw[green!40!black!20] (-1,0,1) -- (0,0,1) -- (0,0,0)-- cycle;
\filldraw[green!60!black!20] (0,-1,1) -- (0,0,1) -- (0,0,0)-- cycle;
\filldraw[blue!20] (-1,1,1) -- (-1,1,-1) -- (1,-1,-1) -- (1,-1,1);
\filldraw[red!80!black!20] (1,1,-1) -- (1,-1,-1) -- (0,0,0) -- cycle;
\filldraw[green!40!black!20] (1,0,-1) -- (1,0,0) -- (0,0,0)-- cycle;
\filldraw[green!50!black!20] (1,-1,0) -- (1,0,0) -- (0,0,0)-- cycle;
\filldraw[yellow!95!black!20] (1,-1,0) -- (1,0,-1) -- (1,0,0)-- cycle;
\filldraw[yellow!95!black!20] (-1,0,1) -- (0,-1,1) -- (0,0,1)-- cycle;
\filldraw[gray!20] (-1,1,-1) -- (1,1,-1) -- (1,-1,1)--(-1,1,-1);
\filldraw[yellow!95!black!20] (-1,1,0) -- (0,1,-1) -- (0,0,0)--(-1,1,0);
\filldraw[green!50!black!20] (-1,1,0) -- (0,0,0) -- (0,1,0) -- cycle;
\filldraw[green!60!black!20] (0,1,-1)--(0,0,0) -- (0,1,0) -- cycle;
\filldraw[red!80!black!20] (-1,1,1) -- (1,1,-1) -- (0,0,0)-- cycle;
\filldraw[green!60!black!20] (0,1,1) -- (0,0,1) -- (0,0,0)-- (0,1,0)-- cycle;
\filldraw[green!50!black!20] (1,1,0) -- (1,0,0) -- (0,0,0)-- (0,1,0)-- cycle;
\filldraw[green!40!black!20] (1,0,1) -- (1,0,0) -- (0,0,0)-- (0,0,1)-- cycle;
\node (A) at (1,0.6,1) [DWs] {};
\node (B1) at (-0.6,0.7,0.6) [DWs] {};
\node (B2) at (-1,0.9,0) [DWs] {};
\node (C1) at (0.6,0.7,-0.7) [DWs] {};
\node (C2) at (0,0.9,-1) [DWs] {};
\node (D) at (-1,0.7,-1.2) [DWs] {};
\node (E) at (-1,0.9,-1) [DWs] {};
\node (F) at (1,-0.3,1) [DWs] {};
\node (F1) at (0.3,-0.6,1) [DWs] {};
\node (F2) at (-0.3,-0.3,1) [DWs] {};
\node (J) at (-0.8,-0.8,0.9) [DWs] {};
\node (F3) at (-0.6,0.3,1) [DWs] {};
\node (G1) at (1,-0.6,0.3) [DWs] {};
\node (G2) at (1,-0.3,-0.3) [DWs] {};
\node (H) at (0.9,-0.8,-0.8) [DWs] {};
\node (G3) at (1,0.3,-0.6) [DWs] {};
\draw[->,bend right] (A) to (B1);
\draw[->,bend right] (B1) to (A);
\draw[->,bend right] (B1) to (B2);
\draw[->,bend right] (B2) to (B1);
\draw[->,bend right] (B2) to (D);
\draw[->,bend right] (D) to (B2);
\draw[->,bend right] (D) to (E);
\draw[->,bend right] (E) to (D);
\draw[->,bend right] (A) to (C1);
\draw[->,bend right] (C1) to (A);
\draw[->,bend right] (C1) to (C2);
\draw[->,bend right] (C2) to (C1);
\draw[->,bend right] (C2) to (D);
\draw[->,bend right] (D) to (C2);
\draw[-,bend left] (E) to (-1,0.88,-1.15); 
\draw[-,bend left] (-1,0.86,-1.15) to (E); 
\draw[-,bend left] (E) to (-1.1,0.83,-1); 
\draw[-,bend left]  (-1.11,0.85,-1) to (E); 
\draw[-,bend left] (B2) to (-1.1,0.83,0);
\draw[-,bend left] (-1.12,0.84,0) to (B2);
\draw[<-,bend left] (C2) to (0.075,0.9,-1.15);
\draw[-,bend left] (0.09,0.875,-1.15) to (C2);
\draw[->,bend right] (A) to (F);
\draw[->,bend right] (F) to (A);
\draw[->,bend right] (F) to (F1);
\draw[->,bend right] (F1) to (F);
\draw[->,bend right] (F1) to (F2);
\draw[->,bend right] (F2) to (F1);
\draw[->,bend right] (F2) to (F3);
\draw[->,bend right] (F3) to (F2);
\draw[->,bend right] (F3) to (B1);
\draw[->,bend right] (B1) to (F3);
\draw[->,bend right] (F) to (G1);
\draw[->,bend right] (G1) to (F);
\draw[->,bend right] (G1) to (G2);
\draw[->,bend right] (G2) to (G1);
\draw[->,bend right] (G2) to (G3);
\draw[->,bend right] (G3) to (G2);
\draw[->,bend right] (G3) to (C1);
\draw[->,bend right] (C1) to (G3);
\draw[->,bend right] (G2) to (H);
\draw[->,bend right] (H) to (G2);
\draw[->,bend right] (F2) to (J);
\draw[->,bend right] (J) to (F2);
\draw[bend right] (G3) to (1.025,0.42,-0.73);
\draw[->,bend right] (1,0.45,-0.7) to (G3);
\draw[-,bend right] (H) to (0.86,-0.65,-1);
\draw[->,bend right] (0.825,-0.65,-1) to (H);
\draw[-,bend right] (H) to (1.025,-0.88,-0.4);
\draw[->,bend right] (1.05,-0.9,-0.425) to (H);
\draw[-,bend right] (G1) to (0.9,-0.9,0.5);
\draw[->,bend right] (0.9,-0.925,0.45) to (G1);
\draw[-,bend right] (F1) to (0.5,-0.9,0.9);
\draw[->,bend right] (0.55,-0.875,0.9) to (F1);
\draw[-,bend right] (F3) to (-0.75,0.42,0.9);
\draw[->,bend right]  (-0.8,0.4,0.9) to (F3);
\draw[-,bend right] (J) to (-0.86,-0.625,0.9);
\draw[->,bend right] (-0.9,-0.65,0.9) to (J);
\draw[-,bend right] (J) to (-0.65,-0.95,0.9);
\draw[->,bend right] (-0.65,-0.935,0.85) to (J);
\end{scope}
\end{tikzpicture}
\end{array}
\]
\end{example}

\begin{remark}\label{fund gp remark}
By \cite{Paris, Salvetti} (see \cite[2.1]{Paris2}), any vertex group of the groupoid $\mathds{G}_{\cH}$ defined above is isomorphic to $\fundgp(\mathbb{C}^n\backslash \cH_\mathbb{C})$, the fundamental group of the complexified complement of $\cH$.  We thus let $\fundgp(\mathds{G}_{\cH})$ denote a vertex group of the Deligne groupoid. 
\end{remark}

By \cite[1.10, 1.12]{Deligne}, to produce a representation of the Deligne groupoid, it is sufficient to check certain codimension-two relations. 
As in the previous subsection, we first consider this problem in the formal fibre setting.
\begin{lemma}\label{crash=flopboth}
With notation as in the complete local setup of \S\ref{moduli tracking section}, suppose that $c$ is a chamber in $\Uptheta_\AB$ with a codimension-two wall $w$.  Then
\begin{enumerate}
\item\label{crash=flopboth 1} From $c$, crashing through $w$ corresponds to flopping a pair of curves $C_{i_1}\bigcup C_{i_2}$ in $\cM_{\rk,c}(\AB)$, where $\cM_{\rk,c}(\AB)$ is the scheme of $c$-stable $\AB$-modules of dimension vector $\rk$ corresponding to the chamber $c$ in $\Uptheta_\AB$. 
\item\label{crash=flopboth 2} Iterating $\cdots\circ \flop_{i_1}\circ \flop_{i_2}\circ \flop_{i_1}$ traverses one direction around the codimension-two wall $w$, whilst $\cdots\circ \flop_{i_2}\circ \flop_{i_1}\circ \flop_{i_2}$ traverses the other direction.
\end{enumerate}
\end{lemma}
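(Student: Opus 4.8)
The plan is to reduce both assertions to the two-curve case \ref{2braidcompletelocal} by localising the moduli-tracking picture of \ref{moduli main} at the codimension-two face $w$, and then to read off the conclusion from the geometry of a central line arrangement in the plane. First I would reduce to the case $c=C_+$. The scheme $\mathfrak{U}':=\cM_{\rk,c}(\AB)$ is a minimal model of $\Spec\mathfrak{R}$, hence derived equivalent to $\AB':=\End_{\mathfrak{R}}(N')$ for $N':=H^0(\cV_{\mathfrak{U}'})$; by \ref{C+chamber}\eqref{C+chamber 3} together with repeated use of \ref{moduli main}, tracking moduli from $\AB'$ back to $\AB$ gives an isomorphism of hyperplane arrangements $\Uptheta_{\AB}\cong\Uptheta_{\AB'}$ carrying $c$ to the chamber $C_+$ of $\AB'$, compatibly with the dictionary ``crossing a wall $=$ flopping a curve'' via \cite[4.2]{HomMMP}. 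As every statement in the lemma is invariant under this identification, we may assume $\AB=\AB'$, $c=C_+$, and $\cM_{\rk,C_+}(\AB)=\mathfrak{U}$.

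The crux is then to show that near $w$ everything is governed by a genuine two-curve flopping contraction. Since $C_+$ is the positive orthant (\ref{C+chamber}), its codimension-two faces are exactly the sets $\{\upvartheta_{i_1}=\upvartheta_{i_2}=0\}\cap\overline{C_+}$, so $w$ determines an unordered pair $\{i_1,i_2\}\subseteq\{1,\dots,n\}$, and by \cite[4.2]{HomMMP} crossing the wall $\upvartheta_{i_1}=0$ (resp.\ $\upvartheta_{i_2}=0$) out of $C_+$ flops $C_{i_1}$ (resp.\ $C_{i_2}$) in $\mathfrak{U}$. For $\upvartheta$ in the relative interior of $w$ the space $\cM_{\rk,\upvartheta}(\AB)$ is a partial contraction $g\colon\mathfrak{U}\to\mathfrak{V}$: the only curves whose parameter vanishes on $w$ are $C_{i_1},C_{i_2}$, while each $C_k$ with $k\notin\{i_1,i_2\}$ has $\upvartheta_k$ of constant sign near $w$ and so is not contracted, whence $g$ contracts precisely $C_{i_1}\cup C_{i_2}$. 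Localising and completing $\mathfrak{V}$ at the image point $q$ of these curves yields, by \cite{Pagoda}, an isolated cDV singularity, so $\widehat{\mathfrak{U}_q}\to\widehat{\mathfrak{V}_q}$ is a complete local flopping contraction (setup \ref{flopscompletelocal}) of the two curves $C_{i_1},C_{i_2}$, which remain individually floppable because flops are local over the base. The chambers of $\cH^p=\Uptheta_{\AB}$ meeting $w$ are exactly the minimal models reachable from $\mathfrak{U}$ by iteratively flopping only $C_{i_1},C_{i_2}$ and their successive images, i.e.\ the minimal models over $\widehat{\mathfrak{V}_q}$; applying moduli tracking (\ref{moduli main}) over $\widehat{\mathfrak{V}_q}$ identifies the (simplicial, by \ref{issimplicial}) restricted arrangement $\overline{\cH^{p}_{w}}$ in the $2$-plane transverse to $w$ with the hyperplane arrangement of $\widehat{\mathfrak{U}_q}\to\widehat{\mathfrak{V}_q}$, so that each wall-crossing at $w$ flops one of these two curves (in its current model). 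This is assertion (1), the ``pair of curves'' being, in each such model, the image of $C_{i_1}\cup C_{i_2}$.

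Finally, for (2) I would invoke \ref{2braidcompletelocal} for $\widehat{\mathfrak{U}_q}\to\widehat{\mathfrak{V}_q}$: its arrangement $\overline{\cH^{p}_{w}}$ is a central family of $d_w:=\#\{H\in\cH^p:\ w\subseteq H\}$ lines in $\mathbb{R}^2$ of the shape \eqref{chamber structure}, so the $2d_w$ chambers about $w$ are cyclically ordered, and the antipode of $\overline{C_+}$, namely the model $\mathfrak{U}_{\{i_1,i_2\}}$ obtained by flopping both curves together, is reached by exactly two minimal positive galleries of length $d_w$, one going each way around $w$; by \ref{2braidcompletelocal} these galleries are realised by the alternating compositions $\cdots\circ\flop_{i_1}\circ\flop_{i_2}\circ\flop_{i_1}$ and $\cdots\circ\flop_{i_2}\circ\flop_{i_1}\circ\flop_{i_2}$, which therefore traverse the two opposite arcs of chambers of $\cH^p$ about $w$. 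I expect the technical heart to be the claim in the previous paragraph that moduli tracking ``localises correctly'' at the codimension-two face $w$ — that the partial contraction $\mathfrak{V}$ exists and contracts exactly $C_{i_1}\cup C_{i_2}$, and that the quiver-GIT walls through $w$ are precisely those of the completed two-curve contraction $\widehat{\mathfrak{U}_q}\to\widehat{\mathfrak{V}_q}$ — since this is where the relative form of \ref{moduli main} must be combined with the compatibility of the GIT walls with faces of $C_+$, and one must check that passage to the completion at $q$ does not disturb the ($\mathds{Q}$-factoriality-free) bookkeeping. Granting this, both (1) and (2) are formal consequences of \ref{2braidcompletelocal} and the elementary combinatorics of a plane central arrangement \cite{Deligne,Paris}.
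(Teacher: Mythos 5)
Your proposal is correct in outline and reduces to $c=C_+$ in essentially the same way as the paper (your $\AB'$ is the paper's $\BB$), but from there the two arguments diverge substantially, so let me compare them. For part (1) the paper does not construct a partial contraction at all: after tracking $w$ back to the codimension-two wall $x_1=x_2=0$ of $C_+$ in $\Uptheta_\BB$, it cites \cite[4.16]{HomMMP} directly for the statement that crashing through this wall flops $C_1\cup C_2$. You instead unpack this into a geometric picture — take a strictly semistable $\upvartheta$ in the relative interior of $w$, produce a partial contraction $g\colon\mathfrak{U}\to\mathfrak{V}$ contracting exactly $C_{i_1}\cup C_{i_2}$, complete at the image point $q$, and identify the hyperplanes through $w$ with the two-curve arrangement of $\widehat{\mathfrak{U}_q}\to\widehat{\mathfrak{V}_q}$. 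This makes the geometry more visible, but your ``technical heart'' paragraph is doing real work: (a) that the GIT quotient at a strictly semistable boundary parameter is the expected partial contraction, (b) that exactly the hyperplanes of $\cH^p$ containing $w$ arise from wall-crossings over $\widehat{\mathfrak{V}_q}$, and (c) the relative compatibility of moduli tracking over the intermediate base are each nontrivial claims that you state but do not establish; they are essentially the content that \cite[4.16]{HomMMP} packages up, so you should point there (or to the surrounding material in [HomMMP, \S4--5]) rather than leaving them as assertions. For part (2) the paper gives a short self-contained induction using only the moduli-tracking formula of \ref{moduli main}: a single flop $\flop_1$ out of $C_+$ crosses $\upvartheta_1=0$, and the formula $(\upnu_{\boldb}\upvartheta)_i=\upvartheta_i+\sum_j b_{j,i}\upvartheta_j$ (for $i\notin J$), $-\upvartheta_i$ (for $i\in J$) shows the new chamber still has $x_1=x_2=0$ as a codimension-two wall, so by induction the alternating iterates stay adjacent to $w$ and traverse opposite arcs. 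You instead invoke the full two-curve structure of \ref{2braidcompletelocal}. This is logically sound (no circularity is created downstream in \ref{flopsglobalmultiple}) and a little slicker once the reduction to a genuine two-curve contraction is in place, but it is heavier machinery than the paper needs, and it inherits the burden of your unproved identification of $\overline{\cH^p_w}$ with the two-curve arrangement. In short: correct strategy, genuinely different route, but the proposal relies on localisation-at-a-face compatibilities that it flags rather than proves, whereas the paper routes around them via \cite[4.16]{HomMMP} and a direct moduli-tracking induction.
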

\begin{proof}
Viewing $\cM_{\rk,c}(\AB)$ abstractly, we associate an algebra $\BB$ to it using the procedure in \S\ref{completelocalnotation}. In turn, this algebra has a chamber structure, which we denote $\Uptheta_{\BB}$. By \ref{C+chamber}\eqref{C+chamber 3}, the chamber $c$ in $\Uptheta_{\AB}$ is the tracking, under moduli tracking, of the chamber $C_+$ in $\Uptheta_{\BB}$.  Furthermore, since under moduli tracking walls get sent to walls,  the codimension-two wall $w$ corresponds to one of the codimension-two walls of $C_+$ in $\Uptheta_{\BB}$, which without loss of generality we can assume is  $x_1=x_2=0$.  Thus, since as schemes $\cM_{\rk,c}(\AB)=\cM_{\rk,C_+}\!(\BB)$, to prove \eqref{crash=flopboth 1} it suffices to prove that, from $C_+$ in $\Uptheta_\BB$, crashing through the codimension-two wall $x_1=x_2=0$  corresponds to flopping $C_1\bigcup C_2$.  But this is immediate by \cite[4.16]{HomMMP} and moduli tracking.

To prove \eqref{crash=flopboth 2}, since in all the chamber structures crossing codimension-one walls corresponds to a flop \cite[\S5]{HomMMP}, it suffices to show that iterating $\cdots\circ \flop_1\circ \flop_2\circ \flop_1$ traverses one direction around the codimension-two wall $x_1=x_2=0$ in $\Uptheta_\BB$, whilst $\cdots\circ \flop_2\circ \flop_1\circ \flop_2$ traverses the other direction.

Beginning in the chamber $C_+$ of $\Uptheta_\BB$, flopping $\flop_1$ corresponds to crashing through the single codimension-one wall $\upvartheta_1=0$ \cite[5.21]{HomMMP}, and produces a new chamber that by the moduli tracking formula has a codimension-two wall $x_1=x_2=0$.  This new chamber can then be viewed as $C_+$ on another algebra. Repeating the argument for that chamber then tracking back to $\Uptheta_\BB$, the iterate $\flop_2\circ \flop_1$ corresponds to crashing through two consecutive walls of $\Uptheta_\BB$.  Further, by the moduli tracking formula, both of the obtained chambers share the codimension-two wall $x_1=x_2=0$.  By induction, iterating $\cdots\circ \flop_1\circ \flop_2\circ \flop_1$ produces a series of chambers, at each stage crossing a single codimension-one wall, and each chamber has a codimension-two wall $x_1=x_2=0$.   It follows that iterating $\cdots\circ \flop_1\circ \flop_2\circ \flop_1$ traverses one direction around the codimension-two wall $x_1=x_2=0$.  By symmetry of the argument, necessarily $\cdots\circ \flop_2\circ \flop_1\circ \flop_2$ traverses the other direction. 
\end{proof}

\begin{defin}\label{flops data}
With the global quasi-projective flops setup $f\colon X\to X_{\con}$ of \ref{flopsglobal}, suppose that $f$ contracts precisely $n$ independently floppable irreducible curves. Given this data, the derived flops groupoid $\DF$ is defined by the following generating set.  It has vertices $\Db(\coh X)$, running over all varieties obtained from $X$ by iteratively flopping the $n$ curves, and as arrows we connect vertices by the Bridgeland--Chen flop functors, running through all possible combinations of single flopping curves.
\end{defin}

The following is the main result of this section.
\begin{thm}\label{flopsglobalmultiple}
With the global quasi-projective flops setup $f\colon X\to X_{\con}$ of \ref{flopsglobal}, suppose that $f$ contracts precisely $n$ independently floppable curves. Then:
\begin{enumerate}
\item\label{flopsglobalmultiple 1} There is a homomorphism of groupoids $\mathds{G}_\cH\to\DF$.
\item\label{flopsglobalmultiple 2} The group $\fundgp(\mathds{G}_{\cH})$ acts on $\Db(\coh X)$.
\end{enumerate}
\end{thm}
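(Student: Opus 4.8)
\emph{Plan.} The plan is to construct a homomorphism of groupoids $\Phi\colon\mathds{G}_\cH\to\DF$ on generators, and then to check that it respects the relations of $\mathds{G}_\cH$ by reducing, one codimension-two wall at a time, to the two-curve braiding already established in \ref{braid 2 curve global intersect} and \ref{braid 2 curve global disjoint}.

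\emph{Building $\Phi$.} Since by hypothesis all $n$ exceptional curves are individually floppable, the Homological MMP \cite{HomMMP} identifies the chambers of $\cH$ with the minimal models obtained from $X$ by iterated flops, with the distinguished chamber $C_+$ corresponding to $X$ itself, and identifies crossing a single codimension-one wall of the (simplicial, by \ref{issimplicial}) arrangement $\cH$ with flopping a single curve. I would therefore set $\Phi(c):=\Db(\coh X_c)$ on a vertex (chamber) $c$ of $\mathds{G}_\cH$, where $X_c$ is the associated model, send a generating arrow $a\colon c\to c'$ between adjacent chambers -- whose common facet lies on a hyperplane $H$ of $\cH$ -- to the Bridgeland--Chen flop functor of the unique curve of $X_c$ flopped by crossing $H$, and send the inverse arrow to the opposite flop functor. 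As flop functors are equivalences, this defines a functor from the path category of the graph $X_1$ of \ref{Paris defin} into the groupoid $\DF$. By the criterion of Deligne \cite[1.10, 1.12]{Deligne} recalled in \S\ref{braid global section}, this assignment descends to $\mathds{G}_\cH=(X_1,\sim)$ once one checks that for every codimension-two face $w$ of $\cH$ and every chamber $c$ in its star, the two minimal positive galleries around $w$ are carried by $\Phi$ to functorially isomorphic compositions.

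\emph{Checking the codimension-two relations.} Fix $w$ and $c$. As $c$ is a simplicial cone, $w$ is the intersection of exactly two of its facets, so $w$ lies on exactly two hyperplanes of $\cH$; by \ref{crash=flopboth}\eqref{crash=flopboth 1} these correspond to a pair of curves $C_{i_1}\cup C_{i_2}$ of $X_c$, and crashing through $w$ from $c$ flops this pair. Using \cite{HomMMP} (cf.\ the proof of \ref{crash=flopboth}) there is a partial crepant contraction $g\colon X_c\to Z_w$ over $X_{\con}$ contracting precisely $C_{i_1}\cup C_{i_2}$; it is a flopping contraction of quasi-projective $3$-folds, Gorenstein terminal near the contracted locus, contracting precisely two independently floppable curves, and its hyperplane arrangement is the rank-two slice of $\cH$ cut out by the hyperplanes through $w$, hence has $d':=\#\{H\in\cH\mid w\subseteq H\}$ hyperplanes. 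By \ref{crash=flopboth}\eqref{crash=flopboth 2}, the two minimal positive galleries around $w$ are $\underbrace{\cdots\circ\flop_{i_1}\circ\flop_{i_2}\circ\flop_{i_1}}_{d'}$ and $\underbrace{\cdots\circ\flop_{i_2}\circ\flop_{i_1}\circ\flop_{i_2}}_{d'}$ in terms of the flops of $g$, so the required isomorphism is precisely the braid relation of length $d'$ furnished by \ref{braid 2 curve global intersect} when $C_{i_1}$ and $C_{i_2}$ meet, and by \ref{braid 2 curve global disjoint} (with $d'=2$) when they are disjoint, applied to $g$. This proves \eqref{flopsglobalmultiple 1}. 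For \eqref{flopsglobalmultiple 2}, the vertex $C_+$ maps under $\Phi$ to $\Db(\coh X)$, so $\Phi$ restricts to a group homomorphism $\fundgp(\mathds{G}_\cH)=\Aut_{\mathds{G}_\cH}(C_+)\to\Aut\Db(\coh X)$, which is the asserted action; by \ref{fund gp remark} it is an action of $\fundgp(\mathbb{C}^n\backslash\cH_\mathbb{C})$.

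\emph{Main obstacle.} The whole difficulty is concentrated in the reduction of each codimension-two relation to a genuine two-curve flop: one must know that the local geometry around an arbitrary codimension-two wall $w$ is governed by a flopping contraction with precisely two curves, whose hyperplane arrangement is the rank-two slice of $\cH$ through $w$, so that \ref{braid 2 curve global intersect} and \ref{braid 2 curve global disjoint} apply verbatim. This is exactly the content of \ref{crash=flopboth}, which in turn rests on moduli tracking; once it is in hand, the passage through Deligne's criterion and the construction of $\Phi$ are formal.
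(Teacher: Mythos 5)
Your proposal is correct and follows essentially the same route as the paper: both first pass to the formal fibres to identify the combinatorics with the simplicial arrangement $\cH$, both invoke Deligne's criterion \cite[1.10, 1.12]{Deligne} to reduce the verification to codimension-two faces, and both discharge those relations by combining \ref{crash=flopboth} with the two-curve braiding of \ref{braid 2 curve global}. The only difference is one of exposition: the paper's proof compresses the codimension-two check to a single sentence (``this follows immediately from \ref{braid 2 curve global} and \ref{crash=flopboth}''), whereas you make explicit the intermediate partial contraction $g\colon X_c\to Z_w$ and the identification of its rank-two slice with the relevant sub-arrangement through $w$ -- the content the paper leaves implicit -- which is a reasonable way to justify that \ref{braid 2 curve global} applies verbatim.
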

\begin{proof}
As in \ref{braid 2 curve local}, since each algebraic flop, after passing to the formal fibre, is still a flop, iteratively flopping all possible single curves gives the same number of schemes in both cases, and the combinatorics are the same for both. Hence the braiding of the algebraic flop functors is governed by the same simplicial hyperplane arrangement $\cH$.

By \cite[1.10, 1.12]{Deligne} (see also \cite{CM}), to prove \eqref{flopsglobalmultiple 1} we only need to check that the relations on the flop functors in \ref{flops data} arising from each codimension-two wall are satisfied by the algebraic flop functors.  But this follows immediately from \ref{braid 2 curve global} and \ref{crash=flopboth}.  Part \eqref{flopsglobalmultiple 2} follows directly from \eqref{flopsglobalmultiple 1}.
\end{proof}

\begin{remark}
The above proof does not require a presentation of $\fundgp(\mathds{G}_\cH)$, which is convenient since we do not know one in general.  It is known how to obtain a presentation given the explicit hyperplanes \cite{Arvola, Randell, Salvetti}, but all the possible simplicial hyperplane arrangements arising from flops have not yet been fully classified.
\end{remark}

\section{Mutation in the Flops Setting}\label{mut whole section}

We now work towards dropping the assumption that all the curves are individually floppable.  The aim of this section is to apply the mutation in \S\ref{mut prelim} to the setting of flops to obtain on the formal fibre various intrinsic derived autoequivalences.  These will then be made algebraic in \S\ref{ZLT and GT section}, and will give intrinsic algebraic autoequivalences regardless of whether the curves flop individually. 

The results related to the fibre twist in \S\ref{mu of refl section} and \S\ref{cltwists} will require an additional assumption on the singularities.

\subsection{Mutation for Flops}\label{mut for flops section}

We keep the complete local flops setup of \S\ref{completelocalnotation}, and in particular the notation of \ref{N notation} where $\AB:=\End_{\mathfrak{R}}(N)$.  As in \ref{def basic algebra}, for any $J\subseteq \{0,1,\hdots,n\}$ we set $N_J:=\bigoplus_{j\in J}N_j$ and $N_{J^c}:=\bigoplus_{i\notin J}N_i$, so that $N=N_J\oplus N_{J^c}$.

\begin{remark}\label{AXc is what we think it is}
In later sections we will be interested in two special cases.  The first is when $J\subseteq\{1,\hdots,n\}$, as this will give the $J$-twist corresponding to the noncommutative simultaneous deformations of the family $\{ E_j\mid j\in J\}$. The second will be when $J=\{0\}$, which will give the `fibre twist' corresponding to deformations of the scheme-theoretic fibre $\cO_C$. 
\end{remark}

The following is elementary.
\begin{lemma}\label{elementary mut for flops}
In the complete local flops setup of \ref{flopscompletelocal}, for any $J\subseteq \{0,1,\hdots,n\}$
\begin{enumerate}
\item\label{elementary mut for flops 1} $\AB_J$ is a finite dimensional algebra.
\item\label{elementary mut for flops 2} $T_J\cong\Hom_{\mathfrak{R}}(N,\nuJ N)$, and is a tilting $\AB$-module of projective dimension one.
\end{enumerate}
\end{lemma}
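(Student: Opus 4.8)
The plan is to deduce both parts from the general mutation machinery of \cite[\S6]{IW4} recalled in \S\ref{mut prelim}, together with the fact (\ref{flopscompletelocal}) that $\mathfrak{R}$ is an isolated hypersurface singularity. Throughout, the cases of interest are $J\subseteq\{1,\hdots,n\}$ and $J=\{0\}$ (see \ref{AXc is what we think it is}); in particular $N_{J^c}=\bigoplus_{i\notin J}N_i$ is nonzero.

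For part (1): the algebra $\AB_J=\AB/[N_{J^c}]$ is module-finite over $\mathfrak{R}$, since $\AB=\End_{\mathfrak{R}}(N)$ is, so it suffices to show it is supported only at $\m$. If $\mathfrak{p}\in\Spec\mathfrak{R}$ with $\mathfrak{p}\neq\m$, then $\mathfrak{R}_{\mathfrak{p}}$ is regular, so each $(N_i)_{\mathfrak{p}}$ is free of positive rank (the rank of $N_i$ equals the multiplicity of $C_i$, which is $\geq 1$, and $N_0=\mathfrak{R}$). Hence $N_{\mathfrak{p}}\in\add\mathfrak{R}_{\mathfrak{p}}=\add(N_{J^c})_{\mathfrak{p}}$, so $(\AB_J)_{\mathfrak{p}}=\End_{\mathfrak{R}_{\mathfrak{p}}}(N_{\mathfrak{p}})/[(N_{J^c})_{\mathfrak{p}}]=0$. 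A finitely generated module over the Noetherian local ring $\mathfrak{R}$ that is supported only at $\m$ has finite length, so $\AB_J$ has finite length over $\mathfrak{R}$ and hence finite dimension over $\mathbb{C}=\mathfrak{R}/\m$; this is recorded in \cite[6.19(3)]{IW4}.

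For part (2): recall from \S\ref{mut prelim} that $\nuJ N=N_{J^c}\oplus(\Ker b_J)^*$ and $T_J=\Hom_{\mathfrak{R}}(N,N_{J^c})\oplus C_J$, where $C_J$ is the cokernel of $b_J^*\cdot\colon\Hom_{\mathfrak{R}}(N,N_J)\to\Hom_{\mathfrak{R}}(N,U_J)$ from \eqref{defin of CI}. That $T_J$ is a tilting $\AB$-module with $\End_{\AB}(T_J)\cong\nuJ\AB=\End_{\mathfrak{R}}(\nuJ N)$ is \cite[6.7, 6.8]{IW4}; and since $N_{J^c},N_J,U_J\in\add N$, the $\AB$-modules $\Hom_{\mathfrak{R}}(N,N_{J^c})$, $\Hom_{\mathfrak{R}}(N,N_J)$ and $\Hom_{\mathfrak{R}}(N,U_J)$ are projective, so \eqref{defin of CI} is a length-one projective resolution of $C_J$ and $T_J$ has projective dimension $\leq 1$. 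It remains to identify $T_J$ with
\[
\Hom_{\mathfrak{R}}(N,\nuJ N)=\Hom_{\mathfrak{R}}(N,N_{J^c})\oplus\Hom_{\mathfrak{R}}(N,(\Ker b_J)^*),
\]
i.e.\ to produce an isomorphism $C_J\cong\Hom_{\mathfrak{R}}(N,(\Ker b_J)^*)$. For this I would apply $\Hom_{\mathfrak{R}}(N,-)$ to the exchange sequence \eqref{key track mut}, namely $0\to N_J\xrightarrow{b_J^*}U_J\xrightarrow{d_J^*}(\Ker b_J)^*$, which yields the left-exact sequence $0\to\Hom_{\mathfrak{R}}(N,N_J)\xrightarrow{b_J^*\cdot}\Hom_{\mathfrak{R}}(N,U_J)\xrightarrow{d_J^*\cdot}\Hom_{\mathfrak{R}}(N,(\Ker b_J)^*)$. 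The map $d_J^*\cdot$ is surjective because $d_J^*\colon U_J\to(\Ker b_J)^*$ is a right $(\add N)$-approximation — precisely the dual of the approximation property defining $b_J$ in \ref{setup2} (cf.\ \cite[6.4, 6.7]{IW4}). Hence $C_J=\Cok(b_J^*\cdot)\cong\Hom_{\mathfrak{R}}(N,(\Ker b_J)^*)$, and therefore $T_J\cong\Hom_{\mathfrak{R}}(N,\nuJ N)$.

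The only step carrying genuine content is the surjectivity of $d_J^*\cdot$, equivalently the assertion that $d_J^*$ is a right $(\add N)$-approximation; everything else is formal bookkeeping on top of \S\ref{mut prelim} and \cite[\S6]{IW4}, which is why the lemma is labelled elementary. One should also keep in mind the hypothesis $N_{J^c}\neq 0$ used in part (1): for $J=\{0,1,\hdots,n\}$ one would have $\AB_J=\AB$, which is not finite-dimensional, but this case does not arise.
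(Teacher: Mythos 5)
Your overall strategy — unpack the two citations that constitute the paper's proof, namely \cite[6.19(3)]{IW4} for (1) and \cite[6.14]{IW4} for (2) — is exactly right, and for part (1) your argument is a correct reconstruction: localising at $\p\neq\m$ kills $\AB_J$ because $\mathfrak{R}_\p$ is regular (so every $(N_i)_\p$ is free of positive rank and $\add(N_{J^c})_\p=\add\mathfrak{R}_\p\ni N_\p$), hence $\AB_J$ has finite length over $\mathfrak{R}$. Your observation that $N_{J^c}\neq 0$ is tacitly required (so $J\neq\{0,1,\dots,n\}$) is a genuine and worthwhile point.

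For part (2), however, there is a gap at the one step you yourself flag as ``the only step carrying genuine content,'' and the attribution you give for it is incorrect. Dualising the approximation property of $b_J$ (a right $\add N_{J^c}^*$-approximation of $N_J^*$) yields that $b_J^*\colon N_J\to U_J$ is a \emph{left} $\add N_{J^c}$-approximation of $N_J$; it does \emph{not} yield that $d_J^*\colon U_J\to(\Ker b_J)^*$ is a right $\add N$-approximation. To get from one to the other you need, first, that the dualised sequence is actually exact on the right (a priori $U_J\to(\Ker b_J)^*$ need not even be surjective; its cokernel injects into $\Ext^1_\mathfrak{R}(\Im b_J,\mathfrak{R})$), and second, after applying $\Hom_\mathfrak{R}(N,-)$, that the connecting map into $\Ext^1_\mathfrak{R}(N,N_J)$ vanishes. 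Neither of these is a formal consequence of the approximation property; both use the Gorenstein/isolated hypotheses and, in the way \cite[6.14]{IW4} organises the argument, the finite dimensionality of $\AB_J$ from part (1) is an \emph{input}. This is why the paper's proof reads ``Part (2) \emph{then} follows by \cite[6.14]{IW4}'' — the ``then'' is doing real work. Your write-up never invokes (1) in proving (2) and never uses the isolated-singularity hypothesis there, which is a signal that the key surjectivity has been asserted rather than established. In short: right citation target, right reduction to the surjectivity of $d_J^*\cdot$, but the justification offered for that surjectivity is not the right one, and the missing justification is precisely what makes this lemma dependent on the flops setup rather than pure mutation formalism.
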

\begin{proof}
Since $\mathfrak{R}$ is an isolated singularity, \eqref{elementary mut for flops 1} is \cite[6.19(3)]{IW4}.   Part \eqref{elementary mut for flops 2} then follows by \cite[6.14]{IW4}.
\end{proof}
As in \S\ref{mut prelim}, the mutation functor gives an equivalence
\[
\Upphi_{J}:=\RHom_{\AB}(T_J,-)\colon\Db(\mod\AB)\xrightarrow{\sim} \Db(\mod\nuJ\AB).
\]
If further $\nuJ\nuJ N\cong N$ (see \ref{mu=nu for hyper} and \ref{mu=nu for R} later) then we can mutate $\End_{\mathfrak{R}}(\nuJ N)$ back to obtain $\End_{\mathfrak{R}}(N)\cong\AB$.  Applying \ref{elementary mut for flops} to $\nuJ N$, $
W_J:=\Hom_\mathfrak{R}(\nuJ N,N)$ is a tilting $\nuJ \AB$-module, giving rise to an equivalence which by abuse of notation we also denote
\[
\Upphi_J:=\RHom_{\nuJ\AB}(W_J,-)\colon\Db(\mod\nuJ\AB)\xrightarrow{\sim} \Db(\mod\AB).
\]
The following is an easy generalisation of \cite[5.9, 5.10, 5.11]{DW1}.
\begin{prop}\label{track Lambda J}
In the complete local flops setup of \ref{flopscompletelocal}, for any $J\subseteq \{0,1,\hdots,n\}$ such that $\nuJ\nuJ N\cong N$, the following statements hold.
\begin{enumerate}
\item\label{track Lambda J 1} $\Upphi_{J}\circ\Upphi_{J}\cong\RHom_{\AB}([N_{J^c}],-)$, where $[N_{J^c}]$ is the two-sided ideal defined in \S\ref{conventions}.
\item\label{track Lambda J 2} $\Upphi_{J}\circ\Upphi_{J}(\AB_J)\cong\AB_J[-2]$.
\item\label{track Lambda J 3} $\Upphi_{J}\circ\Upphi_{J}(S)\cong S[-2]$ for all simple $\AB_J$-modules $S$.
\end{enumerate}
\end{prop}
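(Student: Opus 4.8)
The plan is to follow \cite[5.9, 5.10, 5.11]{DW1} almost verbatim, replacing the single indecomposable summand treated there by $N_J=\bigoplus_{j\in J}N_j$ and carrying the extra block indexing. Part~\eqref{track Lambda J 1} is the substantive statement, and parts~\eqref{track Lambda J 2} and~\eqref{track Lambda J 3} follow formally from it together with the tautological $\AB$-bimodule sequence $0\to[N_{J^c}]\to\AB\to\AB_J\to0$ of \ref{def basic algebra} and the homological facts $\pd_\AB\AB_J=3$, $\Ext^t_\AB(\AB_J,S)\cong\mathbb{C}$ for $t=0,3$ and $0$ otherwise, recorded in \ref{pd for N and ext} (and, when $0\in J$, in \ref{pd thm for fibre}).

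For~\eqref{track Lambda J 1}, by \ref{elementary mut for flops}\eqref{elementary mut for flops 2} the module $T_J\cong\Hom_\mathfrak{R}(N,\nuJ N)$ is a tilting $\AB$-module of projective dimension one, and since $\nuJ\nuJ N\cong N$ the same lemma applied to $\nuJ N$ shows $W_J\cong\Hom_\mathfrak{R}(\nuJ N,N)$ is a tilting $\nuJ\AB$-module of projective dimension one. Composing the two derived Hom functors gives $\Upphi_J\circ\Upphi_J(x)\cong\RHom_{\nuJ\AB}(W_J,\RHom_\AB(T_J,x))$, which by tensor--hom adjunction rewrites as $\RHom_\AB(Y,x)$ for a complex $Y$ of $\AB$-bimodules built from $\RHom_\AB(T_J,\AB)$ and $\RHom_{\nuJ\AB}(W_J,\nuJ\AB)$. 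The crucial computation is to apply $\Hom_\mathfrak{R}(-,N)$ to the exchange sequence \eqref{key track mut}, and $\Hom_\mathfrak{R}(-,\nuJ N)$ to its mirror built from \eqref{K1}, and to invoke reflexive equivalence over $\mathfrak{R}$; this yields $\Hom_\AB(T_J,\AB)\cong W_J$ together with the identification $\Ext^1_\AB(T_J,\AB)\cong\AB_J$ as $(\AB,\nuJ\AB)$-bimodules, i.e. a triangle $W_J\to\RHom_\AB(T_J,\AB)\to\AB_J[-1]\to{}$ and symmetrically for $W_J$. Feeding these triangles into $Y$, and using that the composition map $\Hom_\mathfrak{R}(N,N_{J^c})\otimes_{\End_\mathfrak{R}(N_{J^c})}\Hom_\mathfrak{R}(N_{J^c},N)\to\AB$ has image exactly $[N_{J^c}]$ while the residual $\AB_J$-terms are killed by the idempotents complementary to $J$, identifies $Y$ with $[N_{J^c}]$ in degree zero, with the correct bimodule structure; the reflexivity of $[N_{J^c}]$ over $\AB$ needed to pass between $\RHom$- and $\Lotimes$-descriptions is again as in \cite[5.9]{DW1}. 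Equivalently, feeding \eqref{begin lambdacon} and its reverse directly into the derived equivalences produces the functorial triangle $\RHom_\AB(\AB_J,x)\to x\to\Upphi_J\circ\Upphi_J(x)\to{}$, which is the same statement read through the bimodule sequence above.

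Given~\eqref{track Lambda J 1}, part~\eqref{track Lambda J 2} follows by applying $\RHom_\AB(-,\AB_J)$ to $0\to[N_{J^c}]\to\AB\to\AB_J\to0$ to get the triangle $\RHom_\AB(\AB_J,\AB_J)\to\AB_J\to\Upphi_J\circ\Upphi_J(\AB_J)\to{}$; since $\Hom_\AB(\AB_J,\AB_J)\cong\AB_J$ (as $\AB_J$ is a quotient ring of $\AB$), $\Ext^1_\AB(\AB_J,\AB_J)=\Ext^2_\AB(\AB_J,\AB_J)=0$ by d\'evissage from \ref{pd for N and ext}, $\pd_\AB\AB_J=3$, and $\Ext^3_\AB(\AB_J,\AB_J)\cong\AB_J$ (as in \cite[5.10]{DW1}), the long exact cohomology sequence of the triangle forces $\Upphi_J\circ\Upphi_J(\AB_J)$ to be concentrated in degree $2$ with value $\AB_J$, i.e. $\cong\AB_J[-2]$. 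Part~\eqref{track Lambda J 3} follows the same way with a simple $S$ in place of $\AB_J$, using $\Ext^t_\AB(\AB_J,S)\cong\mathbb{C}$ for $t=0,3$ and $0$ otherwise; alternatively, and bypassing \eqref{track Lambda J 1}, one reads off from the shape $T_J=\Hom_\mathfrak{R}(N,N_{J^c})\oplus C_J$ of \eqref{defin of CI} that $\RHom_\AB(T_J,S_j)$ is one-dimensional and concentrated in degree $1$, so $\Upphi_J(S_j)\cong S'_j[-1]$ for the simple $\nuJ\AB$-module $S'_j$ at the mutated vertex $j$, and then $\Upphi_J\circ\Upphi_J(S_j)\cong\Upphi_J(S'_j)[-1]\cong S_j[-2]$ by the symmetric computation; this route has the merit of identifying which simple occurs.

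The main obstacle is the bimodule identification $\Ext^1_\AB(T_J,\AB)\cong\AB_J$ inside~\eqref{track Lambda J 1}: everything downstream is formal, but it requires carefully pushing the decomposition $N=N_J\oplus N_{J^c}$ through the approximation sequences of \ref{setup2} and checking that, relative to the single-curve case of \cite[5.9]{DW1}, no new terms appear. As only the bookkeeping changes, the proposition is indeed an ``easy generalisation'' of \emph{loc.\ cit.}
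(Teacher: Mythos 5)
Your overall plan is viable, but it rearranges the logical flow relative to the paper in a way worth flagging. The paper proves parts \eqref{track Lambda J 1} and \eqref{track Lambda J 2} largely \emph{independently}, both via the four-term complex of $\mathfrak{R}$-modules
$0\to(\Ker b_J)^*\to V_J\to U_J\to(\Ker b_J)^*\to0$
(obtained by splicing \eqref{K0prime} and \eqref{key track mut} once one has $(\Ker b_J)^*\cong\Ker a_J$ from the hypothesis $\nuJ\nuJ N\cong N$), citing \cite[5.9, 5.10]{DW1} respectively; it then deduces \eqref{track Lambda J 3} from both. You instead prove \eqref{track Lambda J 1} — essentially the same computation with the composed tilting bimodule $W_J\otimes^{\bf L}_{\nuJ\AB}T_J$, so your sketch there is consistent with the paper — and then derive \eqref{track Lambda J 2} and \eqref{track Lambda J 3} as formal consequences via the long exact sequence of $\RHom_\AB(\placeholder,x)$ applied to $0\to[N_{J^c}]\to\AB\to\AB_J\to0$. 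That LES argument does close, but note two things it requires. First, it needs $\Ext^3_\AB(\AB_J,\AB_J)\cong\AB_J$ and the vanishing of $\Ext^{1,2}$; the vanishing is d\'evissage from \ref{pd for N and ext}/\ref{pd thm for fibre}, but the top identification ultimately uses the self-injectivity of $\AB_J$ established in \ref{pd for N and ext}\eqref{pd for N and ext 3}, which is not a free input. Second, and more importantly, \ref{pd for N and ext} is stated for $J\subseteq\{1,\dots,n\}$ and \ref{pd thm for fibre} only for $J=\{0\}$ (with the extra $\mathds{Q}$-factoriality hypothesis), whereas the proposition as stated allows arbitrary $J\subseteq\{0,1,\dots,n\}$ with $\nuJ\nuJ N\cong N$. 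The paper's direct computation from the four-term complex covers that generality uniformly; your reliance on \ref{pd for N and ext}/\ref{pd thm for fibre} leaves a gap for mixed $J$ containing $0$ together with nonzero indices (even though, in practice, the paper only ever applies the proposition with $J\subseteq\{1,\dots,n\}$ or $J=\{0\}$). Your second route to \eqref{track Lambda J 3}, tracking $\RHom_\AB(T_J,S_j)\cong S'_j[-1]$ through the mutation, is clean and avoids the duality subtlety, so it is arguably preferable; but it is a somewhat different argument from the paper's, which obtains \eqref{track Lambda J 3} by tensoring the isomorphism of \eqref{track Lambda J 2} with $\AB_J/\Rad\AB_J$ and applying idempotents.

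One last small caution: when you write the identification $\Ext^1_\AB(T_J,\AB)\cong\AB_J$ ``as $(\AB,\nuJ\AB)$-bimodules'', the right $\nuJ\AB$-structure on the left-hand side is through the $\nuJ\AB$-action on $T_J$, so the precise statement is that the underlying $(\AB,\AB_J)$-bimodule is $\AB_J$ and the right $\nuJ\AB$-action factors through the quotient $\nuJ\AB\twoheadrightarrow(\nuJ\AB)_J$; this needs a sentence of justification rather than being automatic from reflexive equivalence alone, and it is precisely the kind of bookkeeping that the paper delegates to \cite[5.10]{DW1}, where the hypothesis $(\Ker b_J)^*\cong\Ker a_J$ is doing the work.
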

\begin{proof}
(1) By the assumption $\nuJ\nuJ N\cong N$ it follows that $(\Ker b_J)^*\cong \Ker a_J$.  From here, the proof is then identical to \cite[5.10]{DW1}.\\
(2) Since $(\Ker b_J)^*\cong \Ker a_J$, combining \eqref{K0} and \eqref{key track mut} gives us a complex of $R$-modules
\[
0\to (\Ker b_J)^*\xrightarrow{c_J} V_J\xrightarrow{b_J^{*}\cdot a_{\!J}^{\phantom{*}}} U_J\xrightarrow{d_J^{*}} (\Ker b_J)^* \to 0. 
\]
From here the proof of (2) is word-for-word identical to \cite[5.9(1--2)]{DW1}, since although the above complex need not be exact, whereas it was in \cite[5.9]{DW1}, this does not affect anything.\\
(3) Since by \eqref{track Lambda J 1}  $\Upphi_{J}\circ\Upphi_{J}(-)\cong\RHom_{\AB}([N_{J^c}],-)$, part (3) follows by tensoring both sides of \eqref{track Lambda J 2} by $\AB_{J}/\Rad(\AB_{J})$, just as in \cite[5.11]{DW1}, then applying idempotents.
\end{proof}

Since by \eqref{a_i decomp} $
\widehat{\Lambda}\cong\End_{\mathfrak{R}}(\bigoplus_{i=0}^nN_i^{\oplus a_i})$,
which is only morita equivalent to $\AB$, we need to describe the compatibility between mutation and morita equivalence.   For the positive integers $a_i$ from \eqref{a_i decomp}, we set $Z:=\bigoplus_{i=0}^n N_i^{\oplus a_i}$ so that $\widehat{\Lambda}=\End_{\mathfrak{R}}(Z)$. For a choice of $J\subseteq\{0,1,\hdots,n\}$, consider the summand $Z_J=\bigoplus_{j\in J}N_j^{\oplus a_j}$ of $Z$ and set $Z_{J^c}:=\bigoplus_{i\notin J}N_i^{\oplus a_i}$.
In an identical way to the above, there is a mutation functor 
\[
\Upphi^\prime_{J}:=\RHom_{\widehat{\Lambda}}(\Hom_{\mathfrak{R}}(Z,\nuJ Z),-)\colon\Db(\mod\widehat{\Lambda})\xrightarrow{\sim} \Db(\mod\nuJ\widehat{\Lambda}).
\]
where $\nuJ\widehat{\Lambda}:=\End_{\mathfrak{R}}\Big(\big(\bigoplus_{j\in J}{(\Ker b_j)^*}^{\oplus a_j}\big)\oplus\big(\bigoplus_{i\notin J}N_i^{\oplus a_i}\big)\Big)$. 
The following is elementary.

\begin{lemma}\label{mutation Morita diagram}
The following diagram commutes.
\[
\begin{array}{c}
\begin{tikzpicture}
\node (A1) at (0,0) {$\Db(\mod\AB)$};
\node (A2) at (0,-1.5) {$\Db(\mod\nuJ\AB)$};
\node (B1) at (4,0) {$\Db(\mod\widehat{\Lambda})$};
\node (B2) at (4,-1.5) {$\Db(\mod\nuJ\widehat{\Lambda})$};
\draw[->] (A1) -- node[above] {$\scriptstyle morita$} (B1);
\draw[->] (A2) -- node[above] {$\scriptstyle morita$} (B2);
\draw[->] (A1) -- node[left] {$\scriptstyle \Upphi_{J}$} (A2);
\draw[->] (B1) -- node[right] {$\scriptstyle \Upphi'_{J}$} (B2);
\end{tikzpicture}
\end{array}
\]
\end{lemma}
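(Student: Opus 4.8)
The plan is to carry out the multi-curve analogue of \cite[5.8]{DW1}: realise all four functors in the square as derived tensor (equivalently $\RHom$) with suitable complexes of bimodules assembled out of $\Hom_{\mathfrak{R}}$-groups, observe that the mutation data used to build $\nuJ N$ and $\nuJ Z$ literally coincide, and then deduce commutativity from associativity of $\otimes^{\bf L}$ together with the standard composition isomorphisms among those $\Hom_{\mathfrak{R}}$-bimodules.

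First I would record concrete models. Since every $a_i\geq 1$ in \eqref{a_i decomp}, we have $\add Z=\add N$, so the left-hand Morita functor $\mathbb{F}$ is (up to opposite-ring conventions, cf.\ \S\ref{conventions}) given by $-\otimes^{\bf L}$ with the evident $\AB$-$\widehat{\Lambda}$-bimodule $\Hom_{\mathfrak{R}}(Z,N)$. By \ref{elementary mut for flops}\eqref{elementary mut for flops 2}, the mutation functor is $\Upphi_J=\RHom_{\AB}(T_J,-)$ with $T_J\cong\Hom_{\mathfrak{R}}(N,\nuJ N)$ a tilting $\AB$-module of projective dimension one and $\End_{\AB}(T_J)\cong\nuJ\AB$; being perfect, $\Upphi_J\cong T_J^{\vee}\otimes^{\bf L}_{\AB}-$ with $T_J^{\vee}:=\RHom_{\AB}(T_J,\AB)\cong\Hom_{\mathfrak{R}}(\nuJ N,N)$ by reflexive equivalence. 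Exactly the same holds along the bottom and right of the square with $Z$ in place of $N$: $T'_J=\Hom_{\mathfrak{R}}(Z,\nuJ Z)$ and $\Upphi'_J\cong (T'_J)^{\vee}\otimes^{\bf L}_{\widehat{\Lambda}}-$.

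The key observation, and the only input beyond \cite[5.8]{DW1}, is that $\add N_{J^c}=\add Z_{J^c}$, so a minimal right $(\add N_{J^c})$-approximation of $N_j$ is also a minimal right $(\add Z_{J^c})$-approximation; hence the modules $\Ker b_j$ of \S\ref{mut prelim} are the same whether computed relative to $N$ or relative to $Z$, and therefore $\add\nuJ N=\add\nuJ Z$. Consequently $\nuJ\widehat{\Lambda}=\End_{\mathfrak{R}}(\nuJ Z)$ is Morita equivalent to $\nuJ\AB=\End_{\mathfrak{R}}(\nuJ N)$ via $-\otimes^{\bf L}\Hom_{\mathfrak{R}}(\nuJ Z,\nuJ N)$, which is precisely the right-hand $morita$ functor, and the two tilting modules $T_J,T'_J$ are $\Hom_{\mathfrak{R}}$ into the \emph{same} mutated module, so they correspond under the Morita equivalences of the square.

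With all this in place, commutativity follows formally: both composites are given by tensoring with a bimodule complex, and they agree by associativity of $\otimes^{\bf L}$ together with the composition isomorphisms among the $\Hom_{\mathfrak{R}}(-,-)$-bimodules for modules in $\add N=\add Z$ (the reflexive-equivalence identities underlying \S\ref{mut prelim} and \cite{IW4}); equivalently one may simply evaluate both composites on the compact generator $\AB$ and match the resulting $\AB$-module structures. I expect the only friction to be keeping the left/right module structures and the opposite-ring conventions of \S\ref{conventions} straight, and confirming that the assembled natural transformation is a morphism of triangulated functors; there is no conceptual obstacle, and once $\add\nuJ N=\add\nuJ Z$ is noted the argument of \cite[5.8]{DW1} transfers essentially verbatim.
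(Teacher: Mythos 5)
Your overall strategy — express all four functors as operations by $\Hom_{\mathfrak{R}}$-bimodules, note that the mutation data for $N$ and $Z$ coincide since $\add N_{J^c}=\add Z_{J^c}$, and match the composites — is the same underlying idea as the paper's proof of this lemma. However, the concrete step by which you implement the vertical functors introduces a genuine error. You assert $T_J^{\vee}:=\RHom_{\AB}(T_J,\AB)\cong\Hom_{\mathfrak{R}}(\nuJ N,N)$, i.e.\ that $T_J^{\vee}$ is concentrated in a single degree. This fails in general: one has $\Ext^1_{\AB}(T_J,\AB)\cong\AB_J\neq 0$. Indeed, writing $T_J\cong\Hom_{\mathfrak{R}}(N,N_{J^c})\oplus C_J$ with $C_J$ given by the two-term projective presentation \eqref{defin of CI}, applying $\Hom_{\AB}(-,\AB)$ and using reflexive equivalence gives
\[
\Ext^1_{\AB}(C_J,\AB)\cong\Cok\bigl(\Hom_{\mathfrak{R}}(U_J,N)\longrightarrow\Hom_{\mathfrak{R}}(N_J,N)\bigr),
\]
the map being precomposition with $b_J^*$. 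Since $b_J^*\colon N_J\to U_J$ is a minimal left $(\add N_{J^c})$-approximation, the image is exactly those maps $N_J\to N$ that factor through $\add N_{J^c}$, and the cokernel is $\End_{\mathfrak{R}}(N_J)/[N_{J^c}]\cong\AB_J$, which contains the class of $\mathrm{id}_{N_J}$ and so is nonzero. Hence $T_J^{\vee}$ is a genuine two-term complex, and the formal conclusion via ``associativity of $\otimes^{\bf L}$ together with degree-zero composition isomorphisms'' does not go through as written — the thing you want to compose with $Q'$ on the bottom row is a complex, not a $\Hom_{\mathfrak{R}}$-module.

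The paper's argument sidesteps this by never dualizing $T_J$ over $\AB$. It instead rewrites $\Upphi'_J\circ\mathbb{F}\cong\RHom_{\AB}(T'_J\otimes_{\widehat{\Lambda}}P,-)$ and $\mathbb{G}\circ\Upphi_J\cong\RHom_{\AB}(P'\otimes_{\nuJ\AB}T_J,-)$ via the $\Hom$--$\otimes$ adjunction; here the tensor products are underived because $P$ and $P'$ are progenerators, and they are identified with $\Hom_{\mathfrak{R}}(N,\nuJ Z)$ using the explicit Morita formulas \eqref{temp ME diagram}, \eqref{temp ME diagram 2} together with the exact $\mathfrak{R}$-linear duality $(-)^*$ on the underlying CM modules — a duality that, unlike $\RHom_{\AB}(-,\AB)$, carries no higher cohomology. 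Your observation that $\nuJ Z$ is assembled from the same modules $\Ker b_j$ as $\nuJ N$ is correct, and is indeed built into the paper's definition of $\nuJ\widehat\Lambda$, but it does not by itself repair the faulty dualization step; to fix your argument you would need to work throughout with the two-term complexes $\RHom_{\AB}(T_J,\AB)$ and $\RHom_{\widehat\Lambda}(T'_J,\widehat\Lambda)$, at which point you essentially recover the paper's adjunction calculation.
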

\begin{proof}
This was stated in \cite[5.8]{DW1}, but since we are working more generally we give the proof here. For simplicity, we drop all $J$ from the notation.  As in \ref{TandSnotation} we denote the top morita functor by $\mathbb{F}$, and we also denote the bottom by $\mathbb{G}$.   Since $P:=\Hom_{\mathfrak{R}}(N,Z)$ is a progenerator, it gives a morita context $(\AB,\End_{\AB}(P),P_{\AB},\Hom_{\AB}(P,\AB))$
which by reflexive equivalence is $(\AB,\widehat{\Lambda},P_{\AB},{}_{\AB}Q)$ where $Q=\Hom_{\mathfrak{R}}(Z,N)$.  Standard morita theory gives an equivalence of categories, and natural isomorphisms
\begin{eqnarray}
\begin{array}{c}
\begin{tikzpicture}[xscale=1]
\node (d1) at (3,0) {$\mod \AB$};
\node (e1) at (8,0) {${}_{}\mod\widehat{\Lambda}$.};
\draw[->,transform canvas={yshift=+0.4ex}] (d1) to  node[above] {$\scriptstyle \mathbb{F}:=\Hom_{\AB}(P,-)\cong-\otimes_{\AB}Q $} (e1);
\draw[<-,transform canvas={yshift=-0.4ex}] (d1) to node [below]  {$\scriptstyle \Hom_{\widehat{\Lambda}}(Q,-)\cong-\otimes_{\widehat{\Lambda}}P $} (e1);
\end{tikzpicture}
\end{array}\label{temp ME diagram}
\end{eqnarray}
There is a similar left version, namely
\begin{eqnarray}
\begin{array}{c}
\begin{tikzpicture}[xscale=1]
\node (d1) at (3,0) {$\mod\AB^{\op}$};
\node (e1) at (8,0) {$\mod\widehat{\Lambda}^{\op}$.};
\draw[->,transform canvas={yshift=+0.4ex}] (d1) to  node[above] {$\scriptstyle \Hom_{\AB^{\op}}(Q,-)\cong P\otimes_{\AB}- $} (e1);
\draw[<-,transform canvas={yshift=-0.4ex}] (d1) to node [below]  {$\scriptstyle \Hom_{\widehat{\Lambda}^{\op}}(P,-) \cong Q\otimes_{\widehat{\Lambda}}- $} (e1);
\end{tikzpicture}
\end{array}\label{temp ME diagram 2}
\end{eqnarray}
Now on one hand 
\begin{align*}
\Upphi^\prime\circ \mathbb{F}&\cong\RHom_{\AB}(\Hom_{\mathfrak{R}}(Z,\upnu Z)\otimes_{\widehat{\Lambda}}P,-)\tag{by adjunction}\\
&\cong \RHom_{\AB}(\Hom_{\widehat{\Lambda}}(Q,\Hom_{\mathfrak{R}}(Z,\upnu Z)),-) \tag{by \eqref{temp ME diagram}}\\
&\cong \RHom_{\AB}(\Hom_{\mathfrak{R}}(N,\upnu Z),-). \tag{by reflexive equivalence}
\end{align*}
On the other hand $\mathbb{G}=\Hom_{\upnu\AB}(P^\prime,-)$ for $P^\prime:=\Hom_{\mathfrak{R}}(\upnu N,\upnu Z)$, with inverse given by $Q^\prime=\Hom_{\mathfrak{R}}(\upnu Z,\upnu N)$.  Thus
\begin{align*}
\mathbb{G}\circ\Upphi&\cong\RHom_{\AB}(P^\prime\otimes_{\upnu\AB}\Hom_{\mathfrak{R}}(N,\upnu N),-)\tag{by adjunction}\\
&\cong \RHom_{\AB}(\Hom_{(\upnu\AB)^{\op}}(Q^\prime,\Hom_{\mathfrak{R}}(N,\upnu N)),-) \tag{by \eqref{temp ME diagram 2}${}^{\prime}$ }\\
&\cong \RHom_{\AB}(\Hom_{\upnu\AB}(\Hom_{\mathfrak{R}}(N, \upnu N)^*,(Q^\prime)^*),-) \tag{$(-)^*$ duality on first term}\\
&\cong \RHom_{\AB}(\Hom_{\upnu\AB}(\Hom_{\mathfrak{R}}(\upnu N,  N),\Hom_{\mathfrak{R}}(\upnu N,\upnu Z)),-) \\
&\cong \RHom_{\AB}(\Hom_{\mathfrak{R}}(N,\upnu Z),-) \tag{by reflexive equivalence}
\end{align*}
and so $\Upphi^\prime\circ \mathbb{F}\cong\mathbb{G}\circ\Upphi$, as required.
\end{proof}

\begin{cor}\label{noncomm flop flop on universal}
In the complete local flops setup of \ref{flopscompletelocal}, and with notation as in \ref{TandSnotation}, for any $J\subseteq \{0,1,\hdots,n\}$ such that $\nuJ\nuJ N\cong N$, the following statements hold.
\begin{enumerate}
\item $\Upphi'_{J}\Upphi'_{J}(\mathbb{F}\AB_{J})\cong \mathbb{F}\AB_{J}[-2]$.
\item $\Upphi'_{J}\Upphi'_{J}(\mathbb{F}S)\cong \mathbb{F}S[-2]$ for all simple $\AB_J$-modules $S$. 
\end{enumerate}
\end{cor}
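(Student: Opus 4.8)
The plan is to obtain both statements as a purely formal consequence of \ref{track Lambda J}\eqref{track Lambda J 2}--\eqref{track Lambda J 3}, by transporting those identities across the Morita equivalences of \ref{mutation Morita diagram}. No new geometric or homological input is needed beyond these two results and the fact that a Morita equivalence is an exact functor.

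The one preliminary step I would carry out is to observe that \ref{mutation Morita diagram} also holds in the reverse direction. Write $\mathbb{G}\colon\Db(\mod\nuJ\AB)\to\Db(\mod\nuJ\widehat{\Lambda})$ for the Morita equivalence appearing as the lower horizontal arrow of \ref{mutation Morita diagram}, so that that lemma reads $\Upphi'_J\circ\mathbb{F}\cong\mathbb{G}\circ\Upphi_J$. Under the standing hypothesis $\nuJ\nuJ N\cong N$, mutating $\nuJ N$ at $\nuJ N_J$ returns $N$, and correspondingly mutating $\nuJ Z$ returns $Z$ (since $\add Z=\add N$, so the approximations defining the mutation only involve $\add N_{J^c}$); thus the reverse mutation functors $\Upphi_J\colon\Db(\mod\nuJ\AB)\to\Db(\mod\AB)$ and $\Upphi'_J\colon\Db(\mod\nuJ\widehat{\Lambda})\to\Db(\mod\widehat{\Lambda})$ are defined, and \ref{mutation Morita diagram} applied with $N$ replaced by $\nuJ N$ yields $\Upphi'_J\circ\mathbb{G}\cong\mathbb{F}\circ\Upphi_J$. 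Composing the two isomorphisms produces a functorial isomorphism
\[
\Upphi'_J\circ\Upphi'_J\circ\mathbb{F}\cong\mathbb{F}\circ\Upphi_J\circ\Upphi_J
\]
of functors $\Db(\mod\AB)\to\Db(\mod\widehat{\Lambda})$.

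Evaluating this isomorphism then finishes the proof. The module $\AB_J$ lies in $\mod\AB$ by \ref{elementary mut for flops}\eqref{elementary mut for flops 1}, so applying the displayed isomorphism to it and invoking \ref{track Lambda J}\eqref{track Lambda J 2} gives
\[
\Upphi'_J\Upphi'_J(\mathbb{F}\AB_J)\cong\mathbb{F}\big(\Upphi_J\Upphi_J(\AB_J)\big)\cong\mathbb{F}(\AB_J[-2])\cong(\mathbb{F}\AB_J)[-2],
\]
the last isomorphism because $\mathbb{F}$ is exact and hence commutes with shifts; this is (1). For (2) one repeats the argument verbatim with a simple $\AB_J$-module $S$ (regarded as an $\AB$-module via the surjection $\AB\to\AB_J$) in place of $\AB_J$, using \ref{track Lambda J}\eqref{track Lambda J 3} instead of \eqref{track Lambda J 2}. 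There is no substantive obstacle here; the only point deserving a line of care is the legitimacy of the reverse square, which is exactly where the hypothesis $\nuJ\nuJ N\cong N$ is used, and which is immediate since \ref{mutation Morita diagram} was established for an arbitrary summand of an arbitrary $\mathfrak{R}$-module of the relevant form.
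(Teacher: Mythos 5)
Your proposal is correct and follows essentially the same route as the paper: both apply \ref{mutation Morita diagram} a second time (with $N$ replaced by $\nuJ N$, using $\nuJ\nuJ Z\cong Z$) to stack the two commutative squares into the functorial isomorphism $\Upphi'_J\Upphi'_J\circ\mathbb{F}\cong\mathbb{F}\circ\Upphi_J\Upphi_J$, then evaluate on $\AB_J$ and on simples via \ref{track Lambda J}\eqref{track Lambda J 2}--\eqref{track Lambda J 3}. Your justification that $\nuJ\nuJ Z\cong Z$ is a paraphrase of the paper's ``minimal approximations sum'' and is fine; there is nothing further to add.
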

\begin{proof}
Since minimal approximations sum, it follows that $\nuJ\nuJ Z\cong Z$.  Thus applying \ref{mutation Morita diagram} twice gives a commutative diagram
\[
\begin{array}{c}
\begin{tikzpicture}
\node (A1) at (0,0) {$\Db(\mod\AB)$};
\node (A3) at (0,-1.5) {$\Db(\mod\AB)$};
\node (B1) at (4,0) {$\Db(\mod\widehat{\Lambda})$};
\node (B3) at (4,-1.5) {$\Db(\mod\widehat{\Lambda}).$};
\draw[->] (A1) -- node[above] {$\scriptstyle \mathbb{F}$} (B1);
\draw[->] (A3) -- node[above] {$\scriptstyle \mathbb{F}$} (B3);
\draw[->] (A1) -- node[left] {$\scriptstyle \Upphi_{J}\Upphi_{J}$} (A3);
\draw[->] (B1) -- node[right] {$\scriptstyle \Upphi'_{J}\Upphi'_{J}$} (B3);
\end{tikzpicture}
\end{array}
\]
Hence the result follows from \ref{track Lambda J}\eqref{track Lambda J 2} and \ref{track Lambda J}\eqref{track Lambda J 3}.
\end{proof}

\subsection{Application 1: The $J$-Twists}\label{mut of CM section} In this subsection we exclude $0$ and consider the special case $\indxset\subseteq\{1,\hdots,n\}$ of \S\ref{mut for flops section}.  The situation $n=1$ was considered in \cite{DW1}.  

\begin{prop}\label{mu=nu for hyper}
In the complete local flops setup of \ref{flopscompletelocal}, for any $\indxset\subseteq \{1,\hdots,n\}$, 
\begin{enumerate}
\item\label{mu=nu for hyper 1} $\upnu_\indxset\upnu_\indxset N\cong N$.
\item\label{mu=nu for hyper 2} The simple $A_\indxset$-modules are precisely $S_\indx$ for $\indx\in \indxset$.
\end{enumerate}
\end{prop}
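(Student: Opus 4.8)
The plan is to reduce everything to the key structural fact recalled in the introduction and in Setup~\ref{flopscompletelocal}: since $R$ is Gorenstein terminal, $\mathfrak{R}$ is an isolated complete local hypersurface singularity, so $\mathfrak{R}$ is in particular Gorenstein of Krull dimension $3$, and $N_0 = \mathfrak{R}$ is a summand of $N$. The whole point of restricting to $J \subseteq \{1,\hdots,n\}$ (excluding $0$) is that $N_0 = \mathfrak{R} \in \add N_{J^c}$, and this is what makes the mutation symmetric. Concretely, for part~\eqref{mu=nu for hyper 1}, I would invoke the general mutation machinery of \cite[\S6]{IW4}: for a complete local Gorenstein $3$-fold and a modifying module $N$ with $\mathfrak{R}$ as a summand not in $J$, left mutation at $N_J$ followed by left mutation at the mutated summand returns (up to isomorphism in $\add$) the original module. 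The mechanism is that, because $\mathfrak{R} \in \add N_{J^c}$, the minimal right $(\add N_{J^c})$-approximation $V_j \to N_j$ of \eqref{K0prime} and the dual minimal right $(\add N_{J^c}^*)$-approximation $U_j^* \to N_j^*$ of \eqref{K1} are dual to one another: dualizing the approximation complex computing $\upnu_J N$ recovers the one computing $\upnu_J(\upnu_J N)$. This is exactly the reflexive-equivalence / matrix-factorisation argument indicated in the introductory remark after Proposition~\ref{J mut intro}, namely that "both parts follow quite easily from the fact that $\mathfrak{R}$ is a hypersurface singularity, using matrix factorisations". So the skeleton is: (i) $\mathfrak{R}$ hypersurface $\Rightarrow$ CM modules have $2$-periodic free resolutions (matrix factorisations); (ii) this periodicity forces $(\Ker b_J)^* $ and $\Ker a_J$ to be related by the syzygy/cosyzygy correspondence, so that $(\Ker b_J)^{**} \cong \Ker a_J$ up to the appropriate approximation, hence mutating twice returns $N_{J^c} \oplus (\text{something in } \add N_J)$; (iii) a minimality/Krull--Schmidt argument (available since $\mathfrak{R}$ is complete local) upgrades this to an actual isomorphism $\upnu_J \upnu_J N \cong N$.

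For part~\eqref{mu=nu for hyper 2}, I would argue as follows. By Definition~\ref{def basic algebra}(3), $\AB_J = \End_{\mathfrak{R}}(N)/[N_{J^c}]$, which is a quotient of $\AB = \End_{\mathfrak{R}}(N)$ by the two-sided ideal generated by the idempotents $e_i$ for $i \notin J$ (here using that $\AB$ is basic, so its idempotents are indexed by $\{0,1,\hdots,n\}$, by Notation~\ref{N notation} and the discussion around \eqref{a_i decomp}). The simple $\AB$-modules are $\{S_i\}_{i=0}^n$ from Notation~\ref{TandSnotation}(3), one for each vertex/idempotent. Killing the idempotents $e_i$, $i \notin J$, kills exactly the simples $S_i$ with $i \notin J$, since a simple module of a finite-dimensional basic algebra survives a quotient by a two-sided ideal precisely when that ideal does not contain the corresponding primitive idempotent. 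Because $0 \notin J$ the simple $S_0$ is among those killed, and the surviving simples are exactly $\{S_j\}_{j \in J}$. One should also note $\AB_J$ is finite dimensional by Lemma~\ref{elementary mut for flops}\eqref{elementary mut for flops 1}, so this counting argument is legitimate; and that by Lemma~\ref{Lambdacon idempotents} (complete local version) the idempotent count is correct.

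The step I expect to be the main obstacle is the honest verification of $\upnu_J\upnu_J N \cong N$ in \eqref{mu=nu for hyper 1} — more precisely, getting the dualities between the approximation sequences \eqref{K0prime}/\eqref{K0} and \eqref{K1} to line up. The subtlety is that left mutation $\upnu_J$ uses the $(-)^* = \Hom_{\mathfrak{R}}(-,\mathfrak{R})$ dual of a $J^c$-approximation on the dual side, so to compose two left mutations one must track how $(\Ker b_J)^*$ behaves under a further approximation and dualization. For $n=1$ this is \cite[\S5]{DW1} and there one can be very explicit; the generalisation requires checking that nothing in the argument used that $J$ was a single vertex, only that $\mathfrak{R} \in \add N_{J^c}$ — which holds here — and that the matrix-factorisation $2$-periodicity gives $\Omega^2 \cong \Id$ on $\uCM \mathfrak{R}$. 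Part~\eqref{mu=nu for hyper 2} is essentially bookkeeping with idempotents and should be short. Once \eqref{mu=nu for hyper 1} is in place, Proposition~\ref{track Lambda J} and Corollary~\ref{noncomm flop flop on universal} apply with this $J$, which is what downstream sections need.
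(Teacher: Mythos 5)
Your part~\eqref{mu=nu for hyper 2} is fine and matches what the paper dismisses as ``clear'': $\AB_J$ is the quotient of the basic algebra $\AB$ by the two-sided ideal generated by the idempotents $e_i$ with $i\notin J$, so its simples are exactly the $S_j$ with $j\in J$; the only imprecision is calling $\AB$ finite-dimensional (it is module-finite over $\mathfrak{R}$, and only the quotient $\AB_J$ is finite-dimensional by \ref{elementary mut for flops}\eqref{elementary mut for flops 1}), but the idempotent bookkeeping goes through regardless.

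For part~\eqref{mu=nu for hyper 1}, however, there is a genuine gap, and it is precisely in the step you flag as ``the main obstacle''. Your skeleton runs the matrix-factorisation argument over $\mathfrak{R}$: $\mathfrak{R}$ is a hypersurface, so free $\mathfrak{R}$-resolutions of CM modules are $2$-periodic, and you want this to force $(\Ker b_J)^* \cong \Ker a_J$. But the approximations \eqref{K0} and \eqref{K1} that compute $\upnu_J$ are minimal right $(\add N_{J^c})$- and $(\add N_{J^c}^*)$-approximations, not free covers, and for a proper subset $J\subsetneq\{1,\hdots,n\}$ with $n>1$ the module $N_{J^c}$ contains non-free summands $N_i$ ($i\notin J$, $i\geq 1$). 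The $2$-periodicity of matrix factorisations over $\mathfrak{R}$ governs free resolutions, not these relative syzygies, so it does not by itself give what you want. Your reduction ``nothing in \cite[\S5]{DW1} used that $J$ was a single vertex, only that $\mathfrak{R}\in\add N_{J^c}$'' is exactly where this breaks: in \cite[\S5]{DW1}, $n=1$ forces $N_{J^c}=\mathfrak{R}$, so the approximation really \emph{is} a free resolution, whereas here it is not.

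The paper's actual route sidesteps this by changing the ambient space. Contract only the curves in $J$ to get $\mathfrak{U}\to\mathfrak{U}_{\con}\to\Spec\mathfrak{R}$. Since $\mathfrak{U}$ has Gorenstein terminal singularities and $\mathfrak{U}\to\mathfrak{U}_{\con}$ is crepant, $\mathfrak{U}_{\con}$ also has Gorenstein terminal singularities, hence is locally a hypersurface in dimension three. Under the Van den Bergh picture, $\add N_{J^c}$ is the image of $\add\cO_{\mathfrak{U}_{\con}}$ (the locally free objects) under pushforward, so the $\add N_{J^c}$-approximations become genuine free resolutions over the local rings of $\mathfrak{U}_{\con}$, and it is there, not over $\mathfrak{R}$, that matrix-factorisation periodicity applies. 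The paper then invokes \cite[2.25]{HomMMP}, which packages exactly this. So the high-level slogan in the introduction (``hypersurfaces, via matrix factorisations'') is accurate, but the relevant hypersurface is $\mathfrak{U}_{\con}$, not $\mathfrak{R}$; your proposal would need to introduce this intermediate contraction to make the reduction honest.
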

\begin{proof}
Contract the curves in $\indxset$ to obtain $\mathfrak{U}\to\mathfrak{U}_{\con}\to\Spec\mathfrak{R}$.  Since $\mathfrak{U}$ has only Gorenstein terminal singularities, and this is a flopping contraction, $\mathfrak{U}_{\con}$ has only Gorenstein terminal singularities.  Locally, it follows that $\mathfrak{U}_{\con}$ has only hypersurface singularities, so part \eqref{mu=nu for hyper 1} follows from \cite[2.25]{HomMMP}. Part \eqref{mu=nu for hyper 2} is clear.
\end{proof}

The following is then the multi-curve analogue of \cite[5.6, 5.7]{DW1}.

\begin{prop}\label{pd for N and ext} 
In the complete local flops setup of \ref{flopscompletelocal}, for any $\indxset\subseteq \{1,\hdots,n\}$, 
\begin{enumerate}
\item\label{pd for N and ext 1} The minimal projective resolution of $\AB_J$ as an $\AB$-module has the form
\[
0\to P\to Q_1\to Q_0\to P\to \AB_J\to 0
\]
where $P:=\Hom_\mathfrak{R}(N,N_J)$, and $Q_i\in\add Q$ for $Q:=\Hom_\mathfrak{R}(N,N_{J^c})$.
\item\label{pd for N and ext 2} $\pd_{\widehat{\Lambda}}\widehat{\Lambda}_{J}=3$ and $\pd_{\widehat{\Lambda}} \widehat{I}_J=2$.
\item\label{pd for N and ext 3}  We have
\[
\Ext_{\Lambda}^t(\mathbb{F}\AB_J,T_j) 
\cong\Ext_{\widehat{\Lambda}}^t(\mathbb{F}\AB_J,\widehat{T}_j) 
\cong\Ext_{\AB}^t(\AB_J,S_j) 
\cong\left\{ \begin{array}{cl} \mathbb{C}&\mbox{if }t=0,3\\
0&\mbox{else,}\\  \end{array} \right.
\]
for all $j\in J$, and further $\AB_J$ is a self-injective algebra.
\end{enumerate}
\end{prop}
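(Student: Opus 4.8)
The plan is to proceed exactly as in \cite[5.6, 5.7]{DW1}, using the fact that $\mathfrak{R}$ is a hypersurface singularity (hence Gorenstein), now with $N_J$ playing the role of the single summand $N_1$ in the irreducible case. For \eqref{pd for N and ext 1}, I would start from the defining exact sequence \eqref{begin lambdacon},
\[
0\to \Hom_\mathfrak{R}(N,\Ker a_J)\xrightarrow{c_J\cdot} \Hom_\mathfrak{R}(N,V_J)\xrightarrow{a_J\cdot}\Hom_\mathfrak{R}(N,N_J)\to\AB_J\to 0,
\]
which already gives the two right-hand terms of the resolution, since $V_J\in\add N_{J^c}$ so that $\Hom_\mathfrak{R}(N,V_J)\in\add Q$, and $\Hom_\mathfrak{R}(N,N_J)=P$. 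The remaining task is to resolve $\Hom_\mathfrak{R}(N,\Ker a_J)$: by Proposition \ref{mu=nu for hyper}\eqref{mu=nu for hyper 1} we have $\upnu_J\upnu_J N\cong N$, hence $(\Ker b_J)^*\cong \Ker a_J$, and so dualizing \eqref{K1} yields the exact sequence $0\to N_J\xrightarrow{b_J^*} U_J\to (\Ker b_J)^*\cong \Ker a_J\to 0$ with $U_J\in\add N_{J^c}$ (this is \eqref{key track mut}, which is now genuinely exact here because $\mathfrak{R}$ is a $3$-dimensional Gorenstein hypersurface, by the matrix factorisation argument of \cite[5.6]{DW1}). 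Applying $\Hom_\mathfrak{R}(N,-)$ to this, and splicing with \eqref{begin lambdacon}, gives precisely the length-four complex
\[
0\to P\to Q_1\to Q_0\to P\to \AB_J\to 0
\]
with $Q_0=\Hom_\mathfrak{R}(N,V_J)$ and $Q_1=\Hom_\mathfrak{R}(N,U_J)$, both in $\add Q$; minimality follows since the approximations $a_J, b_J$ were chosen minimal, exactly as in \cite[5.6]{DW1}.

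For \eqref{pd for N and ext 2}, the resolution in \eqref{pd for N and ext 1} immediately gives $\pd_{\AB}\AB_J\leq 3$, and $=3$ because the last map $P\to Q_0$ is not split (its cokernel maps onto the nonzero module $\Ext^3$, or by computing $\Ext^3_\AB(\AB_J,S_j)\neq 0$ as below). Morita invariance transfers this to $\pd_{\widehat\Lambda}\widehat\Lambda_J=3$, and then the short exact sequence $0\to \widehat I_J\to\widehat\Lambda\to\widehat\Lambda_J\to 0$ of \eqref{Icon ses 1J} (completed, or rather its $\widehat\Lambda$-module version) gives $\pd_{\widehat\Lambda}\widehat I_J=2$ since $\widehat\Lambda$ is projective. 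For \eqref{pd for N and ext 3}, I would compute $\Ext^\bullet_\AB(\AB_J,S_j)$ by applying $\Hom_\AB(-,S_j)$ to the resolution in \eqref{pd for N and ext 1}: since $S_j$ is the simple at vertex $j\in J$, the maps between the $\add Q$-terms become zero after applying $\Hom_\AB(-,S_j)$ (as $Q\in\add\Hom_\mathfrak{R}(N,N_{J^c})$ has no vertex-$j$ summand), while the two copies of $P=\Hom_\mathfrak{R}(N,N_J)$ each contribute a single $\mathbb{C}=\Hom_\AB(P,S_j)$ in degrees $0$ and $3$; and one must check the differential $P\to Q_0$ and $Q_1\to P$ also die, which again follows from the vertex considerations and minimality. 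This gives $\Ext^t_\AB(\AB_J,S_j)=\mathbb{C}$ for $t=0,3$ and $0$ otherwise. The isomorphisms with $\Ext^t_{\widehat\Lambda}(\mathbb{F}\AB_J,\widehat T_j)$ and $\Ext^t_\Lambda(\mathbb{F}\AB_J,T_j)$ follow from Morita invariance and flat base change along $R_\m\to\widehat R$ (the modules involved are supported only at $\m$, as in \cite[2.16]{DW1} or Lemma \ref{fib ses bimodules}). Finally, self-injectivity of $\AB_J$: by Proposition \ref{mu=nu for hyper}\eqref{mu=nu for hyper 2} its simples are exactly the $S_j$ for $j\in J$, and the $\Ext$-computation shows each has injective dimension $0$ over $\AB_J$ once one restricts the above resolution; more directly, $\AB_J$ is the stable endomorphism algebra $\underline{\End}_\mathfrak{R}(N_J)$ of CM modules over the Gorenstein ring $\mathfrak{R}$, which is symmetric (hence self-injective) by the standard argument via Auslander--Reiten duality, exactly as in \cite[5.7]{DW1}.

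The main obstacle I anticipate is \emph{exactness} of the complex \eqref{key track mut} in this multi-curve setting: in the irreducible case \cite[5.6, 5.9]{DW1} one genuinely has a short exact sequence $0\to N_1\to U_1\to (\Ker b_1)^*\to 0$, whereas Proposition \ref{track Lambda J}(2)--(3) above explicitly warns that in general \eqref{key track mut} \emph{need not} be exact. The point to get right is that exactness \emph{does} hold when $\mathfrak{R}$ is a hypersurface — this is precisely where the hypothesis that $\mathfrak{R}$ is an isolated hypersurface singularity (Setup \ref{flopscompletelocal}) is used, via matrix factorisations: over a hypersurface, a CM module $N_J$ has a $2$-periodic minimal resolution, which forces the relevant syzygy computation to close up and makes the approximation sequences exact in the required range. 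So the real content is to verify that the matrix-factorisation input of \cite[5.6]{DW1} applies verbatim to the (possibly non-indecomposable, higher-rank) module $N_J$, after which everything else is bookkeeping with the resolution in \eqref{pd for N and ext 1}.
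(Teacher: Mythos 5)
Your arguments for \eqref{pd for N and ext 1} and \eqref{pd for N and ext 2}, and for the $\Ext$-group computation in \eqref{pd for N and ext 3}, are essentially sound, though you reconstruct by hand what the paper simply cites as \cite[A.7(3)]{HomMMP}. The crux you flag---exactness of the dualised exchange sequence for a possibly non-indecomposable CM summand $N_J$---is precisely what that reference establishes, using the hypersurface property as you anticipate; being explicit about it is a reasonable route.

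However, your proof of self-injectivity contains a genuine error. You assert that $\AB_J$ equals the stable endomorphism algebra $\underline{\End}_\mathfrak{R}(N_J)=\End_\mathfrak{R}(N_J)/[\mathfrak{R}]$, and then invoke the symmetric-algebra argument from \cite[5.7]{DW1}. But this identification only holds when $J^c=\{0\}$, i.e.\ $J=\{1,\ldots,n\}$. In general $\AB_J=\End_\mathfrak{R}(N_J)/[N_{J^c}]$, and since $[\mathfrak{R}]\subsetneq[N_{J^c}]$ whenever $J^c$ contains some $k\neq 0$ (a map $N_j\to N_k\to N_j$ need not factor through a free module), $\AB_J$ is in general a \emph{proper quotient} of $\underline{\End}_\mathfrak{R}(N_J)$, and the symmetry of the stable endomorphism algebra does not descend to the quotient. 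Your fallback remark that ``the $\Ext$-computation shows each [simple] has injective dimension $0$'' is too vague to stand on its own. The paper's argument is instead: $\mod\AB_J$ is extension-closed in $\mod\AB$, so $\Ext^1_{\AB_J}(S_j,\AB_J)=\Ext^1_{\AB}(S_j,\AB_J)$; since $\AB$ is $3$-sCY, $\pd_\AB\AB_J<\infty$ and $S_j$ has finite length, this equals $D\Ext^2_{\AB}(\AB_J,S_j)=0$ by your own computation; and as $\AB_J$ is finite-dimensional, every module is filtered by simples, so $\AB_J$ is injective over itself. You should replace your stable-endomorphism claim with this Serre-duality argument.
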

\begin{proof}
(1) This is \cite[A.7(3)]{HomMMP}.\\
(2) Since projective dimension is preserved across morita equivalence, using \eqref{pd for N and ext 1} it follows that $\pd_{\widehat{\Lambda}}\mathbb{F}\AB_J=3$.  But $\add_{\widehat{\Lambda}}\mathbb{F}\AB_J=\add_{\widehat{\Lambda}}\widehat{\Lambda}_J$, so $\pd_{\widehat{\Lambda}}\widehat{\Lambda}_{J}=\pd_{\widehat{\Lambda}}\mathbb{F}\AB_J=3$.  The statement for $\widehat{I}_J$ is then obvious from the completion of \eqref{Icon ses 1J}.\\
(3) The first two isomorphisms are consequences of the fact that the Ext groups are supported only at $\m$, and the third isomorphism is a consequence of \eqref{pd for N and ext 1}.  Since $\mod\AB_J$ is extension-closed in $\mod\AB$ we have
\begin{eqnarray}
\Ext^1_{\AB_J}(S_j,\AB_J)=\Ext^1_{\AB}(S_j,\AB_J)\cong D\Ext^2_{\AB}(\AB_J,S_j)\label{selfinj sequence}
\end{eqnarray}
where the last isomorphism holds since $\AB$ is 3-sCY \cite[2.22(2)]{IW4}, $\pd_{\AB}\AB_J<\infty$ and $S_j$ has finite length.   Thus \eqref{selfinj sequence} shows that $\Ext^1_{\AB_J}(S_j,\AB_J)=0$ for all $j\in J$.  Since $\AB_J$ is finite dimensional, every finitely generated module is filtered by simples, so it follows that $\AB_J$ is self-injective.
\end{proof}

\subsection{Application 2: The Fibre Twist}\label{mu of refl section} This subsection considers the special case $\indxset=\{0\}$ of \S\ref{mut for flops section}, in which case $\AB_\indxset$ is the fibre algebra $\AB_{\fib}$ of \ref{def basic algebra}. Since $\indxset=\{0\}$, this involves mutating the summand $\mathfrak{R}$, which results in reflexive modules that are not Cohen--Macaulay.  Consequently, there is no easy reason for the assumption in \ref{track Lambda J} to be satisfied, and so this subsection is technically much harder than the previous \S\ref{mut of CM section}.  As a result, this subsection requires additional assumptions.

In the Zariski local setup in \ref{flopslocal}, of which the complete local flops setup \ref{flopscompletelocal} is the formal fibre, $f\colon U\to\Spec R$ is a Zariski local crepant contraction, where $U$ has only Gorenstein terminal singularities, contracting precisely one connected chain $C$ of curves with $C^{\redu}=\bigcup_{\indx=1}^n C_j$ with $C_j \cong \mathbb{P}^1$ to a point $\m$ of $\Spec R$. We have that $\Lambda:=\End_R(R\oplus L)$ is derived equivalent to $U$ and recall from \S\ref{localnotation} that we write $\Lambda_\m\cong\End_{R_\m}(R_\m^{\oplus a}\oplus K)$.  Since $f$ is crepant $\Lambda_\m\in\CM R_\m$, so necessarily $K\in\CM R_\m$.

In this subsection, we make the additional assumption that $U$ (not $\mathfrak{U}$) is $\mathds{Q}$-factorial with only Gorenstein terminal singularities, so that $\Lambda_\m$ is an MMA by \ref{MMAsLocalize}.

\begin{remark}
If we assume in addition that $U$ is complete locally $\mathds{Q}$-factorial, equivalently $\mathfrak{U}$ is  $\mathds{Q}$-factorial (which happens for example if $U$ is smooth), then $\widehat{\Lambda}$ and $\AB$ are MMAs and so all the results in this subsection follow immediately from \cite[\S6.3]{IW4}.  However, later we will be working algebraically and it is well-known that the property of being $\mathds{Q}$-factorial does not pass to the completion.  Thus, since we are working with algebraic assumptions, this subsection requires some delicate global--local arguments.
\end{remark}

\begin{thm}\label{pd thm for fibre}
Consider the formal fibre setup \ref{flopscompletelocal} where in the Zariski local flops setting \ref{flopslocal}, $U$ is in addition $\mathds{Q}$-factorial.  Then
\begin{enumerate}
\item\label{pd thm for fibre 1} $\pd_{\widehat{\Lambda}}\widehat{\Lambda}_{\fib}=3$, $\pd_{\widehat{\Lambda}}\widehat{I}_{\fib}=2$ and $\pd_{\AB}\CAR=3$.
\item\label{pd thm for fibre 2} The minimal projective resolution of the $\AB$-module $\CAR$ has the form
\[
0\to Q\to P_1\to P_0\to Q\to \CAR\to 0
\]
where $Q:=\Hom_\mathfrak{R}(N,\mathfrak{R})$, and $P_i\in\add P$ for $P:=\Hom_\mathfrak{R}(N,\bigoplus_{\indx=1}^nN_\indx)$.
\end{enumerate}
\end{thm}
\begin{proof}
(1) Since $\Lambda_\m$ is an MMA of $R_\m$ by \ref{MMAsLocalize}, and by definition $\Lambda_{\fib}=\Lambda_\m/[K]$, it follows that $\pd_{\Lambda_\m}\Lambda_{\fib}<\infty$ by \cite[4.16]{IW5}.  By \ref{Acon and Afib under FBC}, $\Lambda_{\fib}\otimes_{R_\m}\mathfrak{R}\cong\widehat{\Lambda}_{\fib}$. Since completion preserves finite projective dimension, it follows that $\pd_{\widehat{\Lambda}}\widehat{\Lambda}_{\fib}\leq 3$. Since $\widehat{\Lambda}_{\fib}$ is finite dimensional, a projective dimension strictly less than three would contradict the depth lemma, so $\pd_{\widehat{\Lambda}}\widehat{\Lambda}_{\fib}=3$. The statement  $\pd_{\widehat{\Lambda}}\widehat{I}_{\fib}=2$ follows by completing \eqref{Ifib ses 1}.  Finally, $\add_{\widehat{\Lambda}}\mathbb{F}\AB_{\fib}=\add_{\widehat{\Lambda}}\widehat{\Lambda}_{\fib}$, so $\pd_{\widehat{\Lambda}}\mathbb{F}\AB_{\fib}=\pd_{\widehat{\Lambda}}\widehat{\Lambda}_{\fib}=3$.  Since projective dimension is preserved across morita equivalence, $\pd_{\AB}\AB_{\fib}=3$ follows.\\
(2) This is very similar to the arguments in \cite[4.3, 5.6]{IR} and \cite[6.23]{IW4}, but our assumptions here are weaker, so we include the proof.  Since by \eqref{pd thm for fibre 1} $\CAR$ has finite projective dimension as an $\AB$-module, it follows that $\id_{\CAR}\CAR\leq 1$ by e.g.\ \cite[6.19(4)]{IW4}, in particular the injective dimension is finite.  This being the case, since $\CAR$ is local we deduce that
\[
\depth_{\mathfrak{R}}\CAR=\dim_{\mathfrak{R}}\CAR=\id_{\CAR}\CAR
\]
by Ramras \cite[2.15]{Ramras}.  Since $\CAR$ is finite dimensional, this number is zero and in particular $\CAR$ is a Cohen--Macaulay $\mathfrak{R}$-module of dimension zero.

Let $e$ be the idempotent in $\AB$ corresponding to the summand $\mathfrak{R}$, so that $Q=e \AB$, $P=(1-e)\AB$ and $\CAR=\AB/\AB(1-e)\AB$. By \eqref{pd thm for fibre 1} we have a minimal projective resolution 
\begin{eqnarray}\label{proj of CAR}
0\to P_3\to P_2\to P_1\stackrel{f}{\to} e\AB \to\CAR\to 0
\end{eqnarray}
with $f$ a minimal right ($\add{}(1 - e)\AB$)-approximation since it is a projective cover of $e\AB(1 - e)\AB $.  In particular, $P_1\in\add P$.  Now $\CAR$ is finite dimensional and $\AB$ is perfect, so since $\AB$ is $3$-sCY we have
\[
\Ext^t_{\AB}(\CAR, \AB)\cong D\Ext^{3-t}_{\AB}(\AB,\CAR)
\]
which is zero for $t\neq 3$.  Hence applying $(-)^\vee:=\Hom_{\AB}(-,\AB)$ to \eqref{proj of CAR} we obtain an exact sequence
\begin{eqnarray}\label{eek}
0\to (e\AB)^\vee\to (P_1)^\vee\to (P_2)^\vee\to (P_3)^\vee\to \Ext^3_{\AB}(\CAR,\AB)\to 0
\end{eqnarray}
which is the minimal projective resolution of $\Ext^3_{\AB}(\CAR,\AB)$ as an $\AB^{\op}$-module.  But we have
\[
\Ext^3_{\AB}(\CAR,\AB)\cong\Ext^3_{\mathfrak{R}}(\CAR,\mathfrak{R})
\] 
by \cite[3.4(5)]{IR}, and this is a projective $\CAR^{\op}$-module by \cite[1.1(3)]{GN}.  Since $\CAR$ is a local ring, it is a free $\CAR^{\op}$-module.  Further, since $\CAR$ is a Cohen--Macaulay $\mathfrak{R}$-module of dimension zero, we have 
\[
\Ext_{\mathfrak{R}}^3(\Ext^3_{\mathfrak{R}}(\CAR,\mathfrak{R}),\mathfrak{R})\cong \CAR. 
\]
and so the rank has to be one, forcing $\Ext^3_{\mathfrak{R}}(\CAR,\mathfrak{R})\cong \CAR$ as $\CAR^{\op}$-modules.
Hence \eqref{eek} is the minimal projective resolution 
\[
0\to \Hom_{\AB}(e\AB,\AB)\to \Hom_{\AB}(P_1,\AB)\to \Hom_{\AB}(P_2,\AB)\to \Hom_{\AB}(P_3,\AB)\to \CAR\to 0
\]
of $\CAR$ as an $\AB^{\op}$-module.  Thus we have $\Hom_{\AB}(P_3,\AB)\cong \AB e$ and so $P_3\cong e\AB =Q$.  Similarly $\Hom_{\AB}(P_2,\AB)\in\add\AB(1-e)$ forces $P_2\in\add{}(1-e)\AB=\add P$.
\end{proof}

The following corollary ensures that the assumption in \ref{track Lambda J} is satisfied. 

\begin{cor}\label{mu=nu for R}
Consider the formal fibre setup \ref{flopscompletelocal} where in the Zariski local flops setting \ref{flopslocal}, $U$ is in addition $\mathds{Q}$-factorial. Then for $I=\{0\}$, $\upnu_I\upnu_IN\cong N$. 
\end{cor}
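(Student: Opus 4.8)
The plan is to deduce this from Theorem \ref{pd thm for fibre}. By the general mutation theory of \cite[\S6]{IW4}, the operation $\upnu_I$ at a summand is invertible, its inverse being the ``right'' mutation built from the $(\add N_{I^c})$-approximation; concretely, exactly as in the proof of \ref{track Lambda J}\eqref{track Lambda J 1}, for $I=\{0\}$ the isomorphism $\upnu_I\upnu_IN\cong N$ is equivalent to the single isomorphism of reflexive $\mathfrak{R}$-modules
\[
(\Ker b_0)^*\cong\Ker a_0,
\]
where $a_0\colon V_0\to N_0=\mathfrak{R}$ and $b_0\colon U_0^*\to N_0^*=\mathfrak{R}$ are the minimal approximations of \ref{setup2} taken with the single index $0$. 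So it suffices to produce this isomorphism.

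First I would read off both sides from the minimal projective resolution of $\AB_{\fib}$ in \ref{pd thm for fibre}\eqref{pd thm for fibre 2},
\[
0\to Q\to P_1\to P_0\to Q\to \AB_{\fib}\to 0,\qquad Q=\Hom_\mathfrak{R}(N,\mathfrak{R}),\quad P_i\in\add P,
\]
with $P=\Hom_\mathfrak{R}(N,\bigoplus_{j=1}^nN_j)$. Since $V_0\in\add\bigoplus_{j=1}^nN_j$ and $U_0^*\in\add\bigoplus_{j=1}^nN_j^*$, applying $\Hom_\mathfrak{R}(N,-)$ to \eqref{K0} and comparing with \eqref{begin lambdacon} identifies $\Hom_\mathfrak{R}(N,V_0)$ with $P_0$ and $\Hom_\mathfrak{R}(N,\Ker a_0)$ with the second syzygy of $\AB_{\fib}$; the resolution then shows $0\to Q\to P_1\to \Hom_\mathfrak{R}(N,\Ker a_0)\to 0$ is exact, so $\Ker a_0$ has a minimal $(\add N)$-presentation whose syzygy is the free summand $\mathfrak{R}$. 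Dually, the proof of \ref{pd thm for fibre}\eqref{pd thm for fibre 2} already establishes that applying $(-)^\vee=\Hom_{\AB}(-,\AB)$, using that $\AB$ is $3$-sCY and $\Ext^3_{\AB}(\AB_{\fib},\AB)\cong\AB_{\fib}$ as $\AB^{\op}$-modules, returns the minimal projective $\AB^{\op}$-resolution of $\AB_{\fib}$, of the same palindromic shape; translating this back through $(-)^*=\Hom_\mathfrak{R}(-,\mathfrak{R})$, which interchanges $\add N$ with $\add N^*$ and carries the $a_0$-data to the $b_0$-data, identifies $(\Ker b_0)^*$ with $\Ker a_0$, in the same manner as the proof of \ref{track Lambda J}\eqref{track Lambda J 2} and of \cite[\S5]{DW1}. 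This yields the required isomorphism, and hence $\upnu_I\upnu_IN\cong N$.

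The genuinely delicate point is not this last bookkeeping but the input \ref{pd thm for fibre} itself, which is exactly where the $\mathds{Q}$-factoriality of $U$ enters. For $I\subseteq\{1,\dots,n\}$ in \ref{mu=nu for hyper} one contracts the curves indexed by $I$ to a hypersurface base and invokes matrix factorisations, but the summand $N_0=\mathfrak{R}$ admits no such contraction, and $\upnu_{\{0\}}N$ acquires a reflexive, non-Cohen--Macaulay summand $(\Ker b_0)^*$ for which that symmetry is unavailable. Its replacement is the finiteness $\pd_{\AB}\AB_{\fib}=3$, which forces the palindromic resolution used above; and that finiteness rests on $\Lambda_\m$ being an MMA of $R_\m$ (Theorem \ref{MMAsLocalize}, via \cite[4.16]{IW5}), which holds precisely because $U$ is $\mathds{Q}$-factorial --- an assumption that, as recalled in the text, cannot be checked on the formal fibre since $\mathfrak{R}$ itself need not be $\mathds{Q}$-factorial. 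Once \ref{pd thm for fibre} is granted, the deduction of \ref{mu=nu for R} is routine.
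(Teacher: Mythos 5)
Your proposal takes a genuinely different route from the paper. The paper does not attempt to read $(\Ker b_0)^* \cong \Ker a_0$ directly off the palindromic resolution and its $\AB^{\op}$-dual. Instead, setting $M := \bigoplus_{i=1}^n N_i$, it notes that $T_I := \Hom_\mathfrak{R}(N, (\Ker b_0)^* \oplus M)$ is already a tilting $\AB$-module of projective dimension one by \ref{elementary mut for flops}\eqref{elementary mut for flops 2}, then shows independently that $T := \Hom_\mathfrak{R}(N, \Ker a_0 \oplus M)$ is also tilting of projective dimension one; since both differ from $\AB$ and share all summands except possibly one, a Riedtmann--Schofield argument \cite[4.2]{IR} forces $T_I \cong T$, whence $(\Ker b_0)^* \cong \Ker a_0$ by reflexive equivalence and Krull--Schmidt. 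Theorem~\ref{pd thm for fibre} is used exactly where you say, namely to obtain $\pd_\AB\Hom_\mathfrak{R}(N,\Ker a_0)=1$ from \eqref{begin lambdacon}; but the rest of the verification that $T$ is tilting is a separate depth argument: $\End_\AB(T)\cong\End_\mathfrak{R}(M\oplus\Ker a_0)$ is CM by \cite[6.10]{IW4}, forcing $\depth_\mathfrak{R}\Ext^1_\AB(T,T)>0$, while Auslander--Buchsbaum at height-two primes forces that Ext group to have finite length, so it vanishes; Bongartz completion then finishes.

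Your duality route is plausible but, as written, has a gap at the decisive step. To read $(\Ker b_0)^*$ off the dual resolution you need three identifications: the second syzygy of $\AB_\fib$ over $\AB$ is $\Hom_\mathfrak{R}(N,\Ker a_0)$, which you correctly extract from \eqref{begin lambdacon}; the second syzygy over $\AB^{\op}$ is $\Hom_\mathfrak{R}(N^*,\Ker b_0)$, via the $b_0$-approximation and the reflexive equivalence $\End_\mathfrak{R}(N^*)\cong\AB^{\op}$; and $(-)^\vee$ carries the former to the latter, which requires the vanishing $\Ext^2_\AB(\AB_\fib,\AB)=0$ (available from the proof of \ref{pd thm for fibre}\eqref{pd thm for fibre 2}) so that $(-)^\vee$ stays exact on the short exact sequence presenting the $\AB$-syzygy. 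Combining these with the isomorphism $\Hom_\AB(\Hom_\mathfrak{R}(N,X),\AB)\cong\Hom_\mathfrak{R}(N^*,X^*)$ for reflexive $X$ would then give the result. You supply none of these, and the appeal to \ref{track Lambda J}\eqref{track Lambda J 2} as a template is circular: that proof takes $(\Ker b_J)^*\cong\Ker a_J$ as its starting hypothesis, so it cannot serve as a model for establishing it.
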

\begin{proof}
Set $M:=\bigoplus_{i=1}^nN_i$ so that $N=N_0\oplus M=\mathfrak{R}\oplus M$. By \ref{elementary mut for flops},
\[
T_I\cong \Hom_\mathfrak{R}(N,(\Ker b_0)^*\oplus M)
\]
is a tilting $\AB$-module with $\pd_{\AB} T_I=1$.  We claim that
\[
T:=\Hom_\mathfrak{R}(N,(\Ker a_0)\oplus M)
\]
is also a tilting $\AB$-module with $\pd_{\AB} T_I=1$.  Then, since the projective dimension of both being one implies that both are not isomorphic to the ring $\AB$, and further as $\AB$-modules $T_I$ and $T$ share all summands except possibly one, by a Riedtmann--Schofield type theorem \cite[4.2]{IR} they must coincide, i.e.\ $\Hom_\mathfrak{R}(N,(\Ker b_0)^*\oplus M)\cong \Hom_\mathfrak{R}(N,(\Ker a_0)\oplus M)$.  By reflexive equivalence and Krull--Schmidt it then follows that $(\Ker b_0)^*\cong \Ker a_0$, proving the statement.

Now $T=\Hom_\mathfrak{R}(N,M)\oplus \Hom_\mathfrak{R}(N,\Ker a_0 )$, and the first summand is projective.  Further the sequence \eqref{begin lambdacon} becomes
\[
0\to \Hom_\mathfrak{R}(N,\Ker a_0)\xrightarrow{c\cdot} \Hom_\mathfrak{R}(N,V_0)\xrightarrow{a\cdot}\Hom_\mathfrak{R}(N,\mathfrak{R})\to\AB_{\fib}\to 0 
\]
which is the beginning of the minimal projective resolution of $\AB_{\fib}$.  Since $\pd_{\AB}\AB_{\fib}=3$ by \ref{pd thm for fibre} it follows that $\pd_{\AB}\Hom_\mathfrak{R}(N,\Ker a_0)=1$ and so $\pd_{\AB}T=1$.

Now by reflexive equivalence $\End_{\AB}(T)\cong\End_{\mathfrak{R}}(M\oplus (\Ker a_0))$, and this is a CM $\mathfrak{R}$-module by \cite[6.10]{IW4}.  Thus it follows that $\depth_{\mathfrak{R}}\Ext^1_{\AB}(T,T)>0$ by the depth lemma (see e.g.\ \cite[2.7]{IW4}).  But on the other hand for any prime $\p\in\Spec\mathfrak{R}$ of height two, $T_\p\in\CM \AB_\p$ since reflexive modules are CM for two-dimensional rings.  Since $
\pd_{\AB_\p}T_\p\leq 1$, Auslander--Buchsbaum then implies  that $T_\p$ is projective, and so $\Ext^1_{\AB}(T,T)_\p=\Ext^1_{\AB_\p}(T_\p,T_\p)=0$, implying that the Ext group has finite length.  Combining, it follows that $\Ext^1_{\AB}(T,T)=0$, and since $\pd_{\AB}T=1$ we deduce that $\Ext^i_{\AB}(T,T)=0$ for all $i>0$.  Hence $T$ is a partial tilting $\AB$-module with exactly $n$ non-isomorphic summands.  By the Bongartz completion and Krull--Schmidt, it follows that $T$ is a tilting $\AB$-module with projective dimension one.
\end{proof}

\subsection{Summary of Complete Local Twists}\label{cltwists} The following corollary summarises the main results in the previous two subsections.

\begin{cor}\label{mut mut twist complete}
With the Zariski local setup in \ref{flopslocal}, and its formal fibre setup in \ref{flopscompletelocal}, then with notation as in \eqref{Icon ses 1J} and \eqref{Ifib ses 1},
\begin{enumerate}
\item\label{mut mut twist complete 1} For any $\indxset\subseteq\{1,\hdots,n\}$, 
\[
\Upphi^\prime_{\indxset}\circ\Upphi^\prime_{\indxset}\cong\RHom_{\widehat{\Lambda}}(\widehat{I}_\indxset,-)
\]
is an autoequivalence of $\Db(\mod\widehat{\Lambda})$, and $\widehat{I}_\indxset$ is a tilting $\widehat{\Lambda}$-module of projective dimension two.  This autoequivalence sends
\begin{enumerate}
\item[(a)] $\mathbb{F}\AB_\indxset\mapsto\mathbb{F}\AB_\indxset[-2]$.
\item[(b)] $\mathbb{F}S_\indx\mapsto \mathbb{F}S_\indx[-2]$ for all $\indx\in \indxset$.
\end{enumerate}
\item\label{mut mut twist complete 2} If further $U$ is $\mathds{Q}$-factorial then
\[
\Upphi^\prime_{0}\circ\Upphi^\prime_{0}\cong\RHom_{\widehat{\Lambda}}(\widehat{I}_{\fib},-)
\]  
is an autoequivalence of $\Db(\mod\widehat{\Lambda})$, and $\widehat{I}_{\fib}$ is a tilting $\widehat{\Lambda}$-module of projective dimension two.  This autoequivalence sends
\begin{enumerate}
\item[(a)] $\mathbb{F}\AB_{\fib}\mapsto\mathbb{F}\AB_{\fib}[-2]$.
\item[(b)] $\mathbb{F}S_0\mapsto \mathbb{F}S_0[-2]$.
\end{enumerate}
\end{enumerate}
\end{cor}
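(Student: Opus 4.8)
The plan is to deduce both parts of the corollary from the results assembled in \S\ref{mut for flops section}--\S\ref{mu of refl section}: once the hypothesis $\nuJ\nuJ N\cong N$ of \ref{track Lambda J} and \ref{noncomm flop flop on universal} is in place, the statement is essentially bookkeeping. The common skeleton for \eqref{mut mut twist complete 1} and \eqref{mut mut twist complete 2} is: (i) establish $\nuJ\nuJ N\cong N$; (ii) identify the composite $\Upphi'_\indxset\circ\Upphi'_\indxset$ with $\RHom_{\widehat{\Lambda}}(\widehat{I}_\indxset,-)$ (resp.\ $\RHom_{\widehat{\Lambda}}(\widehat{I}_{\fib},-)$) by combining the Morita compatibility \ref{mutation Morita diagram} with \ref{track Lambda J}\eqref{track Lambda J 1}; (iii) deduce that this is an autoequivalence with $\widehat{I}_\indxset$ (resp.\ $\widehat{I}_{\fib}$) a tilting $\widehat{\Lambda}$-module of projective dimension $2$; and (iv) read off the images of $\mathbb{F}\AB_\indxset$ (resp.\ $\mathbb{F}\AB_{\fib}$) and of the pertinent simple modules from \ref{noncomm flop flop on universal}.

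For \eqref{mut mut twist complete 1} I would first observe that, with $\indxset\subseteq\{1,\hdots,n\}$, the hypothesis $\nuJ\nuJ N\cong N$ is supplied by \ref{mu=nu for hyper}\eqref{mu=nu for hyper 1}; by \ref{elementary mut for flops} this also makes $\Hom_\mathfrak{R}(\nuJ N,N)$ a tilting $\nuJ\AB$-module, so that the return mutation $\Upphi_\indxset\colon\Db(\mod\nuJ\AB)\to\Db(\mod\AB)$, and likewise $\Upphi'_\indxset$ on $\widehat{\Lambda}$, really do land back in the original derived category. Applying \ref{mutation Morita diagram} twice (exactly as in the proof of \ref{noncomm flop flop on universal}) gives $\mathbb{F}\circ(\Upphi_\indxset\circ\Upphi_\indxset)\cong(\Upphi'_\indxset\circ\Upphi'_\indxset)\circ\mathbb{F}$, while \ref{track Lambda J}\eqref{track Lambda J 1} computes $\Upphi_\indxset\circ\Upphi_\indxset\cong\RHom_{\AB}([N_{\indxset^c}],-)$. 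Since the Morita equivalence $\mathbb{F}$ is exact and carries the two-sided ideal $[N_{\indxset^c}]=\Ker(\AB\to\AB_\indxset)$ onto $\widehat{I}_\indxset=\Ker(\widehat{\Lambda}\to\widehat{\Lambda}_\indxset)$ from \eqref{Icon ses 1J} (compatibility with the completion, as in the proof of \ref{pd for N and ext}\eqref{pd for N and ext 2}), conjugation yields $\Upphi'_\indxset\circ\Upphi'_\indxset\cong\RHom_{\widehat{\Lambda}}(\widehat{I}_\indxset,-)$. Being a composite of two mutation equivalences, this is an autoequivalence of $\Db(\mod\widehat{\Lambda})$; hence $\widehat{I}_\indxset$ is a tilting $\widehat{\Lambda}$-module, of projective dimension $2$ by \ref{pd for N and ext}\eqref{pd for N and ext 2}. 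The images in (a), (b) are then exactly the two assertions of \ref{noncomm flop flop on universal}, where one invokes \ref{mu=nu for hyper}\eqref{mu=nu for hyper 2} to identify the simple $\AB_\indxset$-modules as the $S_\indx$, $\indx\in\indxset$.

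Part \eqref{mut mut twist complete 2} proceeds identically with $\indxset=\{0\}$, the sole difference being that the hypothesis $\upnu_0\upnu_0 N\cong N$ is no longer formal: this is precisely where the assumption that $U$ is $\mathds{Q}$-factorial enters, via \ref{mu=nu for R}. Granted this, \ref{mutation Morita diagram} and \ref{track Lambda J}\eqref{track Lambda J 1} again give $\Upphi'_0\circ\Upphi'_0\cong\RHom_{\widehat{\Lambda}}(\widehat{I}_{\fib},-)$, now with the two-sided ideal $\bigl[\bigoplus_{\indx=1}^n N_\indx\bigr]=\Ker(\AB\to\AB_{\fib})$ corresponding under $\mathbb{F}$ to $\widehat{I}_{\fib}$ from \eqref{Ifib ses 1}; it is an autoequivalence, $\widehat{I}_{\fib}$ is a tilting $\widehat{\Lambda}$-module, and $\pd_{\widehat{\Lambda}}\widehat{I}_{\fib}=2$ by \ref{pd thm for fibre}\eqref{pd thm for fibre 1}. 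For (a), (b), \ref{noncomm flop flop on universal} applies once more, now using that $\AB_{\fib}$ is a local commutative $\mathbb{C}$-algebra (a quotient of $N_0=\mathfrak{R}$), so that $S_0$ is its unique simple module. The only step demanding more than citation is the bookkeeping in (ii): tracking the two-sided ideals $[N_{\indxset^c}]$ and $\widehat{I}_\indxset$ (resp.\ $\bigl[\bigoplus N_\indx\bigr]$ and $\widehat{I}_{\fib}$) through the Morita equivalence so as to pin $\Upphi'_\indxset\circ\Upphi'_\indxset$ to $\RHom$ against the precise module occurring in \eqref{Icon ses 1J} (resp.\ \eqref{Ifib ses 1}), keeping in mind that $\widehat{\Lambda}$ is in general not basic. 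The genuinely substantive ingredient, namely $\upnu_0\upnu_0 N\cong N$ under $\mathds{Q}$-factoriality, has already been isolated in \ref{mu=nu for R} and \ref{pd thm for fibre}.
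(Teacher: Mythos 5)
Your proof is correct and follows the same essential approach as the paper: invoke $\nuJ\nuJ N\cong N$ (via \ref{mu=nu for hyper} or \ref{mu=nu for R}), apply \ref{track Lambda J}\eqref{track Lambda J 1} to compute the double mutation as $\RHom$ against the two-sided ideal, conclude tilting of projective dimension two via \ref{pd for N and ext}\eqref{pd for N and ext 2} resp.\ \ref{pd thm for fibre}\eqref{pd thm for fibre 1}, and read off the shifts from \ref{noncomm flop flop on universal}. The only difference of detail is that you pass through the basic algebra $\AB$ and transport the ideal identity across the Morita equivalence using \ref{mutation Morita diagram}, whereas the paper applies the proof of \ref{track Lambda J}\eqref{track Lambda J 1} directly to $\widehat{\Lambda}=\End_{\mathfrak{R}}(Z)$ itself, observing that $[Z_{\indxset^c}]=\widehat{I}_\indxset$ (and $[\widehat{K}]=\widehat{I}_{\fib}$); this sidesteps the need to track two-sided ideals through a Morita bimodule and is slightly more economical, but the two routes are genuinely the same argument, and your version is fully rigorous. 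You are also more explicit than the paper in flagging that the hypothesis $\nuJ\nuJ N\cong N$ of \ref{track Lambda J} must be verified by \ref{mu=nu for hyper} and, under $\mathds{Q}$-factoriality, \ref{mu=nu for R}, which is a helpful clarification.
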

\begin{proof}
(1) The two-sided ideal $[Z_{\indxset^c}]$ of $\widehat{\Lambda}=\End_{\mathfrak{R}}(Z)$ is $\widehat{I}_\indxset$, so the first statement  $\Upphi^\prime_{\indxset}\Upphi^\prime_{\indxset}\cong\RHom_{\widehat{\Lambda}}(\widehat{I}_\indxset,-)$ is identical to \ref{track Lambda J}\eqref{track Lambda J 1}.  Since mutation is an equivalence, this means that $\widehat{I}_\indxset$ must be a tilting module, and it has projective dimension two by \ref{pd for N and ext}\eqref{pd for N and ext 2}. Statements (a) and (b) are special cases of \ref{noncomm flop flop on universal}.\\
(2) The two-sided ideal $[\widehat{K}]$ of $\End_{\mathfrak{R}}(Z)$ is $\widehat{I}_{\fib}$, so the proof is identical to \eqref{mut mut twist complete 1}, except now \ref{pd thm for fibre}\eqref{pd thm for fibre 1} is used to establish that the projective dimension of $\widehat{I}_{\fib}$ is two.
\end{proof}

\section{Global Twists}\label{ZLT and GT section}

In this section we define the $J$-twist associated to a subset $J$ of the exceptional reduced curves in the fibres of a global flopping contraction, as well as the fibre twist associated to the whole scheme-theoretic fibre. The key result is \ref{twists autoequivalence B}, establishing that these are autoequivalences, and describing their action on $\Db(\coh X)$ by certain functorial triangles. We study relations between these autoequivalences, and relate the $J$-twist with the flop functor, in~\S\ref{section auto group}.

\subsection{Zariski Local Twists}\label{ZLT section} We first revert to the Zariski local setup in \ref{flopslocal}, namely $f\colon U\to\Spec R$ is a Zariski local crepant contraction, where $U$ has only Gorenstein terminal singularities, contracting precisely one connected chain $C$ of $n$ curves with $C^{\redu}=\bigcup_{\indx=1}^n C_\indx$ such that each $C_\indx\cong\mathbb{P}^1$.  We have that $U$ is derived equivalent to $\Lambda:=\End_R(R\oplus L)$, with the basic complete local version $\AB=\End_{\mathfrak{R}}(\mathfrak{R}\oplus N)$.

The aim of this subsection is to lift the complete local equivalences established in~\ref{mut mut twist complete} to the Zariski open set $U$. To do this we use the sequences of bimodules \eqref{Icon ses 1} and \eqref{Ifib ses 1}.

\begin{cor}\label{Zar local alg twist}
With the Zariski local flops setup in \ref{flopslocal},
\begin{enumerate}
\item\label{Zar local alg twist 1} For any $J\subseteq\{1,\hdots,n\}$, $I_J$ is a tilting $\Lambda$-module of projective dimension two.
\item\label{Zar local alg twist 2} If $U$ is $\mathds{Q}$-factorial, then $I_{\fib}$ is a tilting $\Lambda$-module of projective dimension two.
\end{enumerate}
In either case, if we let $I$ denote either $I_J$ or $I_{\fib}$, viewing $I$ as a right $\Lambda$-module we have $\Lambda\cong \End_\Lambda(I)$, and under this isomorphism the bimodule ${}_{\End_\Lambda(I)}I_\Lambda$ coincides with the natural bimodule structure ${}_{\Lambda}{I}_\Lambda$.
\end{cor}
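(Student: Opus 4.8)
The plan is to reduce both parts to the complete local statement \ref{mut mut twist complete}. Write $I$ for either $I_J$ (with $J\subseteq\{1,\dots,n\}$, for part \eqref{Zar local alg twist 1}) or $I_\fib$ (under the $\mathds{Q}$-factoriality hypothesis on $U$, for part \eqref{Zar local alg twist 2}), so that $I=\Ker(\Lambda\to\Lambda_J)$ from \eqref{Icon ses 1J}, respectively $I=\Ker(\Lambda\to\Lambda_\fib)$ from \eqref{Ifib ses 1}. The first observation is that this cokernel is, as an $R$-module, supported only at $\m$: for $\Lambda_\fib$ this is \ref{fib ses bimodules}, while $\Lambda_J$ is a quotient of the contraction algebra $\Lambda_\con$, which is finite-dimensional over $\K$ by (the proof of) \ref{Lambdacon idempotents}. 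Hence, localizing the relevant bimodule sequence at any prime $\p\in\Spec R$ with $\p\neq\m$ gives $I_\p\cong\Lambda_\p$, and all of the asserted properties hold trivially away from $\m$; only the closed point needs work.

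At $\m$ I would pass to the completion. As $\Lambda$ is module-finite over the noetherian ring $R$, having projective dimension two, being self-orthogonal, and being a tilting module are all properties that can be verified prime-by-prime on $\Spec R$ and, at $\m$, after completing along the faithfully flat map $R_\m\to\widehat{R_\m}$: projective dimension and the (finitely generated over $R$) groups $\Ext^i_\Lambda(I,I)$ descend along this completion, and for the remaining requirement that $\Lambda$ have an $\add I$-coresolution of length two it is convenient to detect it through the vanishing of the cone of the counit $\RHom_\Lambda(I,\Lambda)\Lotimes_{\End_\Lambda(I)}I\to\Lambda$, which commutes with localization and has finitely generated cohomology. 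Completing \eqref{Icon ses 1J} (respectively \eqref{Ifib ses 1}) and using \ref{Acon and Afib under FBC} identifies the completion of $I$ with the $\widehat\Lambda$-module $\widehat I_J$ (respectively $\widehat I_\fib$) of \ref{mut mut twist complete}, and that corollary says exactly that this module is a tilting $\widehat\Lambda$-module of projective dimension two. Combined with the first paragraph, this proves \eqref{Zar local alg twist 1} and \eqref{Zar local alg twist 2}.

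Finally, since $I$ is a two-sided ideal, left multiplication defines a ring homomorphism $\Lambda\to\End_\Lambda(I_\Lambda)$; it is injective because $\Lambda/I$ is $R$-torsion, so $I$ contains a nonzerodivisor of $R$, while $\Lambda$ is $R$-torsion-free, and it is an isomorphism by the same local check — it is the identity on $\Lambda_\p$ for $\p\neq\m$, and at $\m$, after completing, it becomes the natural map $\widehat\Lambda\to\End_{\widehat\Lambda}(\widehat I)$, which is an isomorphism because \ref{mut mut twist complete} exhibits $\RHom_{\widehat\Lambda}(\widehat I,-)=\Upphi^\prime_J\circ\Upphi^\prime_J$ (respectively $\Upphi^\prime_0\circ\Upphi^\prime_0$) as an \emph{autoequivalence} of $\Db(\mod\widehat\Lambda)$, equivalently because $\widehat I$ is obtained from $\widehat\Lambda$ by two iterated mutations that return to $\widehat\Lambda$ (see \S\ref{mut for flops section}). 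Under $\Lambda\cong\End_\Lambda(I)$ the induced left action on $I$ is then precisely left multiplication by $\Lambda$, which is the bimodule compatibility claimed. The step I expect to be the main obstacle is this descent at $\m$: transferring the genuinely non-local input of \ref{mut mut twist complete} back to $\Lambda$ — in particular the $\add I$-coresolution of $\Lambda$, and the fact that the hypothesis that $U$ be $\mathds{Q}$-factorial, which does not pass to the completion, is exactly what makes \ref{pd thm for fibre} and hence \ref{mut mut twist complete}\eqref{mut mut twist complete 2} available on the formal fibre. Everything else is a routine prime-by-prime reduction.
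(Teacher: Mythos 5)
Your argument is correct and follows essentially the same route as the paper: reduce to the point $\m$ by observing $I_\p\cong\Lambda_\p$ for $\p\neq\m$, then pass to the completion $\widehat{R}_\m=\mathfrak{R}$ and invoke \ref{mut mut twist complete}. The one place where you add content is the final bimodule claim, which the paper simply outsources to the argument in \cite[6.1]{DW1}; your self-contained version — injectivity of $\Lambda\to\End_\Lambda(I)$ because $I$ contains a nonzerodivisor of $R$ and $\Lambda$ is torsion-free, surjectivity checked prime-by-prime and complete-locally where it follows from $\End_{\widehat\Lambda}(\widehat I)\cong\widehat\Lambda$ — is sound and matches the cited argument.
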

\begin{proof}
(1) The statement is local, and since $\Lambda_J$ is supported only at $\m$, it is clear that $(I_J)_\n$ is free for all $\n\in\Max R$ with $\n\neq\m$.  Thus it suffices to check that $(I_J)_\m$ is a tilting $\Lambda_\m$-module of projective dimension two.  Since $R_\m$ is local, and a module being zero can be detected on the completion, the statement is equivalent to $\widehat{I}_J=I_J\otimes_R\mathfrak{R}$ being a tilting $\widehat{\Lambda}$-module of projective dimension two.  But this is just \ref{mut mut twist complete}\eqref{mut mut twist complete 1}.\\
(2) By the same logic as in \eqref{Zar local alg twist 1}, the statement is equivalent to $\widehat{I}_{\fib}$ being a tilting $\widehat{\Lambda}$-module of projective dimension two.  Again, since $U$ is now $\mathds{Q}$-factorial, this is just \ref{mut mut twist complete}\eqref{mut mut twist complete 2}.

The final statements follow immediately by repeating the argument in \cite[6.1]{DW1}.
\end{proof}

The following is the extension of \ref{mut mut twist complete} to the Zariski local setting.

\begin{prop}\label{track Lambda J Prop2}
With the Zariski local setup in \ref{flopslocal}, 
\begin{enumerate}
\item For all $J\subseteq\{1,\hdots,n\}$, 
\begin{eqnarray}
\RHom_{\Lambda}(I_J,-)\colon\Db(\mod\Lambda)\xrightarrow{\sim}\Db(\mod\Lambda),\label{Zar alg 1}
\end{eqnarray}
and further this autoequivalence sends
\begin{enumerate}
\item $\mathbb{F}\AB_J\mapsto\mathbb{F}\AB_J[-2]$,
\item $T_j\mapsto T_j[-2]$ for all $j\in J$.
\end{enumerate}
\item If further $U$ is $\mathds{Q}$-factorial,
\begin{eqnarray}
\RHom_{\Lambda}(I_{\fib},-)\colon\Db(\mod\Lambda)\xrightarrow{\sim}\Db(\mod\Lambda),\label{Zar alg 2}
\end{eqnarray}
and further this autoequivalence sends
\begin{enumerate}
\item $\mathbb{F}\AB_{\fib}\mapsto\mathbb{F}\AB_{\fib}[-2]$,
\item $T_0\mapsto T_0[-2]$.
\end{enumerate}
\end{enumerate}
\end{prop}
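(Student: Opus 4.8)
The plan is to deduce this Zariski local statement from its complete local counterpart \ref{mut mut twist complete}, with the tilting bimodules $I_J$ and $I_{\fib}$ of \ref{Zar local alg twist} doing the bookkeeping and a flat base change along completion at $\m$ transferring the computations. Throughout I would write $I$ for either $I_J$ or $I_{\fib}$, running the two cases in parallel, the second one always under the standing hypothesis that $U$ is $\mathds{Q}$-factorial.

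First, for the claims that \eqref{Zar alg 1} and \eqref{Zar alg 2} are autoequivalences: by \ref{Zar local alg twist} the object $I$ is a tilting $\Lambda$-module of projective dimension two, and there is a ring isomorphism $\Lambda\cong\End_\Lambda(I)$ under which the bimodule ${}_{\End_\Lambda(I)}I_\Lambda$ coincides with the natural bimodule ${}_\Lambda I_\Lambda$. The standard theory of tilting complexes then shows that $\RHom_\Lambda(I,-)\colon\Db(\mod\Lambda)\xrightarrow{\sim}\Db(\mod\End_\Lambda(I))$ is an equivalence, and transporting along $\End_\Lambda(I)\cong\Lambda$ exhibits it as the asserted autoequivalence of $\Db(\mod\Lambda)$.

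Next, to identify the action on the distinguished objects I would observe that $\mathbb{F}\AB_J$, $\mathbb{F}\AB_{\fib}$ and the simple $\Lambda$-modules $T_j$ (for $j\in J$) and $T_0$ all have finite length and are supported only at $\m\in\Max R$; in particular each such $M$ coincides with its $\m$-adic completion, and the cohomologies $\Ext^i_\Lambda(I,M)$ are again of finite length and supported at $\m$. Hence the base change $\RHom_\Lambda(I,M)\otimes_R\mathfrak R\cong\RHom_{\widehat\Lambda}(\widehat I,M)$ — the same device used in the proof of \ref{braid 2 curve local} — identifies $\RHom_\Lambda(I,M)$ with $\RHom_{\widehat\Lambda}(\widehat I,M)$ as complexes of $\Lambda$-modules. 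Since $\widehat{I_J}=I_J\otimes_R\mathfrak R$ and $\widehat{I_{\fib}}=I_{\fib}\otimes_R\mathfrak R$ are precisely the tilting $\widehat\Lambda$-modules appearing in \ref{mut mut twist complete}, it then remains to read off, from \ref{mut mut twist complete}\eqref{mut mut twist complete 1} and \eqref{mut mut twist complete 2}, the effect of $\RHom_{\widehat\Lambda}(\widehat I_J,-)\cong\Upphi'_J\circ\Upphi'_J$ and $\RHom_{\widehat\Lambda}(\widehat I_{\fib},-)\cong\Upphi'_0\circ\Upphi'_0$. Using the identification $\widehat{T_j}=\mathbb{F}S_j$ from \ref{TandSnotation}, items (a) and (b) of \ref{mut mut twist complete}\eqref{mut mut twist complete 1} give $\mathbb{F}\AB_J\mapsto\mathbb{F}\AB_J[-2]$ and $\widehat{T_j}\mapsto\widehat{T_j}[-2]$ for $j\in J$, which descend to $\mathbb{F}\AB_J\mapsto\mathbb{F}\AB_J[-2]$ and $T_j\mapsto T_j[-2]$, proving (1); and, under the $\mathds{Q}$-factorial hypothesis, items (a) and (b) of \ref{mut mut twist complete}\eqref{mut mut twist complete 2} give $\mathbb{F}\AB_{\fib}\mapsto\mathbb{F}\AB_{\fib}[-2]$ and, via $\widehat{T_0}=\mathbb{F}S_0$, $T_0\mapsto T_0[-2]$, proving (2).

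The genuinely substantive inputs are already in place: \ref{Zar local alg twist} supplies the tilting bimodules (self-inducing $\Lambda$), and \ref{mut mut twist complete} — built on \ref{track Lambda J}, \ref{pd for N and ext}, and for the fibre case \ref{pd thm for fibre} — supplies the complete local shift computations, with $\mathds{Q}$-factoriality of $U$ entering only through \eqref{mut mut twist complete 2}. So I do not expect a deep obstacle here; the one step requiring care is the descent itself, namely that completion at $\m$ is exact and fully faithful on finite-length modules and commutes with $\RHom_\Lambda(I,-)$ up to the stated base change. This is routine and is applied in the identical form elsewhere in the paper, so the proposition is essentially a formal consequence of \ref{Zar local alg twist} and \ref{mut mut twist complete}.
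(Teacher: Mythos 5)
Your proposal is correct and follows essentially the same route as the paper: the equivalence claim falls out of Corollary \ref{Zar local alg twist}, and the shift computations are reduced to the complete local case \ref{mut mut twist complete} via the base-change device for objects supported only at $\m$, exactly as the paper invokes (citing [DW1, 6.3] for the formal step). Your write-up is a bit more explicit about the equivalence statement and the base change itself, but there is no substantive difference in the argument.
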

\begin{proof}
(1) By \ref{mut mut twist complete}\eqref{mut mut twist complete 1}, $\RHom_{\widehat{\Lambda}}(\widehat{I}_J,\mathbb{F}\AB_J)\cong\mathbb{F}\AB_J[-2]$. Since $\mathbb{F}\AB_J$ is supported only at the point $\m$, $\RHom_{\Lambda}(I_J,\mathbb{F}\AB_J)\cong\mathbb{F}\AB_J[-2]$ is a formal consequence, as in \cite[6.3]{DW1}.  Since $\mathbb{F}S_j=\widehat{T}_j$ by \ref{TandSnotation}, the second statement is also a formal consequence of \ref{mut mut twist complete}\eqref{mut mut twist complete 1}.\\
(2) Similarly $\RHom_{\widehat{\Lambda}}(\widehat{I}_{\fib},\mathbb{F}\AB_{\fib})\cong\mathbb{F}\AB_{\fib}[-2]$ by \ref{mut mut twist complete}\eqref{mut mut twist complete 2}, and again $\mathbb{F}\AB_{\fib}$ is supported only at the point $\m$.
\end{proof}

By passing across the equivalence $\Db(\mod\Lambda)\cong\Db(\coh U)$, \eqref{Zar alg 1} and \eqref{Zar alg 2} give autoequivalences on $U$.  To describe these, consider the projections $p_1,p_2\colon U\times U\to U$, 
and set $\cV^{\vee}\boxtimes \cV=p_1^*\cV^{\vee}\otimes_{\cO_{U\times U}}^{\bf L}p_2^*\cV$.  Then there is an induced derived equivalence 
\begin{eqnarray}
\begin{array}{c}
\begin{tikzpicture}[xscale=1]
\node (d1) at (3,0) {$\Db(\coh U\times U)$};
\node (e1) at (9,0) {$\Db(\mod \Lambda\otimes_{\mathbb{C}}\Lambda^{\op})$};
\draw[->,transform canvas={yshift=+0.4ex}] (d1) to  node[above] {$\scriptstyle \RHom_{U\times U}(\cV^{\vee}\boxtimes \cV,-)$} (e1);
\draw[<-,transform canvas={yshift=-0.4ex}] (d1) to node [below]  {$\scriptstyle -\otimes^{\bf L}_{\Lambda^{\rm e}}(\cV^{\vee}\boxtimes \cV)$} (e1);
\end{tikzpicture}
\end{array}\label{bimodule adj geom alg}
\end{eqnarray}
as in \cite{BH}, where we denote the enveloping algebra by $\Lambda^{\rm e}:=\Lambda\otimes_{\mathbb{C}}\Lambda^{\op}$. Applying the lower functor in \eqref{bimodule adj geom alg} to \eqref{Icon ses 1J} and \eqref{Ifib ses 1}, we obtain exact triangles of Fourier--Mukai kernels on $U\times U$ denoted
\begin{gather}
\cW_{J}\to \cO_{\diag,U}\xrightarrow{\upphi_{J}}\cQ_{J}\to\phantom{.}\label{Zariski local kernel triangles1a inverse}\\
\cW_{\fib}\to \cO_{\diag,U}\xrightarrow{\upphi_{\fib}}\cQ_{\fib}\to.\label{Zariski local kernel triangles1b inverse}
\end{gather}
Dualizing \eqref{Icon ses 1J} to obtain an exact triangle
\begin{eqnarray*}
\RHom_\Lambda(\Lambda_J,\Lambda) \to\Lambda\to \RHom_\Lambda(I_J,\Lambda)\to\label{Icon ses 1 dual}
\end{eqnarray*}
then applying the lower functor in \eqref{bimodule adj geom alg}, and similarly for \eqref{Ifib ses 1}, we obtain exact triangles of Fourier--Mukai kernels on $U\times U$ denoted
\begin{gather}
\cQ'_{J}\xrightarrow{\upphi'_{J}} \cO_{\diag,U}\to\cW'_{J}\to\phantom{.}\label{Zariski local kernel triangles1a}\\
\cQ'_{\fib}\xrightarrow{\upphi'_{\fib}} \cO_{\diag,U}\to\cW'_{\fib}\to.\label{Zariski local kernel triangles1b}
\end{gather}
Then, using the obvious adjunctions given by restriction and extension of scalars from the ring homomorphisms $\Lambda\to\Lambda_J$ and $\Lambda\to\Lambda_{\fib}$, passing through the above derived equivalence and the morita equivalence $\mathbb{F}$ from \ref{TandSnotation}, exactly as in \cite[6.10, 6.11, 6.16]{DW1}, \eqref{Zariski local kernel triangles1a} and \eqref{Zariski local kernel triangles1b} yield functorial triangles
\begin{gather*}
\RHom_U(\cE_{J},-)\otimes^{\bf L}_{\AB_J}\cE_{J}\to \Id\to\FM{\cW'_{J}}\to\phantom{,}\\
\RHom_U(\cE_{\fib},-)\otimes^{\bf L}_{\CAR}\cE_{\fib}\to \Id\to\FM{\cW'_{\fib}}\to,
\end{gather*}
where $\FM{\cW'_{J}}$ is the autoequivalence on $U$ corresponding to \eqref{Zar alg 1}, and  $\FM{\cW'_{\fib}}$ is the autoequivalence on $U$ corresponding to \eqref{Zar alg 2}.

Translating \ref{track Lambda J Prop2} through \eqref{notation summary} immediately gives the following.

\begin{cor}\label{local twists shifting}
With the Zariski local setup in \ref{flopslocal}, 
\begin{enumerate}
\item $\FM{\cW'_{J}}\colon\Db(\coh U)\to\Db(\coh U)$ is an autoequivalence, sending 
\begin{enumerate}
\item $\cE_{J}\mapsto \cE_{J}[-2]$,
\item $E_j\mapsto E_j[-2]$ for all $j\in J$.
\end{enumerate}
\item If further $U$ is $\mathds{Q}$-factorial then $\FM{\cW'_{\fib}}\colon\Db(\coh U)\to\Db(\coh U)$ is an autoequivalence, sending
\begin{enumerate}
\item $\cE_{\fib}\mapsto\cE_{\fib}[-2]$,
\item $E_0\mapsto E_0[-2]$.
\end{enumerate}
\end{enumerate}
\end{cor}

\subsection{Definition of Global Twists}\label{GT section}
We now define twist functors for  the global quasi-projective flops  setup $f\colon X\to X_{\con}$ of \ref{flopsglobal}.   As in \cite[\S7]{DW1}, it is technically easier to construct their inverse functors, and this proceeds by gluing in the Zariski local construction of \S\ref{ZLT section}.

Recall that for each point $p\in\noniso f$ we chose an affine open neighbourhood containing $p$ but none of the other points in $\noniso f$, and considered the open set in $X$ given by the inverse image of this affine open, under $f$, which we denote by $U_p$, with inclusion $i\colon U_p \hookrightarrow X$. 


Given the setup above, we have the following definition. We denote the projections from $X\times X$ to its factors by $\FMproja_1, \FMproja_2$. 

\begin{defin}\label{defin twists at a point}
With the global quasi-projective flops setup of \ref{flopsglobal}, and notation in \eqref{openopen}, for any $J\subseteq\{1,\hdots,n_p\}$,
\begin{enumerate}
\item\label{defin twists at a point 1} The inverse $J$-twist functor $\Tconp^*$ on $X$ is defined to be
\[ 
\Tconp^*:=\RDerived \FMproja_{2*}(\cC_{J}\otimes^{\bf L}_{X\times X}\FMproja_1^*(-))
\]
where $\cC_{J}:=\Cone\upphi_{J,X}[-1]$, with $\upphi_{J,X}$ defined to be the composition
\begin{equation}\label{twist cone morphism}
\cO_{\diag,X}
\xrightarrow{\upeta_\diag}
\RDerived(i \times i)_* \cO_{\diag,U_p} \xrightarrow{\RDerived(i \times i)_*\upphi_{J}} \RDerived(i \times i)_* \cQ_{J}.
 \end{equation}
The notation here comes from \eqref{Zariski local kernel triangles1a inverse}, and the natural morphism $\upeta_\diag$ is described explicitly in \cite[7.3]{DW1}.  
\item\label{defin twists at a point 2} If $U_p$ is $\mathds{Q}$-factorial, the inverse fibre twist $\Tfibp^*$ on $X$ is defined in an identical way, using instead  $\upphi_{\fib}$ from \eqref{Zariski local kernel triangles1b inverse}.
\end{enumerate} 
\end{defin}

We may then make the following general definition of (inverse) $J$-twist and fibre twists for a flopping contraction. Recall $n_p$ denotes the number of curves above a point $p\in\noniso f$.

\begin{defin}\label{twists definition} 
With the global quasi-projective flops setup of \ref{flopsglobal}, we enumerate the points of the non-isomorphism locus $\noniso f$ as $\{p_1,\hdots,p_t\}$, and let $J$ denote a collection $(J_1,\hdots,J_t)$ of subsets $J_i\subseteq\{1,\hdots,n_{p_i}\}$. We then define the \emph{inverse $J$-twist} to be
\[
\Tcon^* := \Tconpiinv{1}\circ\cdots\circ\Tconpiinv{t}.
\]
Similarly, when $X$ is $\mathds{Q}$-factorial we set 
\[
\Tfib^* := \Tfibpiinv{1}\circ\cdots\circ\Tfibpiinv{t}.
\]
and call it the \emph{inverse fibre twist}.
\end{defin}

\begin{remark}\label{commute remark}
The order of the compositions in the definitions does not matter, since the individual functors all commute. This may be shown, for instance, by tracking skyscraper sheaves as in \ref{braid 2 curve global disjoint}, or using \ref{sky off curves}\eqref{sky off curves 1} later.
\end{remark}
We will prove in \ref{twists autoequivalence B} that these functors are always equivalences, regardless of whether $\bigcup_{j\in J}C_j$ flops algebraically.  Being able to vary $J$ without worrying about whether the curves flop is the key to obtaining group actions on derived categories without strong assumptions on the flopping contraction.

Since $X$ is quasi-projective, it embeds as an open subscheme of a projective variety~$\projenv$, so that we have 
\begin{eqnarray}
U_p\stackrel{i}{\hookrightarrow}X\stackrel{k}{\hookrightarrow} \projenv.\label{openopen}
\end{eqnarray}
We remark that $\projenv$ need not be Gorenstein, as we make no assumptions on the complement of $X$. However, the projectivity of $\projenv$ will be used to prove that the inverse $J$-twist is an equivalence on $\projenv$ in \S\ref{section proj twist is equiv}, before deducing the analogous result on $X$ in \S\ref{section qproj twist is equiv}. The construction of the inverse $J$-twist on $\projenv$ proceeds by gluing, in the same manner as \ref{defin twists at a point}.

\begin{defin}\label{defin twists at a point aux}
With the setup as above,
\begin{enumerate}
\item\label{defin twists at a point aux 1} The inverse $J$-twist functor $\Tconp^*$ on $\projenv$ is defined, by slight abuse of notation, to be
\[ 
\Tconp^*:=\RDerived \FMprojb_{2*}(\cD_{J}\otimes^{\bf L}_{\projenv\times \projenv}\FMprojb_1^*(-))
\]
where $\cD_{J}:=\Cone\upphi_{J,\projenv}[-1]$, with $\upphi_{J,\projenv}$  defined to be the composition
\[
\cO_{\diag,\projenv}
\xrightarrow{\upeta_\diag}
\RDerived(ki \times ki)_* \cO_{\diag,U_p} \xrightarrow{\RDerived(ki \times ki)_*\upphi_J} \RDerived(ki \times ki)_* \cQ_{J}.
 \]
 \item\label{defin twists at a point aux 2} The $J$-twist functor $\Tconp$ on $\projenv$ is defined
\[
\Tconp:=\RDerived \FMprojb_{1*}(\cD_{J}^R\otimes^{\bf L}_{\projenv\times \projenv}\FMprojb_2^*(-))
\] 
where $\cD_{J}^R :=\RsHom_{\projenv\times \projenv}(\cD_{J},\FMprojb_2^!\cO_\projenv)$. 
\item\label{defin twists at a point aux 3} If $U_p$ is $\mathds{Q}$-factorial, the inverse fibre twist $\Tfibp^*$ and fibre twist $\Tfibp$ on $\projenv$ are defined similarly. 
\end{enumerate} 
\end{defin}

\subsection{First Properties}
\label{section first props}

We establish basic properties of the functors defined in \ref{defin twists at a point} and \ref{defin twists at a point aux}, after the following proposition, which studies the pushforwards of $\cE_J$ to $X$ and $\projenv$. Recall that $\Perf(Z)$ denotes the category of perfect complexes on a scheme $Z$.

\begin{prop}\label{universal and canonical}
With the global quasi-projective setup of \ref{flopsglobal}, and notation in \eqref{openopen},
\begin{enumerate}
\item\label{universal and canonical 1} 
For a fixed $p\in\noniso f$\!, and for any $J\subseteq\{1,\hdots,n_p\}$, 
\begin{enumerate}[label=(\alph*), ref=\alph*]
\item\label{universal and canonical 1 a} $\RDerived {i}_*\cE_{J} = {i}_*\cE_{J}\in\Perf(X)$,
\item\label{universal and canonical 1 b} $\RDerived {(ki)}_{\!*}\cE_{J} = {(ki)}_{\!*}\cE_{J}\in\Perf(\projenv)$.
\end{enumerate}
By abuse of notation we denote these simply by $\cE_J$.  Furthermore,
\begin{enumerate}[resume*]
\setcounter{enumii}{2}
\item\label{universal and canonical 1 c} $\cE_J \otimes^{\bf L}_X\omega_X\cong \cE_J$,
\item\label{universal and canonical 1 d} $\cE_J \otimes_\projenv^{\bf L} \omega_\projenv\cong \cE_J$. 
\end{enumerate}
\item\label{universal and canonical 2}
If further $U_p$ is $\mathds{Q}$-factorial, then the corresponding statements hold for $\cE_{\fib}$, and we abuse notation similarly.
\end{enumerate}
\end{prop}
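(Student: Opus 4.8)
The plan is to reduce everything to statements that have already been proved Zariski locally, then glue. First I would establish part (1)(a). The sheaf $\cE_J$ was constructed on $U_p$ as the image of $\mathbb{F}\AB_J$ under the derived equivalence $\Db(\mod\Lambda)\cong\Db(\coh U_p)$, explicitly $\cE_J=\mathbb{F}\AB_J\otimes^{\bf L}_\Lambda\cV$. Since $\AB_J$ (equivalently $\mathbb{F}\AB_J=\Lambda_J$ up to summands) is a finite-dimensional $\Lambda$-module supported only at the closed point $\m$, the complex $\cE_J$ is a bounded complex of coherent sheaves set-theoretically supported on the curves $\bigcup_{j\in J}C_j$, which lie in the closed fibre over $p$. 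In particular $\cE_J$ has proper support over $X_{\con}$ and is supported away from $X\smallsetminus U_p$, so pushing forward along the open immersion $i$ is exact: $\RDerived i_*\cE_J=i_*\cE_J$. For the perfection claim, I would argue that $\AB_J$ has finite projective dimension as a $\Lambda$-module (this is \ref{pd for N and ext}\eqref{pd for N and ext 1}, which gives $\pd_\Lambda\AB_J=3$ on the formal fibre, hence Zariski locally by the standard completion/localization argument already used repeatedly, e.g. in the proof of \ref{track Lambda J Prop2}), so $\cE_J$ is a perfect complex on $U_p$; being supported on a proper closed subscheme missing $X\smallsetminus U_p$, its extension by zero to $X$ remains perfect. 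The statement (1)(b) for $\projenv$ is then immediate: $ki$ is again an open immersion, $\cE_J$ is supported on the same proper closed subset of $X$ which is itself closed in $\projenv$ (its support is proper, hence closed in the projective $\projenv$), so $\RDerived(ki)_*\cE_J=(ki)_*\cE_J$ and perfection is preserved for the same reason.

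Next, parts (1)(c) and (1)(d), the $\omega$-invariance. The key input is that $f$ is crepant, so $\omega_X$ restricted to a neighbourhood of the contracted curves is trivial — indeed $\omega_{U_p}\cong\cO_{U_p}$ since $U_p=f^{-1}(\Spec R)$ is a crepant contraction over an affine base with $R$ Gorenstein (this is the content of \ref{flopslocal}: $U_p$ is a crepant contraction of Gorenstein $3$-folds, so $\omega_{U_p}$ is a line bundle pulled back from $\Spec R$, hence trivial on the affine $U_p$ after possibly shrinking, which we may do since all relevant objects are supported over a single point). Therefore $\cE_J\otimes^{\bf L}_{U_p}\omega_{U_p}\cong\cE_J$, and since $\cE_J$ is supported inside $U_p$ and $\omega_X|_{U_p}\cong\omega_{U_p}$, we get $\cE_J\otimes^{\bf L}_X\omega_X\cong\cE_J$ after pushing forward along $i$ (using that $i^*\omega_X\cong\omega_{U_p}$ and the projection formula). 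For $\projenv$, even though $\projenv$ need not be Gorenstein, $\omega_\projenv$ is still a dualizing complex; but $\cE_J$ is supported in the open locus $X\subseteq\projenv$ where $\projenv$ agrees with $X$, and on a neighbourhood of the support $\omega_\projenv$ is quasi-isomorphic to a shift of $\omega_{U_p}\cong\cO_{U_p}$. More carefully: shrinking to the open set $U_p\subseteq\projenv$, $\omega_\projenv|_{U_p}$ is the dualizing complex of $U_p$, which since $U_p$ is Cohen--Macaulay (indeed Gorenstein) is $\omega_{U_p}[\dim]\cong\cO_{U_p}[\dim]$; tensoring the perfect complex $\cE_J$ by a trivial line bundle (possibly shifted, but the shift is absorbed into how $\omega_\projenv$ is normalized relative to $\omega_X$) leaves it unchanged up to that same shift, which matches the normalization of $\omega_X$. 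So $\cE_J\otimes^{\bf L}_\projenv\omega_\projenv\cong\cE_J$, granting the convention that $\omega_X$ and $\omega_\projenv$ are normalized compatibly on the overlap $X$.

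Finally, part (2): when $U_p$ is $\mathds{Q}$-factorial the sheaf $\cE_\fib$ is constructed exactly as $\cE_J$ but from $\mathbb{F}\AB_\fib$ instead of $\mathbb{F}\AB_J$, and it is concentrated in degree $-1$ rather than $0$ (per \ref{TandSnotation}), which affects none of the above. The module $\AB_\fib$ is again finite-dimensional and supported only at $\m$ (\ref{fib ses bimodules} shows $\Lambda_\fib$ is supported only at $\m$), and has finite projective dimension as a $\Lambda$-module: $\pd_\Lambda\AB_\fib=3$, which is \ref{pd thm for fibre}\eqref{pd thm for fibre 1} on the formal fibre, pushed to the Zariski local setting by the same completion argument (this is where the $\mathds{Q}$-factorial hypothesis on $U_p$ enters, via \ref{MMAsLocalize}). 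Hence $\cE_\fib$ is perfect on $U_p$, supported on the proper closed subset $C=f^{-1}(p)$ missing $X\smallsetminus U_p$, and the identical reasoning gives $\RDerived i_*\cE_\fib=i_*\cE_\fib\in\Perf(X)$, $\RDerived(ki)_*\cE_\fib=(ki)_*\cE_\fib\in\Perf(\projenv)$, and the two $\omega$-invariance isomorphisms.

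The main obstacle I anticipate is (1)(d)/(2): getting the $\omega_\projenv$-invariance clean when $\projenv$ itself is not Gorenstein and $\omega_\projenv$ is only a dualizing \emph{complex}. The honest way to handle it is to work entirely in a neighbourhood of the support of $\cE_J$ (or $\cE_\fib$), which sits inside $U_p$ where the dualizing complex is concentrated in one degree and trivial, and to fix once and for all the normalization identifying $\omega_\projenv|_X$ with $\omega_X$; everything else is a routine projection-formula manipulation. The perfection claims, by contrast, should be genuinely straightforward once the finite-projective-dimension inputs (\ref{pd for N and ext}, \ref{pd thm for fibre}) are invoked and transferred to the Zariski local ring by the now-standard localization-then-completion argument.
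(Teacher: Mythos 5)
Your strategy for (1)(a), (1)(b) and (2) matches the paper's: use $\pd_{\Lambda}\mathbb{F}\AB_J<\infty$ from \ref{pd for N and ext} (respectively \ref{pd thm for fibre} for $\AB_\fib$) to get perfection of $\cE_J$ on $U_p$, then note that its support is proper, hence closed in $X$ and in $\projenv$, so the pushforwards along the open immersions $i$ and $ki$ are exact and preserve perfection. Where you genuinely differ is the $\omega$-invariance in (1)(c)/(1)(d). You establish $\cE_J\otimes^{\bf L}_{U_p}\omega_{U_p}\cong\cE_J$ by arguing that $\omega_{U_p}\cong\cO_{U_p}$ after shrinking $\Spec R$; the paper instead runs a Serre-functor argument following \cite[6.6]{DW1}: since $\mathbb{F}\AB_J$ is supported at the single closed point $\m$, the Serre functor on $\Db(\mod\Lambda)$ acts on it by a shift, and transporting this through the tilting equivalence and invoking uniqueness of Serre functors yields $\cE_J\otimes\omega_{U_p}\cong\cE_J$ directly. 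Both routes then finish with the same projection-formula chain. Two caveats on your version: the claim ``$\omega_{U_p}\cong\cO_{U_p}$ since $R$ is Gorenstein'' is false before shrinking --- Gorenstein only gives that $\omega_R$ is invertible, not trivial --- so the parenthetical ``after possibly shrinking'' is load-bearing and should be promoted to the main line of argument, together with the observation that the pushforward $i_*\cE_J$ on $X$ does not depend on the choice of affine chart; and you correctly flag the imprecision about $\omega_\projenv$ when $\projenv$ is not Cohen--Macaulay, which the paper also glosses over by working entirely in the Gorenstein open $U_p$, and which is harmless downstream since the only use in \ref{Y not Goren A} is a $\RHom$-vanishing statement insensitive to shifts. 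What the paper's Serre-functor argument buys is that no shrinking is required and no hypothesis about triviality of $\omega_R$ ever enters: it is intrinsic to the object $\cE_J$.
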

\begin{proof}
(1) By \ref{pd for N and ext} $\pd_{\widehat{\Lambda}}\mathbb{F}\AB_J=3$, so since $\mathbb{F}\AB_J$ is supported only at $\m$, it follows that $\mathbb{F}\AB_J$ is a perfect $\Lambda$-module. Since by \ref{TandSnotation} $\cE_{J}$ corresponds to $\mathbb{F}\AB_J$ across the derived equivalence, it follows that $\cE_{J}$ is perfect on $U_p$.  This in turn implies that ${i}_*\cE_{J}\in\Perf(X)$ and ${(ki)}_*\cE_J\in\Perf(\projenv)$, since we can check perfectness locally.   Since $\cE_{J}$ has a finite filtration by objects $\cO_{C_j}(-1)$ for $j\in J$, it follows that $\RDerived {i}_*\cE_{J}={i}_*\cE_{J}$ and $\RDerived {(ki)}_*\cE_{J}={(ki)}_*\cE_{J}$.  Lastly, $\mathbb{F}\AB_J$ is supported only at $\m$, so just as in \cite[6.6]{DW1} the Serre functor on $\Lambda$ acts trivially on $\mathbb{F}\AB_J$.  Across the equivalence, by uniqueness of Serre functor this means that the Serre functor on $U_p$ acts trivially on $\cE_{J}$.  This then implies \eqref{universal and canonical 1 c}, since
\[
\cE_J\otimes^{\bf L}_X\omega_X\cong\RDerived i_*\cE_{J}\otimes^{\bf L}_X\omega_X\cong\RDerived i_*(\cE_{J}\otimes^{\bf L}_{U_p}i^*\omega_X)\cong \RDerived i_*(\cE_{J}\otimes^{\bf L}_{U_p}\omega_{U_p^{\phantom{p}\!}})\cong \RDerived i_*\cE_{J}\cong \cE_J. 
\]
The proof of \eqref{universal and canonical 1 d} is identical.\\
(2) The proof is identical to \eqref{universal and canonical 1}, using \ref{pd thm for fibre} instead to give $\pd_{\widehat{\Lambda}}\AB_{\fib}=3$.
 \end{proof}

The next result describes $\Tconp^*$ and $\Tfibp^*$ on both $\Dm(\coh X)$ and $\Dm(\coh \projenv)$ as `twists' by fitting them into certain functorial triangles, and shows that they preserve the bounded derived categories.  

\begin{prop}\label{twist triangle and bounded}
With the setup as above, let $\mathcal{X}$ denote either $X$ or $\projenv$.
\begin{enumerate}
\item\label{twist triangle and bounded 1} For each $J\subseteq\{1,\hdots,n_p\}$, there exist adjunctions
\[
\begin{array}{c}
\begin{tikzpicture}[looseness=1,bend angle=15]
\node (a1) at (0,0) {$\D(\Mod \AB_J)$};
\node (a2) at (5,0) {$\D(\Qcoh \mathcal{X})$};
\draw[->] (a1) -- node[gap] {$\scriptstyle G_J$} (a2);
\draw[->,bend right] (a2) to node[above] {$\scriptstyle G_J^{\LA}$} (a1);
\draw[->,bend left] (a2) to node[below] {$\scriptstyle G_J^{\RA}$} (a1);
\end{tikzpicture}
\end{array}
\]
where $G_J:=-\otimes^{\bf L}_{\AB_J}\cE_J$. Furthermore
\begin{enumerate}[label=(\alph*), ref=\alph*]
\item\label{twist triangle and bounded 1 a} For all $x\in\D(\Qcoh \mathcal{X})$,
\begin{align*}
&\Tconp^*(x)\to x\to G^{\phantom{L}}_J\circ G_J^{\LA}(x)\to
\end{align*}
is a triangle in $\D(\Qcoh \cX)$. 
\item\label{twist triangle and bounded 1 b} For all $x\in \Dm(\coh \cX)$, $G_J^{\LA}(x)\cong \RHom_{\AB_J}(\RHom_{\cX}(-,\cE_J),\AB_J)$.
\item\label{twist triangle and bounded 1 c} $\Tconp^*$ preserves the bounded derived category $\Db(\coh \mathcal{X})$.
\end{enumerate}
\item\label{twist triangle and bounded 3} If further $U_p$ is $\mathds{Q}$-factorial then there is a version of \eqref{twist triangle and bounded 1} for $\Tfibp^*$.
\end{enumerate}
\end{prop}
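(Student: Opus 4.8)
The plan is to treat parts \eqref{twist triangle and bounded 1} and \eqref{twist triangle and bounded 3} uniformly, writing $I$ for either $I_J$ or $I_\fib$ and $\cE$ for the corresponding universal object $\cE_J$ or $\cE_\fib$, so that the $\mathds{Q}$-factoriality hypothesis in \eqref{twist triangle and bounded 3} is exactly what is needed to invoke \ref{Zar local alg twist}\eqref{Zar local alg twist 2} and \ref{track Lambda J Prop2}(2) in place of their $J$-counterparts; the arguments are otherwise verbatim. First I would construct the adjunctions. The functor $G_J = -\otimes^{\bf L}_{\AB_J}\cE_J$ factors through restriction of scalars along $\Lambda\to\Lambda_J$ (equivalently $\AB\to\AB_J$ via the morita equivalence $\mathbb{F}$) followed by $-\otimes^{\bf L}_\Lambda\cV$ and then $\RDerived i_*$ (or $\RDerived(ki)_*$). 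Each of these three has both adjoints: restriction of scalars along a surjection of noetherian rings has the obvious extension-of-scalars left adjoint and a $\RHom$ right adjoint; $-\otimes^{\bf L}_\Lambda\cV$ is the equivalence \eqref{derived equivalence}; and the open immersion $i$ (resp. $ki$) gives $i^*\dashv \RDerived i_*\dashv i^!$, where $\RDerived i_*$ preserves quasi-coherence and, by \ref{universal and canonical}, $\RDerived i_*\cE = i_*\cE\in\Perf$, which is what makes $G_J^{\LA}$ well-behaved. Composing adjoints in the two possible orders produces $G_J^{\LA}$ and $G_J^{\RA}$. This is the place where the new adjoint technology of Rizzardo \cite{Alice} is genuinely used, exactly as flagged in the introduction, since $\cX$ need not be projective (and $\projenv$ need not be Gorenstein), so one cannot simply quote Serre duality; I expect this to be the main technical obstacle, and I would spell out carefully that $G_J^{\RA}$ exists on all of $\D(\Qcoh\cX)$ and not merely on a subcategory.

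For \eqref{twist triangle and bounded 1 a}, I would unwind the definition of $\Tconp^*$ in \ref{defin twists at a point}\eqref{defin twists at a point 1} (resp. \ref{defin twists at a point aux}\eqref{defin twists at a point aux 1}): its kernel is $\cC_J=\Cone(\upphi_{J,\cX})[-1]$, where $\upphi_{J,\cX}$ is the composite \eqref{twist cone morphism} of the canonical map $\cO_{\diag,\cX}\to\RDerived(i\times i)_*\cO_{\diag,U_p}$ with $\RDerived(i\times i)_*\upphi_J$. The key identification is that the Fourier--Mukai functor with kernel $\RDerived(i\times i)_*\cQ_J$ is naturally isomorphic to $G_J\circ G_J^{\LA}$; this follows by transporting the description of $\upphi_J$ as the bimodule map coming from \eqref{Zariski local kernel triangles1a} across the equivalence \eqref{bimodule adj geom alg}, together with the computation in \ref{track Lambda J Prop2} (and its Zariski-local consequences leading to the functorial triangle $\RHom_U(\cE_J,-)\otimes^{\bf L}_{\AB_J}\cE_J\to\Id\to\FM{\cW'_J}\to$ displayed just before \ref{local twists shifting}). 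Taking $\RDerived\FMproja_{2*}(-\otimes^{\bf L}\FMproja_1^*(x))$ of the triangle $\cC_J\to\cO_{\diag,\cX}\to\RDerived(i\times i)_*\cQ_J\to$ then yields the asserted triangle $\Tconp^*(x)\to x\to G_J\circ G_J^{\LA}(x)\to$, this being just the argument of \cite[7.3--7.9]{DW1} adapted to the present hypotheses.

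Part \eqref{twist triangle and bounded 1 b} is the Grothendieck-duality identification of the left adjoint on bounded complexes with compact support data: since $\cE$ is perfect (\ref{universal and canonical}), for $x\in\Dm(\coh\cX)$ one has $\RHom_\cX(x,\cE)\in\D(\Mod\AB_J^{\op})$ with finitely generated cohomology, and the standard biduality for the finite-dimensional algebra $\AB_J$ gives $G_J^{\LA}(x)\cong\RHom_{\AB_J}(\RHom_\cX(x,\cE_J),\AB_J)$; here one uses that $\AB_J$ is self-injective by \ref{pd for N and ext}\eqref{pd for N and ext 3} (resp. that $\AB_\fib$ is a finite-dimensional commutative Gorenstein-zero-dimensional ring, from \ref{pd thm for fibre}), which makes $\RHom_{\AB_J}(-,\AB_J)$ a duality. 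Finally, for \eqref{twist triangle and bounded 1 c}, I would argue that $\Tconp^*$ preserves $\Db(\coh\cX)$ by the triangle in \eqref{twist triangle and bounded 1 a}: if $x\in\Db(\coh\cX)$ then $G_J^{\LA}(x)\in\Db(\coh\AB_J)$ by \eqref{twist triangle and bounded 1 b} and the perfectness of $\cE$, hence $G_J\circ G_J^{\LA}(x)\in\Db(\coh\cX)$ since $\cE$ has finite support and a finite filtration by sheaves $\cO_{C_j}(-1)$ (resp. $\omega_C$), so $\Tconp^*(x)$, being the cone shifted, also lies in $\Db(\coh\cX)$. The statement \eqref{twist triangle and bounded 3} follows by the identical chain of arguments with $J$ replaced by $\fib$ throughout, the only additional input being the $\mathds{Q}$-factoriality needed to know that $I_\fib$ is tilting of projective dimension two (\ref{Zar local alg twist}\eqref{Zar local alg twist 2}) and that $\cE_\fib$ is perfect (\ref{universal and canonical}\eqref{universal and canonical 2}).
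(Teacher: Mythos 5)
Your argument follows the paper's own strategy almost exactly: the adjunctions for $G_J$ are constructed by factoring through the morita equivalence $\mathbb{F}$, restriction of scalars along $\Lambda\to\Lambda_J$, the tilting equivalence with $\cV$, and pushforward along the open immersion $\alpha$; the triangle in (a) comes from unwinding $\cC_J$; the formula in (b) comes from biduality via self-injectivity of $\AB_J$; and (c) follows by two-out-of-three. All of this matches the paper, which cites the corresponding steps of \cite[(7.D), (7.E), 7.4(1)]{DW1}.

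There is, however, one substantive misstatement. You assert that the construction of $G_J^{\LA}$ and $G_J^{\RA}$ is ``the place where the new adjoint technology of Rizzardo \cite{Alice} is genuinely used.'' It is not. The adjoints of $G_J$ come for free from the factorization you describe --- each factor is an equivalence, a restriction/extension of scalars along a ring map, or an open-immersion push/pull, and each of those has both adjoints by elementary means; indeed the paper writes down $G_J^{\LA}=\mathbb{F}^{-1}\circ(-\otimes^{\bf L}_\Lambda\Lambda_J)\circ\RHom_{U_p}(\cV,-)\circ\alpha^*$ explicitly and obtains $G_J^{\RA}\cong\RHom_\cX(\cE_J,-)$ from tensor--Hom adjunction, with no appeal to \ref{Alice thm}. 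Rizzardo's theorem \ref{Alice thm} concerns Fourier--Mukai functors between projective varieties, and is invoked only later, in \ref{adjoint on minus}, to produce adjoints for the Fourier--Mukai functor $\Tconp^*$ itself on the projective $\projenv$ (the kernel $\cD_J$ being a cone rather than a simple pushforward, so the factorization trick doesn't apply there). In fact the dependency runs the other way: part \eqref{twist triangle and bounded 1 c} of the present proposition --- that $\Tconp^*$ preserves $\Db(\coh\projenv)$ --- is one of the hypotheses one must verify in order to apply \ref{Alice thm} in \ref{adjoint on minus}, so Rizzardo's theorem cannot be an input here without circularity. A secondary, more minor point: your deduction of (c) from (b) presupposes that $G_J^{\LA}$ already takes $\Dm(\coh\cX)$ into $\Dm(\mod\AB_J)$, which is also needed to make sense of (b) in the first place; the paper sidesteps this by reading boundedness directly off the explicit factorization of $G_J^{\LA}$, before proving (b), and you should do the same.
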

\begin{proof}
(1) This follows as in \cite[(7.D), (7.E)]{DW1}, using \ref{universal and canonical} above.\\
(a) This is shown as in \cite[7.4(1)]{DW1}.\\
(b) Exactly as in the proof of \cite[(7.E)]{DW1}, $G_J^{\RA}\cong\RHom_{\cX}(\cE_J,-)$.  Further, setting 
\begin{eqnarray}
\alpha:=\left\{ \begin{array}{ll} i &\mbox{if }\cX=X\\  
ki &\mbox{if }\cX=\projenv, \end{array}\right.\label{alpha hack}
\end{eqnarray}
then \[G_J^{\LA}:=\mathbb{F}^{-1}\circ (-\otimes_\Lambda^{\bf L}\Lambda_J)\circ \RHom_{U_p}(\cV,-)\circ \alpha^*\] where $\mathbb{F}$ is the morita equivalence between $\mod\Lambda_J$ and $\mod \AB_J$ induced from \eqref{temp ME diagram}.  

Thus the functor $G_J^{\LA}$ takes $\Dm(\coh\cX)$ to $\Dm(\mod \AB_J)$, and also takes $\Db(\coh\cX)$ to $\Db(\mod\AB_J)$, since each individual functor does: $\mathbb{F}^{-1}$ and $\alpha^*$ since they are already exact, $\RHom_{U_p}(\cV,-)$ since it is a tilting equivalence \cite[3.3]{TodaUehara}, and $(-\otimes_\Lambda^{\bf L}\Lambda_J)$ since $\Lambda_J$ has finite projective dimension.

Since $\AB_J$ is a self-injective algebra, we can now use duality as in  \cite[1.3]{YZ}. Denoting restriction of scalars from $\Lambda_J$ to $\Lambda$ by $\rest$, we see
\begin{align*}
\RHom_{\AB_J}(G_J^{\LA}(-),\AB_J)&\cong
\RHom_{\cX}(-,\RDerived \alpha_*\circ(-\otimes^{\bf L}_{U_p}\cV)\circ \rest\circ\, \mathbb{F}(\AB_J))\\
&\cong 
\RHom_{\cX}(-,\RDerived \alpha_*\cE_J)\tag{by \eqref{notation summary}}\\
&\cong
\RHom_{\cX}(-,\cE_J).
\end{align*}
But now being self-injective, $\AB_J$ is clearly a dualizing complex for $\D(\Mod\AB_J)$, and so since $G_J^{\LA}(x)\in\Dm(\mod\AB_J)$ for all $x\in\Dm(\coh \cX)$, applying the dualizing functor $\RHom_{\AB_J}(-,\AB_J)$ gives the result.\\
(c) As remarked above, $G_J^{\LA}$ takes $\Db(\coh \cX)$ to $\Db(\mod\AB_J)$.  The functor $G_J$ takes the simple $\AB_J$-modules to the coherent sheaves $E_i$, hence since $\AB_J$ is a finite dimensional algebra, it follows that $G_J$ takes $\Db(\mod \AB_J)$ to $\Db(\coh\cX)$.  We conclude that the composition $G^{\phantom{L}}_J\circ G_J^{\LA}$ preserves $\Db(\coh \cX)$, and so since clearly the identity functor enjoys this property, by two-out-of-three, so does $\Tconp^*$. \\
(2) is identical to the above, using all the corresponding properties of $\cE_\fib$ and $\AB_\fib$. 
\end{proof}

\subsection{Projective Twist and Adjoints}
The first difficulty in proving that our twist functors are equivalences, exactly as in \cite[7.7]{DW1}, is to produce adjoints. For this, we use recent work of Rizzardo \cite{Alice}.  As notation, for a projective variety $Z$, denote the two projections from $Z\times Z$ by $\uppi_1$ and $\uppi_2$ respectively.

\begin{thm}[{\cite{Alice}}]\label{Alice thm}
Suppose that $Z$ is a projective variety, and that $\cP\in\D(\Qcoh Z\times Z)$ is such that both
\[
F=\RDerived \uppi_{2*}(\cP\otimes^{\bf L}_{Z\times Z}\uppi_1^*(-))\quad\mbox{and}\quad \RDerived \uppi_{1*}(\cP\otimes^{\bf L}_{Z\times Z}\uppi_2^*(-))
\]
preserve $\Db(\coh Z)$.  Then:
\begin{enumerate}
\item\label{Alice thm 1} The functor $F\colon \D(\Qcoh Z)\to \D(\Qcoh Z)$ admits both a left and right adjoint.
\item\label{Alice thm 2} The left and right adjoints of $F$ preserve $\Dm(\coh Z)$. 
\item\label{Alice thm 3} The right adjoint of $F$ furthermore preserves $\Db(\coh Z)$. 
\end{enumerate}
\end{thm}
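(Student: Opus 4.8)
\textbf{Proposal for the proof of \ref{Alice thm}.}
The plan is to invoke Rizzardo's adjoint existence and finiteness results for integral functors \cite{Alice} directly, with the bulk of the work being the verification of the hypothesis, namely that the integral functor $F$ with kernel $\cP$ (and, by symmetry, the functor with the projections swapped) preserves $\Db(\coh Z)$. In our applications $\cP$ will be one of the cone kernels $\cD_J$ or its fibre analogue, so this hypothesis will have been checked separately in \ref{twist triangle and bounded}; here the statement is packaged abstractly so that it can be quoted in \S\ref{section proj twist is equiv}. First I would recall the precise form of the main theorem of \cite{Alice}: for a projective variety $Z$ and an object $\cP\in\D(\Qcoh Z\times Z)$ whose associated integral functor sends $\Db(\coh Z)$ into itself, the functor $F=\RDerived\uppi_{2*}(\cP\otimes^{\bf L}_{Z\times Z}\uppi_1^*(-))\colon \D(\Qcoh Z)\to\D(\Qcoh Z)$ admits a right adjoint, and that right adjoint again preserves $\Db(\coh Z)$; moreover a left adjoint exists once the ``transposed'' functor (with $\uppi_1$ and $\uppi_2$ interchanged) also preserves $\Db(\coh Z)$. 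This gives \eqref{Alice thm 1} and \eqref{Alice thm 3} immediately.

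For \eqref{Alice thm 2}, the plan is to observe that on a projective variety every right (resp.\ left) adjoint of a functor preserving $\Db(\coh Z)$ automatically preserves $\Dm(\coh Z)$ (resp., using Serre duality, $\D^+(\coh Z)$): the point is that $F$ is an integral functor, hence commutes with arbitrary direct sums, so its right adjoint $F^{\RA}$ is computed by a kernel as well, and boundedness-below of $F^{\RA}(x)$ for $x\in\Dm(\coh Z)$ follows from the boundedness of the cohomological amplitude of $F^{\RA}$, which is finite because $Z$ is finite-dimensional and $\cP$ is (relatively) bounded. Concretely, one writes $\RHom_Z(F^{\LA}(x),-)\cong\RHom_Z(x,F(-))$, truncates, and uses that $F$ has finite cohomological amplitude to conclude $F^{\LA}$ does too; the same argument with the roles reversed handles $F^{\RA}$ on $\Dm$. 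So both adjoints land in $\Dm(\coh Z)$.

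The step I expect to be the genuine obstacle is not in this lemma itself — which is essentially a citation plus a boundedness bookkeeping argument — but rather in the hypothesis: in the applications one must know that \emph{both} $F$ and its transpose preserve $\Db(\coh Z)$. For the transpose of the $J$-twist kernel on $\projenv$ this is a nontrivial symmetry statement, and it is exactly what forces the embedding $X\hookrightarrow\projenv$ into a projective variety and the Grothendieck-duality manipulations of \S\ref{GT section}: the transposed functor is, up to twisting by $\omega_\projenv$ and shift, the $J$-twist again, and \ref{universal and canonical}\eqref{universal and canonical 1 d} ($\cE_J\otimes^{\bf L}_\projenv\omega_\projenv\cong\cE_J$) is precisely what makes this work. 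Thus in writing the proof of \ref{Alice thm} I would state it as a black box consequence of \cite{Alice} together with the finite-amplitude observation, and defer the verification of the ``both functors preserve $\Db$'' hypothesis to the point of use, where \ref{twist triangle and bounded} and \ref{universal and canonical} supply it.
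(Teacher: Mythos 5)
Your outline of parts (1) and (3) is essentially the paper's: both reduce to citing \cite{Alice}, which first identifies $\cP$ as $\uppi_1$- and $\uppi_2$-perfect (using the $\Db$-preservation hypothesis and \cite[5.3]{Alice}), then produces explicit Grothendieck-duality kernels $\cP^L :=\RsHom_{Z\times Z}(\cP,\uppi_1^!\cO_Z)$ and $\cP^R :=\RsHom_{Z\times Z}(\cP,\uppi_2^!\cO_Z)$ for the adjoints via \cite[3.1, 4.1]{Alice}, and then for (3) applies \cite[5.3]{Alice} once more to $\cP^R$.

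The gap is in part (2). You argue that $F^{\RA}$ ``is computed by a kernel as well'' and that its cohomological amplitude is finite ``because $Z$ is finite-dimensional and $\cP$ is (relatively) bounded,'' supplemented by a Serre-duality detour that would only give $\D^+$-preservation for $F^{\LA}$ (which is not what is wanted, and is then silently promoted to $\Dm$). Neither step is available for free here. First, the representability of $F^{\RA}$, $F^{\LA}$ by integral kernels is precisely the content of \cite{Alice} and must be used, not assumed; this is why the paper works with the explicit kernels $\cP^L$, $\cP^R$. Second, the amplitude bookkeeping argument based on the adjunction $\RHom_Z(F^{\LA}(x),-)\cong\RHom_Z(x,F(-))$ and ``truncate'' would rely on vanishing of Hom's by degree comparison, which fails on a singular projective variety $Z$ because coherent sheaves there can have infinite injective dimension --- and the singular case is exactly the one needed, since $Z=\projenv$ carries no smoothness or Gorenstein hypothesis. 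The paper's substitute is to track relative perfectness: since $\cP$ is $\uppi_i$-perfect, \cite[2.3.9]{AIL} shows that $\RsHom(-,\uppi_i^!\cO_Z)$ preserves $\uppi_i$-perfectness, so $\cP^L$ is $\uppi_1$-perfect and $\cP^R$ is $\uppi_2$-perfect, in particular both lie in $\Db(\coh Z\times Z)$; then the standard fact that a Fourier--Mukai functor along projective flat projections with bounded coherent kernel preserves $\Dm$ finishes (2). You would need to replace your amplitude/Serre-duality sketch by this relative-perfectness argument (or an equivalent one that does not assume finite global dimension) for the proof to go through.

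Your final paragraph correctly identifies where the $\Db$-preservation hypothesis is verified in the applications (\ref{twist triangle and bounded} and \ref{universal and canonical}), but that is context for the subsequent use of this theorem, not part of its proof.
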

\begin{proof}
Using the assumptions that the two functors preserve $\Db(\coh Z)$, by  \cite[5.3]{Alice} $\cP$~is both $\uppi_1$- and $\uppi_2$-perfect (for the definition of this, see \cite[2.4]{Alice}).  In particular $\cP\in\Db(\coh Z\times Z)$.  Thus \eqref{Alice thm 1} follows immediately from \cite[3.1, 4.1]{Alice}, with adjoints given by
\[
F^{\LA}:=\RDerived \uppi_{1*}(\cP^L\otimes^{\bf L}_{Z\times Z}\uppi_2^*(-))\quad\mbox{ and }\quad F^{\RA}:=\RDerived \uppi_{1*}(\cP^R\otimes^{\bf L}_{Z\times Z}\uppi_2^*(-))
\]
where $\cP^L :=\RsHom_{Z\times Z}(\cP,\uppi_1^!\cO_Z)$ and $\cP^R :=\RsHom_{Z\times Z}(\cP,\uppi_2^!\cO_Z)$.\\
(2) Since $\uppi_1,\uppi_2$ are projective and flat, it is well known and easy to see that Fourier--Mukai functors preserve $\Dm(\coh Z)$ provided that their kernels belong to $\Db(\coh Z\times Z)$.  Thus it suffices to show that $\cP,\cP^L,\cP^R\in \Db(\coh Z\times Z)$. We know that $\cP\in\Db(\coh Z\times Z)$ by the above.  

Next, recall by \cite[2.3.9]{AIL} that for any morphism of schemes $g\colon X_1\to X_2$, if $\cR$ is a perfect complex on $X_2$ then $\RHom_{X_1}(-,g^!\cR)$ takes $g$-perfect complexes to $g$-perfect complexes.  Thus $\cP^R$ is $\uppi_2$-perfect and $\cP^L$ is $\uppi_1$-perfect.  By definition, in particular this means that $\cP^R,\cP^L\in\Db(\coh Z\times Z)$.\\
(3) We know that $F$ preserves $\Db(\coh Z)$ by assumption. Since $\cP^R$ is $\uppi_2$-perfect, and the functor $F^{\RA}$ is defined by first pulling up using $\uppi_2^*$, it follows again from \cite[5.3]{Alice} that $F^{\RA}$ preserves $\Db(\coh Z)$.
\end{proof}

It is unclear in general whether both adjoints restrict to $\Db(\coh \projenv)$, and so the next series of results are mainly concerned with properties of $ \Dm(\coh \projenv)$ instead of the more usual $ \Db(\coh \projenv)$.

\begin{cor}\label{adjoint on minus}
With the global quasi-projective flops setup of \ref{flopsglobal}, and notation in \eqref{openopen}, 
\begin{enumerate}
\item\label{adjoint on minus 0} $\Tconp^*\colon \D(\Qcoh \projenv)\to \D(\Qcoh \projenv)$ has both left and right adjoints. 
\item\label{adjoint on minus 1} $\Tconp^*\colon \Dm(\coh \projenv)\to \Dm(\coh \projenv)$ has both left and right adjoints. 
\item\label{adjoint on minus 2} $\Tconp^*\colon \Db(\coh \projenv)\to \Db(\coh \projenv)$ has a right adjoint.
\end{enumerate}
In all cases, the right adjoint is $\Tconp$.  If further $U_p$ is $\mathds{Q}$-factorial then there are similar versions of \eqref{adjoint on minus 0}, \eqref{adjoint on minus 1} and \eqref{adjoint on minus 2} for $\Tfibp^*$.
\end{cor}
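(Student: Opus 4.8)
The statement to prove is \ref{adjoint on minus}, asserting that $\Tconp^*$ on $\projenv$ has left and right adjoints (on $\D(\Qcoh\projenv)$, on $\Dm(\coh\projenv)$, and a right adjoint on $\Db(\coh\projenv)$), with the right adjoint given by $\Tconp$, and similarly for $\Tfibp^*$ when $U_p$ is $\mathds{Q}$-factorial. The plan is to reduce everything to \ref{Alice thm}, applied with $Z=\projenv$ and kernel $\cP=\cD_J$ from \ref{defin twists at a point aux}\eqref{defin twists at a point aux 1}. By construction $\Tconp^*$ is the Fourier--Mukai functor with kernel $\cD_J$ in the orientation $\RDerived\FMprojb_{2*}(\cD_J\otimes^{\bf L}\FMprojb_1^*(-))$, so to invoke \ref{Alice thm} it suffices to verify its hypothesis: that both this functor and the oppositely-oriented Fourier--Mukai functor $\RDerived\FMprojb_{1*}(\cD_J\otimes^{\bf L}\FMprojb_2^*(-))$ preserve $\Db(\coh\projenv)$.

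First I would handle $\Tconp^*$ itself: it preserves $\Db(\coh\projenv)$ by \ref{twist triangle and bounded}\eqref{twist triangle and bounded 1}\eqref{twist triangle and bounded 1 c} (taking $\cX=\projenv$ there), which is already established. The remaining point, and the genuine content here, is that the \emph{other} orientation $\RDerived\FMprojb_{1*}(\cD_J\otimes^{\bf L}\FMprojb_2^*(-))$ also preserves $\Db(\coh\projenv)$. For this I would unwind $\cD_J=\Cone(\upphi_{J,\projenv})[-1]$: the cone sits in a triangle $\cD_J\to\cO_{\diag,\projenv}\to\RDerived(ki\times ki)_*\cQ_J\to$, so the reversed-orientation functor fits into a triangle whose outer terms are (i) the identity functor, which trivially preserves $\Db(\coh\projenv)$, and (ii) the Fourier--Mukai functor with kernel $\RDerived(ki\times ki)_*\cQ_J$ in the reversed orientation. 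By the projection/base-change formula this second functor factors through $U_p$, where $\cQ_J$ is the kernel coming from the Zariski-local tilting-type equivalence $\RHom_\Lambda(I_J,-)$ of \ref{track Lambda J Prop2}; since $I_J$ is a tilting $\Lambda$-module of finite (indeed projective dimension two) by \ref{Zar local alg twist}\eqref{Zar local alg twist 1}, and since $ki$ is an open-then-open immersion into the projective $\projenv$, all the relevant pushforwards and tensor products preserve boundedness. Hence by two-out-of-three the reversed functor preserves $\Db(\coh\projenv)$, and \ref{Alice thm} applies. Part \eqref{adjoint on minus 0} is then \ref{Alice thm}\eqref{Alice thm 1}, part \eqref{adjoint on minus 1} is \ref{Alice thm}\eqref{Alice thm 2}, and part \eqref{adjoint on minus 2} is \ref{Alice thm}\eqref{Alice thm 3}.

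It remains to identify the right adjoint with $\Tconp$ as defined in \ref{defin twists at a point aux}\eqref{defin twists at a point aux 2}. From the formula in the proof of \ref{Alice thm}, the right adjoint of $\RDerived\FMprojb_{2*}(\cD_J\otimes^{\bf L}\FMprojb_1^*(-))$ is $\RDerived\FMprojb_{1*}(\cD_J^R\otimes^{\bf L}\FMprojb_2^*(-))$ with $\cD_J^R=\RsHom_{\projenv\times\projenv}(\cD_J,\FMprojb_2^!\cO_\projenv)$ — which is exactly the definition of $\Tconp$. So this identification is immediate and requires no further work. The $\mathds{Q}$-factorial case for $\Tfibp^*$ is word-for-word identical, replacing $J$ by $\fib$ throughout, using \ref{twist triangle and bounded}\eqref{twist triangle and bounded 3} in place of \eqref{twist triangle and bounded 1}, \ref{Zar local alg twist}\eqref{Zar local alg twist 2} in place of \eqref{Zar local alg twist 1}, and \ref{track Lambda J Prop2}(2) in place of (1).

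The main obstacle, such as it is, is purely bookkeeping: carefully checking that the reversed-orientation Fourier--Mukai functor with kernel $\RDerived(ki\times ki)_*\cQ_J$ preserves $\Db(\coh\projenv)$, i.e.\ tracking the kernel through the open immersions and confirming that the finite projective dimension of $I_J$ (respectively $I_\fib$) is what makes the tensor product bounded. There is no conceptual difficulty — every ingredient is already in hand from \S\ref{ZLT section} and \ref{Alice thm} — but the precise manipulation of kernels under pushforward along non-proper open immersions into a projective variety needs to be stated with care, exactly as in \cite[\S7]{DW1}.
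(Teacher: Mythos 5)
Your proposal follows the same route as the paper: the triangle $\cD_J\to\cO_{\diag,\projenv}\to\cQ\to$ of kernels, a two-out-of-three argument to reduce boundedness-preservation to the kernel $\cQ=\RDerived(ki\times ki)_*\cQ_J$, the appeal to \ref{Alice thm}, and the identification of the right adjoint from the formula in the proof of \ref{Alice thm}. The paper simply cites \cite[7.6(1)]{DW1} for the statement that both orientations of the Fourier--Mukai functor with kernel $\cQ$ preserve $\Db(\coh\projenv)$, which you instead attempt to sketch.

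Your sketch has a small inaccuracy worth flagging: $\cQ_J$ is not the kernel for the tilting equivalence $\RHom_\Lambda(I_J,-)$ — that is the role of $\cW_J$ (respectively $\cW_J'$) in \eqref{Zariski local kernel triangles1a inverse}. Rather $\cQ_J$ corresponds to $\Lambda_J$, and the associated functor is the non-invertible composite $G^{\phantom{L}}_J\circ G_J^{\LA}$ (respectively its adjoint $G^{\phantom{L}}_J\circ G_J^{\RA}$) from \ref{twist triangle and bounded}. Moreover, $\RDerived(ki)_*$ along an open immersion does \emph{not} preserve boundedness in general; the correct reason boundedness is preserved is that these composites factor through $\Db(\mod\AB_J)$ for the finite-dimensional algebra $\AB_J$, and their image consists of objects supported on the proper exceptional curves, on which the pushforward is simply $(ki)_*$ and hence bounded. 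With those corrections the argument is exactly the one in \cite[7.6(1)]{DW1}, as the paper intends.
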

\begin{proof}
With the notation of \ref{defin twists at a point aux}\eqref{defin twists at a point aux 1}, write $\cQ:=\RDerived (ki\times ki)_*\cQ_J$, so there is a 
triangle 
\begin{eqnarray}
\cD_J\to \cO_{\diag,\projenv}\to \cQ\to\label{kernel display}
\end{eqnarray}
of FM kernels in $\projenv\times \projenv$.  The proof of \cite[7.6(1)]{DW1} shows that both  
\[
\RDerived \uppi_{2 *}(\cQ\otimes^{\bf L}_{\cO_{\projenv\times \projenv}}\uppi_1^{*}(-))\quad\mbox{ and }\quad 
\RDerived \uppi_{1 *}(\cQ\otimes^{\bf L}_{\cO_{\projenv\times \projenv}}\uppi_2^{*}(-))
\]
preserve $\Db(\coh \projenv)$.  Since the identity functor obviously preserves $\Db(\coh \projenv)$, by two-out-of-three it follows that both $\RDerived \uppi_{2 *}(\cD_J\otimes^{\bf L}_{\cO_{\projenv\times \projenv}}\uppi_1^{*}(-))$ and $\RDerived \uppi_{1 *}(\cD_J\otimes^{\bf L}_{\cO_{\projenv\times \projenv}}\uppi_2^{*}(-))$ preserve $\Db(\coh \projenv)$.  Statements \eqref{adjoint on minus 0}, \eqref{adjoint on minus 1} and \eqref{adjoint on minus 2} then follow immediately from \ref{Alice thm}, and further as in the proof of \ref{Alice thm} all the right adjoints are given by the kernel $\cD_J^R$, which is precisely what defines $\Tconp$.  The $\Tfibp$ version is identical.
\end{proof}

Using the above, we fit the $J$-twist into a functorial triangle, as we did for the inverse $J$-twist in \ref{twist triangle and bounded}\eqref{twist triangle and bounded 1}\eqref{twist triangle and bounded 1 a}. This will be used in the proof that the inverse $J$-twist is an equivalence in \ref{thm inverse Jtwist is equiv on X}.

\begin{cor}\label{Y funct triang}
With the global quasi-projective flops setup of \ref{flopsglobal}, and notation in \eqref{openopen}, for all $y\in\D(\Qcoh \projenv)$, there is a functorial triangle
\[
\RHom_{\projenv}(\cE_J,y)\otimes^{\bf L}_{\AB_J}\cE_J \to y\to \Tconp( y)\to.
\]
If further $U_p$ is $\mathds{Q}$-factorial, there is a similar triangle for $\Tfibp$.
\end{cor}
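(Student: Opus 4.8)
\textbf{Proof proposal for \ref{Y funct triang}.} The plan is to dualize the functorial triangle for the inverse $J$-twist established in \ref{twist triangle and bounded}\eqref{twist triangle and bounded 1}\eqref{twist triangle and bounded 1 a}, using that $\Tconp$ is the right adjoint of $\Tconp^*$ on all three categories by \ref{adjoint on minus}. Concretely, recall from \ref{twist triangle and bounded} that on $\projenv$ the inverse $J$-twist sits in a functorial triangle
\[
\Tconp^*(x)\to x\to G_J\circ G_J^{\LA}(x)\to
\]
for all $x\in\D(\Qcoh\projenv)$, where $G_J = -\otimes^{\bf L}_{\AB_J}\cE_J$ and, on $\Dm(\coh\projenv)$, $G_J^{\LA}(x)\cong\RHom_{\AB_J}(\RHom_\projenv(x,\cE_J),\AB_J)$, while $G_J^{\RA}\cong\RHom_\projenv(\cE_J,-)$. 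The key observation, exactly as in \cite[7.D, 7.E]{DW1}, is that the endofunctor $G_J\circ G_J^{\RA}$ of $\D(\Qcoh\projenv)$ is the right adjoint of $G_J\circ G_J^{\LA}$; indeed $G_J\circ G_J^{\LA} \dashv G_J\circ G_J^{\RA}$ follows formally from the adjunctions $G_J^{\LA}\dashv G_J \dashv G_J^{\RA}$ and self-duality of the relevant bimodule, using \ref{universal and canonical}\eqref{universal and canonical 1 d} ($\cE_J\otimes^{\bf L}_\projenv\omega_\projenv\cong\cE_J$) and that $\AB_J$ is self-injective (\ref{pd for N and ext}\eqref{pd for N and ext 3}).

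First I would take the triangle $\Tconp^*(x)\to x\to G_J G_J^{\LA}(x)\to$ and apply the right-adjoint operation degreewise. Since the functors in this triangle are Fourier--Mukai functors with kernels $\cD_J$, $\cO_{\diag,\projenv}$, $\cQ$ (via \eqref{kernel display}), and taking the right adjoint corresponds to passing to the dual kernel $\cR\mapsto\RsHom_{\projenv\times\projenv}(\cR,\uppi_2^!\cO_\projenv)$ by the formulas in the proof of \ref{Alice thm}, the triangle of kernels \eqref{kernel display} dualizes to a triangle of kernels, and applying $\RDerived\uppi_{1*}(-\otimes^{\bf L}\uppi_2^*(-))$ produces a functorial triangle
\[
(G_J G_J^{\LA})^{\RA}(y)\to y\to \Tconp(y)\to
\]
for all $y\in\D(\Qcoh\projenv)$, where the last map is the unit-to-right-adjoint comparison. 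It then remains to identify $(G_J G_J^{\LA})^{\RA}$ with $G_J\circ G_J^{\RA} = \RHom_\projenv(\cE_J,-)\otimes^{\bf L}_{\AB_J}\cE_J$. This is the step requiring care: one must check that the right adjoint of the composite $G_J\circ G_J^{\LA}$ really is $G_J\circ G_J^{\RA}$ rather than merely $(G_J^{\LA})^{\RA}\circ G_J^{\RA}$, and that $(G_J^{\LA})^{\RA}\cong G_J$ — which is where the self-injectivity of $\AB_J$ and the identification $G_J^{\LA}(x)\cong\RHom_{\AB_J}(\RHom_\projenv(x,\cE_J),\AB_J)$ from \ref{twist triangle and bounded}\eqref{twist triangle and bounded 1 b} enter, since duality against the dualizing complex $\AB_J$ is its own (quasi-)inverse on $\Dm(\mod\AB_J)$.

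I expect the main obstacle to be precisely the bookkeeping of adjoints at the level of the \emph{unbounded} category $\D(\Qcoh\projenv)$: \ref{twist triangle and bounded}\eqref{twist triangle and bounded 1 b} only identifies $G_J^{\LA}$ as an explicit $\RHom$-dual on $\Dm(\coh\projenv)$, so to get the clean triangle for all $y\in\D(\Qcoh\projenv)$ one either argues by continuity/compatibility with homotopy colimits (the kernels all lie in $\Db(\coh\projenv\times\projenv)$, so all functors involved commute with arbitrary coproducts and the triangle extends from a generating set) or, more cleanly, one simply transports the triangle \eqref{kernel display} through the duality on kernels directly, never leaving the Fourier--Mukai formalism, and only at the end verifies that the first term of the resulting triangle is the Fourier--Mukai functor with kernel computing $\RHom_\projenv(\cE_J,-)\otimes^{\bf L}_{\AB_J}\cE_J$ — this last identification being a kernel computation identical in spirit to \cite[6.10, 6.11, 6.16]{DW1}. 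Finally, the $\Tfibp$ version is obtained verbatim, replacing $\AB_J$ by $\CAR$, $\cE_J$ by $\cE_\fib$, and invoking \ref{pd thm for fibre}, \ref{universal and canonical}\eqref{universal and canonical 2}, and the $\mathds{Q}$-factorial hypothesis on $U_p$ throughout.
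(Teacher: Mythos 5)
Your approach — dualize the kernel triangle \eqref{kernel display} with $(-)^R=\RsHom_{\projenv\times\projenv}(-,\uppi_2^!\cO_\projenv)$ and identify the resulting Fourier--Mukai functors — is the paper's proof. However, two of the concerns you raise are red herrings. First, the identification $(G_J^{\LA})^{\RA}\cong G_J$ requires neither the self-injectivity of $\AB_J$ nor the explicit $\RHom$-dual formula from \ref{twist triangle and bounded}\eqref{twist triangle and bounded 1}\eqref{twist triangle and bounded 1 b}: it is precisely the content of the adjunction $G_J^{\LA}\dashv G_J$ in \ref{twist triangle and bounded}\eqref{twist triangle and bounded 1}. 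From the adjunction chain $G_J^{\LA}\dashv G_J\dashv G_J^{\RA}$ one reads off immediately that the right adjoint of $G_J\circ G_J^{\LA}$ is $(G_J^{\LA})^{\RA}\circ G_J^{\RA}=G_J\circ G_J^{\RA}\cong\RHom_\projenv(\cE_J,-)\otimes^{\bf L}_{\AB_J}\cE_J$, with no further input. Second, there is no boundedness gap to bridge: the adjunctions in \ref{twist triangle and bounded}\eqref{twist triangle and bounded 1} are stated between $\D(\Mod\AB_J)$ and $\D(\Qcoh\cX)$, not only $\Dm$, and \ref{adjoint on minus} (via \ref{Alice thm}) already produces the Fourier--Mukai right adjoints on all of $\D(\Qcoh\projenv)$, so no continuity or colimit argument is required. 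The paper simply observes that $\cD_J^R$ gives $\Tconp$, the dual of the diagonal gives the identity, and $\cQ^R$ gives $G_J\circ G_J^{\RA}$, and reads off the triangle.
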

\begin{proof}
By the above, all kernels in \eqref{kernel display} have right adjoints given by applying $(-)^R:=\RsHom_{\projenv\times \projenv}(-,\uppi_2^!\cO_\projenv)$.  We know that $\cD^R_J$ gives $\Tconp$, and clearly the right adjoint of the identity functor is the identity.  Further by \ref{twist triangle and bounded} the FM functor given by $\cQ$ is $G^{\phantom{L}}_J\circ G_J^{\LA}$, and it has right adjoint $G^{\phantom{L}}_J\circ G_J^{\RA}$.  Since $G_J:=-\otimes^{\bf L}_{\AB_J}\cE_J$, it is clear that $G^{\phantom{L}}_J\circ G_J^{\RA}\cong \RHom_\projenv(\cE_J,-)\otimes^{\bf L}_{\AB_J}\cE_J$.
\end{proof}

\subsection{The Projective Twist is an Equivalence}\label{section proj twist is equiv}

 A formal consequence of the twist definition~\ref{defin twists at a point} is the following intertwinement lemma.

\begin{prop}\label{local-global intertwinement} 
The following diagram is naturally commutative.
\[
\begin{array}{c}
\opt{10pt}{\begin{tikzpicture}}
\opt{12pt}{\begin{tikzpicture}[scale=1.2]}
\node (a1) at (0,0) {$\D(\Qcoh U_p)$};
\node (b1) at (0,-1.5) {$\D(\Qcoh U_p)$};
\node (a2) at (2.5,0) {$\D(\Qcoh X)$};
\node (b2) at (2.5,-1.5) {$\D(\Qcoh X)$};
\node (a3) at (5,0) {$\D(\Qcoh \projenv)$};
\node (b3) at (5,-1.5) {$\D(\Qcoh \projenv)$};
\draw[->] (a1) to node[above] {$\scriptstyle \Ri_*$} (a2);
\draw[->] (b1) to node[above] {$\scriptstyle \Ri_*$} (b2);
\draw[->] (a2) to node[above] {$\scriptstyle \RDerived k_*$} (a3);
\draw[->] (b2) to node[above] {$\scriptstyle \RDerived k_*$} (b3);
\draw[->] (a1) to node[left] {$\scriptstyle \FM{\cW_{J}}$} (b1);
\draw[->] (a2) to node[right] {$\scriptstyle \Tconp^*$} (b2);
\draw[->] (a3) to node[right] {$\scriptstyle \Tconp^*$} (b3);
\end{tikzpicture}
\end{array}
\]
If $U_p$ is in addition $\mathds{Q}$-factorial, there is a similar diagram for $\Tfibp^*$.
\end{prop}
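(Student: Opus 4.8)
The plan is to work entirely with Fourier--Mukai kernels, and to split the square of the Proposition into the two sub-squares given by the open immersions $i\colon U_p\hookrightarrow X$ and $k\colon X\hookrightarrow\projenv$, each of which I would deduce from a single general statement. Recall from \ref{defin twists at a point}\eqref{defin twists at a point 1} and \ref{defin twists at a point aux}\eqref{defin twists at a point aux 1} that $\FM{\cW_{J}}$, $\Tconp^{*}$ on $X$, and $\Tconp^{*}$ on $\projenv$ are the Fourier--Mukai functors with kernels $\cW_J=\Cone(\upphi_J)[-1]$, $\cC_J=\Cone(\upphi_{J,X})[-1]$ and $\cD_J=\Cone(\upphi_{J,\projenv})[-1]$, where $\upphi_{J,X}$ is the composite $\cO_{\diag,X}\xrightarrow{\upeta_\diag}\RDerived(i\times i)_*\cO_{\diag,U_p}\xrightarrow{\RDerived(i\times i)_*\upphi_J}\RDerived(i\times i)_*\cQ_J$, and similarly for $\upphi_{J,\projenv}$ with $ki\times ki$. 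The general statement is: given an open immersion $j\colon V\hookrightarrow W$, a kernel $\cG\in\Db(\coh V\times V)$, a morphism $\upphi\colon\cO_{\diag,V}\to\cG$, and the induced morphism $\upphi_W\colon\cO_{\diag,W}\to\RDerived(j\times j)_*\cG$ defined as the composite of the unit $\upeta_\diag\colon\cO_{\diag,W}\to\RDerived(j\times j)_*\cO_{\diag,V}$ (whose kernel description is \cite[7.3]{DW1}) with $\RDerived(j\times j)_*\upphi$, one has a natural isomorphism $\RDerived j_*\circ\FM{\Cone(\upphi)[-1]}\cong\FM{\Cone(\upphi_W)[-1]}\circ\RDerived j_*$. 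The left square of the Proposition is the case $(V,W,j,\cG,\upphi)=(U_p,X,i,\cQ_J,\upphi_J)$; the right square is the case $(V,W,j,\cG,\upphi)=(X,\projenv,k,\RDerived(i\times i)_*\cQ_J,\upphi_{J,X})$, where one must first check that the resulting $\upphi_W$ is $\upphi_{J,\projenv}$ (so that $\Cone(\upphi_W)[-1]=\cD_J$), which reduces to the transitivity $\RDerived(k\times k)_*\upeta_\diag\circ\upeta_\diag=\upeta_\diag$ of adjunction units along $U_p\subseteq X\subseteq\projenv$. Composing the two squares gives the Proposition.

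To prove the general statement I would first record two standard consequences of flat base change and the projection formula for the (flat, quasi-compact) morphism $j\times j$: that $\FM{\RDerived(j\times j)_*\cH}^{W}\cong\RDerived j_*\circ\FM{\cH}^{V}\circ j^{*}$ naturally for every kernel $\cH$ on $V\times V$, and that $\upeta_\diag$ induces on Fourier--Mukai functors precisely the adjunction unit $\Id_W\to\RDerived j_*\,j^{*}$. Now apply the octahedral axiom to the two arrows whose composite is $\upphi_W$, with apex vertices $\cO_{\diag,W}$, $\RDerived(j\times j)_*\cO_{\diag,V}$, $\RDerived(j\times j)_*\cG$; since $\RDerived(j\times j)_*$ is triangulated, this yields a triangle of kernels
\[
\Cone(\upeta_\diag)[-1]\to\Cone(\upphi_W)[-1]\to\RDerived(j\times j)_*\Cone(\upphi)[-1]\to\phantom{.}
\]
on $W\times W$. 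Passing to Fourier--Mukai functors on $W$ and precomposing with $\RDerived j_*$, the middle term becomes $\FM{\Cone(\upphi_W)[-1]}\circ\RDerived j_*$, and by the first fact together with $j^{*}\circ\RDerived j_*\cong\Id_V$ (an open immersion) the right-hand term becomes $\RDerived j_*\circ\FM{\Cone(\upphi)[-1]}\circ j^{*}\circ\RDerived j_*\cong\RDerived j_*\circ\FM{\Cone(\upphi)[-1]}$. It therefore remains to prove the vanishing $\FM{\Cone(\upeta_\diag)}^{W}\circ\RDerived j_*=0$. This is a support computation: $\Cone(\upeta_\diag)$ is the cone of a morphism of objects supported on $\diag_W$ and restricts to $0$ on $V\times V$ (where $\upeta_\diag$ is an isomorphism), so $\Supp\Cone(\upeta_\diag)\subseteq\diag_W\setminus\diag_V$; on the other hand, for $x\in\D(\Qcoh V)$ flat base change identifies $\uppi_1^{*}\RDerived j_*x$ with an object of the form $\RDerived(j\times\Id)_*(\,\cdot\,)$, supported on $V\times W\subseteq W\times W$; since $(\diag_W\setminus\diag_V)\cap(V\times W)=\varnothing$, the derived tensor product $\Cone(\upeta_\diag)\otimes^{\bf L}\uppi_1^{*}\RDerived j_*x$ vanishes, hence so does $\FM{\Cone(\upeta_\diag)}^{W}(\RDerived j_*x)$. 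All identifications used are natural, so the resulting isomorphism is natural.

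I expect the proof to be essentially bookkeeping in the Fourier--Mukai formalism, with two points deserving genuine care: the compatibility, at the level of kernels, of $\upeta_\diag$ with composition of adjunction units (needed so that the $X\hookrightarrow\projenv$ square really carries the kernel $\cD_J$), and the support-based vanishing $\FM{\Cone(\upeta_\diag)}^{W}\circ\RDerived j_*=0$ — this last step is where the geometry enters, through the observation that the ``error kernel'' $\Cone(\upeta_\diag)$ lives on the diagonal away from the open subset $V$. Finally, the $\mathds{Q}$-factorial statement for $\Tfibp^{*}$ is proved verbatim, replacing $(\cE_J,\cQ_J,\AB_J,\cW_J,\cC_J,\cD_J,\upphi_J)$ throughout by the fibre-twist analogues $(\cE_{\fib},\cQ_{\fib},\AB_{\fib},\cW_{\fib},\cC_{\fib},\cD_{\fib},\upphi_{\fib})$; $\mathds{Q}$-factoriality of $U_p$ enters only to guarantee, via \ref{Zar local alg twist}\eqref{Zar local alg twist 2} and \ref{defin twists at a point}\eqref{defin twists at a point 2}, that these objects are defined in the first place.
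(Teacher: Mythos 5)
Your approach matches the paper's intent: the proof given there is a one-line deferral to \cite[7.8]{DW1}, whose content is exactly the Fourier--Mukai kernel argument you write out. Splitting the square along $U_p\subseteq X\subseteq\projenv$, checking that $\upphi_{J,\projenv}$ is the $\upphi_W$ induced from $\upphi_{J,X}$ via transitivity of adjunction units, the octahedral triangle of kernels, and the base-change identity for $\RDerived(j\times j)_*$ are all the right moves and are correctly set up.

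The one step that does not hold as written is your justification of the vanishing $\FM{\Cone(\upeta_\diag)}^{W}\circ\RDerived j_*=0$. You assert that $\uppi_1^*\RDerived j_*x\cong\RDerived(j\times\Id)_*(\uppi_1^*x)$ is supported on $V\times W$; this is false in general, since derived pushforward along an open immersion can spread support well beyond the open set (already $j_*\cO_V$ is supported on all of $W$ when $V$ is dense in $W$), so the intersection-of-supports argument does not directly give the claimed vanishing. The correct ingredient is the projection formula: $\Cone(\upeta_\diag)\otimes^{\bf L}\RDerived(j\times\Id)_*(\uppi_1^*x)\cong\RDerived(j\times\Id)_*\big((j\times\Id)^*\Cone(\upeta_\diag)\otimes^{\bf L}\uppi_1^*x\big)$, and this does vanish because $\Supp\Cone(\upeta_\diag)\subseteq\diag_W\setminus\diag_V$ is disjoint from $V\times W$, so $(j\times\Id)^*\Cone(\upeta_\diag)=0$. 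Equivalently, having already recorded that $\upeta_\diag$ induces the unit $\Id_W\to\RDerived j_*\,j^*$ at the functor level, evaluate on $\RDerived j_*x$: the triangle identity together with $j^*\RDerived j_*\cong\Id_V$ shows that the unit morphism $\RDerived j_*x\to\RDerived j_*j^*\RDerived j_*x$ is an isomorphism, so its cone is zero. With this repair the rest of the proof is sound.
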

\begin{proof}
This is a standard application of Fourier--Mukai techniques, following \cite[7.8]{DW1} line for line.
\end{proof}

Recall, for an object $\cE\in\Dm(\coh \projenv)$,
\begin{align*}
\cE^\perp&:=\{ a\in\Dm(\coh \projenv)\mid \RHom_{\projenv}(\cE,a)=0 \},\\
{}^{\perp}\cE&:=\{ a\in\Dm(\coh \projenv)\mid \RHom_{\projenv}(a,\cE)=0  \}.
\end{align*}
and $\Omega\subseteq\Dm(\coh \projenv)$ is called a \emph{spanning class} if, for any $a\in\Dm(\coh \projenv)$,
\begin{enumerate}
\item $\RHom_\projenv(a,c)=0$ for all $c\in\Omega$ implies that $a=0$.
\item $\RHom_\projenv(c,a)=0$ for all $c\in\Omega$ implies that $a=0$.
\end{enumerate}

The following lemma will be used in \ref{Y not Goren} for the construction of a spanning class for $\Dm(\coh \projenv)$.

\begin{lemma}\label{Y not Goren A}
Suppose that $Z$ is a projective variety, and let $\cP\in\Perf(Z)$ with $\cP\otimes^{\bf L}_Z\omega_Z\cong \cP$.  Then for all $a\in\Dm(\coh Z)$
\[
\RHom_Z(a,\cP)=0\iff \RHom_Z(\cP,a)=0.
\]
\end{lemma}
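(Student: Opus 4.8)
The key identity is a Serre-duality statement. Since $Z$ is a projective variety, it admits a dualizing complex $\omega_Z^\bullet$, and for $\cP$ perfect there is a natural isomorphism $\RHom_Z(a,\cP\otimes^{\bf L}_Z\omega_Z^\bullet)\cong\RHom_Z(\cP,a)^\vee$ for all $a\in\Dm(\coh Z)$ (this is the standard duality, valid because $\cP$ being perfect means $\RsHom_Z(\cP,-)$ commutes with the relevant operations and $\RHom_Z(\cP,a)$ has finite-dimensional total cohomology when $a\in\Dm(\coh Z)$ and $\cP\in\Perf(Z)$). The plan is to reduce the hypothesis $\cP\otimes^{\bf L}_Z\omega_Z\cong\cP$ (with $\omega_Z$ the ordinary canonical sheaf, i.e.\ $\omega_Z^\bullet$ shifted, up to the dimension) to the shape of this duality isomorphism and then read off the claimed equivalence.

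First I would fix notation: let $n=\dim Z$, so that if $Z$ is Cohen--Macaulay (which we may reduce to, or simply carry $\omega_Z^\bullet$ through) we have $\omega_Z^\bullet\cong\omega_Z[n]$, and in general $\omega_Z^\bullet$ is a dualizing complex. I would invoke Grothendieck--Serre duality in the form: for $\cP\in\Perf(Z)$ and $a\in\Dm(\coh Z)$,
\[
\RHom_Z(\cP,a)\cong\RHom_Z\big(a,\cP\otimes^{\bf L}_Z\omega_Z^\bullet\big)^\vee,
\]
where $(-)^\vee$ denotes $\mathbb{C}$-linear dual of the (finite-dimensional, degreewise) graded vector space. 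Next, the hypothesis $\cP\otimes^{\bf L}_Z\omega_Z\cong\cP$ gives $\cP\otimes^{\bf L}_Z\omega_Z^\bullet\cong\cP[n]$ (absorbing the shift; if $Z$ is not CM one argues that $\cP\otimes^{\bf L}_Z\omega_Z^\bullet$ is still a shift of $\cP$ using that $\cP$ is perfect and the hypothesis pins down $\cP\otimes\omega_Z$, together with the fact that the dualizing complex differs from $\omega_Z$ by a perfect shift after restricting support — here I would need to be slightly careful, and the cleanest route is to note $\cP$ perfect plus $\cP\otimes\omega_Z\cong\cP$ forces the needed compatibility on the support of $\cP$). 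Substituting, $\RHom_Z(\cP,a)\cong\RHom_Z(a,\cP)^\vee[-n]$ as graded vector spaces.

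From this isomorphism the conclusion is immediate: $\RHom_Z(\cP,a)=0$ if and only if its dual shifted complex $\RHom_Z(a,\cP)^\vee[-n]$ vanishes, if and only if $\RHom_Z(a,\cP)=0$, since dualizing and shifting a graded vector space preserves vanishing. I would write this out as a two-line deduction. The main obstacle, and the only place requiring care, is the passage from $\cP\otimes^{\bf L}_Z\omega_Z\cong\cP$ to $\cP\otimes^{\bf L}_Z\omega_Z^\bullet\cong\cP[n]$ when $Z$ is not assumed Cohen--Macaulay: one must check that for a \emph{perfect} complex the local structure of the dualizing complex over the (possibly non-CM) ambient $Z$ still lets one absorb it into a single shift after tensoring, which follows because $\cP$ perfect implies $\cP\otimes^{\bf L}_Z\omega_Z^\bullet\cong\RsHom_Z(\RsHom_Z(\cP,\cO_Z),\omega_Z^\bullet)$ and the hypothesis controls this; alternatively, and more robustly, I would apply the duality in the form $\RHom_Z(\cP,a)\cong\RHom_Z(a,\RsHom_Z(\cP,\cO_Z)^\vee\otimes\cdots)$ — but the genuinely clean statement is that since we only ever test against $a\in\Dm(\coh Z)$ and $\cP$ is perfect, the twisted duality $\RHom_Z(\cP,a)\cong\RHom_Z(a,\cP\otimes^{\bf L}\omega_Z^\bullet)^\vee$ holds on the nose, and the hypothesis plus perfectness of $\cP$ gives $\cP\otimes^{\bf L}\omega_Z^\bullet\cong\cP\otimes^{\bf L}\omega_Z[n]\cong\cP[n]$. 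So the real content is just: Serre duality for perfect complexes, plus the given Calabi--Yau-type triviality of the canonical twist on $\cP$.
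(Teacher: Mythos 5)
Your approach---Serre/Grothendieck duality for $h\colon Z\to\Spec\mathbb{C}$, exploiting perfectness of $\cP$ to move it across the $\RHom$---is the same argument the paper gives; they simply unroll the duality as a chain of equivalences (using $\RHom_Z(a,\cP\otimes\omega_Z)\cong\RHom_Z(\RsHom_Z(\cP,a),\omega_Z)$ for $\cP$ perfect, then Grothendieck duality, then duality over the field) rather than isolating a single isomorphism. So the route is the same.

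The lengthy digression about absorbing a shift $[n]$ and the Cohen--Macaulay hypothesis is, however, a red herring arising from a misreading of the hypothesis. In the paper's convention, made explicit in the proof itself (the step labelled ``definition of $\omega_Z$'' substitutes $h^!\mathbb{C}$), the symbol $\omega_Z$ denotes the \emph{dualizing complex} $h^!\mathbb{C}$, not the canonical sheaf $\cH^{-n}(h^!\mathbb{C})$. Thus the hypothesis $\cP\otimes^{\bf L}_Z\omega_Z\cong\cP$ is already the statement you want to reach, with no shift to absorb and no CM assumption to worry about; the duality isomorphism applies directly. Your attempted patch---claiming $\cP\otimes^{\bf L}\omega_Z^\bullet\cong\cP\otimes^{\bf L}\omega_Z[n]$---is not a valid deduction when $Z$ is not Cohen--Macaulay (the dualizing complex genuinely has cohomology in several degrees, and tensoring with a perfect complex does not collapse it), so it is fortunate that no such deduction is required. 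Two smaller points: first, even if a shift discrepancy were present it would not damage the conclusion, since shifting a complex does not affect its vanishing, so your ``two-line deduction'' is robust; second, $\RHom_Z(\cP,a)$ for $a\in\Dm(\coh Z)$ and $\cP$ perfect has degreewise finite-dimensional cohomology, not necessarily finite-dimensional \emph{total} cohomology, which is the condition actually needed (and is what the paper's citation of Yekutieli--Zhang supplies) for $\mathbb{C}$-linear duality to detect vanishing.
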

\begin{proof}
If $h\colon Z\to\Spec\mathbb{C}$ denotes the structure morphism,  the statement follows since
\begin{align*}
\RHom_Z(a,\cP)=0&\iff \RHom_Z(a,\cP\otimes^{\bf L}_Z\omega_Z)=0\tag{by assumption}\\
&\iff \RHom_Z(\RsHom_Z(\cP,a),\omega_Z)=0\tag{$\cP$ is perfect}\\
&\iff \RHom_Z(\RsHom_Z(\cP,a),h^!\mathbb{C})=0\tag{definition of $\omega_Z$}\\
&\iff \RHom_\mathbb{C}(\RHom_Z(\cP,a),\mathbb{C})=0\tag{Grothendieck duality}\\
&\iff \RHom_Z(\cP,a)=0,
\end{align*}
where in the last step we have used \cite[1.3]{YZ}, namely that $\mathbb{C}$ is clearly a dualizing complex for $\D(\Mod\mathbb{C})$, and $\RHom_Z(\cP,a)$ has finite dimensional cohomology groups. 
\end{proof}

\begin{cor}\label{Y not Goren}
With the global quasi-projective flops setup of \ref{flopsglobal}, and notation in \eqref{openopen}, 
\begin{enumerate}
\item\label{Y not Goren 1} $\cE_J^{\perp}={}^{\perp}\cE_J^{\phantom{\perp}}\!$ in $\Dm(\coh \projenv)$.
\item\label{Y not Goren 2}  $\Omega:=\cE_{\!J}^{\phantom{\perp}}\!\!\cup \cE_J^{\perp}$ is a spanning class of $\Dm(\coh \projenv)$.
\end{enumerate}
If further $U_p$ is $\mathds{Q}$-factorial, then $\cE_\fib^{\perp}={}^{\perp}\cE_\fib^{\phantom{\perp}}\!\!$ and $\cE_\fib^{\phantom{\perp}}\!\!\cup \cE_\fib^{\perp}$ is a spanning class of $\Dm(\coh \projenv)$.
\end{cor}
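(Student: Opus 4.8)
The plan is to reduce statement \eqref{Y not Goren 1} directly to Lemma \ref{Y not Goren A} and then to deduce the spanning class assertion by a standard truncation/Serre-subcategory argument. First I would apply \ref{Y not Goren A} with $Z=\projenv$ and $\cP=\cE_J$. The hypotheses of that lemma are exactly supplied by \ref{universal and canonical}\eqref{universal and canonical 1}: part \eqref{universal and canonical 1 b} gives $\cE_J=\RDerived(ki)_*\cE_J\in\Perf(\projenv)$, and part \eqref{universal and canonical 1 d} gives $\cE_J\otimes^{\bf L}_\projenv\omega_\projenv\cong\cE_J$. Therefore for every $a\in\Dm(\coh\projenv)$ we have $\RHom_\projenv(a,\cE_J)=0\iff\RHom_\projenv(\cE_J,a)=0$, which is precisely $\cE_J^\perp={}^\perp\cE_J$ inside $\Dm(\coh\projenv)$. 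For the $\mathds{Q}$-factorial case one repeats the argument with $\cP=\cE_\fib$, now invoking \ref{universal and canonical}\eqref{universal and canonical 2} for the perfectness and the $\omega_\projenv$-invariance of $\cE_\fib$.

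For \eqref{Y not Goren 2} I would verify the two spanning-class conditions for $\Omega=\cE_J\cup\cE_J^\perp$. Given $a\in\Dm(\coh\projenv)$ with $\RHom_\projenv(c,a)=0$ for all $c\in\Omega$: taking $c=\cE_J$ gives $a\in\cE_J^\perp$, so then taking $c=a\in\cE_J^\perp\subseteq\Omega$ forces $\RHom_\projenv(a,a)=0$, hence $a=0$. Dually, if $\RHom_\projenv(a,c)=0$ for all $c\in\Omega$, then taking $c=\cE_J$ gives $a\in{}^\perp\cE_J=\cE_J^\perp$ by part \eqref{Y not Goren 1}, so $a\in\Omega$, and then $c=a$ gives $\RHom_\projenv(a,a)=0$, whence $a=0$. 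The only subtlety is that $\Omega$ as written is a union of a single object and a full subcategory; this causes no trouble since the defining conditions of a spanning class are vacuously about membership, and $a\in\cE_J^\perp\subseteq\Omega$ is legitimate. The $\Tfibp$ version is word-for-word identical with $\cE_\fib$ in place of $\cE_J$.

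I do not anticipate a genuine obstacle here: the whole statement is a formal packaging of \ref{Y not Goren A} together with the perfectness and Calabi--Yau-type invariance of the universal objects, both already established in \ref{universal and canonical}. The one place that deserves a line of care is confirming that $\RHom_\projenv(\cE_J,a)$ (equivalently $\RHom_\projenv(a,\cE_J)$) has finite-dimensional total cohomology for $a\in\Dm(\coh\projenv)$, so that \ref{Y not Goren A} genuinely applies; this holds because $\cE_J\in\Perf(\projenv)$ is supported on the (proper) exceptional fibre over $p$, so the $\RHom$ is computed on a proper scheme and lands in $\Dm$ of finite-dimensional vector spaces. With that remark in place the proof is a two-step deduction and requires no further computation.
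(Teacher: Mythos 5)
Your proof is correct and follows essentially the same route as the paper: part \eqref{Y not Goren 1} is obtained by invoking \ref{Y not Goren A} with $Z=\projenv$, $\cP=\cE_J$, using the perfectness and $\omega_\projenv$-invariance supplied by \ref{universal and canonical}, and part \eqref{Y not Goren 2} is the standard formal deduction from \eqref{Y not Goren 1}, which you write out explicitly where the paper simply cites \cite[7.9]{DW1}. The only small inaccuracy is in your closing remark: the finite-dimensionality of the cohomology of $\RHom_\projenv(\cE_J,a)$ follows simply from properness of the projective scheme $\projenv$ together with $\cE_J\in\Perf(\projenv)$ and $a\in\Dm(\coh\projenv)$; the observation that $\cE_J$ is supported on the exceptional fibre is true but not the operative reason.
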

\begin{proof}
By \ref{universal and canonical}\eqref{universal and canonical 1}, $\cE_J$ satisfies the two hypotheses of \ref{Y not Goren A}, so  part \eqref{Y not Goren 1} follows.  Part \eqref{Y not Goren 2} is a formal consequence of \eqref{Y not Goren 1}, see e.g.\ \cite[7.9]{DW1}.  The remaining statements follow in an identical way, since by \ref{universal and canonical}\eqref{universal and canonical 2} $\cE_\fib$ satisfies the two hypotheses of \ref{Y not Goren A}.
\end{proof}

The proof that $\Tconp^*$ and $\Tfibp^*$ are autoequivalences on $\projenv$ will use the spanning classes in \ref{Y not Goren}, and the following lemma describes the action of the functors $\Tconp^*$ and $\Tfibp^*$ on these classes.

\begin{lemma}\label{spanning class and twist functor props Y}
With the setup as above, on $\Dm(\coh \projenv)$,
\begin{enumerate}
\item\label{spanning class and twist functor props Y 1}$\Tconp^*$ sends $\cE_J\mapsto\cE_J[2]$, and is functorially isomorphic to the identity on $\cE_J^{\perp}$. 
\item\label{spanning class and twist functor props Y 2}If further $U_p$ is $\mathds{Q}$-factorial, then the same is true for $\Tfibp^*$, replacing the subscript $J$ by the subscript $0$.
\end{enumerate}
\end{lemma}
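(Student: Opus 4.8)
The plan is to prove Lemma~\ref{spanning class and twist functor props Y} by reducing the statement on $\projenv$ to the corresponding local statement on $U_p$, and then invoking the shifting computations already obtained in \ref{local twists shifting}. The two halves of the assertion are of a different nature: the behaviour of $\Tconp^*$ on $\cE_J$ involves the pushforward $\cE_J=\RDerived(ki)_*\cE_J$ of the local universal object, and should follow from the intertwinement in \ref{local-global intertwinement} together with \ref{local twists shifting}; the behaviour on $\cE_J^\perp$ should follow by inspecting the defining triangle of $\Tconp^*$.

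First I would treat $\cE_J$. By \ref{universal and canonical}\eqref{universal and canonical 1}, $\RDerived(ki)_*\cE_J={(ki)}_*\cE_J\in\Perf(\projenv)$, which we denote $\cE_J$ by abuse of notation. Apply the naturally commutative square of \ref{local-global intertwinement}: taking the $U_p$-object $\cE_J\in\D(\Qcoh U_p)$ we get
\[
\Tconp^*(\RDerived(ki)_*\cE_J)\cong \RDerived(ki)_*(\FM{\cW_J}(\cE_J)).
\]
Now $\FM{\cW_J}$ is the autoequivalence on $U_p$ corresponding to \eqref{Zar alg 1}; wait, more precisely it is $\FM{\cW'_J}$ of \ref{local twists shifting} — here I would need to be careful about whether the relevant local functor is $\FM{\cW_J}$ or $\FM{\cW'_J}$, since \ref{local-global intertwinement} is phrased with $\cW_J$ while the shifting computation \ref{local twists shifting} is for $\cW'_J$. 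The resolution is that $\FM{\cW_J}$ is the inverse $J$-twist on $U_p$ (kernel $\cC_J=\Cone(\upphi_{J})[-1]$ built from \eqref{Zariski local kernel triangles1a inverse}), and the functorial triangle attached to it is, via \S\ref{ZLT section}, the one with $\RHom_{U_p}(\cE_J,-)\otimes^{\bf L}_{\AB_J}\cE_J$ as its first term; plugging in $\cE_J$ and using that $\RHom_{U_p}(\cE_J,\cE_J)\cong\AB_J$ (as $\AB_J$ is self-injective and $\cE_J\leftrightarrow\mathbb{F}\AB_J$ across the derived equivalence) together with the dual shift $\cE_J\mapsto\cE_J[-2]$ recorded in \ref{local twists shifting} for the un-inverted $J$-twist $\FM{\cW'_J}$, yields $\FM{\cW_J}(\cE_J)\cong\cE_J[2]$ on $U_p$. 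Pushing forward gives $\Tconp^*(\cE_J)\cong\cE_J[2]$ on $\projenv$, as required. I expect pinning down this bookkeeping of inverse-versus-direct and $\cW_J$-versus-$\cW'_J$ to be the main obstacle — everything else is formal.

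Next I would treat $\cE_J^\perp$. Recall from \ref{twist triangle and bounded}\eqref{twist triangle and bounded 1}\eqref{twist triangle and bounded 1 a} the functorial triangle
\[
\Tconp^*(x)\to x\to G_J\circ G_J^{\LA}(x)\to
\]
for $x\in\D(\Qcoh\projenv)$, where $G_J=-\otimes^{\bf L}_{\AB_J}\cE_J$ and $G_J^{\LA}$ is its left adjoint. For $x\in\cE_J^\perp\subseteq\Dm(\coh\projenv)$ I would show that $G_J^{\LA}(x)=0$, hence $G_J\circ G_J^{\LA}(x)=0$ and the triangle collapses to $\Tconp^*(x)\cong x$, naturally. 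To see $G_J^{\LA}(x)=0$: by \ref{twist triangle and bounded}\eqref{twist triangle and bounded 1}\eqref{twist triangle and bounded 1 b}, $G_J^{\LA}(x)\cong\RHom_{\AB_J}(\RHom_{\projenv}(x,\cE_J),\AB_J)$, and since $x\in\cE_J^\perp={}^{\perp}\cE_J$ by \ref{Y not Goren}\eqref{Y not Goren 1} we have $\RHom_{\projenv}(x,\cE_J)=0$, so $G_J^{\LA}(x)=0$. (Alternatively one computes $G_J^{\LA}$ via the explicit formula $\mathbb{F}^{-1}\circ(-\otimes_\Lambda^{\bf L}\Lambda_J)\circ\RHom_{U_p}(\cV,-)\circ(ki)^*$ and notes the same vanishing.) This proves part \eqref{spanning class and twist functor props Y 1}.

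Finally, part \eqref{spanning class and twist functor props Y 2}: under the extra hypothesis that $U_p$ is $\mathds{Q}$-factorial, every ingredient used above has a fibre-twist analogue — \ref{universal and canonical}\eqref{universal and canonical 2} gives $\cE_\fib=\RDerived(ki)_*\cE_\fib\in\Perf(\projenv)$ with $\cE_\fib\otimes^{\bf L}\omega_\projenv\cong\cE_\fib$; \ref{local-global intertwinement} has its $\Tfibp^*$ version; \ref{local twists shifting}(2) records $\cE_\fib\mapsto\cE_\fib[-2]$ for the direct fibre twist on $U_p$; \ref{twist triangle and bounded}\eqref{twist triangle and bounded 3} gives the defining triangle with $G_\fib=-\otimes^{\bf L}_{\CAR}\cE_\fib$; and \ref{Y not Goren} gives $\cE_\fib^\perp={}^{\perp}\cE_\fib$. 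Running the same two arguments verbatim, replacing the subscript $J$ by $0$ throughout, yields $\Tfibp^*(\cE_\fib)\cong\cE_\fib[2]$ and $\Tfibp^*|_{\cE_\fib^\perp}\cong\Id$, completing the proof.
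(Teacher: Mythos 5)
Your proposal takes essentially the same approach as the paper: for the shift $\cE_J\mapsto\cE_J[2]$, invert the local shift of \ref{local twists shifting} and push forward through the intertwinement square of \ref{local-global intertwinement}; for the identity on $\cE_J^\perp$, use the functorial triangle of \ref{twist triangle and bounded}\eqref{twist triangle and bounded 1}\eqref{twist triangle and bounded 1 a} together with $\cE_J^\perp = {}^\perp\cE_J$ from \ref{Y not Goren}\eqref{Y not Goren 1}. Both halves of your argument land on the right steps, and part \eqref{spanning class and twist functor props Y 2} is handled the same way.

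One caveat: the alternative direct computation you gesture at, via the claim $\RHom_{U_p}(\cE_J,\cE_J)\cong\AB_J$, is not correct and should be cut. The universal sheaf $\cE_J$ corresponds to $\mathbb{F}\AB_J$, so $\RHom_{U_p}(\cE_J,\cE_J)\cong\RHom_{\AB}(\AB_J,\AB_J)$, and since $\pd_{\AB}\AB_J=3$ with $\Ext^3_{\AB}(\AB_J,S_j)\neq 0$ by \ref{pd for N and ext}\eqref{pd for N and ext 3}, there are nonzero higher self-extensions of $\AB_J$ over $\AB$; self-injectivity of $\AB_J$ bears on $\Ext_{\AB_J}$, not $\Ext_{\AB}$. (Indeed, if that $\RHom$ were concentrated in degree zero, the functorial triangle would give $\FM{\cW'_J}(\cE_J)=0$, which is absurd.) Fortunately this alternative route is never used: the clean argument — $\FM{\cW_J}=\FM{\cW'_J}^{-1}$ on $U_p$, so $\FM{\cW_J}(\cE_J)\cong\cE_J[2]$ by \ref{local twists shifting}, then apply \ref{local-global intertwinement} — is exactly what you state next and is what the paper does.
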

\begin{proof}
We explain only \eqref{spanning class and twist functor props Y 1}, as \eqref{spanning class and twist functor props Y 2} is similar. The last statement is a consequence of the functorial triangle in \ref{twist triangle and bounded}, using in addition $\cE_J^\perp={}^{\perp}\cE_J^{\phantom{\perp}}\!$ by \ref{Y not Goren}\eqref{Y not Goren 1}.  The other statements follow by combining \ref{local twists shifting} with \ref{local-global intertwinement}, after inverting the local twists appearing there.
\end{proof}

The following lemma gives appropriate sufficient conditions for a fully faithful functor to be an equivalence. The right adjoint version is used, as in \cite{DW1}, to show the equivalence property on $\projenv$ in~\ref{twists autoequivalence}; the left adjoint version is used on $X$ in~\ref{thm inverse Jtwist is equiv on X}.

\begin{lemma}\label{equiv trick}
Let $\cC$ be a triangulated category, and $F\colon \cC\to\cC$ an exact fully faithful functor with right adjoint $F^{\RA}$ (respectively left adjoint $F^{\LA}$).  Suppose that there exists an object $c\in\cC$ such that $F(c)\cong c[i]$ for some $i$, and further $F(x)\cong x$ for all $x\in c^\perp$ (respectively $x\in{}^\perp c$).  Then $F$ is an equivalence. 
\end{lemma}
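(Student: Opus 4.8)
\textbf{Proof plan for Lemma \ref{equiv trick}.}

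The plan is to show that $F$, being fully faithful, is an equivalence by verifying that its adjoint is faithful; equivalently, that the essential image of $F$ is all of $\cC$. I will treat the right-adjoint case (the left-adjoint case being dual). Since $F$ is fully faithful, the unit $\Id\to F^{\RA}F$ is an isomorphism. The standard criterion (see e.g. Bridgeland's approach via spanning classes, or Huybrechts' book) is that a fully faithful exact functor with a right adjoint is an equivalence precisely when the counit $FF^{\RA}\to\Id$ is an isomorphism, which in turn holds if and only if there is no nonzero object $d\in\cC$ with $F^{\RA}(d)=0$, i.e. $F^{\RA}$ is conservative on the "orthogonal" of the image. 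Concretely, I would let $d\in\cC$ lie in the right orthogonal of the image of $F$, meaning $\Hom_\cC(F(y),d)=0$ for all $y$; by adjunction this says $\Hom_\cC(y,F^{\RA}(d))=0$ for all $y$, so $F^{\RA}(d)=0$, and it suffices to show $d=0$.

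The key step is then to use the hypotheses on $c$ to force $d$ into a contradiction unless $d=0$. Since $d$ is right-orthogonal to the image of $F$, and $F(c)\cong c[i]$, we get $\Hom_\cC(c[i],d)=0$ for all shifts (applying $F$ to $c[n]$ for each $n$), hence $\Hom_\cC(c,d)=0$; that is, $d\in c^\perp$. By hypothesis $F(d)\cong d$. On the other hand, $d$ being right-orthogonal to the image of $F$ gives in particular $\Hom_\cC(F(d),d)=0$, and since $F(d)\cong d$ this yields $\Hom_\cC(d,d)=0$, forcing $d=0$. Therefore the right orthogonal of the image of $F$ is zero; combined with the fact that the image of a fully faithful functor with a right adjoint is a right-admissible (hence triangulated, thick) subcategory, this forces the image to be all of $\cC$, so $F$ is essentially surjective and hence an equivalence.

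For the left-adjoint version one runs the dual argument: take $d$ in the \emph{left} orthogonal of the image of $F$, so $\Hom_\cC(d,F(y))=0$ for all $y$; adjunction gives $\Hom_\cC(F^{\LA}(d),y)=0$, so $F^{\LA}(d)=0$. From $F(c)\cong c[i]$ and left-orthogonality one deduces $d\in{}^\perp c$, whence $F(d)\cong d$ by hypothesis, and then $\Hom_\cC(d,F(d))=\Hom_\cC(d,d)=0$ gives $d=0$. The image of $F$ is then left-admissible with zero left orthogonal, so again $F$ is an equivalence.

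The main obstacle I anticipate is purely bookkeeping: being careful that the orthogonality hypotheses are invoked at the right variance and that the image subcategory is genuinely admissible (so that the vanishing of a one-sided orthogonal really does imply essential surjectivity), rather than any deep input — the argument is a soft triangulated-category manipulation once the adjunctions are in place, and the geometric content has already been absorbed into the hypotheses $F(c)\cong c[i]$ and $F|_{c^\perp}\cong\Id$.
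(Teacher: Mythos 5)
Your proposal is correct and takes essentially the same approach as the paper: both use the criterion that a fully faithful functor with an adjoint is an equivalence iff the adjoint kills only zero (equivalently, the relevant orthogonal of the image vanishes), then use $F(c)\cong c[i]$ to place any such kernel object in the orthogonal of $c$, and finally invoke the hypothesis $F|_{\text{orth of }c}\cong\Id$ to force it to vanish. The only cosmetic difference is the last step: the paper deduces $x=0$ from $F^{\LA}(x)\cong x$ together with $F^{\LA}(x)=0$, whereas you deduce it directly from $\Hom(F(d),d)=0$ and $F(d)\cong d$ giving $\Hom(d,d)=0$.
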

\begin{proof}The right adjoint version is \cite[7.11]{DW1}. The argument for the left adjoint version is identical: we give it here for the convenience of the reader.

First note that, by \cite[1.24]{HuybrechtsFM}, $F$ fully faithful gives
$
F^{\LA} \circ F \overset{\sim}{\rightarrow} \Id,
$
and thence we deduce that for all $a\in {}^\perp c$, $F^{\LA}(c) \cong c$. Now by \cite[1.51]{HuybrechtsFM} $F$ is an equivalence if $F^{\LA}(x)=0$ implies $x=0$. Supposing then that $F^{\LA}(x)=0$, it suffices to show that $x \in {}^\perp c$. But
\[
\Hom_\cC(x,c[j]) \cong
\Hom_\cC(x,F(c)[j-i]) \cong
\Hom_\cC(F^{\LA}(x),c[j-i]) = 0,
\]
and so the claim is proved. 
\end{proof}

Combining the above gives the main result of this subsection.

\begin{thm}\label{twists autoequivalence} 
With the global quasi-projective flops setup of \ref{flopsglobal}, and notation in \eqref{openopen}, then for any $J\subseteq\{1,\hdots,n_p\}$,  the inverse $J$-twist
\[
\Tconp^*\colon\Db(\coh \projenv)\to\Db(\coh \projenv)
\]
is an equivalence. If furthermore $U_p$ is $\mathds{Q}$-factorial, the same is true for $\Tfibp^*$.
\end{thm}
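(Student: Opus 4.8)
The plan is to reduce the equivalence statement for $\Tconp^*$ (and $\Tfibp^*$) on $\projenv$ to the abstract criterion of \ref{equiv trick}, applied with $c=\cE_J$ (respectively $c=\cE_\fib$) inside the triangulated category $\Dm(\coh\projenv)$. By \ref{adjoint on minus}\eqref{adjoint on minus 1}, the functor $\Tconp^*$ restricts to an endofunctor of $\Dm(\coh\projenv)$ with right adjoint $\Tconp$, so once full faithfulness is established, \ref{equiv trick} combined with \ref{spanning class and twist functor props Y} — which says $\Tconp^*$ sends $\cE_J\mapsto\cE_J[2]$ and is the identity on $\cE_J^\perp$ — immediately gives that $\Tconp^*$ is an equivalence of $\Dm(\coh\projenv)$. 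One then transfers this to the bounded category: full faithfulness on $\Dm$ restricts to $\Db$, and \ref{twist triangle and bounded}\eqref{twist triangle and bounded 1 c} shows $\Tconp^*$ preserves $\Db(\coh\projenv)$, while its right adjoint $\Tconp$ also preserves $\Db(\coh\projenv)$ by \ref{adjoint on minus}\eqref{adjoint on minus 2}; essential surjectivity on $\Db$ then follows from essential surjectivity on $\Dm$ together with the fact that an object of $\Dm$ lies in $\Db$ iff its image under the equivalence does.

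The crux is therefore to prove that $\Tconp^*$ is fully faithful on $\Dm(\coh\projenv)$. Here I would use the spanning class $\Omega=\cE_J\cup\cE_J^\perp$ from \ref{Y not Goren}\eqref{Y not Goren 2}, and verify the two conditions for the criterion of Bridgeland/Huybrechts (\cite[Lemma~1.50 or 2.16]{HuybrechtsFM}, adapted to $\Dm$ using the existence of adjoints from \ref{adjoint on minus}): namely that for all $a,b\in\Omega$ and all $i$, the natural map $\Hom_{\projenv}(a,b[i])\to\Hom_{\projenv}(\Tconp^*a,\Tconp^*b[i])$ is an isomorphism. Splitting into the four cases according to whether $a,b$ are $\cE_J$ or lie in $\cE_J^\perp$: the $(\cE_J^\perp,\cE_J^\perp)$ case is immediate since $\Tconp^*$ is the identity there; the mixed cases $(\cE_J,\cE_J^\perp)$ and $(\cE_J^\perp,\cE_J)$ reduce, using $\Tconp^*(\cE_J)=\cE_J[2]$, to checking $\Hom(\cE_J,x[i])\cong\Hom(\cE_J[2],x[i])$ and $\Hom(x,\cE_J[i])\cong\Hom(x,\cE_J[2+i])$ for $x\in\cE_J^\perp$, both of which hold trivially because all these groups vanish (by definition of $\cE_J^\perp$ and the equality $\cE_J^\perp={}^\perp\cE_J$ from \ref{Y not Goren}\eqref{Y not Goren 1}); the remaining $(\cE_J,\cE_J)$ case asks that $\Hom_{\projenv}(\cE_J,\cE_J[i])\to\Hom_{\projenv}(\cE_J[2],\cE_J[2+i])$ is an isomorphism, which is automatic as it is just the shift. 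The content, then, is really packaged into \ref{spanning class and twist functor props Y} and \ref{Y not Goren}, which have already been proved; the present theorem is the assembly.

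The one genuinely delicate point — the main obstacle — is that we are working over the possibly non-Gorenstein, merely projective variety $\projenv$, and on the unbounded-below category $\Dm(\coh\projenv)$ rather than $\Db$, so one must be careful that the spanning-class criterion for full faithfulness is legitimately applicable. This is exactly why \ref{Y not Goren A} and \ref{Y not Goren} were set up: the perfectness of $\cE_J$ together with $\cE_J\otimes^{\bf L}_\projenv\omega_\projenv\cong\cE_J$ (from \ref{universal and canonical}) is what forces $\cE_J^\perp={}^\perp\cE_J$, so that $\Omega$ is a genuine spanning class and the mixed Hom-groups vanish symmetrically. With those inputs the full faithfulness argument via \cite{HuybrechtsFM} goes through on $\Dm(\coh\projenv)$ verbatim, as in \cite[7.12]{DW1}. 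The $\mathds{Q}$-factorial case for $\Tfibp^*$ is then word-for-word identical, replacing every occurrence of the subscript $J$ by $0$ and $\cE_J$ by $\cE_\fib$, invoking the $\Tfibp^*$-clauses of \ref{adjoint on minus}, \ref{twist triangle and bounded}, \ref{Y not Goren} and \ref{spanning class and twist functor props Y} in place of the $\Tconp^*$-clauses.

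\begin{proof}
We prove the statement for $\Tconp^*$; the $\mathds{Q}$-factorial case of $\Tfibp^*$ is identical, replacing the subscript $J$ throughout by $0$ and $\cE_J$ by $\cE_\fib$, using the $\Tfibp^*$-versions of all cited results.

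\step{1} \emph{$\Tconp^*$ is an equivalence of $\Dm(\coh\projenv)$.}  By \ref{adjoint on minus}\eqref{adjoint on minus 1}, $\Tconp^*$ restricts to an exact endofunctor of $\Dm(\coh\projenv)$ with right adjoint $\Tconp$.  We first show $\Tconp^*$ is fully faithful.  By \ref{Y not Goren}\eqref{Y not Goren 2}, $\Omega:=\cE_J\cup\cE_J^{\perp}$ is a spanning class of $\Dm(\coh\projenv)$, and by \ref{Y not Goren}\eqref{Y not Goren 1} we have $\cE_J^{\perp}={}^{\perp}\cE_J$.  To establish full faithfulness it suffices, as in \cite[1.49, 1.50]{HuybrechtsFM} applied on $\Dm(\coh\projenv)$ (using the adjoint from \ref{adjoint on minus}\eqref{adjoint on minus 0}), to check that for all $a,b\in\Omega$ and all $i\in\mathbb{Z}$ the natural map
\[
\Hom_{\projenv}(a,b[i])\longrightarrow\Hom_{\projenv}(\Tconp^*a,\Tconp^*b[i])
\]
is an isomorphism.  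By \ref{spanning class and twist functor props Y}\eqref{spanning class and twist functor props Y 1}, $\Tconp^*$ is functorially isomorphic to the identity on $\cE_J^{\perp}$, and $\Tconp^*(\cE_J)\cong\cE_J[2]$.  There are four cases.  If $a,b\in\cE_J^{\perp}$ the map is an isomorphism since $\Tconp^*$ is the identity there.  If $a=\cE_J$ and $b\in\cE_J^{\perp}$ then $\Hom_{\projenv}(a,b[i])=0$ by definition of $\cE_J^{\perp}$, while $\Hom_{\projenv}(\Tconp^*a,\Tconp^*b[i])\cong\Hom_{\projenv}(\cE_J[2],b[i])=\Hom_{\projenv}(\cE_J,b[i-2])=0$, again by definition of $\cE_J^{\perp}$.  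If $a\in\cE_J^{\perp}$ and $b=\cE_J$ then $\Hom_{\projenv}(a,b[i])=0$ since $a\in\cE_J^{\perp}={}^{\perp}\cE_J$, and $\Hom_{\projenv}(\Tconp^*a,\Tconp^*b[i])\cong\Hom_{\projenv}(a,\cE_J[i+2])=0$ for the same reason.  Finally if $a=b=\cE_J$, the map is the isomorphism
\[
\Hom_{\projenv}(\cE_J,\cE_J[i])\xrightarrow{\ \sim\ }\Hom_{\projenv}(\cE_J[2],\cE_J[i+2])
\]
given by shift.  Hence $\Tconp^*$ is fully faithful on $\Dm(\coh\projenv)$.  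Now $\Tconp^*(\cE_J)\cong\cE_J[2]$ and $\Tconp^*(x)\cong x$ for all $x\in\cE_J^{\perp}$, so by \ref{equiv trick} (right adjoint version, with $c=\cE_J$, $i=2$) the functor $\Tconp^*$ is an equivalence of $\Dm(\coh\projenv)$.

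\step{2} \emph{Descent to $\Db(\coh\projenv)$.}  By \ref{twist triangle and bounded}\eqref{twist triangle and bounded 1}\eqref{twist triangle and bounded 1 c}, $\Tconp^*$ preserves $\Db(\coh\projenv)$, so it restricts to a fully faithful endofunctor $F$ of $\Db(\coh\projenv)$.  By \ref{adjoint on minus}\eqref{adjoint on minus 2}, its right adjoint $\Tconp$ also preserves $\Db(\coh\projenv)$.  Since $\Tconp^*$ is an equivalence on $\Dm(\coh\projenv)$ with quasi-inverse $\Tconp$, for any $b\in\Db(\coh\projenv)\subseteq\Dm(\coh\projenv)$ the object $\Tconp(b)$ lies in $\Dm(\coh\projenv)$, in fact in $\Db(\coh\projenv)$ by the previous sentence, and $\Tconp^*\Tconp(b)\cong b$.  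Thus $F$ is essentially surjective, hence an equivalence of $\Db(\coh\projenv)$.
\end{proof}
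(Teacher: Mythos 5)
Your overall architecture mirrors the paper's: establish full faithfulness on $\Dm(\coh\projenv)$ via the spanning class $\cE_J\cup\cE_J^{\perp}$ from \ref{Y not Goren} and the Bridgeland criterion, then invoke \ref{equiv trick}. The organizational difference — you upgrade to an equivalence on $\Dm(\coh\projenv)$ first and descend, whereas the paper restricts full faithfulness to $\Db(\coh\projenv)$ and applies \ref{equiv trick} directly there — is harmless, since both $\Tconp^*$ and its right adjoint $\Tconp$ are known to preserve $\Db(\coh\projenv)$.

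There is, however, a genuine gap in your treatment of the case $a=b=\cE_J$. You assert that the map
\[
\Hom_{\projenv}(\cE_J,\cE_J[i])\longrightarrow\Hom_{\projenv}(\Tconp^*\cE_J,\Tconp^*\cE_J[i])
\]
``is automatic as it is just the shift.'' It is not. The Bridgeland criterion asks whether $f\mapsto\Tconp^*(f)$ is bijective, and knowing only that $\Tconp^*(\cE_J)$ is \emph{isomorphic} to $\cE_J[2]$ says nothing about this: after composing with the object identifications you obtain some endomorphism of $\Hom_{\projenv}(\cE_J,\cE_J[i])$, which need not be invertible, let alone equal to the shift isomorphism. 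The functor $\Tconp^*$ is certainly not $[2]$ globally — that is the content of the theorem — and $\cE_J$ has a nontrivial derived endomorphism algebra, so there is real content to verify here. The paper closes this case by an entirely different mechanism: since $\cE_J=\RDerived(ki)_*\cE_J$ is supported on the open subset $U_p$ and $\RDerived(ki)_*$ is fully faithful (open immersion), the commutativity of the intertwinement diagram \ref{local-global intertwinement} identifies the relevant map with the one induced by the Zariski local twist $\FM{\cW_J}$, which is already known to be an equivalence of $\D(\Qcoh U_p)$, and the required bijectivity transports across. Without some version of this argument your full faithfulness claim is not established.
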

\begin{proof}
We first establish that 
\[
F:=\Tconp^*\colon \Dm(\coh \projenv)\to \Dm(\coh \projenv)
\]
is fully faithful.  Since by \ref{adjoint on minus} it has both left and right adjoints, and further by \ref{Y not Goren}\eqref{Y not Goren 2} $\Omega:=\cE_{\!J}^{\phantom{\perp}}\!\!\cup \cE_J^{\perp}\subseteq\Dm(\coh \projenv)$ is a spanning class, by \cite[1.49]{HuybrechtsFM} we just need to show that 
\begin{eqnarray}
F\colon\Hom_{\D(\projenv)}(a,b[i])\to\Hom_{\D(\projenv)}(Fa,Fb[i])\label{bijective map}
\end{eqnarray}
is a bijection for all $a,b\in\Omega$ and all $i\in\mathbb{Z}$.  Exactly as in \cite[7.12]{DW1} this splits into four cases: the first case $a=b=\cE_J$ follows by the commutativity of \ref{local-global intertwinement}, and in the second case $a=\cE_J$, $b\in \cE_J^\perp$ it is obvious from \ref{spanning class and twist functor props Y} that both sides of \eqref{bijective map} are zero.  Similarly, in the third case $a\in\cE_J^\perp$, $b= \cE_J$ both sides of \eqref{bijective map} are zero by \ref{spanning class and twist functor props Y} and \ref{Y not Goren}\eqref{Y not Goren 1}.   The last case, namely $a,b\in\cE_J^{\perp}$ follows since $F$ is functorially isomorphic to the identity on $\cE_J^{\perp}$ by~\ref{spanning class and twist functor props Y}.

It follows that $F$ is fully faithful, and so in particular its restriction to $\Db(\coh \projenv)$ is fully faithful.  But by \ref{twist triangle and bounded}\eqref{twist triangle and bounded 1}\eqref{twist triangle and bounded 1 c} $F$ preserves $\Db(\coh \projenv)$, so
\[
F:=\Tconp^*\colon \Db(\coh \projenv)\to \Db(\coh \projenv)
\]
is fully faithful.  But also by \ref{adjoint on minus}\eqref{adjoint on minus 2} this functor has a right adjoint, and so it is an equivalence by combining \ref{equiv trick} with \ref{spanning class and twist functor props Y}.

The proof for $\Tfibp^*$ is identical, using the $\Tfibp^*$ version of all the results referenced above.
\end{proof}

\begin{remark}
Since the assumptions on the singularities in \ref{twists autoequivalence} are local, the above gives more evidence for the conjecture \cite[B.1]{HomMMP}, namely there is a flop equivalence if and only if $\cE$ is a perfect complex.  The main point of the conjecture is that we should expect flop equivalences not by some global restriction on singularities, but instead by only assuming that $\cE$ is perfect, which is a local condition.
\end{remark}

\subsection{The Quasi-Projective Twist is an Equivalence}\label{section qproj twist is equiv}
We next restrict the equivalences in the previous subsection to $X$.  Since $X$ is only quasi-projective, obtaining adjoints is tricky.  Consequently we do not use a spanning class argument as in \ref{twists autoequivalence} (which requires both left and right adjoints) to prove the functors are equivalences on $X$, instead we appeal to the theory of compactly generated triangulated categories.   The following is standard.
\begin{cor}\label{QcohY compact}
With notation as in \ref{twists autoequivalence}, 
\begin{enumerate}
\item\label{QcohY compact 1} $\Tconp^*\colon\Perf(\projenv)\to\Perf(\projenv)$ is an equivalence.
\item\label{QcohY compact 2} $\Tconp^*\colon\D(\Qcoh \projenv)\to\D(\Qcoh \projenv)$ is an equivalence.
\end{enumerate}
If furthermore $U_p$ is $\mathds{Q}$-factorial then the same is true for $\Tfibp^*$.
\end{cor}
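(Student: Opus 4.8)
The plan is to deduce Corollary~\ref{QcohY compact} from Theorem~\ref{twists autoequivalence} by passing from the bounded coherent level up through the perfect complexes to the whole (unbounded) quasi-coherent derived category, using standard compact-generation machinery. For part~\eqref{QcohY compact 1}, recall that for a projective variety $\projenv$ the category $\Perf(\projenv)$ is precisely the subcategory of compact objects of $\D(\Qcoh\projenv)$, and it coincides with the idempotent-completed triangulated hull generated by a tilting-free resolution argument inside $\Db(\coh\projenv)$; more simply, $\Perf(\projenv)\subseteq\Db(\coh\projenv)$ is the full subcategory of objects that are locally quasi-isomorphic to bounded complexes of vector bundles. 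First I would observe that the kernel $\cD_J=\Cone\upphi_{J,\projenv}[-1]$ built in \ref{defin twists at a point aux}\eqref{defin twists at a point aux 1} is, by \ref{universal and canonical}\eqref{universal and canonical 1}\eqref{universal and canonical 1 b}, an extension of $\cO_{\diag,\projenv}$ by a perfect complex $\cQ=\RDerived(ki\times ki)_*\cQ_J$ on $\projenv\times\projenv$; hence $\cD_J$ is itself perfect on $\projenv\times\projenv$, being a cone of a map of perfect complexes. A Fourier--Mukai functor with perfect kernel sends perfect complexes to perfect complexes (since $\uppi_1$ is flat and proper and $\RDerived\uppi_{2*}$ preserves perfection when applied to a $\uppi_2$-perfect complex, which $\cD_J\otimes^{\bf L}\uppi_1^*(-)$ is for $(-)$ perfect), so $\Tconp^*$ restricts to an endofunctor of $\Perf(\projenv)$. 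Fully faithfulness on $\Perf(\projenv)$ is inherited from fully faithfulness on $\Db(\coh\projenv)$ established in \ref{twists autoequivalence}, and essential surjectivity follows because $\Tconp^*$ is already an equivalence on $\Db(\coh\projenv)$: given $c\in\Perf(\projenv)$ there is $b\in\Db(\coh\projenv)$ with $\Tconp^*(b)\cong c$, and since $\Tconp^*$ has right adjoint $\Tconp$ (a Fourier--Mukai functor with kernel $\cD_J^R$, again perfect by the $\uppi_2$-perfectness argument in \ref{Alice thm}\eqref{Alice thm 2}), applying $\Tconp$ to the perfect object $c$ lands in $\Perf(\projenv)$, and $\Tconp(c)\cong b$ by the unit isomorphism, so $b$ is perfect.

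For part~\eqref{QcohY compact 2}, the strategy is the standard Bondal--Van den Bergh / Neeman argument: $\D(\Qcoh\projenv)$ is a compactly generated triangulated category whose subcategory of compact objects is exactly $\Perf(\projenv)$. The functor $\Tconp^*\colon\D(\Qcoh\projenv)\to\D(\Qcoh\projenv)$ is exact and commutes with arbitrary direct sums, because it is a Fourier--Mukai functor and $\RDerived\uppi_{2*}$, $-\otimes^{\bf L}\cD_J$, $\uppi_1^*$ all commute with coproducts (the first because $\uppi_2$ is proper and finite-dimensional over a noetherian base, so $\RDerived\uppi_{2*}$ preserves coproducts by Neeman's theorem). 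Similarly $\Tconp$ commutes with coproducts. By part~\eqref{QcohY compact 1}, $\Tconp^*$ restricts to an equivalence on compact objects, and a coproduct-preserving exact functor between compactly generated categories that is an equivalence on compact objects is an equivalence (this is, e.g., \cite[1.45]{HuybrechtsFM} or the standard argument: the functor is fully faithful on compacts, hence the unit and counit are isomorphisms on compacts, and since both functors preserve coproducts and the categories are generated by their compacts, the unit and counit are isomorphisms everywhere). This gives that $\Tconp^*$ is an equivalence on $\D(\Qcoh\projenv)$.

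The $\mathds{Q}$-factorial case for $\Tfibp^*$ is word-for-word identical, using \ref{universal and canonical}\eqref{universal and canonical 2}, \ref{twist triangle and bounded}\eqref{twist triangle and bounded 3}, \ref{adjoint on minus} for $\Tfibp^*$, and the $\Tfibp^*$ half of \ref{twists autoequivalence}, so I would simply remark that the proof carries over verbatim rather than repeat it. The main obstacle I anticipate is purely bookkeeping: confirming that $\cD_J$ (and $\cD_J^R$) is genuinely perfect on $\projenv\times\projenv$ rather than merely $\uppi_i$-perfect, so that the Fourier--Mukai functors preserve $\Perf$ on the nose; this follows because $\cO_{\diag,\projenv}$ is $\uppi_1$- and $\uppi_2$-perfect (the diagonal of a smooth-ish variety) combined with perfection of $\cQ$, but since $\projenv$ need not be smooth or Gorenstein, one must instead argue via the already-established preservation of $\Db(\coh\projenv)$ in \ref{adjoint on minus} together with the fact that an object of $\Db(\coh Z)$ whose image under a Fourier--Mukai functor and its adjoint both stay bounded, and which is sent to perfect objects, is handled by restricting to the compact objects where the abstract compact-generation argument does all the work. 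In other words, the cleanest route sidesteps any direct perfection computation on $\projenv\times\projenv$ and instead uses only: $\Tconp^*$ and $\Tconp$ preserve $\Db(\coh\projenv)$ (\ref{twist triangle and bounded}, \ref{adjoint on minus}), $\Tconp^*$ is an equivalence on $\Db(\coh\projenv)$ (\ref{twists autoequivalence}), $\Tconp$ is right adjoint to $\Tconp^*$, and both commute with coproducts; from these four inputs, \eqref{QcohY compact 1} and \eqref{QcohY compact 2} are formal.
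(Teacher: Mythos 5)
Your primary argument for part \eqref{QcohY compact 1} — that $\cD_J$ is a perfect complex on $\projenv\times\projenv$, so the Fourier--Mukai functor $\Tconp^*$ preserves $\Perf(\projenv)$ — breaks down because $\cO_{\diag,\projenv}$ need not be perfect: the paper explicitly makes no smoothness or Gorenstein assumption on $\projenv$ (since nothing is assumed about the complement of $X$), so the diagonal is generally not perfect on the product. You acknowledge this in your closing paragraph. The same confusion affects your invocation of \ref{Alice thm}\eqref{Alice thm 2} for $\cD_J^R$: that result only gives $\uppi_2$-perfectness (a \emph{relative} condition), which places the kernel in $\Db(\coh\projenv\times\projenv)$ but not in $\Perf(\projenv\times\projenv)$, so it does not give that $\Tconp$ preserves $\Perf(\projenv)$. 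Your fallback route — deducing everything from the facts that $\Tconp^*$ is an equivalence on $\Db(\coh\projenv)$, that $\Tconp$ is its right adjoint, and that both preserve coproducts — can be made to close, but the key step is left unstated: a coproduct-preserving functor between compactly generated triangulated categories preserves compact objects if and only if its right adjoint preserves coproducts, and this must be applied to both $\Tconp^*$ and $\Tconp$ to get the equivalence on $\Perf$. As written, ``from these four inputs, (1) and (2) are formal'' is a gesture, not an argument, and the preceding sentence about ``an object of $\Db(\coh Z)$ whose image under a Fourier--Mukai functor and its adjoint both stay bounded\ldots'' does not parse into a deduction.

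The paper's proof of \eqref{QcohY compact 1} is a one-liner that avoids all of this: it cites Orlov's intrinsic characterisation of $\Perf(\projenv)\subseteq\Db(\coh\projenv)$ as the \emph{homologically finite} objects, namely those $c$ with $\Hom(c,d[i])=0$ for $|i|\gg 0$ and all $d\in\Db(\coh\projenv)$. Since this description is purely in terms of the triangulated structure of $\Db(\coh\projenv)$, any autoequivalence of $\Db(\coh\projenv)$ preserves $\Perf(\projenv)$ automatically. No perfectness computation on the kernel and no compact-generation machinery is required for (1). Your argument for \eqref{QcohY compact 2} — a coproduct-preserving exact functor between compactly generated categories that restricts to an equivalence on compacts is an equivalence — is the same as the paper's and is fine. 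I would recommend you look up Orlov's characterisation and replace your entire part (1) with it.
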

\begin{proof}
(1) This follows immediately since perfect complexes can be intrinsically characterised inside $\Db(\coh \projenv)$ as the homologically finite complexes \cite[1.11]{Orlov}.\\
(2) We know that $\Tconp^*$ is an exact functor between compactly generated triangulated categories with infinite coproducts, which preserves coproducts (since it is a Fourier--Mukai functor) and restricts to an equivalence on the compact objects by \eqref{QcohY compact 1}.  The result is standard application of compactly generated triangulated categories (see e.g.\ \cite[3.3]{Schwede}).
\end{proof}

The following lemma, which parallels \ref{spanning class and twist functor props Y}, is used in the proof that the $J$-twist is an equivalence on $X$, and later in \ref{twist vs flopflop} and \ref{conj NC and fibre}.

\begin{lemma}\label{spanning class and twist functor props X}
With the setup as above, on $\Db(\coh X)$,
\begin{enumerate}
\item\label{spanning class and twist functor props X 1}$\Tconp^*$ sends $\cE_J\mapsto\cE_J[2]$, $E_j\mapsto E_j[2]$ for all $j\in J$, and is functorially isomorphic to the identity on ${}^{\perp}\cE_J$. 
\item\label{spanning class and twist functor props X 2}If further $U_p$ is $\mathds{Q}$-factorial, then the same is true for $\Tfibp^*$, replacing the subscripts $J$ and $j$ by the subscript $0$.
\end{enumerate}
\end{lemma}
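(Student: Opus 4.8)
The plan is to reduce the statement on $X$ to the Zariski local statement of \ref{local twists shifting}, using the intertwinement from \ref{local-global intertwinement}, exactly as in the parallel result \ref{spanning class and twist functor props Y} for $\projenv$.

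First I would establish the shifts on $\cE_J$ and the $E_j$. For $\cE_J$, observe that by \ref{universal and canonical}\eqref{universal and canonical 1 a} we have $\cE_J=\RDerived i_*\cE_J^{U_p}$ with $\cE_J^{U_p}$ perfect on $U_p$, and by \ref{local-global intertwinement} the square
\[
\Tconp^*\circ \RDerived i_*\cong \RDerived i_*\circ \FM{\cW_J}
\]
commutes. Applying \ref{local twists shifting}(1)(a), $\FM{\cW'_J}$ sends $\cE_J^{U_p}\mapsto \cE_J^{U_p}[-2]$, and hence its inverse $\FM{\cW_J}$ sends $\cE_J^{U_p}\mapsto \cE_J^{U_p}[2]$; pushing forward gives $\Tconp^*(\cE_J)\cong\cE_J[2]$. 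The same argument applied to $E_j=\cO_{C_j}(-1)$ for $j\in J$, which is supported on $U_p$ so equals $\RDerived i_*$ of the corresponding sheaf on $U_p$, together with \ref{local twists shifting}(1)(b), yields $\Tconp^*(E_j)\cong E_j[2]$.

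Next I would verify that $\Tconp^*$ is functorially isomorphic to the identity on ${}^{\perp}\cE_J$ inside $\Db(\coh X)$. This is where the functorial triangle of \ref{twist triangle and bounded}\eqref{twist triangle and bounded 1}\eqref{twist triangle and bounded 1 a} enters: for $x\in\Db(\coh X)$ there is a triangle
\[
\Tconp^*(x)\to x\to G_J\circ G_J^{\LA}(x)\to,
\]
and by \ref{twist triangle and bounded}\eqref{twist triangle and bounded 1}\eqref{twist triangle and bounded 1 b}, $G_J^{\LA}(x)\cong \RHom_{\AB_J}(\RHom_X(x,\cE_J),\AB_J)$. Thus if $x\in{}^{\perp}\cE_J$, i.e.\ $\RHom_X(x,\cE_J)=0$, then $G_J^{\LA}(x)=0$, so the third term of the triangle vanishes and $\Tconp^*(x)\cong x$; naturality of the triangle gives that this isomorphism is functorial. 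The one subtlety to record is that the argument used ${}^{\perp}\cE_J$ rather than $\cE_J^{\perp}$ — unlike the $\projenv$ case one does not need the quasi-projective analogue of \ref{Y not Goren}\eqref{Y not Goren 1}, which is stated only on $\Dm(\coh\projenv)$; the triangle directly characterises the right orthogonal.

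Finally, part \eqref{spanning class and twist functor props X 2} follows verbatim by replacing $\AB_J$, $\cE_J$, $E_j$ ($j\in J$) by $\AB_\fib$, $\cE_\fib$, $E_0$ throughout, invoking the $\mathds{Q}$-factorial versions: \ref{universal and canonical}\eqref{universal and canonical 2} for perfectness and triviality of $\omega$, \ref{local twists shifting}(2) for the local shifts, and \ref{twist triangle and bounded}\eqref{twist triangle and bounded 3} for the functorial triangle. I do not anticipate a genuine obstacle here; the only point requiring a little care is making sure the functorial isomorphism on the orthogonal subcategory is extracted from the naturality of the triangle rather than checked object by object, and that everything is phrased on $\Db(\coh X)$ (not $\Dm$), which is legitimate since $\Tconp^*$ preserves $\Db(\coh X)$ by \ref{twist triangle and bounded}\eqref{twist triangle and bounded 1}\eqref{twist triangle and bounded 1 c}.
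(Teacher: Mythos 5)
Your proof is correct and follows exactly the same route as the paper's: the shift statements come from combining \ref{local twists shifting} (inverted) with the intertwinement \ref{local-global intertwinement}, and the identity on ${}^{\perp}\cE_J$ follows directly from the functorial triangle of \ref{twist triangle and bounded}. Your observation that the statement on $X$ uses ${}^{\perp}\cE_J$ so that \ref{Y not Goren}\eqref{Y not Goren 1} is not needed (in contrast to \ref{spanning class and twist functor props Y} which used $\cE_J^{\perp}$) is a nice clarification of the paper's terse proof, though note that ${}^{\perp}\cE_J$ is the \emph{left} orthogonal, not the right.
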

\begin{proof}
We show \eqref{spanning class and twist functor props X 1}, as \eqref{spanning class and twist functor props X 2} is similar. The last statement follows from the functorial triangle in \ref{twist triangle and bounded}.  The other statements follow, as in \ref{spanning class and twist functor props Y}, by combining \ref{local twists shifting} with \ref{local-global intertwinement}.
\end{proof}

The following is the main result of this subsection.
\begin{thm}\label{thm inverse Jtwist is equiv on X}
With the global quasi-projective flops setup $X\to X_{\con}$ of \ref{flopsglobal}, 
\[
\Tconp^*\colon\Db(\coh X)\to\Db(\coh X)
\]
is an equivalence.  If furthermore $U_p$ is $\mathds{Q}$-factorial then the same is true for $\Tfibp^*$.
\end{thm}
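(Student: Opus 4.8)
The strategy is to transfer the equivalence property from $\projenv$ to $X$, using that $X$ is an open subscheme of $\projenv$ via $k\colon X\hookrightarrow\projenv$, together with the compatibility diagrams already established. We work throughout with the functor $F:=\Tconp^*$ on $\D(\Qcoh X)$; the argument for $\Tfibp^*$ under the extra $\mathds{Q}$-factoriality hypothesis is identical, using the corresponding $\Tfibp^*$ versions of all cited results.

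First I would show that $F$ is fully faithful on $\Db(\coh X)$. As in the proof of \ref{twists autoequivalence}, by \cite[1.49]{HuybrechtsFM} it suffices to check that
\[
F\colon\Hom_{\D(X)}(a,b[i])\to\Hom_{\D(X)}(Fa,Fb[i])
\]
is bijective for $a,b$ ranging over a spanning class, and for all $i$. One must first produce a spanning class for $\Dm(\coh X)$; here the natural candidate is $\Omega_X:=\cE_J\cup{}^{\perp}\cE_J$ — note that on the quasi-projective $X$, unlike on $\projenv$, one does not have $\cE_J^\perp={}^\perp\cE_J$ available in the same clean form, so the spanning-class verification should be done using $\Ri_*$ from the Zariski-local model $U_p$ where $\cE_J$ lives, exactly as \cite[7.9]{DW1}. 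The four-case analysis then runs as in \ref{twists autoequivalence}: the case $a=b=\cE_J$ follows from the naturally commutative diagram \ref{local-global intertwinement} (using $\Ri_*$ and $\RDerived k_*$ and the known fact that $\FM{\cW_J}$ is an equivalence on $U_p$, which is part of \ref{local twists shifting} after inverting), the two mixed cases $a=\cE_J,b\in{}^\perp\cE_J$ and vice versa give zero on both sides by \ref{spanning class and twist functor props X}\eqref{spanning class and twist functor props X 1}, and the last case $a,b\in{}^\perp\cE_J$ follows because $F$ is functorially isomorphic to the identity on ${}^\perp\cE_J$, again by \ref{spanning class and twist functor props X}. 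Since by \ref{twist triangle and bounded}\eqref{twist triangle and bounded 1}\eqref{twist triangle and bounded 1 c} the functor $F$ preserves $\Db(\coh X)$, this gives full faithfulness on $\Db(\coh X)$.

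Next, to upgrade full faithfulness to an equivalence, I would invoke \ref{equiv trick}. For this we need an adjoint: by \ref{twist triangle and bounded}\eqref{twist triangle and bounded 1} the functor $F$ admits a \emph{left} adjoint $G^{\phantom{L}}_J\circ G_J^{\LA}$-style construction on $\Db(\coh X)$ — concretely, the left adjoint exists because all the functors in the triangle defining $F$ have left adjoints (restriction/extension of scalars, the tilting equivalence, and $-\otimes^{\bf L}_\Lambda\Lambda_J$ since $\Lambda_J$ has finite projective dimension), or alternatively one uses Rizzardo's theorem \ref{Alice thm}\eqref{Alice thm 1} applied on $\projenv$ and then restricts. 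With the left adjoint in hand, \ref{equiv trick} (left adjoint version) applies directly: by \ref{spanning class and twist functor props X}\eqref{spanning class and twist functor props X 1} we have $F(\cE_J)\cong\cE_J[2]$ and $F(x)\cong x$ for all $x\in{}^{\perp}\cE_J$, which are exactly the hypotheses of \ref{equiv trick} with $c=\cE_J$ and $i=2$. Hence $F$ is an equivalence on $\Db(\coh X)$.

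The main obstacle is the adjoint issue on the quasi-projective (non-proper) $X$: Rizzardo's machinery \ref{Alice thm} is stated for projective varieties, so one cannot simply invoke it on $X$. The cleanest route, which I expect to be the technical heart of the argument, is to observe that the left adjoint is already visible from the explicit description of $\Tconp^*$ in \ref{twist triangle and bounded}\eqref{twist triangle and bounded 1 b}: the triangle $\Tconp^*(x)\to x\to G^{\phantom{L}}_J\circ G_J^{\LA}(x)\to$ exhibits $\Tconp^*$ as built from functors between $\Db(\coh X)$ and $\Db(\mod\AB_J)$ each of which has an explicit left adjoint (the self-injectivity of $\AB_J$ from \ref{pd for N and ext}\eqref{pd for N and ext 3}, respectively \ref{pd thm for fibre}, is precisely what makes the $\AB_J$-side duality work), so that $\Tconp^*$ itself has a left adjoint on $\Db(\coh X)$ without appeal to properness. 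Once this adjoint is produced, everything else is a direct application of the already-established local-to-global intertwinement and the shifting behaviour on the spanning class.
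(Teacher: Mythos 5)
Your overall strategy --- reduce to the equivalence on $\projenv$ established in \ref{twists autoequivalence} and then use \ref{equiv trick} with a left adjoint --- is the right one, and in passing you do gesture at the actual mechanism ("one uses Rizzardo's theorem on $\projenv$ and then restricts"). However there are two substantive gaps in the way you execute it.

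First, on full faithfulness: you propose a spanning-class argument on $\Dm(\coh X)$ with $\Omega_X := \cE_J\cup{}^\perp\cE_J$, noting yourself that the equality $\cE_J^\perp={}^\perp\cE_J$ is not available on the non-proper $X$. This is a real obstruction, not merely an inconvenience: the argument for $\cE_J^\perp={}^\perp\cE_J$ in \ref{Y not Goren A} runs through Grothendieck duality against $h^!\mathbb{C}$, which requires $Z$ projective, so the statement (and even the weaker fact that $\Omega_X$ is a spanning class) is simply not established on $X$; the citation to \cite[7.9]{DW1} is to a lemma that is itself only proved on the projective compactification. The paper avoids the issue entirely: it shows $F$ is fully faithful on $\Db(\coh X)$ by observing that $\RDerived k_*$ is fully faithful (open immersion), $\projenvtwist$ is already known to be an equivalence on $\projenv$ by \ref{QcohY compact}, and the commutative square in \ref{local-global intertwinement} transports this back. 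No spanning class on $X$ is needed at any point.

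Second, on the left adjoint: your principal route --- that $\Tconp^*$ has a left adjoint because "all the functors in the triangle defining $F$" have left adjoints --- does not actually go through. For a triangle of Fourier--Mukai functors $F\to\Id\to G_J\circ G_J^{\LA}\to$, having adjoints of the last two terms does not formally yield an adjoint of $F$; one must pass to the level of kernels and dualize against a relative dualizing object, which is precisely what Rizzardo's results make available, and only on the projective $\projenv$. The paper circumvents this by observing $F\cong k^!\circ\projenvtwist\circ\RDerived k_*$ (using $k^!\RDerived k_*\cong\Id$ and \ref{local-global intertwinement}); since $\projenvtwist$ is an equivalence its left adjoint coincides with its right adjoint $\Tconp$, and one checks directly that $F^{\LA}\cong k^*\circ\Tconp\circ\RDerived k_*$. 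Finally you must verify, as \ref{equiv trick} demands, that this $F^{\LA}$ preserves $\Db(\coh X)$; this is not automatic and you do not address it. The paper proves it by applying the functorial triangle for $\Tconp$ from \ref{Y funct triang} to $\RDerived k_*x$, pulling back along $k^*$, and identifying the left-hand term with $\RHom_X(\cE_J,x)\otimes^{\bf L}_{\AB_J}\cE_J$ using the projection formula, so that two-out-of-three applies.
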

\begin{proof}
Set $F:=\Tconp^*$ on $X$, and set $\projenvtwist:=\Tconp^*$ on $\projenv$.   First, $F$ preserves $\Db(\coh X)$ by \ref{twist triangle and bounded}\eqref{twist triangle and bounded 1}\eqref{twist triangle and bounded 1 c}. Then by \ref{local-global intertwinement}, for all $a,b\in\Db(\coh X)$ the diagram
\[
\begin{array}{c}
\opt{10pt}{\begin{tikzpicture}}
\opt{12pt}{\begin{tikzpicture}[scale=1.2]}
\node (a1) at (0,0) {$\Hom_{\Db(\coh X)}(a,b)$};
\node (b1) at (0,-1.5) {$\Hom_{\Db(\coh X)}(Fa,Fb)$};
\node (a2) at (5.5,0) {$\Hom_{\D(\projenv)}(\RDerived k_* a,\RDerived k_* b)$};
\node (b2) at (5.5,-1.5) {$\Hom_{\D(\projenv)}(\projenvtwist\RDerived k_* a,\projenvtwist\RDerived k_* b)$};
\draw[->] (a1) to node[above] {$\scriptstyle \RDerived k_*$} node[below] {$\scriptstyle \sim$} (a2);
\draw[->] (b1) to node[above] {$\scriptstyle \RDerived k_*$}  node[below] {$\scriptstyle \sim$} (b2);
\draw[->] (a1) to node[left] {$\scriptstyle F$} (b1);
\draw[->] (a2) to node[right] {$\scriptstyle \projenvtwist$} (b2);
\end{tikzpicture}
\end{array}
\]
commutes, where the upper and lower morphisms are isomorphisms since $k$ is an open immersion, and the right-hand morphism is an isomorphism by \ref{QcohY compact}.  It follows that the left-hand morphism is an isomorphism, and so $F$ is fully faithful.

To show that $F$ is an equivalence, by \ref{equiv trick} and \ref{spanning class and twist functor props X} it suffices to show that $F$ possesses a left adjoint that preserves $\Db(\coh X)$.  But since $\RDerived k_*$ is fully faithful, $k^!\RDerived k_*\cong \Id$, so by~\ref{local-global intertwinement}  
\[
F\cong k^!\circ \projenvtwist\circ\RDerived k_*.
\]
Now $\projenvtwist$ is an equivalence by \ref{twists autoequivalence}, so its left adjoint coincides with its right adjoint, which is $\Tconp$ by \ref{adjoint on minus}.  Thus $F$ has a left adjoint on $\D(\Qcoh X)$, given by
\begin{eqnarray}
F^{\LA}\cong k^*\circ \Tconp \circ\RDerived k_*.\label{temp formula for RA}
\end{eqnarray}
We claim that $F^{\LA}$ preserves $\Db(\coh X)$, so let $x\in\Db(\coh X)$.  Applying the functorial triangle in \ref{Y funct triang} to $\RDerived k_*x$, then applying $k^*$,  results in the triangle
\begin{eqnarray}
k^*(\RHom_{\projenv}(\cE_J,\RDerived k_*x)\otimes^{\bf L}_{\AB_J}\cE_J)\to x\to F^{\LA}(x)\to,\label{key for DB}
\end{eqnarray}
where we have used $k^*\RDerived k_*\cong \Id$ since $\RDerived k_*$ is fully faithful, and also \eqref{temp formula for RA}.  
But
\[
\RHom_{\projenv}(\cE_J,\RDerived k_*x)\otimes^{\bf L}_{\AB_J}\cE_J
\cong 
\RDerived k_*(\RHom_{X}(\cE_J,x)\otimes^{\bf L}_{\AB_J}\cE_J),
\]
so again using $k^*\RDerived k_*\cong \Id$ we see that the left-hand term in \eqref{key for DB} is isomorphic to $\RHom_X(\cE_J,x)\otimes^{\bf L}_{\AB_J}\cE_J$, which clearly belongs to $\Db(\coh X)$.  By two-out-of-three applied to \eqref{key for DB}, we deduce that $F^{\LA}$ preserves $\Db(\coh X)$, and so we are done.
\end{proof}

Since $\Tconp^*\colon\Db(\coh X)\to\Db(\coh X)$ is an equivalence, we define $\Tconp$ to be its right adjoint, and likewise for $\Tcon$.  Similarly if $U_p$ is $\mathds{Q}$-factorial, $\Tfibp^*$ is an equivalence on $X$, so we define $\Tfibp$ to be its right adjoint.  The following is a completely standard application of adjoint functors of equivalences, together with \eqref{key for DB}.

\begin{cor}\label{twists autoequivalence B}
With the global quasi-projective flops setup $X\to X_{\con}$ of \ref{flopsglobal}, 
\begin{enumerate}
\item\label{twists autoequivalence B 1} For any $J\subseteq\{1,\hdots,n_p\}$, the $J$-twist
\[
\Tconp\colon\Db(\coh X)\to\Db(\coh X)
\]
is an equivalence. If $U_p$ is $\mathds{Q}$-factorial then the same is true for $\Tfibp$.
\item\label{twists autoequivalence B 2} When the twists are defined, there are functorial triangles
\begin{align*}
&\RHom_X(\cE_J,x)\otimes^{\bf L}_{\AB_J}\cE_J \to x\to \Tconp(x)\to\\
&\RHom_X(\cE_{\fib},x)\otimes^{\bf L}_{\CAR}\cE_{\fib}\to x\to \Tfibp(x)\to.
\end{align*}
\item\label{twists autoequivalence B 3} For a general $J$ as in \ref{twists definition}, $\Tcon$ is an equivalence, and if $X$ is $\mathds{Q}$-factorial then so is $\Tfib$.
\end{enumerate}
\end{cor}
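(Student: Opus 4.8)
The statement of \ref{twists autoequivalence B} is essentially a bookkeeping consequence of the work already done. For parts \eqref{twists autoequivalence B 1} and \eqref{twists autoequivalence B 2} with $J$ a subset of the curves above a single point $p$, we have already shown in \ref{thm inverse Jtwist is equiv on X} that the inverse $J$-twist $\Tconp^*$ is an equivalence of $\Db(\coh X)$; its right adjoint is therefore also an equivalence, and this right adjoint is by definition $\Tconp$. The same argument applies verbatim to $\Tfibp^*$ and $\Tfibp$ when $U_p$ is $\mathds{Q}$-factorial. This gives \eqref{twists autoequivalence B 1}. For \eqref{twists autoequivalence B 2}, the functorial triangle for $\Tconp$ on the projective envelope $\projenv$ was established in \ref{Y funct triang}, and the triangle \eqref{key for DB} appearing in the proof of \ref{thm inverse Jtwist is equiv on X} is precisely the restriction of that triangle along the open immersion $k\colon X\hookrightarrow\projenv$: applying $\RDerived k_*$ to the left-hand term of \eqref{key for DB} and using the projection formula together with $k^*\RDerived k_*\cong\Id$ identifies $F^{\LA}(x)$ with $\Tconp(x)$, exactly as needed. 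The fibre version is identical, using the $\Tfibp$ analogue of \ref{Y funct triang}. So the only genuinely new content is part \eqref{twists autoequivalence B 3}.

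For \eqref{twists autoequivalence B 3}, recall from \ref{twists definition} that for a general collection $J = (J_1,\ldots,J_t)$ the global twist is defined as the composite $\Tcon = \Tconpi{1}\circ\cdots\circ\Tconpi{t}$, where $\Tconpi{i}$ is the $J_i$-twist associated to the point $p_i$. By part \eqref{twists autoequivalence B 1}, each factor $\Tconpi{i}$ is an equivalence of $\Db(\coh X)$ (here one uses that the $\mathds{Q}$-factoriality of $X$, in the fibre-twist case, implies $\mathds{Q}$-factoriality of each $U_{p_i}$, since $\mathds{Q}$-factoriality can be checked on stalks of closed points and $U_{p_i}$ is an open subscheme of $X$). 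A composite of equivalences is an equivalence, so $\Tcon$ and $\Tfib$ are equivalences of $\Db(\coh X)$.

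The plan is therefore: first record that \eqref{twists autoequivalence B 1} and \eqref{twists autoequivalence B 2} follow from \ref{thm inverse Jtwist is equiv on X}, \ref{Y funct triang} and the triangle \eqref{key for DB}, spelling out the projection-formula identification of the restricted triangle only to the extent needed; then deduce \eqref{twists autoequivalence B 3} by composing the single-point equivalences, noting the descent of $\mathds{Q}$-factoriality from $X$ to the $U_{p_i}$. The one point requiring a word of care is that the definition of the general twist in \ref{twists definition} presupposes the order of composition is immaterial, which is \ref{commute remark}; but for the mere statement that $\Tcon$ is an equivalence the order is irrelevant, so this causes no difficulty. I do not expect any real obstacle here: all the analytic input (existence of adjoints via \cite{Alice}, perfectness of $\cE_J$, the spanning-class argument) has already been expended in proving \ref{twists autoequivalence} and \ref{thm inverse Jtwist is equiv on X}, and what remains is the formal passage from ``$\Tconp^*$ is an equivalence'' to ``its adjoint $\Tconp$ is an equivalence, fits in the stated triangle, and composites of such are equivalences''.
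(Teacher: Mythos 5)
Your proposal is correct and follows essentially the same route as the paper, which disposes of this corollary in a single sentence as "a completely standard application of adjoint functors of equivalences, together with \eqref{key for DB}." You correctly observe that: \eqref{twists autoequivalence B 1} is immediate since $\Tconp$ is by definition the right adjoint of the equivalence $\Tconp^*$ established in \ref{thm inverse Jtwist is equiv on X}; \eqref{twists autoequivalence B 2} is the triangle \eqref{key for DB} once one identifies $F^{\LA}$ with $\Tconp$ on $X$ (valid because $F=\Tconp^*$ is an equivalence, so its left and right adjoints coincide, and the right adjoint is $\Tconp$ by definition); and \eqref{twists autoequivalence B 3} is a composite of equivalences. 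Your only slight imprecision is in attributing the identification $F^{\LA}(x)\cong\Tconp(x)$ to the projection formula and $k^*\RDerived k_*\cong\Id$; in fact those ingredients simplify the \emph{first} term of \eqref{key for DB} to $\RHom_X(\cE_J,x)\otimes^{\bf L}_{\AB_J}\cE_J$, while the identification of $F^{\LA}$ with $\Tconp$ on $X$ is purely the adjoint-of-an-equivalence observation. This is a matter of bookkeeping, not a gap, and does not affect the validity of the argument.
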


The following result is used in the next subsection to compare autoequivalences.

\begin{cor}\label{cor Jtwist and pushdown} The $J$-twist $\Tconp$ commutes with the pushdown $\RDerived f_*$, i.e.
\[\RDerived f_* \circ \Tconp \cong \RDerived f_*.\]
\end{cor}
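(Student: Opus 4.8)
The plan is to deduce the identity $\RDerived f_*\circ\Tconp\cong\RDerived f_*$ from the functorial triangle in \ref{twists autoequivalence B}\eqref{twists autoequivalence B 2}, namely
\[
\RHom_X(\cE_J,x)\otimes^{\bf L}_{\AB_J}\cE_J \to x\to \Tconp(x)\to,
\]
by showing that $\RDerived f_*$ kills the left-hand term. Applying the exact functor $\RDerived f_*$ to this triangle yields
\[
\RDerived f_*\big(\RHom_X(\cE_J,x)\otimes^{\bf L}_{\AB_J}\cE_J\big)\to \RDerived f_*(x)\to \RDerived f_*(\Tconp x)\to,
\]
so it suffices to prove that $\RDerived f_*\big(\RHom_X(\cE_J,x)\otimes^{\bf L}_{\AB_J}\cE_J\big)=0$ for all $x\in\Db(\coh X)$, and then to check that the resulting isomorphism $\RDerived f_*(x)\xrightarrow{\sim}\RDerived f_*(\Tconp x)$ is the one induced by the natural map $x\to\Tconp(x)$, so that it is functorial in $x$.

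First I would reduce to a local statement. The object $\cE_J$ is supported set-theoretically on the curves $C_j$, $j\in J$, all of which lie in a single fibre $f^{-1}(p)$; more precisely, by \ref{universal and canonical}\eqref{universal and canonical 1}, $\cE_J=\RDerived i_*\cE_J$ is the pushforward from the open set $U_p$ of a perfect complex with a finite filtration by the sheaves $\cO_{C_j}(-1)$. Hence the complex $\RHom_X(\cE_J,x)\otimes^{\bf L}_{\AB_J}\cE_J$ is also supported on $\bigcup_{j\in J}C_j\subseteq f^{-1}(p)$, and computing its pushforward under $f$ is the same as computing the pushforward under $f|_{U_p}\colon U_p\to\Spec R$ of the corresponding Zariski-local object. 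Passing through the derived equivalence $\RHom_{U_p}(\cV,-)\colon\Db(\coh U_p)\xrightarrow{\sim}\Db(\mod\Lambda)$ of \eqref{derived equivalence}, the sheaf $\cE_J$ corresponds to the $\Lambda$-module $\mathbb{F}\AB_J$ (by \ref{TandSnotation}), and $\RDerived f_*$ corresponds to the functor $\RHom_\Lambda(\Lambda e_0,-)$ picking out the idempotent $e_0$ attached to the summand $R$ (equivalently, $-\otimes^{\bf L}_\Lambda \Lambda e_0$, using $\Lambda\cong\End_R(R\oplus L)$ and that $\RDerived f_*$ is $\RHom$ into the structure sheaf). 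Thus the claim becomes: $(\mathbb{F}\AB_J)\cdot e_0=0$, i.e.\ the idempotent $e_0$ annihilates $\AB_J$ on the appropriate side.

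That last assertion is immediate from the definitions: $\AB_J=\End_{\mathfrak{R}}(N)/[N_{J^c}]$ was defined by killing the two-sided ideal generated by morphisms factoring through $N_{J^c}$, and $N_0=\mathfrak{R}$ is a summand of $N_{J^c}$ since $0\notin J$ for $J\subseteq\{1,\dots,n_p\}$; hence the image of $e_0$ in $\AB_J$ is zero, so $\AB_J e_0=0$ and likewise $e_0\AB_J=0$. Consequently $\RHom_X(\cE_J,x)\otimes^{\bf L}_{\AB_J}\cE_J$, being built (after transport through $\mathbb{F}$ and the tilting equivalence) out of $\AB_J$-modules, has vanishing $e_0$-component, so its image under $\RDerived f_*$ is zero. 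The functoriality in $x$ is automatic because the triangle in \ref{twists autoequivalence B}\eqref{twists autoequivalence B 2} is functorial, so $\RDerived f_*$ applied to it gives a functorial isomorphism $\RDerived f_*\cong\RDerived f_*\circ\Tconp$. I expect the only genuine point needing care is the bookkeeping that identifies $\RDerived f_*$ with the $e_0$-idempotent functor under the chain of equivalences of \eqref{notation summary} and the compatibility of $\RDerived f_*$ with the open immersion $i\colon U_p\hookrightarrow X$ (so that one may indeed reduce to $U_p$); both are standard, the former being the statement that $\RHom_R(R,-)=\RHom_\Lambda(\Lambda e_0,-)$ under $\Lambda=\End_R(R\oplus L)$, but it is where the argument must be written out carefully. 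Everything else is formal.
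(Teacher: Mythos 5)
Your proposal is correct, and its overall strategy is the same as the paper's: apply $\RDerived f_*$ to the functorial triangle of \ref{twists autoequivalence B}\eqref{twists autoequivalence B 2} and show the first term is killed. Where you differ is in how the key vanishing is established. The paper (following \cite[7.15]{DW1}) observes directly that $\cE_J$ is filtered by the sheaves $E_j=\cO_{C_j}(-1)$, each of which satisfies $\RDerived f_* E_j=0$, and hence $\RDerived f_*\cE_J=0$; the vanishing of $\RDerived f_*\bigl(\RHom_X(\cE_J,x)\otimes^{\bf L}_{\AB_J}\cE_J\bigr)$ then follows formally (e.g.\ by commuting $\RDerived f_*$ past the tensor over the constant algebra $\AB_J$). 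You instead transport everything through the tilting equivalence, identify $\RDerived f_*$ with the $e_0$-component functor on $\Lambda$-modules, and note that $e_0$ dies in $\AB_J$ since $0\notin J$, so the $\Lambda$-action of $e_0$ on $N\otimes^{\bf L}_{\AB_J}\mathbb{F}\AB_J$ (which factors through $\mathbb{F}\AB_J$) is identically zero. The two observations are equivalent under the derived equivalence: $\RDerived f_*\cE_J=0$ is precisely $\mathbb{F}\AB_J\cdot e_0=0$. Your version is a bit more algebraic and avoids the devissage on $\cE_J$, at the cost of the morita and localisation bookkeeping that you correctly flag as the part that must be written carefully; the paper's version is shorter and stays geometric. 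Both are valid, and the final functoriality step is handled identically. One minor slip: $\RHom_\Lambda(\Lambda e_0,-)$ should be phrased more carefully (for right $\Lambda$-modules $M$ the relevant functor is $M\mapsto Me_0\cong\Hom_\Lambda(e_0\Lambda,M)$), but this is exactly the bookkeeping you already acknowledge and it does not affect the conclusion.
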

\begin{proof}This is shown by the method of \cite[7.15]{DW1}. The key point is that $\cE_J$ is filtered by the $E_j$, and so $\RDerived f_* \cE_J = 0$.
\end{proof}

\section{The Autoequivalence Group}
\label{section auto group}

In this final section we complete our study of the $J$-twist and fibre twist autoequivalences defined in \S\ref{ZLT and GT section}. We show first that the $J$-twist gives an intrinsic characterisation of the inverse of the flop--flop functor, when the latter exists. This result is used to produce, in \ref{global main result action}, a group action on the derived category in the general case where individual curves are not necessarily floppable. Finally in \S\ref{Section6.2} we compare the two twists, and characterise circumstances in which they are conjugate.

\subsection{$J$-Twists and Group Actions}\label{Section6.1}

The input to this subsection is a flopping contraction in the global quasi-projective flops setup $f\colon X\to X_{\con}$ of \ref{flopsglobal}. We will sometimes restrict the singularities of $X$ to be Gorenstein terminal, but we will only do this if we need to ensure that the flop functor is an equivalence \cite{Chen}.

\begin{prop}\label{twist vs flopflop}
With the global quasi-projective flops setup of \ref{flopsglobal}, suppose that $X$ has only Gorenstein terminal singularities and a subset $J$ of the exceptional curves flops algebraically (e.g.\ when $J$ equals all exceptional curves).  Then 
 $\Tcon \circ (\flop_J\circ\flop_J) \functcong \Id$.
\end{prop}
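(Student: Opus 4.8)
The plan is to verify the isomorphism $\Tcon\circ(\flop_J\circ\flop_J)\functcong\Id$ by the now-familiar skyscraper-tracking argument, checking that both sides agree on a rich enough class of objects together with compatibility with the pushdown $\RDerived f_*$. First I would reduce to the case of a single point $p\in\noniso f$ above which the curves in $J$ sit, since both $\Tcon$ and the flop functors are compositions over the points of the non-isomorphism locus, and these compositions commute (see \ref{commute remark} and \ref{cor Jtwist and pushdown}); so it suffices to treat $\Tconp\circ(\flop_J\circ\flop_J)$. The key input is that, because $\bigcup_{j\in J}C_j$ flops algebraically, we are in the setting where the Bridgeland--Chen flop--flop functor $\flop_J\circ\flop_J$ exists and is an autoequivalence of $\Db(\coh X)$.

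\textbf{Key steps.} The core computation is to identify $\flop_J\circ\flop_J$ on the formal fibre with the square of a mutation functor. Complete locally, flopping $\bigcup_{j\in J}C_j$ corresponds to mutating $N$ at $N_J$ (this is the case $J\subseteq\{1,\dots,n\}$ handled in \S\ref{mut of CM section}, where \ref{mu=nu for hyper}\eqref{mu=nu for hyper 1} gives $\upnu_J\upnu_J N\cong N$, so the mutation functor squares to an autoequivalence). By \ref{mutation Morita diagram} and the compatibility of mutation with the flop functor \cite[4.2]{HomMMP}, $\flop_J^{-1}$ corresponds across the derived equivalences to the mutation functor $\Upphi_J$, so $\flop_J^{-1}\circ\flop_J^{-1}$ corresponds to $\Upphi_J\circ\Upphi_J$. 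Now \ref{mut mut twist complete}\eqref{mut mut twist complete 1} identifies $\Upphi'_J\circ\Upphi'_J\cong\RHom_{\widehat\Lambda}(\widehat I_J,-)$, which is exactly the formal-fibre incarnation of the inverse $J$-twist $\Tconp^*$ via \ref{track Lambda J Prop2}\eqref{Zar alg 1} and the intertwinement \ref{local-global intertwinement}. So on the formal fibre we obtain $\flop_J^{-1}\circ\flop_J^{-1}\functcong\Tconp^*$; dualizing (taking right adjoints, using that all the functors involved are equivalences) gives $\flop_J\circ\flop_J\functcong\Tconp$, equivalently $\Tconp\circ(\flop_J\circ\flop_J)^{-1}\functcong\Id$ — but here I must be careful about variance, so let me instead say: $\Tconp^*$ and $\flop_J^{-1}\circ\flop_J^{-1}$ agree as functors on $\Db(\coh X)$, hence their inverses $\Tconp$ and $\flop_J\circ\flop_J$ agree, and therefore $\Tconp\circ(\flop_J\circ\flop_J)\functcong(\Tconp)\circ(\Tconp)$... this is wrong. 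The correct formulation is that $\Tcon$ is the inverse of $\flop_J\circ\flop_J$, so I would track skyscrapers to see that $\Tcon\circ(\flop_J\circ\flop_J)$ fixes all $\cO_x$, fixes $\cO_X$, and commutes with $\RDerived f_*$ (using \ref{cor Jtwist and pushdown} for the $\Tcon$ factor and the standard compatibility of flop functors with pushdown), and conclude it is $\functcong\Id$ by the usual argument.

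\textbf{The main obstacle.} The delicate point is the precise matching of the $J$-twist $\Tcon$ — constructed via the bimodule $I_J$ and a gluing/Fourier--Mukai procedure in \S\ref{ZLT and GT section} — with the flop--flop functor, which is a priori defined by a completely different Fourier--Mukai kernel (the structure sheaf of the fibre product). The functorial triangle in \ref{twists autoequivalence B}\eqref{twists autoequivalence B 2} characterizes $\Tconp$ via the universal sheaf $\cE_J$; one must check that the flop--flop functor sits in a compatible triangle, or more robustly, track skyscrapers $\cO_x$: for $x\notin U_p$ all three functors act trivially, while for $x\in U_p$ one reduces complete locally (as in \ref{braid 2 curve local}), where $\flop_J^{-1}\circ\flop_J^{-1}$ becomes $\Upphi_J\circ\Upphi_J$ and one applies the shifting behaviour on $\cE_J$, $E_j$ from \ref{mut mut twist complete}\eqref{mut mut twist complete 1}(a),(b) matched against \ref{spanning class and twist functor props X}\eqref{spanning class and twist functor props X 1}. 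The bookkeeping of the $[-2]$ shifts and verifying that $\flop_J\circ\flop_J$ correctly inverts them (so that the composite acts as the identity, not a shift, on skyscrapers) is where care is needed; once skyscrapers, the structure sheaf, and $\RDerived f_*$-compatibility are all checked, the Fourier--Mukai rigidity argument of \cite[Section 7]{DW1} closes the proof.
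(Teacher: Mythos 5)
Your direct strategy contains a variance error that you notice but never correctly diagnose, which is why you abandon it for the fallback. The correct fix is that $\RHom_{\widehat\Lambda}(\widehat{I}_J,-)\cong\Upphi'_J\circ\Upphi'_J$ is the formal-fibre incarnation of $\Tconp$, \emph{not} of $\Tconp^*$. Tracing through \S\ref{ZLT section}: the functor \eqref{Zar alg 1} is $\RHom_\Lambda(I_J,-)$, whose FM kernel $\cW'_J$ comes from the \emph{dualized} ideal sequence \eqref{Zariski local kernel triangles1a}; this sits in exactly the same functorial triangle as $\Tconp$ (compare the display after \eqref{Zariski local kernel triangles1b} with \ref{twists autoequivalence B}\eqref{twists autoequivalence B 2}). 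By contrast $\Tconp^*$ is built from the non-dualized kernel $\cW_J$ of \eqref{Zariski local kernel triangles1a inverse}, i.e.\ from $-\otimes^{\bf L}_\Lambda I_J$. Had you used the right side of the adjoint pair, the identification $\flop_J^{-1}\circ\flop_J^{-1}\cong\Upphi_J\circ\Upphi_J\cong\Tconp$ on the formal fibre is exactly the content of the proposition (no contradiction, no $\Tcon\circ\Tcon$), and the remaining task is to globalize this rigidity-style.

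Even with that correction, your fallback is not the paper's argument and is imprecise in ways that would leave gaps. The paper does not track skyscrapers. It writes $\Psi:=\Tcon\circ(\flop_J\circ\flop_J)$ and verifies: $\Tcon(E_j)\cong E_j[-2]$ for $j\in J$ (using \ref{spanning class and twist functor props X}\eqref{spanning class and twist functor props X 1} together with perpendicularity for curves above other points), and $(\flop_J\circ\flop_J)(E_j)\cong E_j[2]$ using Toda's explicit description of the flop functor on $\cO_{C_j}(-1)$ with the sign correction of \cite[Appendix~B]{TodaGV} --- a nontrivial input that your sketch never mentions. The shifts cancel so $\Psi$ fixes $\{E_j\}$, which generates $\cC_g$ for the \emph{intermediate} contraction $g\colon X\to X_J$; since $\Psi$ also commutes with $\RDerived g_*$ it preserves $\Per(X,X_J)$, and then $\Psi(\cO_X)\cong\cO_X$ together with the rigidity result of \cite[7.18]{DW1} finishes. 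Two of your specific points are off: the relevant pushdown is $\RDerived g_*$ (not $\RDerived f_*$), because it is $\Per(X,X_J)$, not $\Per(X,X_{\con})$, that is preserved; and your "track skyscrapers $\cO_x$" step substitutes references to shifts of $\cE_J$ and $E_j$, which are not skyscrapers --- that is exactly the paper's $E_j$-tracking, not a skyscraper argument. A genuine skyscraper argument for multi-curve $J$ would require running moduli tracking through $\Upphi_J\circ\Upphi_J$, but the positivity hypothesis $\upvartheta_j>0$ in \ref{moduli main} fails after the first application, so it is not as immediate as in the braiding proofs where one decomposes into single-curve mutations.
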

\begin{proof} 
Since the curves in $J$ flop algebraically, there exists some factorisation of $f$ into 
\[
X\xrightarrow{g} X_J\to X_{\con}
\]
where $g$ is a flopping contraction. First, note that $\{ E_j\mid j\in J\}$ generates
\[
\cC_g:=\{ c\in\Db(\coh X)\mid \RDerived g_*c=0\},
\]
since $\cC_g$ splits as a direct sum over the points $p$ of the non-isomorphism locus of $g$, so we may follow the argument of \cite[5.3]{KIWY}.

We next argue that $\TwistVsFlop := \Tcon \circ (\flop_J\circ \flop_J)$  preserves $\Per (X,X_J)$, since the remainder of the proof then follows exactly as in \cite[7.18]{DW1}.  For this,  note that 
\begin{eqnarray}
\Tcon (E_j) = E_j[-2]\label{J action on simples}
\end{eqnarray}
for all $j\in J$, since
\[
\begin{array}{ll}
\Tconp (E_j) = E_j[-2] &\mbox{ if }C_j\in g^{-1}(p)\\
\Tconp (E_j) = E_j &\mbox{ if }C_j\notin g^{-1}(p)
\end{array}
\]
where the top line holds by \ref{spanning class and twist functor props X}, and the bottom by \ref{twists autoequivalence B}\eqref{twists autoequivalence B 2} since if $C_j\notin g^{-1}(p)$, then $E_j\in ({i_p}_*\cE_{J_p})^\perp$. On the other hand
\begin{eqnarray}
(\flop_J\circ\flop_J)(E_j) = E_j[2]\label{FF action on simples}
\end{eqnarray}
for all $j\in J$, using the description in \cite[\S3(i)]{TodaResPub} of the action of the flop on the sheaves $E_j=\cO_{C_j}(-1)$, noting a correction given in \cite[Appendix~B]{TodaGV} to the sign of the shift in \cite{Bridgeland}.   Combining \eqref{J action on simples} and \eqref{FF action on simples}, we find that $\TwistVsFlop$ fixes the set $\{ E_j\mid j\in J\}$ and thus preserves $\cC_g$.

Next we note that $\TwistVsFlop$ commutes with the pushdown $\RDerived g_*$. This follows because $\Tconp$ commutes with the pushdown by \ref{cor Jtwist and pushdown}, and the flop--flop commutes with the pushdown exactly as in \cite[7.16(1)]{DW1}.  It then follows by the argument of \cite[7.17]{DW1} that $\TwistVsFlop$ preserves $\Per (X,X_J)$. 

Finally, by combining as in \cite[7.16(2)]{DW1}, we have that $\TwistVsFlop (\cO_X) \cong \cO_X$. Using this, and the fact that $\TwistVsFlop$ commutes with the pushdown $\RDerived g_*$ as noted above, the proof now follows just as in \cite[7.18]{DW1}.
\end{proof}

We next give a group action for any algebraic flopping contraction in the global setup of \ref{flopsglobal}.

\begin{prop}\label{sky off curves}
Under the Zariski local setup \ref{flopslocal}, or the global setup of \ref{flopsglobal}, 
\begin{enumerate}
\item\label{sky off curves 1} $\Tcon(\cO_x)\cong\cO_x$ for all $x\notin \bigcup_{j\in J} C_j$. 
\item\label{sky off curves 2} Choose $p\in \noniso f$ and a collection $J_1,\hdots,J_m$ of subsets of $\{1,\hdots,n_{p}\}$.  If the twists $\Tconpip{1},\hdots,\Tconpip{m}$ satisfy some relation on the formal fibre above $p$, then they satisfy the same relation Zariski locally.
\end{enumerate}
\end{prop}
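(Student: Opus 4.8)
The plan is to treat the two parts in turn, with part \eqref{sky off curves 1} used to bootstrap the local-to-Zariski-local comparison in part \eqref{sky off curves 2}. For \eqref{sky off curves 1}, fix a closed point $x\notin\bigcup_{j\in J}C_j$, and pick the point $p\in\noniso f$ whose fibre contains the relevant curves (all other twist factors $\Tconpip{i}$ for $i\neq p$ fix $\cO_x$ trivially, since their kernels are supported over a disjoint part of $X_{\con}$, so by \ref{commute remark} it suffices to treat one point at a time). Now apply the functorial triangle of \ref{twists autoequivalence B}\eqref{twists autoequivalence B 2} to $\cO_x$:
\[
\RHom_X(\cE_J,\cO_x)\otimes^{\bf L}_{\AB_J}\cE_J \to \cO_x\to \Tconp(\cO_x)\to.
\]
Since $\cE_J$ is, by \ref{universal and canonical}\eqref{universal and canonical 1}, filtered by the sheaves $E_j=\cO_{C_j}(-1)$ for $j\in J$, it is set-theoretically supported on $\bigcup_{j\in J}C_j$. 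As $x$ lies off this locus, $\RHom_X(\cE_J,\cO_x)=0$, and so the leftmost term of the triangle vanishes, giving $\Tconp(\cO_x)\cong\cO_x$ as required. The general $J$-twist of \ref{twists definition} is then a composition of such factors, so $\Tcon(\cO_x)\cong\cO_x$.

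For \eqref{sky off curves 2}, the idea is the standard reduction of a functorial relation on $\Db(\coh U)$ to the formal fibre, point by point. Suppose the twists $\Tconpip{1},\hdots,\Tconpip{m}$ satisfy a relation of the form $\Uppsi\functcong\Id$ on the formal fibre $\mathfrak{U}\to\Spec\mathfrak{R}$ above $p$, where $\Uppsi$ is the corresponding composition of Zariski local twists on $U$; we must show $\Uppsi\functcong\Id$ on $\Db(\coh U)$. By construction each Zariski local twist is a Fourier--Mukai functor whose kernel is supported on $U\times_R U$, and in particular, exactly as in the proofs of \ref{braid 2 curve local} and \ref{twists autoequivalence B}, it commutes with the pushdown $\RDerived f_*$ (by \ref{cor Jtwist and pushdown}), fixes the structure sheaf $\cO_U$, and — being built from the same bimodule sequences \eqref{Icon ses 1J}, \eqref{Ifib ses 1} — agrees after completion at $\m$ with the corresponding composition of complete local twists. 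By part \eqref{sky off curves 1}, $\Uppsi(\cO_x)\cong\cO_x$ for all $x\notin\bigcup_j C_j$; for $x\in\bigcup_j C_j$, applying $-\otimes_R\widehat R$ and using that $\RHom_\Lambda(I,M)\otimes_R\widehat R\cong\RHom_{\widehat\Lambda}(\widehat I,\widehat M)$ for $M$ supported at $\m$ (as in \ref{Zar local alg twist} and \ref{track Lambda J Prop2}), the assumed complete local relation forces $\Uppsi(\cO_x)\cong\cO_x$. Thus $\Uppsi$ is a Fourier--Mukai autoequivalence of $\Db(\coh U)$ that sends skyscrapers to skyscrapers, fixes $\cO_U$, and commutes with $\RDerived f_*$; by the now-familiar argument (as used in \ref{braid 2 curve local}, ultimately going back to \cite[7.16--7.18]{DW1}), any such functor is naturally isomorphic to the identity. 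Hence $\Uppsi\functcong\Id$ Zariski locally. The same argument applies verbatim to any relation $\Uppsi_1\functcong\Uppsi_2$ by considering $\Uppsi_1\circ\Uppsi_2^{-1}$, so an arbitrary relation descends.

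The main obstacle is the $x\in\bigcup_j C_j$ case of the skyscraper-tracking step in part \eqref{sky off curves 2}: one needs that a functorial isomorphism holding complete locally, i.e.\ after $-\otimes_{R_\m}\widehat{R_\m}$, can be tested on the finitely many skyscrapers over $\m$ and then propagated to a genuine natural isomorphism on $\Db(\coh U)$. This is exactly the mechanism already deployed in \ref{braid 2 curve local} and throughout \S\ref{ZLT and GT section}, relying on faithful flatness of completion together with the support of the relevant Ext-sheaves being concentrated at $\m$; so while it is the technical heart of the argument, it requires no genuinely new input beyond assembling these standard facts. Everything else — the vanishing $\RHom_X(\cE_J,\cO_x)=0$ off the curves, and the bookkeeping over the distinct points of $\noniso f$ via \ref{commute remark} — is routine.
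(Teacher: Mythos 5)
Your proposal is correct and takes essentially the same route as the paper: part (1) via the functorial triangle of \ref{twists autoequivalence B}\eqref{twists autoequivalence B 2} together with the vanishing $\RHom(\cE_J,\cO_x)=0$ off the support, and part (2) by tracking skyscrapers, using part (1) off the curves, passing to the formal fibre for skyscrapers on the curves (the base-change compatibility you cite is exactly what the paper invokes via the end of \ref{braid 2 curve local}), and then concluding by the standard fact that an FM autoequivalence preserving skyscrapers, fixing $\cO_U$, and commuting with $\RDerived f_*$ is naturally isomorphic to the identity. The only cosmetic addition is your explicit remark about reducing a general relation $\Uppsi_1\functcong\Uppsi_2$ to the case $\Uppsi\functcong\Id$, which the paper leaves implicit.
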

\begin{proof}
We give the proof in the Zariski local setup  \ref{flopslocal}, since the proof in the global setup is obtained by simply replacing $U$ by $X$ throughout.\\
(1) Since $\cE_{J}$ is filtered by $\cO_{C_j}(-1)$ with $j\in J$, it follows that $\RHom_U(\cE_{J},\cO_x)=0$ for all $x\notin \bigcup_{j\in J}C_j$, since $x\notin\Supp\cE_{J}$ \cite[5.3]{BM}. Thus, the triangle
\[
\RHom_U(\cE_J,\cO_x)\otimes^{\bf L}_{\AB_J}\cE_J\to\cO_x\to\Tcon(\cO_x)\to
\]
implies that $\Tcon(\cO_x)\cong\cO_x$.\\
(2) Suppose that the relation on the formal fibre above $p$ can be written as
\begin{eqnarray}
(\Tconpip{i_1})^{a_1}\hdots(\Tconpip{i_\ell})^{a_\ell} \functcong\Id\label{word relation}
\end{eqnarray}
for some $a_1,\hdots,a_\ell\in\mathbb{Z}$ and some $i_1,\hdots,i_\ell\in\{1,\hdots,m\}$.  We again track skyscrapers.   By~\eqref{sky off curves 1}, certainly the skyscrapers not supported on $\bigcup_{i=1}^{n_p}C_i$ are fixed under the left-hand side of \eqref{word relation}, so we need only track the skyscrapers on $\bigcup_{i=1}^{n_p}C_i$.  As in the latter stages of the proof of \ref{braid 2 curve local}, we can do this by passing to the formal fibre, and by assumption  we know that the relation \eqref{word relation} holds there.  Hence on the formal fibre these skyscrapers are fixed under the left-hand side of \eqref{word relation}.  Hence overall every skyscraper $\cO_x$ gets sent to some skyscraper under the left-hand side of \eqref{word relation}, so since the twists commute with pushdown, as before it follows that the relation holds Zariski locally.  
\end{proof}

Combining \ref{commute remark} with \ref{sky off curves}\eqref{sky off curves 2}  shows that globally we can still view the $J$-twists as a subgroup of $\fundgp(\mathds{G}_{\cH})$. Since the $J$-twists are equivalences by \ref{twists autoequivalence}, this then immediately gives the following, which is our main result.

\begin{cor}\label{global main result action}
With the global quasi-projective flops setup $X\to X_{\con}$ of \ref{flopsglobal},  the subgroup $K$ of \,$\fundgp(\mathds{G}_{\cH})$ generated by the $J$-twists, as $J$ ranges over all subsets of curves, acts on $\Db(\coh X)$.
\end{cor}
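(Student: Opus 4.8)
The statement to be proved, \ref{global main result action}, asserts that the $J$-twists, as $J$ ranges over all subsets of curves, generate a subgroup $K$ of $\fundgp(\mathds{G}_\cH)$ acting on $\Db(\coh X)$. The plan is to combine three ingredients already established in the excerpt. First, each $\Tcon$ is an autoequivalence of $\Db(\coh X)$: this is exactly \ref{twists autoequivalence B}\eqref{twists autoequivalence B 1} and \eqref{twists autoequivalence B 3}. So there is certainly \emph{some} subgroup of $\Aut\Db(\coh X)$ generated by the $J$-twists; the content is to identify it with a subgroup of $\fundgp(\mathds{G}_\cH)$ (equivalently $\fundgp(\mathbb{C}^n\backslash\cH_\mathbb{C})$), i.e.\ to show that the $J$-twists satisfy at least all the relations holding among the corresponding elements of $\fundgp(\mathds{G}_\cH)$, so that there is a well-defined surjection from (a subgroup of) $\fundgp(\mathds{G}_\cH)$ onto $K$ compatible with the action.

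The key step is to pin down, for each subset $J$, which element of $\fundgp(\mathds{G}_\cH)$ the $J$-twist should correspond to: namely, the loop in $\mathbb{C}^n\backslash\cH_\mathbb{C}$ that is monodromy around the codimension-$|J|$ intersection of hyperplanes cut out by the curves in $J$. For $|J|=1$ this is monodromy around a single hyperplane and the relevant relations are automatic; for $|J|=2$ with $J=\{j_1,j_2\}$ intersecting, the relation to check is that $(\Tconpip{j_1}\Tconpip{j_2})$-type iterates traverse a loop around the codimension-two wall, and by \ref{braid 2 curve global} together with \ref{crash=flopboth} and \ref{twist vs flopflop}, the $J$-twist there is the inverse of the flop--flop functor $\flop_J\circ\flop_J$, which by the braiding result is controlled precisely by the hyperplane arrangement. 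For general $|J|$ the corresponding relations among generators of $\fundgp(\mathds{G}_\cH)$ are, by Deligne's theorem \cite[1.10, 1.12]{Deligne} as used in \ref{flopsglobalmultiple}, generated by codimension-two relations, so it suffices to verify these, and that is precisely the two-curve braiding input.

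The reduction to the formal fibre is handled by \ref{sky off curves}: part \eqref{sky off curves 1} shows $\Tcon$ fixes skyscrapers off $\bigcup_{j\in J}C_j$, and part \eqref{sky off curves 2} shows that any relation among the $\Tconpip{i}$ holding on the formal fibre above a point $p$ holds Zariski locally, and hence — by the skyscraper-tracking argument, using that the twists commute with $\RDerived f_*$ (\ref{cor Jtwist and pushdown}) and fix $\cO_X$ — globally. Combined with \ref{commute remark} (the twists at distinct points of $\noniso f$ commute, matching the product decomposition $\fundgp(\mathbb{C}^n\backslash\cH_\mathbb{C})=\bigoplus_p\fundgp(\mathbb{C}^{n_p}\backslash\cH^p_\mathbb{C})$), this identifies $K$ as a subgroup of $\fundgp(\mathds{G}_\cH)$ acting faithfully-enough to give the action on $\Db(\coh X)$. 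So the proof writes itself as: (i) each $J$-twist is an equivalence by \ref{twists autoequivalence B}; (ii) by \ref{sky off curves}\eqref{sky off curves 2} and \ref{commute remark}, relations among the $J$-twists reduce to the formal fibre and decompose over $\noniso f$; (iii) on the formal fibre the relations among the corresponding elements of $\fundgp(\mathds{G}_\cH)$ are codimension-two relations by Deligne, verified by the two-curve braiding \ref{braid 2 curve global}, \ref{crash=flopboth} and the flop--flop identification \ref{twist vs flopflop}; hence the assignment $J\mapsto$ (monodromy loop) extends to a homomorphism from the subgroup $K\le\fundgp(\mathds{G}_\cH)$ it generates, yielding the action.

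\textbf{Main obstacle.} The delicate point is making precise the correspondence between the $J$-twist and a specific monodromy element of $\fundgp(\mathbb{C}^n\backslash\cH_\mathbb{C})$ for $|J|\ge 3$, and verifying that the codimension-two relations suffice — i.e.\ that no higher relations among these particular generators are needed beyond what \ref{braid 2 curve global} supplies. This is exactly where Deligne's theorem is invoked; the subtlety is that $\{E_j\mid j\in J\}$ need not flop algebraically, so one cannot literally invoke \ref{twist vs flopflop} to identify $\Tcon$ with a flop--flop, and instead one must argue intrinsically via \ref{spanning class and twist functor props X} that $\Tcon$ acts on the relevant perverse hearts/simple objects in the way dictated by the combinatorics, and then that these combinatorics are precisely those of $\cH$.
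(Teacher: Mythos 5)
Your proposal captures the same overall mechanism as the paper's (very short) proof: reduce to the formal fibre, identify the $J$-twist there with a specific loop in $\fundgp(\mathds{G}_\cH)$ via the flop--flop functor, and then lift relations to the global setting using \ref{sky off curves}\eqref{sky off curves 2} and \ref{commute remark}, noting the $J$-twists are equivalences by \ref{twists autoequivalence B}. That is exactly the paper's argument.

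However, you have introduced two unnecessary complications, and they reflect a mild conceptual confusion that is worth flagging. First, you re-invoke Deligne's theorem \cite[1.10, 1.12]{Deligne} to argue that ``codimension-two relations suffice'' for the $J$-twists. This is not needed here and is not what Deligne's theorem is for in this context. Deligne's theorem was already used in \ref{flopsglobalmultiple} to establish the representation of the Deligne groupoid by flop functors (by checking codimension-two relations among the $\flop_i$). Once that representation exists on the formal fibre --- where all curves flop analytically, so \ref{flopsglobalmultiple} applies --- the $J$-twist is a specific composition of flop functors (the inverse flop--flop, crossing the codimension-$|J|$ wall and back), hence a specific element of the vertex group. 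It therefore \emph{automatically} satisfies every relation holding in $\fundgp(\mathds{G}_\cH)$ among the elements of $K$; there is nothing to re-verify codimension-two by codimension-two. Second, your ``main obstacle'' --- that $\bigcup_{j\in J}C_j$ need not flop algebraically, so one cannot literally invoke \ref{twist vs flopflop}, and so one should argue intrinsically via \ref{spanning class and twist functor props X} --- is not in fact an obstacle, and the intrinsic workaround you sketch is not needed. The entire point of \ref{sky off curves}\eqref{sky off curves 2} is to reduce every relation to the formal fibre, where the curves always flop and where the twist/flop--flop identification (the complete-local analogue of \ref{twist vs flopflop}, given by \ref{track Lambda J}\eqref{track Lambda J 1} and \ref{mut mut twist complete} together with \cite[4.2]{HomMMP}) is always available. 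Your step (ii) already does this correctly, so step (iii) and the ``main obstacle'' paragraph are redundant with it and muddy what is otherwise the right argument.
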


We remark that the subgroup $K$ can equal $\fundgp(\mathds{G}_{\cH})$, as the following example illustrates.  As stated in the introduction, it is unclear in what level of generality this holds, and indeed this seems to be an interesting problem, both geometrically and group-theoretically.
\begin{example}\label{whole pure braid group}
Consider an algebraic flopping contraction $X\to X_{\con}$ of two intersecting curves contracting to a $cA_n$ singularity.  In this situation the chamber structure and subgroup $K$ are illustrated as follows:
\[
\begin{array}{c}
\begin{tikzpicture}[scale=0.75,>=stealth, bend right, bend angle=25,looseness=1.5]
\coordinate (A1) at (135:2cm);
\coordinate (A2) at (-45:2cm);
\coordinate (B1) at (153.435:2cm);
\coordinate (B2) at (-26.565:2cm);
\coordinate (C1) at (161.565:2cm);
\coordinate (C2) at (-18.435:2cm);
\draw[red!30] (A1) -- (A2);
\draw[black!30] (-2,0)--(2,0);
\draw[black!30] (0,-2)--(0,2);
\draw ([shift=(50:1.5cm)]0,0) arc (50:108:1.5cm);
\draw[->] ([shift=(108:1.41cm)]0,0) arc (108:50:1.41cm);
\draw (108:1.5cm) arc (108:288:0.045cm);
\node at (79:1.7cm) {$\scriptstyle J=\{1\}$};
\draw ([shift=(40:1.5cm)]0,0) arc (40:-18:1.5cm);
\draw[->] ([shift=(-18:1.41cm)]0,0) arc (-18:40:1.41cm);
\draw (-18:1.5cm) arc (-18:-198:0.045cm);
\node at (11:2.1cm) {$\scriptstyle J=\{2\}$};
\draw  (47:1.3cm) -- (223:1.3cm);
\draw[->] (227:1.3cm) -- (43:1.3cm);
\node at (225:1.7cm) {$\scriptstyle J=\{1,2\}$};
\draw ([shift=(315:0.045cm)]225:1.3cm) arc (315:135:0.045cm);
\end{tikzpicture}
\end{array}
\]
Thus inside $\fundgp(\mathds{G}_{\cH})$, which is the pure braid group on three strands, $ K=\langle a^2, b^2, (aba)^2\rangle$ where $a$ and $b$ are the standard braids in the classical presentation of the braid group.  Now $f:=a^2ba^2b=ababab=(aba)^2\in K,$ and hence also $a^{-2}fb^{-2} = ba^2b^{-1} \in K$. But it is well-known that the pure braid group is generated by $a^2$, $b^2$, and $ba^2b^{-1}$ (see e.g.\ \cite[p5]{Brendle}), and hence $K=\fundgp(\mathds{G}_{\cH})$ in this case.
\end{example}

Similarly, it is possible to show that $K=\fundgp(\mathds{G}_{\cH})$  for the $A_3$ hyperplane arrangement.

\subsection{Conjugacy of Twists}\label{Section6.2}
It follows immediately from \ref{twist vs flopflop} that $\Tcon$ lies in the image of the homomorphism in \ref{flops global intro}. This cannot hold for $\Tfib$, as it does not commute with the pushdown along the contraction $\Rf_*$. Nevertheless we might ask whether $\Tfib$ lies in the subgroup generated by this image and twists by line bundles, and furthermore whether the two twists are ever conjugate. In the simplest case, for a smooth rational curve of type~$A$, we find that the two twists may indeed be conjugate by a certain line bundle. When such a curve is not of type~$A$, however, we show that the two twists are never related in this way. As stated in the introduction, the functor $\Tfib$ is not expected to be related to the flop--flop functor in general, instead it is expected to be the affine element in some (pure) braid action. Because of the intricacy of the combinatorics, we will return to this more general affine action in the future.

\begin{thm}\label{conj NC and fibre}
In the global quasi-projective flops setup of \ref{flopsglobal}, for a contraction of a single irreducible curve to a point $p$, where in addition $U$ is $\mathds{Q}$-factorial, there exists a functorial isomorphism
\[
\Tfib(x \otimes \conjlb) \cong \Tcon(x) \otimes \conjlb
\]
for some line bundle $\conjlb$ on $X$, if and only if the following conditions hold.
\begin{enumerate}
\item\label{conj NC and fibre 1} The point $p$ is cDV of Type $A$.
\item\label{conj NC and fibre 2} There exists a line bundle $\conjlb$ on $X$ such that $\deg(\conjlb|_{f^{-1}(p)})=-1$.
\end{enumerate}
\end{thm}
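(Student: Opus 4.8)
The plan is to reduce to the complete local setting, where everything can be computed via mutation, and then glue back up, exactly as was done for the $J$-twist. Since all the functors involved ($\Tfib$, $\Tcon$, and tensoring by a line bundle) commute with the pushdown up to the obvious twist and are Fourier--Mukai, the isomorphism $\Tfib(x\otimes\conjlb)\cong\Tcon(x)\otimes\conjlb$ holds if and only if it holds after restricting to the open neighbourhood $U=U_p$, and then if and only if it holds on the formal fibre $\mathfrak{U}\to\Spec\mathfrak{R}$; this is by the same skyscraper-tracking argument as in \ref{sky off curves}\eqref{sky off curves 2}, noting that away from $f^{-1}(p)$ both twists are trivial by \ref{sky off curves}\eqref{sky off curves 1} and tensoring by $\conjlb$ is harmless there. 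So the real content is a complete local statement about the two autoequivalences $\Upphi'_{\con}\circ\Upphi'_{\con}\cong\RHom_{\widehat\Lambda}(\widehat I_{\con},-)$ and $\Upphi'_0\circ\Upphi'_0\cong\RHom_{\widehat\Lambda}(\widehat I_{\fib},-)$ of \ref{mut mut twist complete}, where now $n=1$, so $\AB=\End_{\mathfrak R}(\mathfrak R\oplus N_1)$ has just two vertices $\{0,1\}$, and tensoring by $\conjlb$ corresponds complete locally to the Morita self-equivalence that swaps the roles of the two idempotents (this is where condition \eqref{conj NC and fibre 2}, the existence of a line bundle of degree $-1$, enters: it is precisely what is needed to realise this swap geometrically, cf.\ the discussion of $\cM_i=\cL_i$ versus the maximal extension in \S\ref{completelocalnotation}).

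Granting that dictionary, the statement becomes: conjugating the $\{1\}$-mutation-squared functor by the idempotent-swap gives the $\{0\}$-mutation-squared functor if and only if $\mathfrak R$ is cDV of type $A$. The forward direction I would prove by a direct structural comparison. Both $\widehat I_{\con}$ and $\widehat I_{\fib}$ are tilting modules of projective dimension two by \ref{mut mut twist complete}, with minimal projective resolutions governed by the knitting/approximation sequences \eqref{K0}, \eqref{key track mut} at the two vertices; the idempotent-swap carries the resolution of $\AB_{\con}=\AB_1$ (built from the approximation $V_1\to N_1$ of $N_1$ by $\add N_0$) to a two-term-periodic complex, and this must be compared with the resolution of $\AB_{\fib}$ of \ref{pd thm for fibre}\eqref{pd thm for fibre 2} (built from the approximation $V_0\to \mathfrak R$ of $\mathfrak R=N_0$ by $\add N_1$). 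The two mutation exchange data $U_1$ and $U_0$ have multiplicities $b_{1,0}$ and $b_{0,1}$ respectively; for the conjugated squared-twists to agree one needs these to match up under the swap, and the known classification of which modules $N_i$ and which approximation sequences arise is controlled by the general elephant $\mathfrak R/g$, an ADE surface singularity (\S\ref{hyper local global section}, \ref{Katz example}). The type-$A$ condition is exactly the case where $N_1$ is a rank-one reflexive module with the appropriate symmetry so that $U_1$ and $U_0$ are interchanged by $(-)^*$ combined with the swap --- in the terminology used around \eqref{chamber structure} and the hyperplane arrangement, type $A$ is the case $d=3$ with the symmetric two-line arrangement $\ell_1=\upvartheta_2$, $\ell_3=\upvartheta_1$ and $\ell_2=\upvartheta_1+\upvartheta_2$ --- whereas outside type $A$ the arrangement $\cH^p$ is no longer symmetric under $\upvartheta_1\leftrightarrow\upvartheta_2$, the multiplicity $\rank_{\mathfrak R}N_1$ exceeds one or the curve has scheme-theoretic multiplicity $>1$, and the two resolutions have genuinely different shapes.

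For the converse --- that the existence of such a conjugating $\conjlb$ forces type $A$ and forces a degree $-1$ line bundle --- I would evaluate both sides on the simple perverse sheaves $E_0,E_1$ (equivalently $S_0,S_1$ on the algebra side) and on $\cE_{\con},\cE_{\fib}$, using \ref{local twists shifting}, \ref{spanning class and twist functor props X}: $\Tcon$ shifts $E_1$ by $[2]$ and fixes $E_0$, while $\Tfib$ shifts $E_0$ by $[2]$ and fixes $E_1$; a conjugating line bundle must therefore swap $E_0$ and $E_1$ up to quasi-isomorphism, and tensoring by a line bundle $\conjlb$ sends $E_1=\cO_{C}(-1)$ to $\cO_C(\deg\conjlb|_{f^{-1}(p)}-1)$, so matching with $E_0=\omega_C[1]$ pins down $\deg(\conjlb|_{f^{-1}(p)})$. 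Combined with adjunction/duality constraints ($E_0$ involves $\omega_C$, and the multiplicity of $C$ must be one for $\cO_C(\deg\conjlb|_{f^{-1}(p)}-1)$ to be a shift of $\omega_C$ for a line bundle $\conjlb$) this forces multiplicity one and the degree $-1$ condition, and then a further comparison of the two-sided ideals $I_{\con}$ and $I_{\fib}$ (or of the contraction algebra $\CA$ versus the fibre algebra $\AB_{\fib}$, using that $\AB_{\fib}$ is commutative while $\CA$ need not be) forces the general elephant to be type $A_n$. The hard part will be the forward direction: one must actually produce the natural isomorphism of Fourier--Mukai kernels $\cW'_{\fib}\otimes(\conjlb\boxtimes\conjlb^{-1})\cong\cW'_{\con}$ complete locally, i.e.\ a bimodule isomorphism $\conjlb^{-1}\otimes_{\widehat\Lambda}\widehat I_{\fib}\otimes_{\widehat\Lambda}\conjlb\cong \widehat I_{\con}$ where $\conjlb$ now denotes the corresponding invertible bimodule, and checking this requires knowing the type-$A$ mutation data explicitly enough (via the matrix-factorisation description of $\mathfrak R$, as in the proof of \ref{mu=nu for hyper}) to see that the idempotent-swap genuinely intertwines the two approximation sequences. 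This is where I expect the bulk of the work to lie.
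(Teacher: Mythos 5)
Your proposal takes a genuinely different route from the paper, but it also contains a gap in the $(\Rightarrow)$ direction that the paper's argument is specifically engineered to avoid.

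For the $(\Leftarrow)$ direction, the paper does not compare projective resolutions of $\widehat I_{\con}$ and $\widehat I_{\fib}$ via mutation exchange data at all. Instead it observes that $\cE_{\fib}$ and $\cE_J$ are the universal families for the noncommutative deformation functors of $E_0=\omega_C[1]$ and $E_1=\cO_{C^{\redu}}(-1)$; under conditions (1)--(2) one has $E_0\cong E_1\otimes\conjlb[1]$, and since $(-\otimes\conjlb)[1]$ is an equivalence this carries one deformation problem to the other, giving an isomorphism of representing objects $\AB_{\fib}\cong\AB_J$ and of universal families $\cE_{\fib}\cong\cE_J\otimes\conjlb[1]$ respecting module structures. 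This directly intertwines the two functorial triangles of \ref{twists autoequivalence B}\eqref{twists autoequivalence B 2}, with no need to touch matrix factorisations or the approximation sequences \eqref{K0}, \eqref{key track mut}. Your proposed structural comparison is feasible in principle but is considerably heavier, and you rightly flag it as the bulk of the work; the paper shows it can be bypassed entirely.

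The gap in your $(\Rightarrow)$ direction is the assertion that ``$\Tcon$ shifts $E_1$ by $[2]$ and fixes $E_0$, while $\Tfib$ shifts $E_0$ by $[2]$ and fixes $E_1$.'' The fixing half of this is false in general. $\Tcon$ is functorially the identity only on ${}^\perp\cE_J$, and $E_0\in{}^\perp\cE_J$ would require $\RHom_X(E_0,\cE_J)=0$, hence $\RHom_X(E_0,E_1)=0$; but the quiver of $\AB$ is connected and has arrows between vertices $0$ and $1$, so this Ext-vanishing fails for every genuine flopping contraction. Concretely, the triangle $\RHom_X(\cE_J,E_0)\otimes^{\bf L}_{\AB_J}\cE_J\to E_0\to\Tcon(E_0)\to$ has a nonzero first term, so $\Tcon(E_0)\not\cong E_0$. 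Without the fixing claim, the argument that conjugation by $\conjlb$ must interchange $E_0$ and $E_1$ up to shift does not get off the ground. The paper circumvents this entirely: it conjugates by the dualizing functor $\dualizing$, exchanging $E_0$ with $\cO_C$, then uses only the shift property $\Tfibdual(\cO_C)\cong\cO_C[2]$ together with the fact that $\Tcondual$ commutes with $\Rf_*$ (because $\Rf_*(\dualizing\,\cE_J)=0$) to force the periodicity $\Rf_*(\cO_C\otimes\conjlb)\cong\Rf_*(\cO_C\otimes\conjlb)[2]$, hence vanishing by boundedness. A complete-local cohomology analysis of this vanishing, using the triangle $\kernelSheaf\to\cO_C\to\cO_{C^{\redu}}\to$ with $\kernelSheaf$ filtered by $r-1$ copies of $\cO_{C^{\redu}}(-1)$, then simultaneously extracts $\deg(\conjlb|_C)=-1$ and $r=1$. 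Your multiplicity-one and degree observations are in the right spirit, but you would need to replace the perp-class reasoning with something like the paper's pushdown-periodicity argument to make them bite.
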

\begin{proof}By assumption $J=\{1\}$, corresponding to the single irreducible curve in the fibre $C = f^{-1}(p)$.

($\Leftarrow$) We require a natural isomorphism of functors
\begin{equation}\label{conjugation natural iso}
\Tfib \functcong (- \otimes \conjlb)\circ \Tcon \circ (- \otimes \conjlb^{-1}) =: \Tconconj.
\end{equation}
Observe that, using \ref{twists autoequivalence B}\eqref{twists autoequivalence B 2}, there are functorial triangles for all $x\in\D(\Qcoh X)$,
\begin{equation*}
\def\arraystretch{1.2}
\begin{array}{c@{\,\,}c@{\,}c@{\,\,}c@{\,\,}c@{\,\,}c@{\,\,}c@{\,\,}c@{}l}
\RHom_X(\cE_{\fib},x) &\otimes^{\bf L}_{\AB_{\fib}}& \cE_{\fib} & \to & x & \to & \Tfib(x) & \to &\\
\RHom_X(\cE_J\otimes\conjlb,x) &\otimes^{\bf L}_{\AB_J} & (\cE_J\otimes\conjlb) & \to & x & \to & \Tconconj(x) & \to &.
\end{array}
\end{equation*}
By the arguments of \cite[\S 3]{DW1}, see also \cite{DW2}, the objects $\cE_{\fib}$ and $\cE_J$ are universal families, corresponding to the representing objects $\AB_{\fib}$ and $\AB_J$ respectively, for the noncommutative deformations of
\[
E_0=\omega_{C}[1]
\quad\text{and}\quad
E_1=\cO_{C^{\redu}}(-1).
\]
Under our assumption \eqref{conj NC and fibre 1}, we have that $C=C^{\redu}\cong\mathbb{P}^1$, and so $\omega_C \cong \cO_{\mathbb{P}^1}(-2)$, and thence using~\eqref{conj NC and fibre 2}
\[
E_0 \cong E_1 \otimes \conjlb[1].
\] The functor~$(- \otimes \conjlb)[1]$ is an equivalence, and so we obtain an isomorphism of representing objects $\AB_{\fib}\cong \AB_J$, and an isomorphism of universal families
\begin{equation}
\cE_{\fib} \cong \cE_J \otimes \conjlb[1],
\end{equation}
respecting the module structures over the algebras $\AB_{\fib}$ and $\AB_J$. We thus obtain isomorphisms between the objects in the functorial triangles above, and the natural isomorphism~\eqref{conjugation natural iso} follows.

($\Rightarrow$) Suppose as in the statement that the twists are conjugate by some line bundle~$\conjlb$. It will be more convenient to work with perverse sheaves in the category $\PerOne$, and so we conjugate by the dualizing functor $\dualizing$ to find
\[
(\Tcondual) \circ ( - \otimes \conjlb) \functcong ( - \otimes \conjlb) \circ ( \Tfibdual).
 \]
Recall that $\dualizing$ exchanges $E_0 = \omega_C[1]$ and $\cO_C$ \cite[3.5.8]{VdB1d}, so that by \ref{spanning class and twist functor props X} we have
\[
(\Tfibdual) (\cO_C) \cong \cO_C [2].
\]
It follows immediately that
\[
(\Tcondual) (\cO_C \otimes \conjlb)  \cong \cO_C \otimes \conjlb[2].
\]

The functor $\Tcondual$ commutes with the pushdown $\Rf_*$, by the argument of~\ref{cor Jtwist and pushdown}. The key point there was that $\Rf_* \cE_J = 0$, whereas here we use $\Rf_* (\dualizing\,\cE_J) = 0$, which is obtained from relative Serre duality, as stated for instance in \cite[6.17]{DW1}. We then deduce that $\Rf_*(\cO_{C} \otimes \conjlb )=0$ by boundedness. As in \S\ref{completelocalnotation} we may work complete locally around $p$, on a formal fibre~$\mathfrak{U}$. It follows by base change that 
\begin{equation}\label{necessary condition for intertwinement}
\Rf_*(\cO_{C} \otimes \conjlb|_{\mathfrak{U}} )=0.
\end{equation}
We now analyse this pushdown via the tilting equivalence on $\mathfrak{U}$, and use its vanishing to control the degree of $\cF$ on the contracted curve $C$.

From \S\ref{completelocalnotation}, we have that $\cO_{\mathfrak{U}}\oplus\cN_1$ is a tilting bundle on $\mathfrak{U}$, so its dual $\cW:=\cO_{\mathfrak{U}}\oplus\cM_1$ is also a tilting bundle, and so gives an equivalence 
\[
\begin{array}{c}
\begin{tikzpicture}
\node (a1) at (0,0) {$\Db(\coh\mathfrak{U})$};
\node (a2) at (5,0) {$ \Db(\mod\AB^{\op}).$};
\draw[->,transform canvas={yshift=+1ex}] (a1) -- node[above] {$\scriptstyle\Upphi=\RHom_{\mathfrak{U}}(\cW,-)$} node [below] {$\scriptstyle\sim$} (a2);
\draw[->,transform canvas={yshift=-1ex}] (a2) -- node[below] {$\scriptstyle\Uppsi$} (a1);
\end{tikzpicture}
\end{array}
\]
We will place the natural surjection $\cO_{C} \to \cO_{C^{\redu}}$ into a distinguished triangle, by passing through the equivalence~$\Upphi$.

Recall that there is an exact sequence
\[
0\to\cO_{\mathfrak{U}}^{\oplus(r-1)}\to\cM_1\to\cL\to 0
\]
where $\cL$ is a line bundle having degree $1$ on $C^{\redu}$, and $r \geq 1$, with $r=1$ occurring precisely when $p$ is cDV of Type $A$. We see immediately from the standard calculation of the cohomology of line bundles on $C^{\redu} \cong \mathbb{P}^1$ that $\RHom_{\mathfrak{U}}(\cL,\cO_{C^{\redu}})=0$, and we thence obtain that
\begin{align*}
\RHom_{\mathfrak{U}}(\cO_{\mathfrak{U}},\cO_{C^{\redu}}) &= \K 
\\
\RHom_{\mathfrak{U}}(\cM_1,\cO_{C^{\redu}}) &= \K^{r-1}. 
\end{align*}
In particular, $\Upphi(\cO_{C^{\redu}})$ is a module in degree zero, which when viewed as a quiver representation has $\K$ at the vertex $0$, and $\K^{r-1}$ at the vertex $1$.

\newcommand\remainder{\cG}
\newcommand\kernelSheaf{\mathcal{H}}

Since $\Upphi$ is an equivalence, $\Upphi(\cO_{C} \to \cO_{C^{\redu}})$ is a non-zero morphism.  But $\Upphi(\cO_{C})=S_0$, which is just $\K$ at the vertex $0$, and so this is simply a non-zero morphism between representations.  Since $\Upphi(\cO_{C})$ is one-dimensional, necessarily this is injective, so we set $\remainder$ to be the cokernel and so obtain an exact sequence
\[
0\to \Upphi(\cO_{C}) \to \Upphi(\cO_{C^{\redu}})\to \remainder\to 0.
\]
By the above, $\remainder$ is a module with a filtration consisting of $r-1$ copies of $S_1$. Applying $\Uppsi$ we get a triangle
\begin{eqnarray}\label{equation.kernel_triangle}
\kernelSheaf := \Uppsi(\remainder)[-1] \to \cO_{C} \to \cO_{C^{\redu}} \to \label{triangle for conj result}
\end{eqnarray}
where $\kernelSheaf$ is a sheaf with a filtration consisting of $r-1$ copies of $\Uppsi(S_1)[-1] \cong \cO_{C^{\redu}}(-1)$. 

 Tensoring \eqref{equation.kernel_triangle} by~$\conjlb|_{\mathfrak{U}}$, and writing $d=\deg(\conjlb|_{C^{\redu}})$ for convenience, we have a short exact sequence
\[
0\to \kernelSheaf(d) \to \cO_{C} \otimes \conjlb|_{\mathfrak{U}} \to \cO_{C^{\redu}}(d)\to 0
\]
where as above $\kernelSheaf$ is filtered by objects $\cO_{C^{\redu}}(-1)$. Applying $\Rf_*$, and using the cohomology vanishing~\eqref{necessary condition for intertwinement}, we find
\[
\RDerived{f_*} (\cO_{C^{\redu}}(d)) \cong \RDerived{f_*} (\kernelSheaf(d))[1].
\]
We now make further use of the cohomology of line bundles on $C^{\redu} \cong \mathbb{P}^1$ to deduce our result. The fibres of $f$ are at most one-dimensional, so $\RDerivedi{>1}{f_*} (\kernelSheaf(d)) = 0$, and therefore $\RDerivedi{1}{f_*} (\cO_{C^{\redu}}(d)) = 0$ which implies $d\geq-1$. Suppose, for a contradiction, that furthermore $d\geq 0$. Then, using the filtration of $\kernelSheaf$, $\RDerivedi{>0}{f_*} (\kernelSheaf(d)) = 0$ whereas $f_* (\cO_{C^{\redu}}(d)) \neq 0$. It follows that $d=-1$, yielding \eqref{conj NC and fibre 2}. We then have $\Rf_*(\kernelSheaf(-1)) = 0$, forcing $\kernelSheaf=0$, so that $r=1$, giving \eqref{conj NC and fibre 1}.
\end{proof}


\begin{thebibliography}{KIWY}


\bibitem[AL1]{AL1}
R.~Anno and T.~Logvinenko, \emph{Orthogonally spherical objects and spherical fibrations}, {\sf arXiv:1011.0707}.

\bibitem[AL2]{AL2}
R.~Anno and T.~Logvinenko, \emph{Spherical DG functors},  {\sf arXiv:1309.5035}.

\bibitem[A]{Arvola}
W.~A.~Arvola, \emph{The fundamental group of the complement of an arrangement of complex hyperplanes}, Topology \textbf{31} (1992), no.~4, 757--765.

\bibitem[AIL]{AIL}
L.~Avramov, S.~B.~Iyengar and J.~Lipman, \emph{Reflexivity and rigidity for complexes, II: Schemes}, Algebra Number Theory \textbf{5} (2011), no.~3, 379--429.

\bibitem[BM]{BayerMacri}
A.~Bayer and E.~Macr\`i, \emph{MMP for moduli of sheaves on K3s via wall-crossing: nef and movable cones, Lagrangian fibrations}, Invent.\ Math.\ \textbf{198} (2014), no.~3, 505--590.

\bibitem[BB]{Brendle}
J.~Birman and T.~Brendle, \emph{Braids: a survey}, Handbook of knot theory, 19--103, Elsevier B.\ V., Amsterdam, 2005. 

\bibitem[B02]{Bridgeland}
T.~Bridgeland, \emph{Flops and derived categories}, Invent. Math. \textbf{147} (2002), no.~3, 613--632.

\bibitem[B09]{BridgelandKleinian}
T.~Bridgeland, \emph{Stability conditions and Kleinian singularities}, 
Int.\ Math.\ Res.\ Not.\ 2009, no.~21, 4142--4157. 

\bibitem[BM]{BM}
T.~Bridgeland and A.~Maciocia, \emph{Fourier--Mukai transforms for K3 and elliptic fibrations}, J.~Algebraic Geom.\ \textbf{11} (2002), no.~4, 629--657.

\bibitem[BO]{BO}
A.~Bondal and D.~Orlov, \emph{Derived categories of coherent sheaves}, Proceedings of the International Congress of Mathematicians, Vol.\ II (Beijing, 2002), 47--56, Higher Ed. Press, Beijing, 2002. 

\bibitem[BH]{BH}
R.-O.~Buchweitz and L.~Hille, \emph{Hochschild (co-)homology of schemes with tilting object}. Trans.\ Amer.\ Math.\ Soc.\ \textbf{365} (2013), no.~6, 2823--2844.

\bibitem[C14]{Calabrese}
J.~Calabrese, \emph{Donaldson--Thomas Invariants and Flops}, to appear J.\ Reine\ Angew.\ Math.\ (Crelle), DOI: 10.1515/crelle-2014-0017.

\bibitem[C02]{Chen}
J.-C.~Chen, \emph{Flops and equivalences of derived categories for threefolds with only terminal Gorenstein singularities}, J. Differential Geom. \textbf{61} (2002), no. 2, 227--261.

\bibitem[CM]{CM}
R.~Cordovil and M.~L.~Moreira, \emph{A homotopy theorem on oriented matroids}, Graph theory and combinatorics (Marseille-Luminy, 1990). Discrete Math.\ \textbf{111} (1993), no.~1--3, 131--136. 

\bibitem[CI]{CI}
A.~Craw and A.~Ishii, \emph{Flops of $G$-Hilb and equivalences of derived categories by variation of GIT quotient}. Duke Math.\ J.\ \textbf{124} (2004), no.\ 2, 259--307. 



\bibitem[D10]{DaoNCCR}
H.~Dao, \emph{Remarks on non-commutative crepant resolutions of complete intersections},  Adv.\ Math.\ \textbf{224} (2010), no.\ 3, 1021--1030.

\bibitem[D72]{Deligne}
P. Deligne, \emph{Les immeubles des groupes de tresses g\'en\'eralis\'es}, Invent.\ Math.\ \textbf{17} (1972), 273--302.

\bibitem[D13]{Donovan}
W.~Donovan, \emph{Grassmannian twists on the derived category via spherical functors}, Proc.\ Lond.\ Math.\ Soc.\ (5) \textbf{107} (2013), 1053--1090.

\bibitem[DS1]{DonSeg1}
W.~Donovan and E.~Segal, \emph{Window shifts, flop equivalences and Grassmannian twists}, Compositio Mathematica (6) \textbf{150} (2014), 942--978.

\bibitem[DS2]{DonSeg2}
W.~Donovan and E.~Segal, \emph{Mixed braid group actions from deformations of surface singularities}, Comm.\ Math.\ Phys. (1) \textbf{335} (2014), 497--543.

\bibitem[DW1]{DW1}
W.~Donovan and M.~Wemyss, \emph{Noncommutative deformations and flops}, \textsf{arXiv:1309.0698}, to appear Duke Math.\ J.

\bibitem[DW2]{DW2}
W.~Donovan and M.~Wemyss, \emph{Contractions and deformations}, in preparation.

\bibitem[GN]{GN}
S.~Goto and K.~Nishida, \emph{Finite modules of finite injective dimension over a Noetherian algebra}, J.\ Lond.\ Math.\ Soc.\ (2) \textbf{63} (2001), 319--335.


\bibitem[HLS]{HLS}
D.~Halpern-Leistner and I.~Shipman, \emph{Autoequivalences of derived categories via geometric invariant theory}, {\sf arXiv:1303.5531}.

\bibitem[H]{HuybrechtsFM}
D.~Huybrechts, \emph{Fourier-Mukai transforms in algebraic geometry}. Oxford Mathematical Monographs. The Clarendon Press, Oxford University Press, Oxford, 2006. viii+307 pp.


\bibitem[KIWY]{KIWY}
M.~Kalck, O.~Iyama, M.~Wemyss and D.~Yang, \emph{Frobenius categories, Gorenstein algebras and rational surface singularities},  Compos.\ Math.\ \textbf{151} (2015), no.~3, 502--534.

\bibitem[IR]{IR}
O.~Iyama and I.~Reiten, \emph{Fomin--Zelevinsky mutation and tilting modules over Calabi-Yau algebras}, Amer.\ J.\ Math.\ \textbf{130} (2008), no.\ 4, 1087--1149.


\bibitem[IW1]{IW4}
O.~Iyama and M.~Wemyss, \emph{Maximal modifications and Auslander--Reiten duality for non-isolated singularities}, Invent.\ Math.\ \textbf{197} (2014), no.~3, 521--586.


\bibitem[IW2]{IW5}
O.~Iyama and M.~Wemyss, \emph{Singular derived categories of $\mathds{Q}$-factorial terminalizations and maximal modification algebras},  Adv.\ Math.\ \textbf{261} (2014), 85--121.

\bibitem[K14]{Joe}
J.~Karmazyn, \emph{Quiver GIT for varieties with tilting bundles}, {\sf arXiv:1407.5005}.

\bibitem[K91]{Katz}
S.~Katz, \emph{Small resolutions of Gorenstein threefold singularities}, Algebraic geometry: Sundance 1988, 61--70, Contemp.\ Math., \textbf{116}, Amer.\ Math.\ Soc., Providence, RI, 1991.

\bibitem[Ko1]{KollarFlops}
J.~Koll\'ar, \emph{Flops}. Nagoya Math.\ J.\ \textbf{113} (1989), 15--36.


\bibitem[LW]{LW}
G.~Leuschke and R.~Wiegand, \emph{Cohen--Macaulay representations}, Mathematical Surveys and Monographs, \textbf{181}. American Mathematical Society, Providence, RI, 2012. xviii+367 pp.


\bibitem[N]{Nagao}
K.~Nagao, \emph{Derived categories of small toric Calabi-Yau 3-folds and curve counting invariants}, Q.\ J.\ Math. \textbf{63} (2012), no.~4, 965--1007.

\bibitem[O]{Orlov}
D.~Orlov, \emph{Triangulated categories of singularities and equivalences between Landau-Ginzburg
Models}, Sb.\ Math.\ \textbf{197} (2006), no.~11--12, 1827--1840.

\bibitem[P93]{Paris}
L.~Paris, \emph{Universal cover of Salvetti's complex and topology of simplicial arrangements of hyperplanes}, Trans.\ Amer.\ Math.\ Soc.\ \textbf{340} (1993), no.~1, 149--178.

\bibitem[P00]{Paris2}
L.~Paris, \emph{On the fundamental group of the complement of a complex hyperplane arrangement}. Arrangements---Tokyo 1998, 257--272, Adv.\ Stud.\ Pure Math., \textbf{27}, Kinokuniya, Tokyo, 2000. 

\bibitem[P83]{Pinkham}
H.~Pinkham, \emph{Factorization of birational maps in dimension 3}, Singularities (P. Orlik, ed.), Proc. Symp. Pure Math., vol. 40, Part 2, 343--371, American Mathematical Society, Providence, 1983.

\bibitem[R69]{Ramras}
M.~Ramras, \emph{Maximal orders over regular local rings of dimension two}, Trans. Amer. Math. Soc. \textbf{142} (1969), 457--479.

\bibitem[R82]{Randell}
R.~Randell, \emph{The fundamental group of the complement of a union of complex hyperplanes}, Invent.\ Math.\ \textbf{69} (1982), no.~1, 103--108.

\bibitem[R83]{Pagoda}
M.~Reid, \emph{Minimal models of canonical 3-folds}, Algebraic varieties and analytic varieties (Tokyo, 1981), 131--180, Adv. Stud. Pure Math., 1, North-Holland, Amsterdam, 1983.

\bibitem[R15]{Alice}
A.~Rizzardo, \emph{Adjoints to a Fourier--Mukai transform}, {\sf arXiv:1505.01035}.

\bibitem[S87]{Salvetti}
M.~Salvetti, \emph{Topology of the complement of real hyperplanes in $\mathbb{C}^N$}, Invent.\ Math.\ \textbf{88} (1987), no.~3, 603--618.

\bibitem[S01]{Schwede}
S.~Schwede, \emph{The stable homotopy category has a unique model at the prime 2}, Adv.\ Math.\ \textbf{164} (2001), no.~1, 24--40.

\bibitem[ST]{ST}
P.~Seidel and R.~P.~Thomas, \emph{Braid group actions on derived categories of sheaves}, Duke Math.\ J.\ \textbf{108} (2001), 37--108.


\bibitem[S03]{Szendroi}
B.~Szendr\H{o}i, \emph{Enhanced gauge symmetry and braid group actions}, Commun.\ Math.\ Phys.\ \textbf{238} (2003), 35--51.

\bibitem[T07]{Toda}
Y.~Toda, \emph{On a certain generalization of spherical twists},
Bulletin de la Soci\'et\'e Math\'ematique de France \textbf{135}, fascicule 1 (2007), 119--134.

\bibitem[T08]{TodaResPub}
Y.~Toda, \emph{Stability conditions and crepant small resolutions}, Trans.\ Amer.\ Math.\ Soc.\ \textbf{360} (2008), no.~11, 6149--6178.

\bibitem[T13]{TodaExtremal}
Y.~Toda, \emph{Stability conditions and extremal contractions}, Math.\ Ann.\ \textbf{357} (2013), no.~2, 631--685. 

\bibitem[T14]{TodaGV}
Y.~Toda, \emph{Non-commutative width and Gopakumar--Vafa invariants}, {\sf arXiv:1411.1505}.

\bibitem[TU]{TodaUehara}
Y.~Toda and H.~Uehara, \emph{Tilting generators via ample line bundles}, Adv.\ Math.\ \textbf{223} (2010), no.~1, 1--29.


\bibitem[V]{VdB1d}
M.~Van den Bergh, \emph{Three-dimensional flops and noncommutative rings}, 
Duke Math.\ J.\ \textbf{122} (2004), no.~3, 423--455. 


\bibitem[W]{HomMMP}
M.~Wemyss, \emph{Aspects of the homological minimal model program}, {\sf arXiv:1411.7189}.

\bibitem[YZ]{YZ}
A.~Yekutieli and J.~J.~Zhang, \emph{Rings with Auslander dualizing complexes}, J.\ Alg.\ \textbf{213} (1999), no.~1, 1--51.
\end{thebibliography}
\end{document}